\renewcommand{\cftsecfont}{}
\def\red #1{\textcolor{red}{#1}}
\definecolor{darkbrown}{rgb}{.5,.1,.1} 
\renewcommand{\today}
{%
\number \day 
\ifnum  \day =1 st
\else
	\ifnum  \day =2 nd
	\else
		\ifnum  \day =3 rd
		\else
			\ifnum  \day =21 st
			\else
				\ifnum  \day =22 nd
				\else
					\ifnum  \day =23 rd
					\else
						\ifnum  \day =31 st
						\else
							th
						\fi
					\fi
				\fi
			\fi
		\fi
	\fi
\fi
\ifcase \month \or January\or February\or March\or April\or May \or June\or July\or August\or September\or October\or November\or December\fi 
{ \number \year}%
} 
\def\boxit [#1]#2{\vbox{\hrule\hbox{\vrule
     \vbox spread #1{\vss\hbox spread#1{\hss #2\hss}\vss}%
        \vrule}\hrule}}
\newbox\algo
\font\algofont=cmtt10 scaled 1100
\font\smallalgofont=cmtt10 scaled 950
\newenvironment {boxedalgorithme}{\smallskip \smallskip
\setbox\algo=\vbox\bgroup\algofont\parindent= 0mm}{\egroup
\boxit[5pt]{\box\algo} \smallskip\smallskip}
\newenvironment {algorithme}{\smallskip \smallskip
\bgroup\algofont\parindent= 0mm}{\egroup
\smallskip\smallskip}
\newtheorem{thm}{Theorem}[section]
\newtheorem{cor}[thm]{Corollary} 
\newtheorem{prop}[thm]{Proposition} 
\newtheorem{lemma}[thm]{Lemma}
\newtheorem{remark}[thm]{Remark}
\newtheorem{observation}[thm]{Observation}
\newtheorem{definition}[thm]{Definition}
\newtheorem{property}[thm]{Property}
\newtheorem{hors-property}[subsubsection]{Property}
\newtheorem{hors-lemma}[subsubsection]{Lemma}
\newtheorem{hors-remark}[subsubsection]{Remark}
\newtheorem{hors-definition}[subsubsection]{Definition}
\def\newline{\hfill\break}
\def\newpage{\vfill\break}
\def\Int{\mathrm{Int}}
\def\Ext{\mathrm{Ext}}
\def\min{\mathrm{min}}
\def\Min{\mathrm{min}}
\def\max{\mathrm{max}}
\def\ass{\mathrm{Part}}
\def\ep{\varepsilon}
\def\io{\iota}
\def\n{\omega}
\def\ov{\overrightarrow}
\def\s{\setminus}
\def\bk{\backslash}
\def\ovl{\overline}
\def\tiret{--- }
\def\bs{\bigskip}
\def\ms{\medskip}
\def\ss{\smallskip}
\def\n{\omega}
\def\choice{{\algofont CHOICE }}
\def\finsi{}
\def\plus{\uplus} 
\font\ptirm=cmr10 scaled 900
\def\bul{$\bullet$ }
\def\G{{\ov G}}
\def\AA{\mathrm{acl}}
\def\bysame{\rule{1cm}{0.1mm}}
\newcommand{\overbar}[1]{\mkern 1.5mu\overline{\mkern-1.5mu#1\mkern-1.5mu}\mkern 1.5mu}
\def\bar{\overbar}    
\def\eme#1{}
\def\emeref#1{}
\def\emevder#1{}
\title{The active bijection for graphs\tnoteref{t1}}
\author[lirmm]{Emeric Gioan\corref{cor1}}
\ead{emeric.gioan@lirmm.fr}
\author[paris]{Michel Las Vergnas\corref{cor2}}
\address[lirmm]{CNRS, LIRMM, Universit\'e de Montpellier,  France\emevder{respecte-je la consigne ? c'est : LIRMM, Universit\'e de Montpellier, CNRS, Montpellier, France}}
\address[paris]{CNRS, Paris, France}
\begin{document}

\eme{dessous en commentaire abstract plus long}%

\begin{abstract} 
\noindent
\small
The active bijection 
forms a package of results
studied
by the authors in a series of papers in 
oriented matroids. 
%
The present paper is intended to state the main results 
in the particular case, and more widespread language, of graphs.
%
We associate 
any directed graph, defined on a linearly ordered set of edges, with one particular of its spanning trees, which we call its \emph{active spanning tree}. 
For any graph on a linearly ordered set of edges, this yields a surjective mapping from orientations onto spanning trees,
which preserves activities (for orientations in the sense of Las Vergnas, for spanning trees in the sense of Tutte), as well as some 
partitions (or filtrations) of the edge set associated with orientations and spanning trees.  It yields a canonical bijection between classes of orientations and spanning trees, as well as a refined bijection between all orientations and edge subsets, containing various noticeable bijections, for instance: between orientations in which smallest edges of cycles and cocycles have a fixed orientation and spanning trees; or between acyclic orientations and no-broken-circuit subsets. 
Several constructions of independent interest are involved.
The basic case concerns bipolar orientations, which are in bijection with their fully optimal spanning trees, 
as proved in a previous paper,
and as computed in a companion paper.
We give a canonical decomposition of  a directed graph on a linearly ordered set of edges into acyclic/cyclic bipolar directed graphs. 
 Considering all orientations of a graph, we obtain
 an expression of the Tutte polynomial 
 in terms of
 products of beta invariants of minors, a remarkable partition of the set of orientations into activity classes, and a simple expression of the Tutte polynomial using four orientation activity parameters.
We derive a similar decomposition theorem for spanning trees. 
We also provide a general deletion/contraction  framework for these bijections and relatives.

\end{abstract}

\maketitle   




\vspace{-7mm}

\setcounter{tocdepth}{2}  
      \renewcommand*\contentsname{\small Summary\vspace{-3mm}}
  \renewcommand\cftsecfont{\small\rm}
    \renewcommand\cftsubsecfont{\small\rm}
 \renewcommand{\cftsecpagefont}{\small\normalfont}
 \pagenumbering{arabic}
    \setlength{\cftbeforesecskip}{0cm}
    \setlength{\cftparskip}{0cm}

\tableofcontents

\bs
\hrule

\setcounter{page}{2}



\emevder{faire corrections d'anglais, comme dans chapter, voir fichier anglais.tex  passage "SUR CHAPTER"}%

\emevder{inserer preuve de covolution formula par orientations?}

\section{Introduction}
\label{sec:intro}

\emevder{*** HYPER IMPORTANT  *** remplacer 'greater than' par 'greater than or equal to' quand necessaire *****}

\emevder{penser a rejeter un oeil aux vieilles macros eme a la fin}

\emevder{refs a chapters!}%

\emevder{comparer intros de ABG2 et AB2-b}%

\emevder{a mettre ? Erratum : phd, FPSAC 03 + mention : supersolv, LP}%

\emevder{??? REMPLACER LE PLUS POSSIBLE on a lienarly ordered edge set PAR ordered ? MEME DANS INTRO ?}%
%





The general setting of this paper is to relate orientations and spanning trees of graphs, and to study graphs on a linearly ordered set of edges,
in terms of structural properties, fundamental constructions, decompositions,  enumerative properties, 
and bijections.
The original motivation was to provide a bijective interpretation
and a structural understanding
of the equality of two classical expressions of the Tutte polynomial, one in terms of spanning tree activities by Tutte~\cite{Tu54}:

\centerline{$\displaystyle t(G;x,y)=\sum_{\io,\ep}t_{\io,\ep}x^\io y^\ep$}
\noindent where $t_{\io,\ep}$ is the number of spanning trees of the graph $G$
with internal activity $\io$ and external activity $\ep$,
 the other in terms of orientation activities by Las Vergnas \cite{LV84a}:
 
 \centerline{$\displaystyle t(G;x,y)=\sum_{\io,\ep}o_{\io,\ep}\ \Bigl({x\over 2}\Bigr)^\io \ \Bigl({y\over 2}\Bigr)^\ep$}
\noindent where $o_{\io,\ep}$ is the number of orientations of $G$ with  dual-activity $\io$ and activity $\ep$,
which contains various famous enumerative results from the literature, such as counting  acyclic reorientations (and more generally regions in hyperplane arrangements and oriented matroids), e.g.
\cite{Wi66, St73, Za75, GrZa83}.

\ss

Our solution is made of several constructions and results of independent interest, whose central achievement 
is to associate, in a canonical way,
any directed graph $\G$ defined on a linearly ordered set of edges with one particular of its spanning trees, denoted $\alpha(\G)$, which we call \emph{the active spanning tree of $\G$}. 
For any graph on a linearly ordered set of edges, this yields a surjective mapping from orientations onto spanning trees,
which preserves the above activities, and
such that exactly $2^{\io+\ep}$ orientations with orientation activities $(\io,\ep)$  are associated with the same spanning tree with spanning tree  activities $(\io,\ep)$. This yields a canonical bijection between remarkable orientation classes and spanning trees (depending only on the ordered undirected graph), along with 
a naturally refined bijection between orientations and edge-subsets (depending in addition on any fixed reference orientation). 

\ss

Before turning into the graph language, 
 let us give to the reader one of the shortest possible complete definition of the active basis in the general setting of oriented matroids.
%
For any 
oriented matroid $M$ on a linearly ordered set $E$, 
\emph{the active basis $\alpha(M)$ of $M$} is  determined~by:%
\vspace{-2mm}
\begin{itemize}
\itemsep=-0.5mm

\item 
\emph{Fully optimal basis of a bounded region.} If $M$ is 
acyclic and every positive cocircuit of $M$ contains $\min(E)$, then $\alpha(M)$ is the unique 
basis $B$ of $M$ such~that:

\begin{itemize}
\vspace{-2mm}
\item 
 for all $b\in B\setminus p$, the signs of $b$ and $\min(C^*(B;b))$ are opposite in $C^*(B;b)$;
\vspace{-1mm}

\item 
for all $e\in E\s B$, the signs of $e$ and $\min(C(B;e))$ are opposite in $C(B;e)$.
\end{itemize}
\vspace{-2mm}

\item 
\emph{Duality.} 
$\alpha(M)=E\s \alpha(M^*).$

\item 
\emph{Decomposition.} 
$\alpha(M)=\alpha(M/ F)\ \uplus\ \alpha(M(F))$
where $F$ is the union of all positive circuits of $M$ whose smallest element is the greatest possible smallest element of a positive circuit of~$M$.%
\end{itemize}

\vspace{-1mm}

This definition, developed in \cite{Gi02, AB1,AB2-a,AB2-b}, applies to directed graphs: for a directed graph $\G$ on a linearly ordered set of edges $E$, we have $\alpha(\G)=\alpha(M(\G))$ where $M(\G)$ is the usual oriented matroid on $E$ associated with $\G$.
In Section \ref{sec:edges} of the present paper, we directly define $\alpha(\G)$ in terms of graphs.
However, a specificity of graphs is their lack of duality, which implies that the definitions have to be adapted.
Throughout the paper, in comparison with \cite{AB1,AB2-a,AB2-b}, the fact that a graph does not have a dual graph forces us to repeat some definitions and some proofs, first from the primal viewpoint and second from the dual viewpoint, which is usually a simple translation using cycle/cocycle duality, whereas, in (oriented) matroids, definitions and proofs can be shortened by applying them directly  to the dual. 
Other specificities of the graph case will be mentioned later.
\ms

What we call \emph{the active bijection} is actually a three-level  construction, summarized in the diagram of Figure \ref{fig:diagram}.
It is based on several results of independent interest, including various Tutte polynomial expressions as shown in this diagram,  yielding various bijections listed in Table \ref{table:intro}, and forming a consistent whole.
Let us describe all this along with the organization of the paper.
In what follows, $G$ is a graph on a linearly ordered set of edges, also called ordered graph for short.
\ms


\def\Gref{{\G_{\hbox{\small ref}}}}
\def\Gref{{\G}}

\def\hdistance{10cm}
\def\vdistance{6.5cm}

\def\hsdistance{3.5cm}
\def\hsldistance{4.2cm}
\def\hsmdistance{4cm}
\def\vsdistance{1.5cm}

\def\diagrdistance{2.6cm}
\def\diagldistance{2.5cm}

\begin{figure}
\centering

\scalebox{0.75}{
\begin{tikzpicture}%
[->,>=triangle 90, thick,shorten >=1pt,auto, node distance=\hdistance,  thick,  
main node/.style={rectangle,fill=blue!10,draw,font=\sffamily\large},
   TP node/.style={rectangle,dotted,draw,font=\sffamily\it\tiny}]
   
  \node[main node] (1a) {orientation $-_A\Gref$};
  \node[main node] (1b) [right of=1a] {subset $\alpha_\Gref(A)$};
   \node[main node] (2a) [below of=1a, node distance=\vdistance] {\begin{tabular}{c}orientation $\G$\\ (activity class)\end{tabular}};
  \node[main node] (2b) [right of=2a] {spanning tree $\alpha(\G)$};
  \node[main node] (3a) [below of=2a, node distance=\vdistance] {\begin{tabular}{c} acyclic/cyclic bipolar \\ orientation $\G$ \\ {\small(up to opposite)}\end{tabular}};
  \node[main node] (3b) [right of=3a] {\begin{tabular}{c}internal/external uniactive \\ spanning tree $\alpha(\G)$\end{tabular}};
  
    \node[TP node] (TP1a) [left of=1a, node distance=\hsldistance] {\begin{tabular}{c} Tutte polynomial\\ in terms of \\ \scriptsize refined orientation activities\\ \tiny (Theorem \ref{EG:th:expansion-orientations})\end{tabular}};
  \node[TP node] (TP1b) [right of=1b, node distance=\hsdistance] {\begin{tabular}{c} Tutte polynomial\\ in terms of \\ \scriptsize subset activities \\ \tiny (Theorem \ref{EG:th:Tutte-4-variables})\end{tabular}};
  \node[TP node] (TP2a) [left of=2a, node distance=\hsldistance] {\begin{tabular}{c} Tutte polynomial\\ in terms of \\ \scriptsize orientation activities\\ \tiny (Section \ref{subsec:orientation-activity})\end{tabular}};
  \node[TP node] (TP2b) [right of=2b, node distance=\hsdistance] {\begin{tabular}{c} Tutte polynomial\\ in terms of \\ \scriptsize spanning tree \\ \scriptsize activities\\ \tiny (Section \ref{subsec:prelim-sp-trees})\end{tabular}};
  \node[TP node] (TP3a) [left of=3a, node distance=\hsldistance] {\begin{tabular}{c} \scriptsize $\beta$ invariant\\ \tiny (Section \ref{subsec:prelim-beta})\end{tabular}};
  \node[TP node] (TP3b) [right of=3b, node distance=\hsmdistance] {\begin{tabular}{c} \scriptsize $\beta$ invariant\\ \tiny (Section \ref{subsec:prelim-beta})\end{tabular}};
  \node[TP node] (TP4a) [above right of=TP3a, node distance=\diagrdistance]  {\begin{tabular}{c} Tutte polynomial in terms of \\ \scriptsize  $\beta$ invariants of minors \\
(Theorem \ref{th:tutte})\end{tabular}};
  \node[TP node] (TP4b) [above left of=TP3b, node distance=\diagldistance] {\begin{tabular}{c} Tutte polynomial in terms of \\ \scriptsize  $\beta$ invariants of minors \\(Theorem \ref{th:tutte}) \end{tabular}};
  
\path[every node/.style={font=\sffamily}]
	
	(1a) edge [->, >=latex, line width=1.2mm] node [bend right] 
		{\begin{tabular}{c}
			\bf refined active bijection $\alpha_\Gref$
		\end{tabular}} 
	(1b)
	
	(1b) edge [-, >=latex] node [bend right] 
		{\begin{tabular}{c}
			\small (Definition \ref{EG:def:act-bij-ext}, Theorem \ref{EG:th:ext-act-bij})
		\end{tabular}} 
	(1a)
		
	(2a) edge [->, >=latex, line width=1.2mm] node [bend right] 
		{\begin{tabular}{c}
			\bf canonical active bijection $\alpha$
		\end{tabular}}  
	(2b)
	
	(2b) edge [-, >=latex] node [bend right] 
		{\begin{tabular}{c}
			\small (Definitions \ref{def:alpha-ind-decomp} \& \ref{def:alpha-seq-decomp}, Theorem \ref{EG:th:alpha})
		\end{tabular}}  
	(2a)
		
	(3a) edge [->, >=latex, line width=1.2mm] node [bend right] 
		{\begin{tabular}{c}
		{\bf uniactive	 }\\
		{\bf bijection $\alpha$} \cite{GiLV05, AB1}
		\end{tabular}} 
	(3b)

	(3b) edge [-, >=latex] node [bend left] 
		{\begin{tabular}{c}
			\small (Def. \ref{def:acyc-alpha} \& \ref{def:cyc-alpha1} \& \ref{def:cyc-alpha2}, Th. \ref{thm:bij-10})
		\end{tabular}}   
	(3a)

	(1a) edge [<-, bend left=25] node [bend left] 
		{\begin{tabular}{c}
			single pass algorithm \cite{AB2-b}\\
			\small (Theorem \ref{th:basori})			
		\end{tabular}}    
	(1b)
	
	(2a) edge [<-, bend left=25] node [bend left] 
		{\begin{tabular}{c}
			single pass algorithm \cite{AB2-b}\\
			\small (Theorem \ref{th:basori})
		\end{tabular}} 
	(2b)	
	
	(3a) edge [<-, bend left=25] node [bend left] 
		{\begin{tabular}{c}
			single pass algorithm \cite{GiLV05, AB1}\\
			\small (Proposition \ref{prop:alpha-10-inverse}) 			
		\end{tabular}}  
	(3b)

	(1a)	 edge [<->,bend right=40]  node [bend right] 
		{\begin{tabular}{c} 
			deletion/contraction\\ 
			\small (Theorem \ref{thm:ind-gene-refined})
		\end{tabular} } 
	(1b)
	
	(2a)	 edge [<->,bend right=40]  node [bend right] 
		{\begin{tabular}{c} 
			deletion/contraction\\ 
			\small (Theorem \ref{thm:ind-gene})
		\end{tabular} } 
	(2b)
	
	(3a)	 edge [<->,bend right=40]  node [bend right] 
		{\begin{tabular}{c} 
			deletion/contraction\\ 
			\small (Theorem \ref{thm:ind-10})
		\end{tabular} } 
	(3b)

	
	(3b)	 edge [<-,bend left=75]  node [bend right] 
		{\begin{tabular}{c} 
			full optimality algorithm \cite[companion paper]{ABG2LP} 
		\end{tabular} }
	 (3a)

	(1a) edge [<->] node [left] 
		{\begin{tabular}{c} 
			partition of $2^E$ \\ 
			into activity classes\\
			of orientations\\
			\small (Prop. \ref{prop:act-classes}, Def. \ref{def:act-class})
		\end{tabular}} 
	(2a)
	
	(1b) edge [<->] node [right] 
		{\begin{tabular}{c} 
			partition of $2^E$ \\ 
			into  intervals\\
			associated with spanning trees\\
			\small (\cite{Cr69, Da81}, see Section \ref{subsec:refined})
		\end{tabular}} 
	(2b)

	(2a) edge [<->] node [left] 
		{\begin{tabular}{c} 
			decomposition  \\ 
			of orientation activities\\
			\small (Def. \ref{def:act-seq-dec}, Theorem \ref{th:dec-ori})
		\end{tabular}} 
	(3a)
	
	(2b) edge [<->] node [right] 
		{\begin{tabular}{c} 
			decomposition \\ 
			of spanning tree activities\\
			\small (Def. \ref{def:sp-tree-dec-seq}, Theorem \ref{EG:th:dec_base})
		\end{tabular}} 
	(3b)
	
	
	(1a) edge [-, dotted]  
	(TP1a)
	
	(1b) edge [-, dotted]  
	(TP1b)
	
	(2a) edge [-, dotted]  
	(TP2a)

	(2b) edge [-, dotted]  
	(TP2b)		
	
	(3a) edge [-, dotted]  
	(TP3a)

	(3b) edge [-, dotted]  
	(TP3b)
	
	;		
\end{tikzpicture}
}
\label{fig:diagram}
\caption{Diagram of results and constructions for the active bijection of an ordered graph $G$.
Horizontal arrows indicate in which ways the constructions or definitions apply
(the deletion/contraction constructions can be used to 
build the whole bijections as matchings rather than mappings). Vertical arrows indicate how objects are related. 
Dotted rectangles indicate how the Tutte polynomial is involved or transforms through the constructions. All results quoted in the diagram are proved in terms of graphs in the paper (or the companion paper \cite{ABG2LP}), except Theorem \ref{th:basori} \cite{AB2-b} and Theorem \ref{thm:bij-10} \cite{GiLV05, AB1} (alternative or more general proofs of all results can be found in \cite{AB1, AB2-a, AB2-b, AB3, AB4}). 
}
\end{figure}
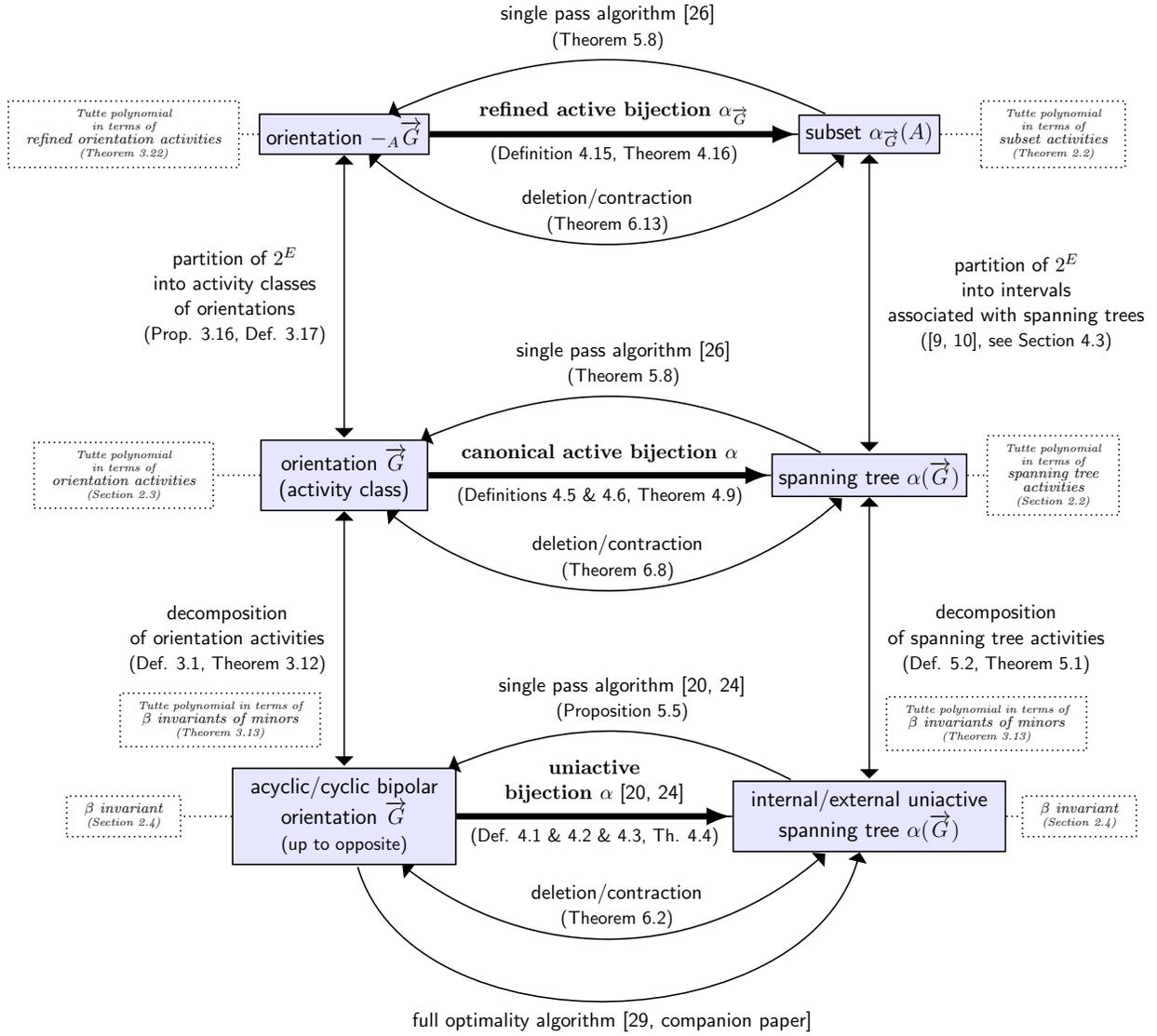

\begin{table}[h]
	\centering
	{\ptirm

\renewcommand{\arraystretch}{1}

\noindent
\begin{tabular}{|l|l|c |}

\hline
\multicolumn{1}{|c|}{\bf orientations} & \multicolumn{1}{|c|}{\bf spanning trees/subsets} & \multicolumn{1}{|c|}{} \\
\hline
\multicolumn{3}{|c|}{{\it \small canonical active bijection of an ordered undirected graph}}\\
\hline
 activity classes of orientations  \ 		&  spanning trees 			&  $t(G;1,1)$ \\ 
 activity classes of acyclic orientations	& internal spanning trees			&  $t(G;1,0)$ \\ 
 activity classes of strongly connected orientations   & external  spanning trees			& $t(G;0,1)$ \\ 
 	 			 bipolar orientations {\scriptsize (up to opposite)}	&   uniactive internal spanning trees	  		&  
 	 			{$t_{1,0}$} \\ 
 	 			 cyclic-bipolar orientations {\scriptsize (up to opposite)}	&   uniactive external spanning trees	  		& 
 	 			 {$t_{0,1}$} \\ 

 \hline
 \multicolumn{3}{|c|}{{\it \small refined active bijection w.r.t. a given reference orientation}}\\
 \hline
  				 orientations	& subsets of the edge set							& $t(G;2,2)$ \\ 

orientations with fixed orientation for active edges  & 	forests	&  $t(G;2,1)$ \\ 
orientations with fixed orientation for dual-active edges	& connected spanning subgraphs			&  $t(G;1,2)$ \\ 

 				 acyclic orientations	& no-broken-circuit subsets			& $t(G;2,0)$ \\ 
 				 strongly connected orientations & {\scriptsize supersets of external spanning trees} & $t(G;0,2)$\\

  \begin{tabular}{@{}l@{}}
orientations with fixed orientation\\
\hskip 1cm  for active edges and dual-active edges 
\end{tabular} 	
		&  spanning trees 			&   $t(G;1,1)$\\ 
 \begin{tabular}{@{}l@{}}
acyclic orientations  with\\
\hskip 1cm  fixed orientation for dual-active edges 
\end{tabular} 	
& internal spanning trees			&  $t(G;1,0)$ \\ 
 \begin{tabular}{@{}l@{}}
  strongly connected orientations   with \\
\hskip 1cm  fixed orientation for active edges
\end{tabular}  
& external  spanning trees			& $t(G;0,1)$ \\ 

\hline
 \multicolumn{3}{|c|}{{\it \small particular cases}}\\
 \hline
\emph{\small (for suitable orderings)}
 unique sink acyclic orientations 	& internal  spanning trees			&  \cite[Section 6]{GiLV05}  \\ 

		
\emph{\small (complete graph seen as a chordal graph)}
permutations & increasing trees							& \cite[Section 5]{GiLV06}\\ 
\hline
\end{tabular}
}
%
\caption{Active bijections and noticeable restrictions. 
The first blocks of lines come from Theorems \ref{EG:th:alpha} and \ref{EG:th:ext-act-bij}.  The third column indicates the evaluation (or the coefficient) of the Tutte polynomial that counts the involved objects.
Activity classes of orientations are obtained by arbitrarily reorienting parts of the active partition/filtration (see Section \ref{sec:tutte}), that is by arbitrarily reorienting unions of all directed cycles or cocycles whose smallest edges are greater than a given edge.
Internal, resp. external, spanning trees are those  whose external activity, internal activity equals 0, and they are uniactive when the other activity equals 1 (see Section \ref{subsec:prelim-sp-trees}).
Active/dual-active edges of an ordered directed graph are smallest edges of directed cycles/cocycles (see Section \ref{subsec:orientation-activity}).
An orientation is said to have fixed orientation for some edge it this edge has the same direction as in a given reference orientation of the graph (see Section \ref{subsec:act-classes}).
The two last lines recall particular cases addressed in other papers.
}
\label{table:intro}
\vspace{-3mm}
\end{table}

At the first level, the \emph{uniactive bijection} of $G$
concerns the case where $\io=1$ and $\ep=0$ in the above setting,
hence the term \emph{uniactive}, which includes also the case where $\io=0$ and $\ep=1$ by some dual construction.
This case is addressed for graphs in \cite{GiLV05, ABG2LP} (see \cite{AB1,AB3} in oriented matroids, or \cite[Section 5]{AB2-b} for a summary\emevder{\cite[Section 5]{AB2-b} verifier section AB2-b}).  Let us sum it up below.
Details are recalled in~Section~\ref{subsec:fob}.
%

In \cite{GiLV05} (see also \cite{AB1}), we showed that a bipolar directed graph $\G$ on a linearly ordered set of edges,  with adjacent unique source and sink connected by the smallest edge, has a unique so-called fully optimal spanning tree $\alpha(\G)$ that satisfies a simple criterion on fundamental cycle/cocycle directions
(let us point out that this is a tricky theoretical result, with various interpretations, see the summary \cite[Section 5]{AB2-b}\emevder{\cite[Section 5]{AB2-b} verifier section AB2-b} for details and \cite{ABG2LP} for complexity issues).
 Associating bipolar orientations of $G$ (with fixed orientation for the smallest edge) with their fully optimal spanning trees provides a canonical bijection with spanning trees with internal activity 1 and external activity 0 (called uniactive internal). 
 It is a classical result from \cite{Za75}, also implied by \cite{LV84a}, that those two sets have the same size,  also known as the $\beta$ invariant of the graph \cite{Cr67}, that is $\beta(G)=t_{1,0}=(1/2).o_{1,0}$.  %

In the companion paper \cite{ABG2LP}, we address the problem of computing the fully optimal spanning tree. 
The inverse mapping, producing a bipolar orientation for which a given spanning tree is fully optimal, is very easy to compute by a single pass over the ordered set of edges. 
But the direct computation is complicated and it had not been addressed in previous papers. When generalized to real hyperplane arrangements, the problem contains and strengthens the real linear programming problem (as shown in~\cite{AB1}, hence the name \emph{fully optimal}). 
%
This ``one way function'' feature is a noteworthy aspect of the active bijection.
In general, we give a direct construction by means of elaborations on linear programming \cite{GiLV09, AB3}, allowing for a polynomial time computation.\emevder{ (in the real case).}
This construction is translated and adapted in the graph case in the companion paper~\cite{ABG2LP}.
\emevder{, implicitly using  the fact that graphs are binary matroids.}

Finally, from \cite[Section 4]{GiLV05}  (see also \cite[Section 5]{AB1}  or \cite[Section 5]{AB2-b}\emevder{ for a complete overview }\emevder{\cite[Section 5]{AB2-b} verifier section AB2-b}\emevder{--- ATTENTION peut-eter trop de repetition de citation \cite[Section 5]{AB2-b} peut-etre les regrouper ?---}), 
the bijection between bipolar orientations and their fully optimal spanning trees  directly yields a  bijection between orientations obtained from bipolar orientations by reversing 
the source-sink edge, namely cyclic-bipolar orientations, and spanning trees with internal activity $0$ and external activity~$1$.
%
This framework
involves a remarkable duality property, 
the so-called active duality, \emevder{???donner ref result?? donner diagram?}
essentially meaning that those two bijections are related to each other consistently with cycle/cocycle duality (that is oriented matroid duality, which extends planar graph duality, see Section \ref{subsec:fob}). 
Let us mention that this duality property can be also seen as a strengthening of linear programming duality (see \cite[Section 5]{AB1}), and that it is also related to the equivalence of  two dual  formulations in the deletion/contraction construction of the uniactive bijection (see Section \ref{subsec:alpha-10-ind} or the companion paper \cite{ABG2LP}).


\bs

%

%




At the second level, which is the central achievement of the whole construction and of this paper, we define  the active spanning tree $\alpha(\G)$ 
of an ordered directed graph $\G$, 
from the previous bipolar and cyclic-bipolar cases, by means of some decompositions of orientations and spanning trees.
Then, the \emph{canonical active bijection} is
the bijection between preimages and images of the surjective mapping $\G\mapsto \alpha(\G)$, from orientations of $G$ to spanning trees of $G$, where preimages are characterized as natural equivalence classes in terms of the above decomposition of orientations, called activity classes.
In other words, it is the combination of the uniactive bijection and those two  decompositions for orientations and spanning trees.
It is called canonical because it is built from those three independent canonical constructions, and because it is an intrinsic attribute of the undirected ordered graph $G$ (depending on the ordering but not depending on any orientation of $G$).
These constructions have been shortly defined without proofs in \cite{GiLV05}.
%
%
The definition and properties of the canonical active bijection are addressed in Section \ref{subsec:alpha-def-decomp}. The decomposition of orientations and its various implications is addressed in Section \ref{sec:tutte} (those results are generalized to oriented matroids in \cite{AB2-b}).
The decomposition of spanning trees is briefly addressed in Section \ref{sec:spanning-trees}, obtained as corollaries of the previous results (see below). Now, let us precise the section contents.%
%


In Section \ref{subsec:act-part-decomp}, we define the active partition/filtration of the set of edges of an ordered directed graph, a notion already introduced in \cite{GiLV05} (see \cite{AB2-b} for a geometrical interpretation in oriented matroids, and \cite{Gi18} for a generalization to oriented matroid perspectives and hence, in a sense, to directed graph homomorphisms\emevder{verifier}). We show how to decompose a directed graph on a linearly ordered set of edges into a sequence of minors that are either bipolar or cyclic-bipolar.
This construction refines the usual partition of the edge set into the union of directed cycles (yielding a strongly connected minor) and the union of directed cocycles (yielding an acyclic minor).
We mention that the notion of active partition turns out to generalize a notion of vertex partition which is relevant in \cite{CaFo69,Vi86, Ge01, La01}, 
see Remark \ref{rk:active-partitions-vs-components} 
\emevder{---verifier selon remaniements--- ou faire remarque avec ces refs---ou mettre ce paragraphe ici?---}%
 for more information 
and \cite[Section 7]{GiLV05} for details. 
%
%

In Section \ref{subsec:act-part-tutte}, considering all orientations of a graph, and building on a uniqueness property in the previous decomposition, we derive
 a general decomposition theorem for the set of all orientations, in terms of particular sequences of 2-connected minors (Theorem \ref{th:dec-ori}).
The involved sequences of subsets provide a remarkable notion of filtrations for ordered graphs.
%
Enumeratively, this decomposition can be seen as an expression of the Tutte polynomial in terms of products of beta invariants of minors (Theorem \ref{th:tutte}).
This formula refines at the same time the formulas in terms of spanning tree activities \cite{Tu54}, of orientation activities \cite{LV84a}, and the convolution formula \cite{EtLV98, KoReSt99}.
Actually, it can be also seen as the enumerative interpretation of a spanning tree decomposition, see below (and in this context, it is generalized to matroids in \cite{AB2-a}).
%

%
In Section \ref{subsec:act-classes},
we define activity classes of orientations,
obtained by reversing independently all parts in the active partition/filtration.
Activity classes are isomorphic to boolean lattices and form a remarkable partition of the set of orientations.
We show how this directly yields a simple expression of the Tutte polynomial using four orientation activity parameters (Theorem \ref{EG:th:expansion-orientations}), as announced in \cite{LV12}.
This expression is the counterpart for orientations of a similar four parameter formula for subsets/supersets of spanning trees \cite{GoTr90, LV13} (Theorem \ref{EG:th:Tutte-4-variables}).
Furthermore, in each activity class, there is one and only one 
representative orientation with fixed direction for smallest edges of directed cycles or cocycles.
%
In particular, as shown in \cite[Section 6]{GiLV05}, given a vertex and a suitable ordering of the edge set 
(when all branches of the smallest spanning tree are increasing from the vertex), 
there is one and only one acyclic 
orientation with this vertex as a unique sink in each activity class of acyclic orientations.
%
This discussion is continued in Section \ref{subsec:refined} about 
the refined active bijection, which 
relates the two above four parameter Tutte polynomial expressions.

%
%

In Section \ref{subsec:alpha-def-decomp}, 
we define the active spanning tree as explained above, by gluing together the images, by the uniactive bijection of Section \ref{subsec:fob}, of the bipolar and cyclic-bipolar minors of the decomposition of Section \ref{sec:tutte}.
Equivalently, this definition can be formulated in a recursive way, as in the beginning of this introduction.
This yields a canonical bijection between activity classes of orientations and spanning trees   (Theorem \ref{EG:th:alpha}), as shown in Table \ref{table:intro}. Furthermore, this bijection  not only preserves activities and active edges, but also active partitions that one can also 
define for spanning trees, as explained below.
\bs

Section \ref{sec:spanning-trees} has a special status in the paper, as it addresses the constructions from the spanning tree viewpoint, whereas the rest of the paper is focused on the orientation viewpoint.
First, in Section \ref{subsec:dec-bases}, we state counterparts in terms of spanning trees of the aforementioned decomposition of orientations.
The main result is a decomposition theorem for spanning trees of an ordered graph in terms of the same filtrations, or the same particular sequences of minors, as above, into spanning trees with internal/external activities equal to $1/0$ or $0/1$ (Theorem \ref{EG:th:dec_base}). It refines the decomposition into two internal/external parts from \cite{EtLV98}.
As far as proofs are concerned, in this paper, we essentially prove this spanning tree decomposition in Section \ref{subsec:alpha-def-decomp}, at the same time as the canonical active bijection properties, building on the decomposition of orientations.
It could also be defined and proved independently of the rest of constructions,  directly in terms of spanning trees (which is the approach used in \cite{AB2-a} to define these decompositions in matroids).
Here we take advantage of the fact that graphs are orientable (in contrast with matroids: such proofs using orientations are not possible in non-orientable matroids).

Second, in Section \ref{subsec:basori}, we give reformulations of the definitions of the active bijection starting from spanning trees, and we give a simple construction
building, for a given spanning tree, at the same time the active partition of this spanning tree and its preimage under the canonical active bijection.
It  consists in a single pass over the set of edges
and uses only fundamental cycles and cocycles.
This section is given for completeness of the paper, but it is proved in \cite{AB2-a, AB2-b} (in contrast with the rest of the paper which is self-contained).
Actually, it is the combination of a single pass construction of the active partition of (the fundamental graph of) a matroid basis \cite{AB2-a}, and the single pass inverse construction for the uniactive bijection alluded to above and  recalled in Section~\ref{subsec:fob}.
This construction also readily applies to the refined active bijection (the third level of the active bijection addressed below).
The simplicity of the construction from spanning trees to orientations is again a noteworthy aspect of the active bijection.
\bs

At the third level of the active bijection, in Section \ref{subsec:refined}, we choose a reference orientation $\G$ of $G$, and we define the \emph{refined active bijection of $G$ w.r.t. $\G$}, denoted $\alpha_\G$, which is a mapping from $2^E$ to $2^E$. Precisely, it applies to $A\subseteq E$, by:

\centerline{$\alpha_\G(A)=\alpha(-_A\G)\ \setminus\ \bigl(A\cap O^*(-_A\G)\bigr)\ \cup\ \bigl(A\cap O(-_A\G)\bigr),$}

\noindent where $O(-_A\G)$, resp. $O^*(-_A\G)$, denotes the set of smallest edges of a directed cycle, resp. cocycle, of $-_A\G$. \emevder{donner cette def technique ???}%
This mapping provides a bijection between all subsets of  edges  $A\subseteq E$, thought of as orientations $-_A\G$, and all subsets of edges, thought of as subsets/supersets of spanning trees (Theorem \ref{EG:def:act-bij-ext}), along with various interesting restrictions as shown in Table \ref{table:intro}.
In particular,  $\alpha_\G(A)$ equals the spanning tree $\alpha(-_A\G)$ when $A$ does not meet $O^*(-_A\G)$ nor $O(-_A\G)$, that is when the directions of smallest edges of directed cycles and cocycles agree with their directions in the reference orientation~$\G$, \emevder{donner cette precision technique ??? en tout cas bien de parler de choice of representative of activity classes !}%
which yields a bijection between spanning trees and these representatives of activity classes.
%
This natural refinement of the canonical active bijection has been briefly introduced in \cite{Gi02,GiLV07} and we develop it into the details.
The construction is the following.
The canonical active bijection maps an activity class of orientations onto a spanning tree. On one hand, the activity class is isomorphic to a boolean lattice, and activity classes partition the set of orientations. On the other hand, each spanning tree $T$ is associated with a classical subset interval $[T\setminus \Int(T), T\cup \Ext(T)]$, where $Int(T)$, resp. $Ext(T)$, denotes the set of internally, resp. externally, active edges of $T$ \cite{Cr69} (see also \cite{Da81, GoMM97, LV13} for generalizations). 
These intervals are also isomorphic to boolean lattices, and partition the power set of $E$.
The canonical active bijection can be seen as associating each activity class to an isomorphic spanning tree interval. Then the choice of a reference orientations $\G$ allows for breaking the symmetry in the two boolean lattices and specifying a boolean lattice isomorphism for each such couple.
By this way, this refined active bijection preserves the four refined activity parameters alluded to above for orientations and for subsets about Section \ref{subsec:act-classes}.
%

\emevder{A ETE MIS AVANT FINALEMENT : Let us finally mention that the inverse construction from spanning trees in Section \ref{subsec:basori} readily applies to the refined active bijection.}%

\bs

Let us point out that the constructions used at the three levels 
of the active bijection
are fundamentally independent of each other. As explained in Section \ref{subsec:act-map-class-decomp}, one can get a whole large class of activity preserving bijections following the same decomposition framework: start at the first level with any arbitrary bijection between bipolar orientations and uniactive spanning trees, extend it at the second level using the same recursive definition, and set arbitrary boolean lattice isomorphisms at the third level.
The active bijection is obtained by a canonical 
choice at each level.



In Section \ref{sec:induction},  
we complete the paper by providing deletion/contraction constructions of the above active bijections: the uniactive one (Theorem \ref{thm:ind-10}, extract from the companion paper \cite{ABG2LP} 
which addresses the problem of computing the fully optimal spanning tree of an ordered bipolar digraph), 
the canonical one (Theorem \ref{thm:ind-gene}), and the refined one (Theorem \ref{thm:ind-gene-refined}). 
We point out that those deletion/contraction constructions provide 
a global approach: they can be used to build the whole bijections at once, as a matching between orientations and spanning trees, rather than as a mapping
(see Remark \ref{rk:ind-gene}, see also \cite[Remark \ref{ABG2LP-rk:ind-10}]{ABG2LP} in terms of complexity).\emevder{---verifier refs---}
We also present a general deletion/contraction framework for building  correspondences/bijections between orientations and spanning trees/edge subsets involving more or less constraining activity preservation properties.
Here again, the active bijection is determined by canonical choices.
%

At the end, in Section \ref{sec:example}, we completely analyze the example of $K_3$ and $K_4$ (much more illustrations and details on the same example can be found in \cite{AB2-a, AB2-b}).

\paragraph{Further notes on the scope of this paper}
This paper is intended 
for a reader primarily interested in graph~theory. It is essentially self-contained and written in the graph language.
Meanwhile, it is inspired from oriented matroid theory, meaning for example that the technique and constructions 
do not use the vertices of the graph at all, and often manipulates or highlights minors, combinations of cycles/cocycles, as well as
cycle/cocycle duality.
\emevder{The authors imagine that an appeal of this paper could be to help popularizing 
this useful but  not very widespread approach of~graphs.
\red{???BOF ???}
}%

Beyond graphs,
this work is the subject of several papers by the present authors
\cite{GiLV04, GiLV05, GiLV06, GiLV07, GiLV09, AB1, AB2-a, AB2-b, AB3, AB4}.
%
In a much more general context, the active bijection has a geometrical flavour, in real hyperplane arrangements or pseudosphere arrangements.
The main papers, which provide the whole construction in oriented matroids, 
are \cite{AB1, AB2-a, AB2-b, AB3, AB4}, and the reader can see the introduction of \cite{AB2-b} for a more general and detailed overview.
%
The previous paper on graphs \cite{GiLV05} was a graphical version of \cite{AB1}.
Now, roughly, as mentioned in the above introduction, the present paper condenses the papers  \cite{AB2-a, AB2-b,  AB4} 
and adapts them in terms of graphs
(the main results of \cite{AB2-a}, available in matroids, are
 derived here from graph orientability),
and the companion paper \cite{ABG2LP} condenses and adapts \cite{AB3}.
More examples, figures, results and details, which apply in particular to graphs, can  be found in these papers \cite{AB1, AB2-a, AB2-b, AB3, AB4}.
Summaries 
can be found in \cite{GiLV07, GiChapterOriented}
(and a survey had been given in \cite{GiLV03}, partial translation of  \cite{Gi02} in english and obsolete as for today).
\emevder{citer Chapter en tant que resume plus global du contexte ?}

Let us also highlight \cite{GiLV06} which addresses the case of chordal graphs, also called triangulated graphs, in the more general context of supersolvable hyperplane arrangement (see \cite[Example 3.2]{GiLV06}). In particular, for acyclic orientations of the complete graph 
with a suitable edge set ordering, 
the active bijection coincides with a well-known bijection between permutations and increasing trees
(see \cite[Section~5]{GiLV06} for details and references).%

Originally, the question of relating spanning tree and orientation activities came from a paper by the second author
\cite{LV84a}, following on from which, in \cite{LV84b}, a definition for a correspondence between spanning trees and orientations of graphs was proposed. 
 It was based on an algorithm, given without a proof%
    \footnote{
 Besides the fact that no proof 
 exist, 
 the 
 authors 
 suspect that, anyway,
  this algorithm would not yield a proper correspondence 
   if its formulation was extended beyond regular matroids. 
   Its technicalities and its non-natural behaviour with respect to duality, in contrast with the active bijection, made 
  the authors 
  abandon this algorithm.}, 
 which 
 inspired the 
 decomposition of 
 activities developed for the active bijection, but which does not yield the correspondence given by the active bijection (not for general activities, nor for the restriction to $1/0$ activities, and nor with respect to duality).
%
%
%
Also, let us 
mention that a different notion of activities for graph orientations had been introduced even earlier 
in \cite{Be77}, 
along with incorrect constructions according to \cite{LV84a}\footnote{
The construction in \cite{Be77} consisted in defining some active directed cycles/cocycles in a complex way, instead of active edges, and in enumerating those cycles/cocycles. It claimed to yield a Tutte polynomial formula which was formally similar to that of Las Vergnas \cite{LV84a} using those different activities,
and  a correspondence between orientations and spanning trees.
According to \cite[footnote page 370]{LV84a}, those constructions were not correct.
}%
.
Finally, the active bijection 
has been introduced 
in 
the
Ph.D. thesis of the first author 
\cite{Gi02}, where most of the results from 
\cite{GiLV04, GiLV05, GiLV06, GiLV07, GiLV09, AB1, AB2-a, AB2-b, AB3, AB4}
were given, at least in a preliminary form.

\eme{A few constructions are available for graphs only, since they involve vertices of the graph: bijections involving unique sink acyclic orientations, and partitions of the vertex set related to the decomposition of orientations alluded to above. This is detailed in \cite[Sections 6 and 7]{GiLV05} and recalled below in the introduction.}
\eme{As an example, the active bijection can be seen as a far reaching generalization of the well-known bijection between permutations and increasing trees: a particular case obtained from the complete graph 
or equivalently from the Coxeter arrangement $A_n$ as detailed in~\cite{GiLV06}. }
%

\eme{It yields a canonical bijection between bipolar orientations 
and uniactive internal spanning trees, 
which is a deep and difficult combinatorial result from \cite{GiLV05, AB1} (that can be seen from different manners, see details in \cite{AB2-b, AB4}, see below, see also Remark \ref{rk:difficult}).
}

\eme{It can be characterized by simple combinatorial properties, and it can be built  by several equivalent manners, see Figure \ref{fig:diagram} for an overview.
}





\eme{*** OLD ABSTRACT Qu'ETAIT BIEN
The present paper provides an overview of the whole construction, with its several aspects addressed in separated sections, and full details for the construction from orientations to spanning trees and for Tutte polynomial formulas.
First, we give a 
short
definition of the active bijection and a summary of its main properties.
Second, we give an algorithm to compute directly the fully optimal spanning tree of a bipolar directed graph:
a graphical simplification of a general linear programming type algorithm (independent from the rest of the paper).
Third, we detail the decomposition of a directed graph on a linearly ordered set of edges into bipolar and cyclic bipolar directed graphs. Considering all orientations of a graph, this yields
 a general decomposition theorem in terms of particular sequences of 2-connected minors (interesting on its own) and, numerically, an expression of the Tutte polynomial 
 in terms of
 products of beta invariants of minors.
Furthermore, it induces a partition of the set of orientations into activity classes, yielding a simple expression of the Tutte polynomial using four orientation activity parameters.
We give definitions and properties of the canonical and refined active bijections  
using this
decomposition, 
we obtain a decomposition theorem for spanning trees in terms of the same particular sequences of minors as above,
and we mention a simple linear inverse construction from spanning trees to orientations.
Fourth, we~provide deletion/contraction constructions of these 
bijections, and  a general deletion/contraction framework for various activity preserving correspondences.
At the end, we explictly analyze the example of~$K_4$.
}

\eme{Each Section 4,5,6 is complete on its own. Section 7 integrates those results in order to describe completely the construction of the active bijection, but a portion of the proof (namely the corresponding decomposition of spanning trees) is postponed to \cite{AB2}.}%

\paragraph{Further literature notes}
%
Information on literature 
related to 
specific results of the paper is given throughout the paper.
To end this introduction, let us give further references on results involving orientations and spanning trees in graphs, distinct from the active bijection.

The equality between the number of unique sink acyclic orientations and internal spanning trees comes from \cite{Za75}.
A bijection between these objects appeared in \cite{GeSa00}, 
and our 
more involved bijection \cite[Section 6]{GiLV05} (see also Theorem \ref{EG:def:act-bij-ext}) answers a question in this paper \cite[(a) p. 145]{GeSa00}.


According to our knowledge, the first bijection between acyclic orientations and no-broken-circuit subsets in graph appeared in \cite{BlSa86}.
Another bijection between orientations and no-broken-circuit subsets appeared  in \cite{BeChTe10} in the context of parking functions.

Other bijections between acyclic orientations, resp. strongly connected orientations, resp. general orientations, and internal-type, resp. external-type spanning trees, resp. edge subsets 
appeared in \cite{Be08}. They rely on a different notion of activities, for spanning trees only, and depending on rotation schemes of combinatorial maps instead of linear orderings of the edge set. 
\eme{The extension to orientations and edge subsets from the acyclic and strongly connected cases uses a similar construction than the one for our bijection detailed in Section \red{???},
as already given in
\cite{Gi02, GiLV07} 
in more general structures. ****PAS UTILE***}%


%
However, none of the above bijections \cite{GeSa00, BlSa86, Be08, BeChTe10} 
is intended to preserve activities,
 and none of them seems to generalize to hyperplane arrangements nor to oriented matroids. 

Lastly, let us mention the recent work \cite{BaHoTr18} which gathers both subset activity parameters (addressed in Section \ref{prelim:subset-activities}) and orientation activity parameters (addressed in Section \ref{subsec:act-classes}) in a large Tutte polynomial expansion formula 
in the context of graph fourientations. 
%
This work also extends to graph fourientations 
a deletion/contraction property addressed in Section \ref{subsec:ind-framework}, see Remark~\ref{rk:fourientations}. 
\section{Preliminaries}
\label{sec:prelim}

\emevder{attention il fallait enlever trucs intuiles, notamment de section LP, normalement ca a ete fait, OK !}%

\emevder{remplacer le plus possible "on a linearly ordered set of edges" par "ordered graph" ?}

\subsection{Generalities}
\label{subsec:prelim-gene}

For the sake of simple exposition, graphs in this paper are usually assumed 
to be connected, 
but the results apply to non-connected graphs as well, up to direct adaptations such as replacing spanning trees with spanning forests.
Graphs can have loops and multiple edges.
The 2-connectivity of a graph means its 2-vertex connectivity, and 
we consider a loopless graph on two vertices with at least one edge as 2-connected. 
Loops and isthmuses have the ususal meaning.
A graph can be called \emph{loop}, or \emph{isthmus}, if it has a unique edge and this unique edge is a loop, or not a loop, respectively.
\emevder{a verifier/confirmer, sinon "signle loop", "single isthmus" comme dans filtrations}%
A \emph{digraph} is a directed graph, and an \emph{ordered graph} is a graph $G=(V,E)$ on a linearly ordered set of edges $E$. 
Edges of a directed graph are supposed to be \emph{directed} or equally \emph{oriented}.
A directed graph will be denoted with an arrow, $\G$, and the underlying undirected graph without arrow, $G$. Reversing the directions of a subset of edges $A$ in a directed graph $\G$ is called 
\emph{reorienting}, 
and the resulting directed graph is denoted $-_A\G$.
The digraph obtained by reorienting all edges is called \emph{the opposite} digraph.
The cycles,  cocycles, and spanning trees of a graph $G=(V,E)$ are considered as subsets of $E$,
hence their edges can be called their \emph{elements}. 
The cycles and cocycles of $G$ are always understood as being minimal for inclusion.
Given $F\subseteq E$, we denote $G(F)$ the graph obtained by restricting the edge set of $G$ to $F$, that is the minor $G\s (E\s F)$ of $G$ (observe that $G(F)$ is not necessarily connected, isolated vertices are pointless and can be ignored).
For $e\in E$, a minor $G/\{e\}$, resp. a minor $G\bk\{e\}$, resp. a subset $A\bk \{e\}$ for $A\subseteq E$, can be denoted for short $G/e$, resp. $G\bk e$, resp. $A\bk e$.
If $\mathcal F$ is a set of subsets of $E$, then $\cup \mathcal F$ denotes the subset of $E$ obtained by taking the union of all elements of $\mathcal F$.
In the paper, $\subset$ denotes the strict inclusion, and $\uplus$ (or $+$) denotes the disjoint union.
%
We call \emph{correspondence} when several objects (e.g. some orientations) are associated with the same object  (e.g. a spanning tree) by a surjection (hence a bijection can be seen as a one-to-one~correspondence).%


\subsection{Spanning tree activities}
\label{subsec:prelim-sp-trees}

Let $G$ be an ordered (connected) graph and let $T$ be  a spanning tree of $G$. 
The next definitions are almost not practically used in the rest of the paper, but we use them here to define the Tutte polynomial and to settle the general setting of the paper.
For $b\in T$, the \emph{fundamental cocycle} of $b$ with respect to $T$, denoted $C_G^*(T;b)$, or $C^*(T;b)$ for short,  is the cocycle joining the two connected components of $T\setminus \{b\}$. Equivalently, it is the unique cocycle contained in $(E\s T)\cup\{b\}$.
For $e\not\in T$, the \emph{fundamental cycle} of $e$ with respect to $T$, denoted $C_G(T;e)$, or $C(T;e)$ for short,
 is the unique cycle contained in $T\cup\{e\}$.
Let $$\Int(T)=\Bigl\{\ b\in T \mid b=\ \min \ \bigl(\ C^*(T;b)\ \bigr)\ \ \Bigr\},$$
$$\Ext(T)=\Bigl\{\ e\in E\setminus T \mid e=\ \min \ \bigl(\ C(T;e)\ \bigr)\ \ \Bigr\}.$$
The elements of $\Int(T)$, resp. $\Ext(T)$, are called \emph{internally active}, resp. \emph{externally active}, with respect to $T$. The cardinality of $\Int(T)$, resp. $\Ext(T)$ is called \emph{internal activity}, resp. \emph{external activity}, of $T$. 
Observe that $\Int(T)\cap \Ext(T)=\emptyset$ and that, for $p=\min (E)$,  we have $p\in \Int(T)\cup \Ext(T)$. If $\Int(T)=\emptyset$, resp.
$\Ext(T)=\emptyset$, then $T$ is called \emph{external}, resp. \emph {internal}.
If 
$\Int(T)\cup \Ext(T)=\{p\}$
then $T$ is called \emph{uniactive}.
Hence, a spanning tree with internal activity $1$ and external activity $0$ can be called uniactive internal, and a spanning tree with internal activity $0$ and external activity $1$ can be called uniactive external.
Let us mention that exchanging the two smallest elements of $E$ yields a canonical bijection between uniactive internal and uniactive external spanning trees, see \cite[Section 4]{GiLV05}.
\eme{remplacer partout ?}%
Also, we recall that if $T_{\min}$ is the smallest (lexicographic) spanning tree of $G$, then $\Int(T_{\min})=T_{\min}$, $\Ext(T_{\min})=\emptyset$ and $\Int(T)\subseteq T_{\min}$ for every spanning tree $T$.
In the paper, we can also denote $\Int_G$ for $\Int$, resp. $\Ext_G$ for $\Ext$, to highlight the graph $G$.
\emevder{serait-il utile de redonner  caracterisation de internal uniactive de GiVL05 ?}

By \cite{Tu54}, the Tutte polynomial of $G$ is
$$t(G;x,y)=\sum_{\io,\ep}\ t_{\io,\ep}\ x^\io\ y^\ep$$ 
where $t_{\io,\ep}$ is the number of spanning trees of $G$ 
with internal activity $\io$ and external activity $\ep$.


\subsection{Orientation activities}
\label{subsec:orientation-activity}

If  $\G=(V,E)$ is a directed graph whose underlying undirected graph is $G$, we call $\G$ an \emph{orientation} of $G$.
A \emph{directed cycle} of $\G$ is a cycle of $G$ such as all orientations of edges are consistent with a running direction of the cycle.
A \emph{directed cocycle} of $\G$ is a cocycle of $G$ such as all orientations of edges go from one of the two parts of the vertex set of $G$ induced by the cocycle to the other.
The directed graph $\G$ is \emph{acyclic} if it has no directed cycle, or, equivalently, if every edge belongs to a directed cocycle.
The directed graph $\G$ is \emph{strongly connected} (or \emph{totally cyclic}), if every edge belongs to a directed cycle, or, equivalently, if it has no directed cocycle.

Let $\G$ be an orientation of an ordered connected graph $G$.
Let $$O^*(\G)=\Bigl\{\ a\in E \mid a=\ \min \ \bigl(\ D\ \bigr)\ \hbox{for a directed cocycle } D\ \Bigr\},$$
$$O(\G)=\Bigl\{\ a\in E \mid a=\ \min \ \bigl(\ C\ \bigr)\ \hbox{for a directed cycle } C\ \Bigr\}.$$
The elements of $O^*(\G)$, resp. $O(\G)$, are called \emph{dual-active}, resp. \emph{active}, with respect to $\G$. The cardinality of $O^*(\G)$, resp. $O(\G)$, is called \emph{dual-activity}, resp. \emph{activity}, of $\G$.
Observe that $O^*(\G)\cap O(\G)=\emptyset$ and that, for $p=\min (E)$,  we have $p\in O^*(\G)\cup O(\G)$. Observe also that we have $O^*(\G)=\emptyset$, resp.
$O(\G)=\emptyset$, if and only if  $\G$ is strongly connected, resp. acyclic.

By  \cite{LV84a}, we have the following theorem enumerating of orientation activities: 
\begin{equation*}
\label{eq:orientation-activities}
t(G;x,y)=\sum_{\io,\ep}\ o_{\io,\ep}\ {\Bigl({x\over 2}\Bigr)}^\io\  {\Bigl({y\over 2}\Bigr)}^\ep
\end{equation*}
where $o_{\io,\ep}$ is the number of orientations of $G$ with  dual-activity $\io$ and activity $\ep$.
%

This last formula 
generalizes various results from the literature,
for instance: counting acyclic orientations \cite{St73},
which is a special case of counting the number of regions of a (real central) hyperplane arrangement  \cite{Wi66, Za75, GrZa83}, 
counting bounded regions in hyperplane arrangements or bipolar orientations in graphs \cite{Za75} (see below),
generalizations in (oriented) matroids \cite{LV77}, etc., see \cite{AB1}\emevder{[ChapterOM]} for further references. 

Comparing the above two expressions for $t(G;x,y)$ we get, for all $\io,\ep$:
$$o_{\io,\ep}\ =\ 2^{\io+\ep}\ t_{\io,\ep}.$$

\subsection{Bipolar orientations and $\beta$ invariant}
\label{subsec:prelim-beta}


We say that a directed graph $\G$  on the edge set $E$ is {\it bipolar with respect to $p\in E$} if  $\G$ is acyclic and has a unique source and a unique sink which are the extremities of $p$.
In particular, if $\G$ consists in a single edge $p$ which is an isthmus, then $\G$ is bipolar with respect to $p$.
Equivalently, $\G$ is bipolar with respect to $p$ if and only if every edge of $\G$ is contained in a directed cocycle and every directed cocycle contains $p$, see  \cite{GiLV05}.
We say that $\G$ is {\it cyclic-bipolar with respect to $p\in E$} if either $\G$ consists in a single edge $p$ which is a loop, or $\G$ has more than two edges and the digraph $-_p\G$ obtained from reorienting $p$ in $\G$ is bipolar with respect to $p$. Equivalently, $\G$ is cyclic-bipolar if and only if every edge of $\G$ is contained in a directed cycle and every directed cycle contains $p$, see  \cite[Proposition 5]{GiLV05}. 
%
Therefore, for graphs with at least two edges, reorienting $p$ provides a canonical bijection between bipolar orientations with respect to $p$ and cyclic-bipolar orientations with respect to $p$ \cite[Section 4]{GiLV05}.
Another characterization is the following: $\G$ is bipolar w.r.t. $p$ (or equally $-_p\G$ is cyclic-bipolar w.r.t. $p$) if and only if $\G$ is acyclic and $-_p\G$ is strongly connected.
Let us mention that if $G$ is planar then bipolar orientations of $\G$ with respect to $p$ correspond to cyclic-bipolar orientations of $G^*$ with respect to~$p$.

Assuming $G$ is ordered, we get by definitions that:  
$\G$ is bipolar with respect to $p=\min(E)$ if and only if 
$O(\G)=\emptyset$ (i.e. $\G$ is acyclic, i.e. $\G$ has an activity equal to zero) and $O^*(\G)=\{p\}$ (i.e. it has exactly one dual-active edge, i.e. $\G$ has a dual-activity equal to one).
Similarly, $\G$ is cyclic-bipolar if and only if 
$O^*(\G)=\emptyset$ (i.e. $\G$ is totally cyclic, i.e. $\G$ has a dual-activity equal to zero) and $O(\G)=\{p\}$  (i.e. it has exactly one active edge, i.e. $\G$ has an activity equal to one).
For an ordered digraph, being (cylic-)bipolar is always meant w.r.t. its smallest edge (for short, we might omit this precision).
%


In particular $$\beta(G)=t_{1,0}={o_{1,0}\over 2},$$ counts the number of uniactive internal spanning trees, as well as  the number of bipolar orientations of $G$ with respect to a given edge with fixed orientation. This number does not depend on the linear ordering of the edge set $E$. This value is known as the \emph{beta invariant} of $G$ \cite{Cr67} and denoted $\beta(G)=t_{1,0}$. Assuming $\mid E\mid >1$, it is known $\beta(G)=t_{1,0}=t_{0,1}$, and that $\beta(G)\not=0$ if and only if the graphic matroid of $G$ is connected, that is if and only if $G$ is loopless and 2-connected. 
Note that, if $\mid E\mid=1$, then we have $\beta(G)=1$ if the single edge is an isthmus of $G$,
and $\beta(G)=0$ if the single edge is a loop of $G$.


Finally, for our constructions, we need to introduce the following dual slight variation of $\beta$:
$$
 \beta^*(G)=t_{0,1}={o_{0,1}\over 2}= \
\Biggr\{
\begin{array}{ll}
       \beta(G) &\text{ if }|E|>1 \\
       0 &\text{ if $G$ is an isthmus} \\
       1 &\text{ if $G$ is a loop.} 
\end{array}
$$

\subsection{Subset activities refining spanning tree activities}
\label{prelim:subset-activities}

This section can be skipped in a first reading. It is crucial only for the refined active bijection in Section \ref{subsec:refined}, which relates it to its counterpart for orientations developed in Section \ref{subsec:act-classes}. This section can also be seen as completing Section \ref{sec:spanning-trees} which addresses the spanning tree viewpoint. 

Let $G$ be a graph on a linearly ordered set of edges $E$.
Let $T$ be a spanning tree of $G$. The set of subsets of $E$ containing $T\setminus \Int(T)$ and contained in $T\cup \Ext(T)$ will be called the \emph{interval of} $T$,\emevder{"interval of T" utilis\'e?}
denoted $[T\setminus \Int(T), T\cup \Ext(T)]$.
%
%
It is a classical result from \cite{Cr69} (see also \cite{Da81, GoMM97, LV13} for generalizations) 
that these sets considered for all spanning trees form a partition of $2^E$:
$$2^E=\biguplus_{T\hbox{ spanning tree}}[T\setminus \Int(T), T\cup \Ext(T)].$$


The four refined activities defined below, which we call \emph{subset activities}, can be seen as situating a subset in the interval $[T\setminus \Int(T), T\cup \Ext(T)]$ to which it belongs for some spanning tree $T$.
They are obviously consistent with the definition of activities for a spanning tree (Section \ref{subsec:prelim-sp-trees}).

%




\begin{definition} 
\label{def:gene-act-base}
Let $G$ be a graph on a linearly ordered set $E$. 
Let $T$ be a spanning tree of $G$. Let $A$ be in the boolean interval $[T\s \Int_{G}(T), T\cup \Ext_G(T)]$. We denote:
\begin{eqnarray*}
\Ext_G(A) & =& \Ext_G(T)\s A; \\
Q_G(A) & =& \Ext_G(T)\cap A; \\
\Int_{G}(A)& =& \Int_{G}(T)\cap A; \\
P_{G}(A) & =& \Int_{G}(T)\s A. 
\end{eqnarray*}
\end{definition}


Let us mention that
these four parameters can be defined directly from $A$ without using $T$.
In particular, $Q_G(A)$, resp. $P_G(A)$, counts smallest edges of cycles, resp. cocycles, contained in $A$, resp. $E\s A$.
This yields
$|P_G(A)|=r(G)-r_G(A)$ 
and
$|Q_G(A)|=|A|-r_G(A)$, where $r$ is the usual rank function.
These two values do not depend on the associated spanning tree.
See \cite{LV13} for~details.

Finally, Theorem \ref{EG:th:Tutte-4-variables} below provides an expansion formula for the Tutte polynomial in terms of these activities.
It is a specialization of a similar 
theorem 
 in terms of generalized activities 
 \cite[Theorem~3]{GoTr90}. 
 The formulations used in this section and paper follow \cite{LV13} (which generalized these notions from matroids to matroid perspectives).
Let us mention that numerous Tutte polynomial formulas are directly derived from this general four parameter formula, see \cite{GoTr90, LV13}.
Notably, setting $(x,u,y,v)$ to $(x,0,y,0)$ yields the Tutte polynomial expession in terms of spanning tree activities (Section \ref{subsec:prelim-sp-trees}), and setting $(x,u,y,v)$ to $(1,x-1,1,y-1)$ yields the classical Tutte polynomial expression in terms of rank function \cite{Tu54}.
See also \cite{GiChapterPerspectives, Gi18} for overviews on the notions of this~section.%
\emevder{and for alternative proofs of the Theorem below ?}

%

\begin{thm}[{\cite{GoTr90, LV13}}]
\label{EG:th:Tutte-4-variables}
Let $G$ be a graph on a linearly ordered set of edges $E$. We have
\begin{large}
$$T(G;x+u,y+v)=\sum_{A\subseteq E}\ x^{\mid \Int_G(A)\mid}\ u^{\mid P_G(A)\mid}\ y^{\mid \Ext_G(A)\mid}\ v^{\mid Q_G(A)\mid}$$
\end{large}
\end{thm}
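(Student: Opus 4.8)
The plan is to prove the four-variable expansion by summing over the partition of $2^E$ into spanning-tree intervals, which was recalled just above the statement. First I would write
$$\sum_{A\subseteq E}\ x^{\mid \Int_G(A)\mid}\ u^{\mid P_G(A)\mid}\ y^{\mid \Ext_G(A)\mid}\ v^{\mid Q_G(A)\mid}=\sum_{T\hbox{ spanning tree}}\ \sum_{A\in[T\s\Int(T),\,T\cup\Ext(T)]}\ x^{\mid \Int_G(A)\mid}\ u^{\mid P_G(A)\mid}\ y^{\mid \Ext_G(A)\mid}\ v^{\mid Q_G(A)\mid},$$
using the fact that the intervals $[T\setminus\Int(T),T\cup\Ext(T)]$ partition $2^E$. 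Then I would analyze the inner sum for a fixed spanning tree $T$. By Definition~\ref{def:gene-act-base}, for $A$ in this interval the four sets $\Int_G(A)$, $P_G(A)$, $\Ext_G(A)$, $Q_G(A)$ are obtained by intersecting or subtracting with the two disjoint fixed sets $\Int_G(T)$ and $\Ext_G(T)$; concretely $\Int_G(A)\plus P_G(A)=\Int_G(T)$ with $\Int_G(A)=\Int_G(T)\cap A$, and $\Ext_G(A)\plus Q_G(A)=\Ext_G(T)$ with $Q_G(A)=\Ext_G(T)\cap A$.

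Next I would observe that an element $A$ of the interval is determined freely and independently by the choice of $A\cap\Int_G(T)$ (any subset of $\Int_G(T)$) and $A\cap\Ext_G(T)$ (any subset of $\Ext_G(T)$), since every $A$ in the interval contains $T\setminus\Int_G(T)$ and is contained in $T\cup\Ext_G(T)$, so the coordinates outside $\Int_G(T)\cup\Ext_G(T)$ are forced. Hence the inner sum factors as a product over $\Int_G(T)$ and a product over $\Ext_G(T)$, each of which is a binomial-type expansion:
$$\sum_{S\subseteq \Int_G(T)}x^{\mid S\mid}u^{\mid \Int_G(T)\s S\mid}\ \cdot\ \sum_{S'\subseteq \Ext_G(T)}y^{\mid \Ext_G(T)\s S'\mid}v^{\mid S'\mid}=(x+u)^{\mid\Int_G(T)\mid}\,(y+v)^{\mid\Ext_G(T)\mid}.$$
Summing over all spanning trees gives $\sum_T (x+u)^{\mid\Int_G(T)\mid}(y+v)^{\mid\Ext_G(T)\mid}$, which by the Tutte expansion of Section~\ref{subsec:prelim-sp-trees} (i.e.\ $t(G;X,Y)=\sum_T X^{\mid\Int(T)\mid}Y^{\mid\Ext(T)\mid}$) equals $t(G;x+u,y+v)$, completing the proof.

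The only genuinely delicate point is the free, independent parametrization of the interval: one must check that $A\mapsto(A\cap\Int_G(T),\,A\cap\Ext_G(T))$ is a bijection from the interval onto $2^{\Int_G(T)}\times 2^{\Ext_G(T)}$, which uses $\Int_G(T)\cap\Ext_G(T)=\emptyset$ and $\Int_G(T),\Ext_G(T)\subseteq T^c\cup T$ in the right way, together with the definition of the interval as $[T\setminus\Int_G(T),T\cup\Ext_G(T)]$. Everything else is a routine binomial computation and an invocation of the already-recalled partition of $2^E$ and Tutte's activity expansion; no new structural input is needed, which is why the theorem is attributed as a specialization of \cite{GoTr90,LV13}.
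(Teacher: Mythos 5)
Your proof is correct. Note, however, that the paper does not actually prove Theorem \ref{EG:th:Tutte-4-variables}: it only cites \cite{GoTr90, LV13}, presenting the statement as a specialization of the generalized-activities expansion of \cite[Theorem~3]{GoTr90}. So there is nothing in the paper to match your argument against directly; what your write-up supplies is a self-contained derivation of the cited result. Your route --- partition $2^E$ into the Crapo intervals $[T\setminus \Int(T), T\cup \Ext(T)]$, parametrize each interval freely by $(A\cap \Int(T), A\cap \Ext(T))$ using $\Int(T)\cap\Ext(T)=\emptyset$, collapse the inner sum by the binomial theorem to $(x+u)^{|\Int(T)|}(y+v)^{|\Ext(T)|}$, and finish with Tutte's activity expansion --- is the standard one, and it is the exact mirror of the proof the paper \emph{does} give for the orientation-side analogue (Theorem \ref{EG:th:expansion-orientations}), where activity classes play the role of intervals and the orientation-activity formula of \cite{LV84a} plays the role of Tutte's expansion. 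The one point you rightly flag as delicate, the bijection from the interval onto $2^{\Int(T)}\times 2^{\Ext(T)}$, is indeed the only structural input beyond the two recalled classical facts, and your justification of it (elements outside $\Int(T)\cup\Ext(T)$ are forced by the interval bounds, the rest are free) is complete.
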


\eme{dessous en commenatire formualtion avec cardianux}




\subsection{Some tools and terminology from (oriented) matroid theory}
\label{subsec:om}


The technique used in the paper is close from (oriented) matroid technique, which notably means that it focuses on edges, whereas vertices are usually not used.
%
%
Given an orientation $\G$ of a graph $G$, we will have sometimes to deal with directions of edges in cycles and cocycles of the underlying graph $G$, and, at a few places,  to deal with combinations of cycles or cocycles. To achieve this, we will use some practical notations and classical properties from oriented matroid theory \cite{OM99}.

A \emph{signed edge subset} is a subset $C\subseteq E$ provided with a partition  into a positive part $C^+$ and a negative part $C^-$.
A cycle, resp. cocycle, of $G$ provides two opposite signed edge subsets called \emph{signed cycles}, resp. \emph{signed cocycles}, of $\G$ by giving a sign in $\{+,-\}$ to each of its elements accordingly with the orientation $\G$ of $G$ the natural way.  
Precisely: two edges having the same direction with respect to a running direction of a cycle will have the same sign in the associated signed cycles, and  two edges having the same direction with respect to the partition of the vertex set induced by a cocycle will have the same sign in the associated signed cocycles.
%
In particular, a directed cycle, resp. a directed cocycle, of $\G$ corresponds to a signed cycle, resp. a signed cocycle, all the elements of which are positive (and to its opposite all the elements of which are negative).
We will often use the same notation $C$ either for a signed edge subset (formally a couple $(C^+,C^-)$, e.g. signed cycle) or for the underlying subset ($C^+\uplus C^-$, e.g. graph cycle).
%
Given a spanning tree $T$ of $G$ and an edge $b\in T$, resp. an edge $e\not\in T$, the fundamental cocycle $C^*(T;b)$, resp. the  fundamental cycle $C(T;e)$, induces two opposite signed cocycles, resp. signed cycles, of $\G$; then, by convention, 
the (signed) fundamental cocycle $C^*(T;b)$, resp. the (signed) fundamental cycle $C(T;e)$, is considered to be the one in which $b$ is positive, resp. $e$ is positive.\emevder{utilis\'e ou pas ?}
%
%
%
%

We will also use some terminology inherited from classical matroid theory. 
Let $G$ be a graph with edge set $E$.
A \emph{flat} $F$ of $G$ is a subset of $E$ such that $E\setminus F$ is a union of cocycles, equivalently: if $C\setminus \{e\}\subseteq F$ for some cycle $C$ and edge $e$, then $e\in F$; equivalently: $G/F$ has no loop.
A \emph{dual-flat} $F$ of $G$ is a subset of E which is a union of cycles (in fact its complement is a flat of the dual matroid), equivalently: if $D\setminus \{e\}\subseteq E\s F$ for some cocycle $D$ and edge $e$, then $e\in E\s F$; equivalently: $G(F)$ has no isthmus.
A \emph{cyclic flat} $F$ of $G$ is both a flat and a dual-flat of $G$;
 equivalently: $G/F$ has no loop and $G(F)$ has no isthmus.
 
Lastly, in Section \ref{sec:tutte},
we will extensively use properties of cycles and cocycles in minors. 
So, let us recall some combinatorial technique, coming from classical (oriented) matroid theory. For $F\subseteq E$, it is known that:
cycles of $G(F)$ are cycles of $G$ contained in $F$; cocycles of $G(F)$ are non-empty inclusion-minimal intersections of $F$ and cocycles of $G$; cycles of $G/F$ are non-empty inclusion-minimal intersections of $E\s F$ and cycles of $G$ (that is inclusion-minimal subsets obtained by removing $F$ from cycles of $G$); cocycles of $G/F$ are cocycles of $G$ contained in $E\s F$.

\section{The active partition/filtration of an ordered digraph}
\label{sec:tutte}

We investigate into the details the notion of active partition (and active filtration) of the edge set of an ordered digraph (introduced in previous works, e.g. \cite{Gi02,GiLV05}).
This notion turns out to be fundamental for various results: 
a canonical decomposition of an ordered digraph into bipolar and cyclic-bipolar minors (Section \ref{subsec:act-part-decomp});
a decomposition of the set of all orientations, yielding a Tutte polynomial formula in terms of filtrations and  beta invariants of minors (Section \ref{subsec:act-part-tutte}); 
a notion of activity classes of orientations 
that partition the set of orientations into boolean lattices, yielding a Tutte polynomial formula in terms of 4 variable orientation-activities  (Section \ref{subsec:act-classes});
 and the extension of the canonical active bijection from the uniactive case to the general case (Section \ref{subsec:alpha-def-decomp}).
The reader can see Section \ref{sec:intro} for a global and more detailed  introduction to the constructions of this section and their role in the whole construction.
\emevder{A METTRE OU PAS ? Let us recall that these constructions extend to oriented matroids \cite{AB2-b}, and that proofs 
  in this setting are rather similar to the following ones, 
 and even shorter as they can use duality (they often contain both a primal and a dual viewpoint, both of which we address here since a graph has no graph dual).
Also, we use orientability of graphs to derive the Tutte polynomial formula of Section \ref{subsec:act-part-tutte}, which extends to matroids  \cite{AB2-a} whereas such a proof is not available in (non-orientable) matroids.}%





\subsection{Definition and examples - Decomposition of an ordered digraph into bipolar and cyclic-bipolar minors}
\label{subsec:act-part-decomp}



Let us refine the classical partition of the edge set of a directed graph $\G$ as $E=F_c\uplus (E\s F_c)$ where $F_c$ and $E\s F_c$ are  respectively the union of directed cycles and cocycles of $\G$, 
which yields a
decomposition of $\G$ into an acyclic minor $\G/F_c$ and a strongly connected minor $G(F_c)$. 

A simple example is provided in Figure \ref{EG:fig:K4-dec}.
A more involved example is provided in Figure \ref{fig:gros-ex-decomp}.

\begin{figure}[h]
\centering
\includegraphics[scale=1.85]{./figures/K4-ex-dec}
\hfill
\includegraphics[scale=1.5]{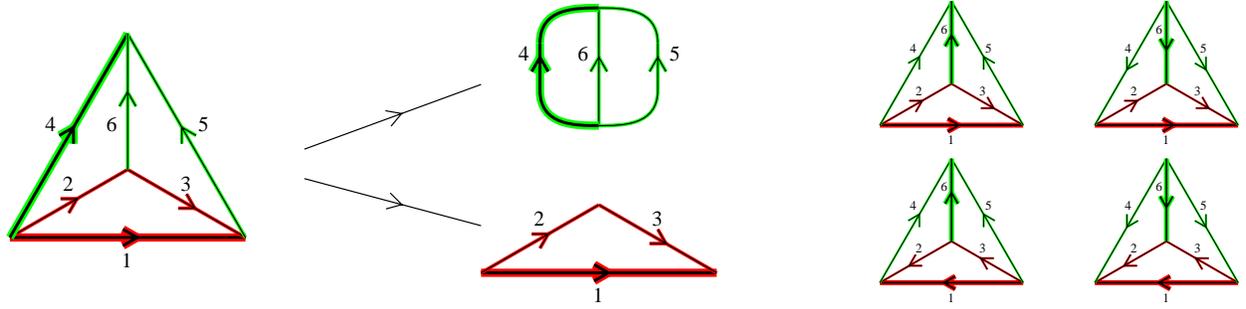}
\caption[]{Active decomposition of an ordered acyclic digraph $\G$ (on the left).
The active filtration is $\emptyset=F_c\subset 123\subset 123456$ (Definition \ref{def:act-seq-dec}). The active partition is $123+456$, with cyclic flat $F_c=\emptyset$  (Definition \ref{def:act-part}). The  active minors (in the middle) are $\G(123456)/123$, which is bipolar w.r.t. $4$, and $\G(123)$, which is bipolar w.r.t. $1$ (Definition \ref{def:active-minors}).
On the right part, we show the four digraphs in the same activity class (Definition \ref{def:act-class}). They share the same active partition/filtration, and the same active minors up to reorienting all their edges.}
\label{EG:fig:K4-dec}
\end{figure}


\begin{figure}[H]
\centering
\includegraphics[scale=1.3]{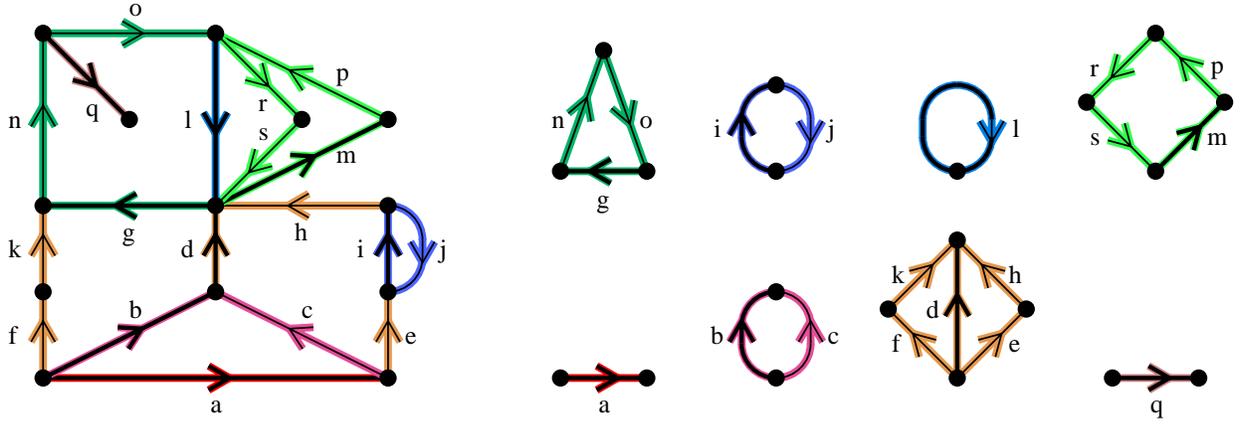}
\caption[]{Active decomposition of an ordered digraph $\G$ (shown on the left). 
The ordering of $E$ is given by: $a<b<c<\dots<q<r<s$. The active partition is given by: $F_c=gno+ij+l+mprs$ and $E\s F_c=a+bc+defhk+q$ (Definition \ref{def:act-part}). In particular, the active edges are $O(\G)=\{g,i,l,m\}$ and the dual-active edges are $O^*(\G)=\{a,b,d,q\}$ (bold edges).
The  bipolar, resp. cyclic-bipolar, active minors w.r.t. their smallest edges, whose edge sets are given by the active partition, are shown in the bottom right line, resp. the upper right line (Definition \ref{def:active-minors}).
}
\label{fig:gros-ex-decomp}
\end{figure}


Recall that $\subset$ denotes a strict inclusion.
%
See Section \ref{subsec:om} for properties of cycles and cocycles in minors, and for some terminology (e.g. cyclic flats), inherited from classical matroid theory. 
\emevder{a mettre ? ici ? dans preuve seulement ?}

\emevder{bien recomparer avec ab2-b et avec note expansions}

\begin{definition}
\label{def:act-seq-dec}



Let $\G$ be an ordered directed graph, 
with $\io$ dual-active edges $a_1<...<a_\io$ and $\ep$ active edges $a'_1<...<a'_\ep$.
The \emph{active filtration of $\G$} is the sequence of subsets of $E$:
$$\emptyset=F'_\ep\subset F'_{\ep-1}\subset\ldots\subset F'_0= F_c = F_0\subset \ldots\subset F_{\io-1}\subset F_\io=E,$$ 
that can be also denoted $(F'_\ep, \ldots, F'_0, F_c , F_0, \ldots, F_\io)$,
defined by the following.
The subset $F_c$, called \emph{the cyclic flat} of the sequence, is 
$$F_c\ =\ \bigcup_{C \hbox{\small\ directed cycle}}C
\ =\ E\ \setminus\ \bigcup_{C \hbox{\small\ directed cocycle}}C.$$
We have $F_\io=E$, and for every $0\leq k\leq\io-1$, we have
$$F_k=E\ \setminus\ \bigcup_{\substack{ D \hbox{\small\ directed cocycle}\\ {{\small  \Min}}(D)\; \geq\; a_{k+1}}}D.$$
We have $F'_\ep=\emptyset$, and for every $0\leq k\leq\ep-1$, we have
$$F'_{k}=\bigcup_{\substack{ C \hbox{\small\ directed cycle}\\ {{\small  \Min}}(C)\; \geq\; a'_{k+1}}}C.$$
%
%
\end{definition}

One can note that, for $0\leq k\leq\io$, $F_k$ is a flat of $G$ and, for $0\leq k\leq\ep$, $F'_k$ is a dual flat of~$G$ .

\begin{definition}
\label{def:act-part}
The \emph{active partition of $\G$} is the partition of $E$ 
induced by successive differences of sets in the active filtration:
$$E= (F'_{\ep-1}\s F'_\ep)\ \uplus\ \dots \ \uplus\ (F'_{0}\s F'_1)\ \uplus\ (F_1\s F_0)\ \uplus\ \dots \ \uplus\ (F_{\io}\s F_{\io-1}),$$
with:
\vspace{-3mm}
\begin{center}
$
   \begin{array}{ll}
\min(F'_{k-1}\s F'_k)=a'_k & \text{for } 1\leq k\leq \ep,\\[1mm]
\min(F_{k}\s F_{k-1})=a_k & \text{for } 1\leq k\leq \io.
   \end{array}
   $
\end{center}
We assume that the active partition is always given with the cyclic flat $F_c$ (i.e. it can be thought of as a pair of partitions, one for $F_c$, the other for $E\s F_c$).
For convenience, we can refer to $F_c$, or to the parts forming $F_c$, as the \emph{cyclic part} of $\G$, and to $E\s F_c$, or to the parts forming $E\s F_c$, as the \emph{acyclic part} of $\G$.
\end{definition}

Observe that knowing the subsets forming the active partition of $\G$ 
allows us to build the active filtration of $\G$. Indeed, the sequence $\min(F_k\setminus F_{k-1})$, $1\leq k\leq\io$,  is increasing with $k$, and the sequence  $\min(F'_{k-1}\setminus F'_k)$, $1\leq k\leq\ep$, is increasing with $k$, so the position of each part of the active partition with respect to the active filtration is identified.
%
Also, we have, for $1\leq k\leq\io$, 
$$F_k\setminus F_{k-1}=\bigcup_{\substack{ D \hbox{\small\ directed cocycle}\\ {{\small  \Min}}(D)\; =\; a_k}}D\ \ \setminus\
\bigcup_{\substack{ D \hbox{\small\ directed cocycle}\\ {{\small \Min}}(D)\; >\; a_k}}D,$$
and, for $1\leq k\leq \ep$,  $$F'_{k-1}\setminus F'_{k}=\bigcup_{\substack{ D \hbox{\small\ directed cycle}\\ {{\small  \Min}}(D)\; =\; a'_k}}D\ \ \setminus\
\bigcup_{\substack{ D \hbox{\small\ directed cycle}\\ {{\small \Min}}(D)\; >\; a'_k}}D
.$$

Let us point out that
the particular case of acyclic digraphs is addressed as the case where $F_c=\emptyset$, and the strongly connected case is addressed as the case where $F_c=E$.
Those cases can be thought of as being dual to each other (they are actually dual in an oriented matroid setting).
By the same token, in the planar case, 
 $(F'_\ep, \ldots, F'_0, F_c , F_0, \ldots, F_\io)$ is
  the active filtration of $\G$ if and only if
  $(E\s F_\io, \ldots, E\s F_0, E\s F_c, E\s F'_0, \ldots, E\s F'_\ep)$
is the active filtration of a dual $\G^*$ of $\G$
(which is the reason for the symmmetry in the two subscript orderings).
\emevder{attention ordre change ? !!! TOUT REVERIFIER}
Also, one can see  that if the active filtration of $\G$ is
 $(F'_\ep, \ldots, F'_0, F_c , F_0, \ldots, F_\io)$ then 
the active filtration of $\G(F_c)$ (strongly connected digraph) is  $\emptyset=F'_\ep\subset F'_{\ep-1}\subset\ldots\subset F'_0= F_c = F_c,$
and the active filtration of $\G/F_c$ (acyclic digraph) is
$\emptyset= F_c\s  F_c = F_0\s  F_c\subset \ldots\subset F_{\io-1}\s  F_c\subset F_\io\s  F_c=E\s  F_c$\emevder{inclusions ou sequence?}
(an extensive refinement of these properties is provided in Observation  \ref{obs:induced-dec-seq-ori} below).

\begin{remark}
\label{rk:active-partitions-vs-components}
\rm
As shown in \cite[Section 7]{GiLV05}, the notion of active partition for 
an ordered digraph
generalizes the notion of components of acyclic orientations with a unique sink.
This last notion, studied in \cite{La01} in relation with the chromatic polynomial, in \cite{Vi86} in terms of heaps of pieces, and in \cite{CaFo69} in terms of non-commutative monoids (see also \cite{Ge01}), relies on certain linear orderings of the vertex set. 
For every such vertex ordering, there exists a consistent edge ordering such that active partitions exactly match acyclic orientation components. 
Our generalization allows us to consider any orientation and any ordering of the edge set 
(along with a generalization to oriented matroids).
\end{remark}


\begin{definition}
\label{def:active-minors}
The \emph{active minors} of $\G$ are the 
minors 
 $$\G(F_k)/F_{k-1}, \text{ for } 1\leq k\leq\io,\text{ and }$$ 
 $$\G(F'_{k-1})/F'_{k},  \text{ for } 1\leq k\leq \ep.\hphantom{\text{ and }}$$
%
\emevder{Rk : on ne se servait pas de la notation $\G_k$, a part dans preuve alors je l'ai enlevee sauf dans preuve}%
\end{definition}

\begin{prop}
\label{prop:pty-active-minors}
With the notations of Definitions \ref{def:act-seq-dec},
 and \ref{def:active-minors}, 
we have:
\begin{itemize}
\itemsep=0mm
\partopsep=0mm 
\topsep=0mm 
\parsep=0mm
\item the $\io$ active minors 
$\G(F_k)/F_{k-1}$, 
$1\leq k\leq\io,$ 
are 
bipolar 
w.r.t. 
$a_k=\min(F_k\setminus F_{k-1})$, 
\item the $\ep$ active minors 
$\G(F'_{k-1})/F'_{k}$,
 $1\leq k\leq \ep,$  are 
cyclic-bipolar 
w.r.t. 
$a'_k=\min(F'_{k-1}\setminus F'_{k})$.
\end{itemize}
\end{prop}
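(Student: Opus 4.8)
The strategy is to analyze directly the cycles and cocycles of each active minor, using the standard minor rules for cycles/cocycles recalled in Section~\ref{subsec:om}, and then invoke the characterizations of bipolar and cyclic-bipolar digraphs from Section~\ref{subsec:prelim-beta} (namely: $\G$ is bipolar w.r.t.\ $p$ iff every edge lies in a directed cocycle and every directed cocycle contains $p$; dually for cyclic-bipolar with directed cycles). By the primal/dual symmetry of Definitions~\ref{def:act-seq-dec}--\ref{def:active-minors} (cocycles for the $F_k$ side, cycles for the $F'_k$ side), it suffices to prove the first bullet carefully and then transcribe the argument with cycles in place of cocycles for the second.

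First I would fix $k$ with $1\le k\le\io$ and write $H=\G(F_k)/F_{k-1}$, whose edge set is $F_k\setminus F_{k-1}$. Using the minor rules, the directed cocycles of $H$ are exactly the nonempty inclusion-minimal sets of the form $D\cap(F_k\setminus F_{k-1})$ where $D$ is a directed cocycle of $\G$ (minimality of directed cocycles is preserved because restricting the ground set to a flat and then contracting a subset behaves well: cocycles of $G(F_k)$ are minimal intersections of $F_k$ with cocycles of $G$, and cocycles of $G/F_{k-1}$ are cocycles contained in the complement of $F_{k-1}$, and signs are inherited). I would then show: (i) \emph{every edge of $H$ lies in a directed cocycle of $H$.} Since $F_k$ is a flat and $F_{k-1}$ is a flat, $H$ has no loop, and an edge $e\in F_k\setminus F_{k-1}$ not lying in any directed cocycle of $H$ would, lifting back, fail to lie in the acyclic part structure — more precisely, from the formula $F_k\setminus F_{k-1}=\bigcup_{\Min(D)=a_k}D\setminus\bigcup_{\Min(D)>a_k}D$ (directed cocycles $D$ of $\G$), one reads off that every such $e$ belongs to some directed cocycle $D$ of $\G$ with $\Min(D)=a_k$ and $D\cap\bigcup_{\Min(D')>a_k}D'$ missing $e$; intersecting $D$ with $F_k\setminus F_{k-1}$ and extracting a minimal piece through $e$ gives a directed cocycle of $H$ containing $e$. (ii) \emph{$a_k=\min(F_k\setminus F_{k-1})$ lies in every directed cocycle of $H$.} A directed cocycle of $H$ comes from a directed cocycle $D$ of $\G$ with $D\cap(F_k\setminus F_{k-1})\ne\emptyset$; by the filtration formula such a $D$ has $\Min(D)\ge a_{k+1}$ excluded (it would land in $F_k$, killing the intersection) — one needs $\Min(D)=a_k$, forcing $a_k\in D$, and since $a_k$ is the smallest element of $F_k\setminus F_{k-1}$ it survives the restriction/contraction, so $a_k$ lies in the resulting directed cocycle of $H$. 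Combining (i) and (ii) with the characterization, $H$ is bipolar w.r.t.\ $a_k$. (One also checks the signs: all edges of $D$ are positive since $D$ is a \emph{directed} cocycle, and restriction/contraction does not change edge signs, so the resulting cocycle of $H$ is again all-positive, i.e.\ directed.)

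For the second bullet, the same argument applies verbatim to $\G(F'_{k-1})/F'_k$ with ``cocycle'' replaced by ``cycle'' and ``flat'' by ``dual flat'', using the companion formula $F'_{k-1}\setminus F'_k=\bigcup_{\Min(C)=a'_k}C\setminus\bigcup_{\Min(C)>a'_k}C$ over directed cycles $C$, together with the dual minor rules (cycles of $G(F'_{k-1})$ are cycles of $G$ contained in $F'_{k-1}$; cycles of $G/F'_k$ are minimal intersections with the complement of $F'_k$), and the characterization that $\G$ is cyclic-bipolar w.r.t.\ $p$ iff every edge lies in a directed cycle and every directed cycle contains $p$. The degenerate single-edge cases (an active minor reduced to an isthmus, resp.\ a loop) are bipolar, resp.\ cyclic-bipolar, by definition and should be noted separately.

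\textbf{Main obstacle.} The delicate point is the bookkeeping in step (i)--(ii): showing that the minimal pieces $D\cap(F_k\setminus F_{k-1})$ that constitute directed cocycles of $H$ are precisely those coming from directed cocycles $D$ of $\G$ with $\Min(D)=a_k$, and that $a_k$ genuinely belongs to each of them rather than being lost in the contraction. This requires using the defining formulas for $F_k$ and $F_{k-1}$ to control which $\Min(D)$ values can occur, plus the observation that $F_k, F_{k-1}$ being flats (and $F'_k,F'_{k-1}$ dual flats) guarantees the relevant minors are loopless (resp.\ coloopless), so that ``every edge in a directed cocycle (resp.\ cycle)'' is not vacuously obstructed. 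Everything else is routine translation between the primal and dual formulations and an appeal to the bipolar/cyclic-bipolar characterizations already established in Section~\ref{subsec:prelim-beta}.
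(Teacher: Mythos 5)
There is a genuine gap, and it sits exactly at the point you flag as ``the delicate bookkeeping'' without naming it. In step (ii) you assert that ``a directed cocycle of $H$ comes from a directed cocycle $D$ of $\G$'' and then argue about $\min(D)$. But the minor rules only give you that a cocycle of $H=\G(F_k)/F_{k-1}$ is an inclusion-minimal trace $D\cap F_k$ of a cocycle $D$ of $G$ contained in $E\setminus F_{k-1}$; for that trace to be a \emph{directed} cocycle of $H$ you only need $D$ to be positive \emph{on $D\cap F_k$}, while $D$ may carry negative elements in $E\setminus F_k$. So $D$ need not be a directed cocycle of $\G$, and your control of $\min(D)$ via the filtration formulas does not apply to it. Your parenthetical sign-check (``restriction/contraction does not change edge signs, so the resulting cocycle of $H$ is again all-positive'') proves only the easy direction --- directed cocycles of $\G$ induce directed cocycles of $H$ --- which is what step (i) needs, not the converse needed for step (ii). Closing this gap requires an oriented-matroid argument: compose the lifted signed cocycle with the positive covector supported on $E\setminus F_k$ (the composition of all directed cocycles with smallest element $\geq a_{k+1}$), obtain a positive covector, and extract by conformality a directed cocycle $D'$ of $\G$ through the given edge with $D'\subseteq D\cup(E\setminus F_k)$; only then can you argue $\min(D')=a_k$ and $a_k\in D'\cap F_k$. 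The same issue recurs, in milder form, on the cycle side, where the lift uses that $F'_k$ is a union of directed cycles.

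For comparison, the paper does not argue minor-by-minor at all: Proposition \ref{prop:pty-active-minors} is obtained by recursively applying Lemma \ref{lem:induction-dec-seq}, which peels off only the outermost part of the filtration (showing $\G/F_{\io-1}$ is bipolar, resp.\ $\G(F'_{\ep-1})$ is cyclic-bipolar, and that the truncated sequence is the active filtration of the remaining restriction/contraction), and the conformal-composition argument is performed once there, in the cleanest possible position. Your direct per-$k$ strategy is viable and arguably more self-contained, but it must import that same composition/conformality step for each $k$; as written, the proposal omits it entirely, and with it the proof of the assertion ``every directed cocycle of $\G(F_k)/F_{k-1}$ contains $a_k$''.
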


\begin{proof}
Direct by recursively using Lemma \ref{lem:induction-dec-seq} below.
\end{proof}

%





\begin{lemma}
\label{lem:induction-dec-seq}
 \emevder{repeter hypothese comme ci-dessus dans source, ou ecrire 'with the notations...' comme dans prop ci-dessus?}%

We use the notations of Definitions \ref{def:act-seq-dec}.
If $\io>0$ then, denoting $F=F_{\io-1}$, we have:
\begin{itemize}
\itemsep=0mm
\partopsep=0mm 
\topsep=0mm 
\parsep=0mm
\item $\G /F$ is 
bipolar with respect to $a_\io$,
\item the active filtration of $\G(F)$ is 
 $(F'_\ep, \ldots, F'_0, F_c , F_0, \ldots, F_{\io-1})$.
\end{itemize}
 If $\ep>0$ then, denoting $F'=F'_{\ep-1}$, we have:
 \begin{itemize}
 \itemsep=0mm
\partopsep=0mm 
\topsep=0mm 
\parsep=0mm
\item $\G(F')$ is 
cyclic-bipolar with respect to $a'_\ep$,
\item the active filtration of $\G / F'$ is 
 $(F'_{\ep-1}\setminus F', \ldots, F'_0\setminus F', F_c\setminus F' , F_0\setminus F', \ldots, F_{\io}\setminus F')$.
\end{itemize}
\end{lemma}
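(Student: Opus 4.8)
The plan is to prove the two halves independently, by a careful ``unwinding'' of the definitions of active filtration, active partition, and directed cycles/cocycles in minors. The two cases are completely symmetric under cycle/cocycle duality, so I would write out the $\io>0$ case in full and then remark that the $\ep>0$ case follows by the analogous argument with cycles in place of cocycles (and $\G(F')$ in place of $\G/F$). The key combinatorial inputs, recalled in Section~\ref{subsec:om}, are: cocycles of $\G/F$ are exactly the cocycles of $\G$ contained in $E\s F$; cocycles of $\G(F)$ are the non-empty inclusion-minimal intersections $D\cap F$ over cocycles $D$ of $\G$; and a directed cocycle of a minor is one whose edges are all positively signed in the induced orientation. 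I will also use the characterization from Section~\ref{subsec:prelim-beta}: a digraph is bipolar w.r.t.\ its smallest edge $p$ iff every edge lies in a directed cocycle and every directed cocycle contains $p$.

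First I would show $\G/F$ is bipolar with respect to $a_\io$, where $F=F_{\io-1}$. By Definition~\ref{def:act-seq-dec},
$$F_{\io-1}=E\ \setminus\ \bigcup_{\substack{D\text{ directed cocycle}\\ \Min(D)\,\geq\, a_\io}}D,$$
so $E\s F$ is precisely the union of the directed cocycles $D$ of $\G$ with $\Min(D)\geq a_\io$. Every such $D$ is contained in $E\s F$, hence is a cocycle of $\G/F$, and it is directed in $\G/F$ because the induced orientation on $E\s F$ agrees with $\G$. Conversely any directed cocycle of $\G/F$ is a directed cocycle of $\G$ contained in $E\s F$, so it appears in that union; I must check it actually has $\Min\geq a_\io$, which holds because $E\s F$ consists only of edges lying in directed cocycles $D$ with $\Min(D)\ge a_\io$, while any directed cocycle of $\G$ meeting $E\s F$ and with smallest element $<a_\io$ would, by minimality of $a_\io$ among dual-active edges exceeding the earlier ones, have been killed already (more precisely: $a_\io$ is by hypothesis the largest dual-active edge, and $F_{\io-1}$ removes exactly the directed cocycles whose minimum is $a_\io$; so every directed cocycle of $\G$ meeting $E\s F$ has minimum exactly $a_\io$). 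In particular $a_\io\in E\s F$, every edge of $E\s F$ lies in a directed cocycle of $\G/F$, and every directed cocycle of $\G/F$ contains $a_\io=\min(E\s F)$; this is exactly the bipolarity criterion. (One must also note $\G/F$ is acyclic: a directed cycle of $\G/F$ would be an inclusion-minimal trace on $E\s F$ of a directed cycle of $\G$, but $E\s F\subseteq E\s F_c$ contains no edge of any directed cycle of $\G$.)

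Next I would identify the active filtration of $\G(F)$. The claim is that it is $(F'_\ep,\ldots,F'_0,F_c,F_0,\ldots,F_{\io-1})$, i.e.\ removing the last term. Here the point is that the dual-active edges of $\G(F)$ are $a_1<\dots<a_{\io-1}$ (the edge $a_\io$ has been contracted away along with the rest of $E\s F$), its active edges are still $a'_1<\dots<a'_\ep$ since all directed cycles of $\G$ lie in $F_c\subseteq F$ and are unchanged, and its cyclic flat is still $F_c$. Then for each relevant $k$ I would verify that the defining formula for $F_k$, resp.\ $F'_k$, \emph{relative to $\G(F)$} yields the same set: cocycles of $\G(F)$ are traces $D\cap F$ of cocycles $D$ of $\G$, a trace $D\cap F$ is a directed cocycle of $\G(F)$ exactly when $D$ itself is a directed cocycle of $\G$ with $D\cap F$ nonempty, and for $k\le \io-1$ one has $D\subseteq F$ automatically when $\Min(D)\ge a_{k+1}$ (since such $D$ avoids $E\s F$, whose cocycles all have minimum $a_\io>a_{k+1}$), so $\bigcup_{\Min(D\cap F)\ge a_{k+1}}(D\cap F)=\bigcup_{\Min(D)\ge a_{k+1}}D$ within $F$, giving $F_k\cap F=F_k$, i.e.\ $F_k$ again; similarly the cycle side is literally unchanged. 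This is bookkeeping but needs care at the boundary indices $k=\io-1$ and $k=0$ (the cyclic flat). The main obstacle, and the step I expect to spend the most care on, is precisely this last bullet: making the ``relative versus absolute'' computation of the filtration sets airtight, in particular checking that no directed cocycle of $\G$ contained in $F$ is accidentally lost or created when passing to $\G(F)$, and that the list of active/dual-active edges of $\G(F)$ is exactly the truncation claimed. Once that is done, the $\ep>0$ statements follow by the dual argument — $\G(F')$ with $F'=F'_{\ep-1}$, using directed cycles, $F'_{\ep-1}$ being the union of directed cycles with minimum $\ge a'_\ep$, and the criterion that $\G(F')$ is cyclic-bipolar iff every edge lies in a directed cycle and every directed cycle contains $\min(F')=a'_\ep$, plus the matching computation of the active filtration of $\G/F'$.
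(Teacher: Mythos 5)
Your treatment of the bipolarity of $\G/F$ is essentially sound and matches the paper's, but there is a genuine gap in the second bullet, the computation of the active filtration of $\G(F)$. The claim that ``a trace $D\cap F$ is a directed cocycle of $\G(F)$ exactly when $D$ itself is a directed cocycle of $\G$'' is precisely the hard direction and is not justified: a directed cocycle of the minor $\G(F)$ is a positive cocycle $D_0=D_G\cap F$ for some cocycle $D_G$ of $\G$, but nothing forces $D_G$ to be positive on $E\s F$, so $D_G$ need not be directed in $\G$ (and the cocycle of $\G$ with a given trace is not unique anyway). Likewise the parenthetical ``$D\subseteq F$ automatically when $\min(D)\ge a_{k+1}$'' is false: a directed cocycle of $\G$ with minimum $a_j$, $j<\io$, may well meet $E\s F$ (this is exactly why the parts $F_k\s F_{k-1}$ are defined by subtracting the unions of cocycles with larger minimum rather than by a disjointness assumption). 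What is actually needed, and what the paper proves, is only the one-sided statement that every element $e$ of a directed cocycle $D_0$ of $\G(F)$ with $\min(D_0)>a_k$ lies in some directed cocycle $D'$ of $\G$ with $\min(D')>a_k$; the paper obtains $D'$ by composing a cocycle $D_G$ of $\G$ extending $D_0$ with the positive covector supported on $E\s F$ (the composition of all directed cocycles with minimum $a_\io$) and then invoking the conformal decomposition of a positive covector into positive cocircuits. Some argument of this type, or a graph-theoretic substitute, is indispensable and is absent from your proposal, which treats the step as bookkeeping.

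A second, related issue is your choice to write out the $\io>0$ (cocycle) half in full and obtain the $\ep>0$ half ``by duality.'' For graphs the two halves are not symmetric in difficulty: the analogous step for cycles is genuinely easy (if $F'$ is a union of directed cycles and $C\s F'$ carries a directed cycle of $\G/F'$ through $e$, one readily finds a directed cycle of $\G$ through $e$ inside $C\cup F'$), whereas the cocycle step needs the extra technicality above. The paper accordingly writes the cycle half first and explicitly flags the cocycle half as the one requiring a supplementary argument; you have elected to elaborate the harder half while omitting its one nontrivial ingredient.
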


\eme{Enlever notation $F$ et $F'$ de enonce ? garder juste pour preuve ?}%



\begin{proof}
The proof  separately deals with the two parts of the statement. We begin with the second part, 
in which we assume $\ep>0$ and handle cycles.
The other half
of the proof, in which we assume $\io>0$ and handle cocycles,
is dual from the previous one.
In an oriented matroid setting, we would not have to prove the two halves, we would just have to apply one half to the dual (see \cite{AB2-b}). Here, in a graph setting, we have to adapt it. Essentially, the dual part consists in replacing terms and constructions with their dual corresponding ones, except that a supplementary technicality is used to handle cocycles.
Also, recall that cycles and cocycles of $G$ and of its minors are all considered as subsets of $E$.
%

\tiret {\it Cycles part.}
Assume $\ep>0$ and let $F'=F'_{\ep-1}$.
The cycles of $\G(F')$ are the cycles of $\G$ contained in $F'$, 
 where $F'$ is the union of all 
 directed cycles $C$ of $\G$ with smallest edge $a'_\ep$.
 Hence every edge of $\G(F')$ belongs to a directed cycle, hence $\G(F')$ is totally cyclic.
 And $a'_\ep$ belongs to a directed cycle of $\G(F')$, hence $a'_\ep$ is active in $\G(F')$.
If another element was active in $\G(F')$, then it would also be the smallest element of a directed cycle in $\G$ and active in $\G$, a contradiction with $a'_\ep$ being the greatest active element of $\G$.
So we have $O(\G(F'))=\{a'_\ep\}$ and $O^*(\G(F'))=\emptyset$, that is: $\G(F')$ is cyclic-bipolar with respect to $a'_\ep$. 

As $F'$ is a union of directed cycles of $\G$, the directed cocycles of $\G/ F'$ are the directed cocycles of $\G$. Hence, $\G$ and $\G/ F'$ have the same dual-active edges and the same unions of directed cocycles with given smallest element. Hence, the ``dual part''  $(F_c, F_0, \ldots, F_{\io-1}, F_\io=E)$ of their active filtration is the same up to removing $F'$ from each subset. 

The cycles of $\G/ F'$ are exactly the non-empty inclusion-minimal
intersections of cycles of $\G$ with $E\s F'$.
More precisely, the signed subsets of the form $C\s F'$, where $C$ is a cycle of $\G$, are unions of cycles of $\G/F'$.
Since every element of $F'$ is greatest than $a'_\ep$ by definition of $a'_\ep$, we have that $a'_k\in E\s F'$ for every $1\leq k<\ep$. A directed cycle $C$ of $\G$ with smallest element $a'_k$, for $1\leq k<\ep$, induces a directed cycle  of $G/F'$ contained in $C\s F'$ with smallest element $a'_k$, hence $a'_1,\dots, a'_{\ep-1}$ are active in $\G/F'$. 
Let $H'_k=\cup\{C\mid C\hbox{ directed cycle of }\G/F', \ \min(D)>a'_k\}$.
Independently, by definition of $F'_k$, we have $F'_k\s F'=\cup\{C\s F'\mid C\hbox{ directed cycle of }\G, \ \min(C)>a'_k\}$.
For every directed cycle $C$ of $\G$, $C\s F'$ is a union of directed  cycles of $G/F'$, 
so we  have $F'_k\s F'\subseteq H'_k$.

Now, conversely,  let $e$ be an element of $H'_k$, for some $1\leq k<\ep$. It belongs to be a directed cycle  $C$ of $\G/F'$ with smallest element $a>a'_k$.
As $F'$ is a union of directed cycles of $\G$, it is easy to see that 
there exists a directed cycle $C'$ of $\G$ containing $e$ and contained in $C\cup F'$.
Since every element of $F'$ is greater than $a'_\ep$ and $a'_\ep\geq a >a'_k$, the smallest element of $C'$ is greater than $a$, hence strictly greater than $a'_k$.
Since $e$ belongs to $C'\s F'$, we get that $e\in F'_k\s F'$.
We have proved  $H'_k\subseteq F'_k\s F'$, that is finally $F'_k\s F'\subseteq H'_k$, which provides the active filtration of $\G/F'$.
%

\tiret {\it Cocycles dual part.}
Assume $\io>0$ and let $F=F_{\io-1}$.
%
The cocycles of $\G/F$ are the cocycles of $\G$ contained in $E\s F$, 
 where $E\s F$ is the union of all 
 directed cocycles $D$ of $\G$ with smallest edge $a_\io$.
 Hence every edge of $\G/F$ belongs to a directed cocycle, hence $\G/F$ is acyclic.
 And $a_\io$ belongs to a directed cocycle of $\G/F$, hence $a_\io$ is dual-active in $\G/F$.
If another element was dual-active in $\G/F$, then it would also be the smallest element of a directed cocycle in $\G$ and dual-active in $\G$, a contradiction with $a_\io$ being the greatest dual-active element of $\G$.
So we have $O^*(\G/F)=\{a_\io\}$ and $O(\G/F)=\emptyset$, that is $\G/F$ is bipolar with respect to $a_\io$. 

As $E\s F$ is a union of directed cocycles of $\G$, the directed cycles of $\G(F)$ are the directed cycles of $\G$. Hence, $\G$ and $\G(F)$ have the same active edges, and the ``primal part'' 
$(F'_\ep, \ldots, F'_0, F_c)$ of their active filtration is the same. 

The cocycles of $\G(F)$ 
are exactly the non-empty inclusion-minimal intersections of 
 intersections of $F$ and cocycles of $\G$.
 More precisely, the signed subsets of the form $C\cap F$, where $C$ is a cocycle of $\G$, are unions of cocycles of $\G(F)$.
Since every element of $E\s F$ is greatest than $a_\io$ by definition of $a_\io$, we have that $a_k\in F$ for every $1\leq k<\io$. A directed cocycle $D$ of $\G$ with smallest element $a_k$, for $1\leq k<\io$, induces a directed cocycle contained in $D\cap F$ of $G(F)$ with smallest element $a_k$, hence $a_1,\dots, a_{\io-1}$ are dual-active in $\G(F)$. 
Let $H_k=F\s \cup\{D\mid D\hbox{ directed cocycle of }\G(F), \ \min(D)>a_k\}$.
Independently, by definition of $F_k$, we have $F_k=F\cap F_k=F\s \cup\{F\cap D\mid D\hbox{ directed cocycle of }\G, \ \min(D)>a_k\}$.
For every directed cocycle $D$ of $\G$, $D\cap F$ is a union of directed cocycles of $G(F)$, 
so we  have  $F\s F_k\subseteq F\s H_k$, that is $H_k\subseteq F_k$.

Now, conversely,  let $e$ be an element of $F\s H_k$, for some $1\leq k<\io$. It belongs to be a directed cocycle  $D$ of $\G(F)$ with smallest element $a>a_k$.
We want to prove that $e$ belongs to $F\cap D'$ for some directed cocycle $D'$ of $\G$ contained in $D\cup (E\s F)$.
This is less easy to see than for cycles as in the above part of the proof.
We give a proof using usual oriented matroid technique.
Let us recall that the \emph{composition} $A\circ B$ between two signed edge subsets as the edge subset $A\cup B$ with signs inherited from $A$ for the element of $A$ and inherited from $B$ for the elements of $B\setminus A$.
\eme{Suite utilise composition. peut etre donner la composition conforme des covecteurs en, preliminaires ?}%
The cocycle $D$ is contained in a cocycle $D_G$ of $\G$ with $D_G\cap F=D$. 
Let $D'_G$ be the composition of all directed cocycles of $\G$ with smallest element $a_\io$, whose support is $E\s F$ and whose signs are all positive (since given by directed cocycles).
Then $D'_G\circ D_G$ is positive, since it is positive on $E\s F$ as $D'_G$, and positive on $D_G\cap F=D$ as $D$.
And $D'_G\circ D_G$ has smallest element $a$, since $a<a_\io$. 
By the conformal composition property of covectors in oriented matroid theory, there exists a directed cocycle $D'$ of $\G$ containing $e$ and contained in $D_G\cup (E\s F)$.
Since every element of $E\s F$ is greater than $a_\io$ and $a_\io\geq a >a_k$, the smallest element of $D'$ is greater than $a$, hence strictly greater than $a_k$.
Since $e$ belongs to $F\cap D'$, we get that $e\in F\s F_k$.
We have proved  $F\s H_k\subseteq F\s F_k$, that is finally $F_k=H_k$, which provides the active filtration of $\G(F)$.
\end{proof}

%

\subsection{Decomposition of the set of all orientations of an ordered graph -  Tutte polynomial in terms of filtrations and beta invariants of minors}
\label{subsec:act-part-tutte}

Let us now characterize and build on the set of  all possible sequences of subsets that can be active filtrations of an orientation of a given graph, and let us obtain general results involving all orientations of the underlying graph, not only a given directed graph.
After giving definitions 
for
these sequences, 
we first complete Proposition \ref{prop:pty-active-minors} with a  uniqueness property in Proposition \ref{prop:unique-dec-seq}, then we extend this result to a bijective result taking into account all possible sequences in Theorem \ref{th:dec-ori}, whose enumerative counterpart is the  Tutte polynomial formula of Theorem \ref{th:tutte}.

\begin{definition}
\label{def:abst-dec-seq}
Let $E$ be a linearly ordered set. Let $G=(V,E)$ be a graph with set of edges $E$.
We call \emph{filtration of $G$ (or of $E$)} a 
sequence $(F'_\ep, \ldots, F'_0, F_c , F_0, \ldots, F_\io)$ of subsets of $E$ such that:
\vspace{-1mm}
\begin{itemize}
\itemsep=0mm
\item $\emptyset= F'_\ep\subset...\subset F'_0=F_c=F_0\subset...\subset F_\io= E$;
\item the sequence $\min(F_k\setminus F_{k-1})$, $1\leq k\leq\io$  is increasing with $k$;
\item the sequence  $\min(F'_{k-1}\setminus F'_k)$, $1\leq k\leq\ep$, is increasing with $k$.
\end{itemize}
\end{definition}


For convenience, in the rest of the paper, we can equally use the notations $(F'_\ep, \ldots, F'_0, F_c , F_0, \ldots, F_\io)$
or $\emptyset= F'_\ep\subset...\subset F'_0=F_c=F_0\subset...\subset F_\io= E$ to denote a filtration of $G$.

\begin{definition}
\label{def:graph-dec-seq}
A filtration $(F'_\ep, \ldots, F'_0,$ $F_c , F_0, \ldots, F_\io)$ of $G$ is called \emph{connected} if, in addition:
\vspace{-1mm}
\begin{itemize}
\itemsep=0mm
\item for every $1\leq k\leq\io$, the minor $G(F_{k})/F_{k-1}$ is either loopless and 2-connected with at least two edges, or a single isthmus; 
\item for every $1\leq k\leq\ep$, the minor $G(F'_{k-1})/F'_k$ is either  loopless and 2-connected  with at least two edges, or a single loop.
\end{itemize}
\end{definition}


\eme{
To motivate this definition, let us recall that, for a graph with at least two edges, $\beta(G)\not= 0$ if and only if the matroid of $G$ is connected, that is if and only if $G$ is loopless and 2-connected (see also forthcoming Lemma \ref{lem:dec-seq-equiv}). 
}%

The minors involved in Definition \ref{def:graph-dec-seq} are said to be \emph{associated with} or \emph{induced by} the filtration.
Let us recall that the 2-connectivity of a graph means its 2-vertex connectivity, and that
we consider a loopless graph on two vertices with at least one edge as 2-connected (Section \ref{subsec:prelim-gene}). 
%
Let us recall that, for a graph $G$ with at least two edges, 
$G$ is 
loopless 2-connected
if and only if $\beta(G)\not=0$ 
if and only if $\beta^*(G)\not=0$ 
 (if and only if there exists a bipolar orientation of $G$ 
 if and only if there exists a cyclic-bipolar orientation of $G$ 
 if and only if the cycle matroid of $G$ is connected,  see Section \ref{subsec:prelim-beta}).
\eme{phrase suivante utile ou ok apr definition de 2-connected?}%
\eme{appramment : 2-connected = enlever 1 sommet ne deconnecte pas. isthme est 2-connexe}%
Let us lastly recall that, for a graph $G$ with one edge, we have $\beta(G)=1$ and $\beta^*(G)=0$ if it is an isthmus, and $\beta(G)=0$ and $\beta^*(G)=1$ if it is a loop.
From these results, we 
derive the next lemma.
%


\begin{lemma}
\label{lem:connected-filtration-beta}
A filtration $(F'_\ep, \ldots, F'_0,$ $F_c , F_0, \ldots, F_\io)$ of $G$ is connected  if and only~if
\ss

\hfill $\displaystyle \Bigl(\prod_{1\leq k\leq \io}
\beta \bigl( G(F_k)/F_{k-1}\bigr)\Bigr)
 \ \Bigl(\prod_{1\leq k\leq \ep}\beta^* \bigl( G(F'_{k-1})/F'_{k}\bigr)\Bigr)\ \not=\ 0.$\hfill
 \qed
 \end{lemma}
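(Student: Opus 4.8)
The plan is to reduce the statement to a factor-by-factor check. All the quantities $\beta\bigl(G(F_k)/F_{k-1}\bigr)$ and $\beta^*\bigl(G(F'_{k-1})/F'_{k}\bigr)$ are nonnegative integers, so the displayed product is nonzero if and only if each of its factors is nonzero; hence it suffices to prove that, for each index, nonvanishing of the factor is equivalent to the corresponding clause of Definition~\ref{def:graph-dec-seq}.

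First I would note that, by the definition of a filtration, the inclusions $F_{k-1}\subset F_k$ (for $1\le k\le\io$) and $F'_{k}\subset F'_{k-1}$ (for $1\le k\le\ep$) are strict. Therefore the minor $G(F_k)/F_{k-1}$ has edge set $F_k\setminus F_{k-1}\ne\emptyset$ and the minor $G(F'_{k-1})/F'_{k}$ has edge set $F'_{k-1}\setminus F'_{k}\ne\emptyset$; in particular each of these minors has at least one edge, so no degenerate edgeless case arises and the invariants $\beta$ and $\beta^*$ are defined for all of them.

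Then, for a primal factor, fix $1\le k\le\io$ and put $H=G(F_k)/F_{k-1}$. If $H$ has at least two edges, then by the facts recalled before the lemma (see Section~\ref{subsec:prelim-beta}) we have $\beta(H)\ne 0$ if and only if $H$ is loopless and $2$-connected; if $H$ has exactly one edge, then $H$ is a single isthmus, in which case $\beta(H)=1$, or a single loop, in which case $\beta(H)=0$. In all cases $\beta(H)\ne 0$ holds exactly when $H$ is loopless and $2$-connected with at least two edges, or a single isthmus, i.e. precisely the first clause of Definition~\ref{def:graph-dec-seq}. Symmetrically, for a dual factor, fix $1\le k\le\ep$ and put $H'=G(F'_{k-1})/F'_{k}$: if $H'$ has at least two edges then $\beta^*(H')=\beta(H')\ne 0$ if and only if $H'$ is loopless and $2$-connected, while if $H'$ has exactly one edge then $\beta^*(H')=0$ for an isthmus and $\beta^*(H')=1$ for a loop, so $\beta^*(H')\ne 0$ holds exactly when $H'$ is loopless and $2$-connected with at least two edges, or a single loop, i.e. the second clause of Definition~\ref{def:graph-dec-seq}.

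Putting these equivalences together, the filtration is connected if and only if every primal factor $\beta\bigl(G(F_k)/F_{k-1}\bigr)$ and every dual factor $\beta^*\bigl(G(F'_{k-1})/F'_{k}\bigr)$ is nonzero, which in turn holds if and only if the displayed product is nonzero. There is no genuine obstacle here: the only subtleties worth flagging are excluding edgeless minors (handled by the strictness of the inclusions in a filtration) and tracking the asymmetry of the one-edge case — $\beta$ detects an isthmus while $\beta^*$ detects a loop — which is exactly mirrored by the isthmus/loop asymmetry between the two clauses defining a connected filtration.
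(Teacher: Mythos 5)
Your proof is correct and is exactly the argument the paper intends: the lemma is stated with no written proof because it follows immediately from the facts recalled just before it (nonvanishing of $\beta$, resp. $\beta^*$, characterizes loopless $2$-connected graphs with at least two edges together with the single-isthmus, resp. single-loop, case), and your factor-by-factor check, including the observation that strictness of the inclusions rules out edgeless minors, is precisely that derivation written out.
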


\emevder{ai enleve lemme disant que connected filtratin => flat s and dual flats, inutile}%

\emevder{attention de ne pas employer "cyclic flat" of the conected fitlration du coup--- normalement j'ai verifier, c'est OK}%

\emevder{proposition ci-dessous mise en theoreme dans AB2-b}%

\begin{prop}
\label{prop:unique-dec-seq}
Let $\G$ be an ordered directed graph.
The active filtration of $\G$ is the unique (connected) filtration $(F'_\ep, \ldots, F'_0, F_c , F_0, \ldots, F_\io)$ of $G$ 
such that
the $\io$ minors 
$$\G(F_k)/F_{k-1},\ \ 1\leq k\leq\io,$$ 
are 
bipolar with respect to 
$a_k=\min(F_k\setminus F_{k-1})$, and the $\ep$ minors 
$$\G(F'_{k-1})/F'_{k}, \ \ 1\leq k\leq \ep,$$ are 
cyclic-bipolar with respect to 
$a'_k=\min(F'_{k-1}\setminus F'_{k})$.
\end{prop}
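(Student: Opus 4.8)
The plan is to separate the statement into an \emph{existence} claim --- that the active filtration is one connected filtration of $G$ with the stated bipolar/cyclic\nobreakdash-bipolar property --- and a \emph{uniqueness} claim. Existence is almost free: Proposition \ref{prop:pty-active-minors} already says the minors $\G(F_k)/F_{k-1}$ and $\G(F'_{k-1})/F'_k$ of the active filtration are bipolar, resp. cyclic-bipolar, w.r.t. their smallest edges; and the active filtration is \emph{connected} in the sense of Definition \ref{def:graph-dec-seq} because a bipolar orientation on at least two edges has $\beta\neq 0$, hence a loopless $2$-connected underlying graph, while on a single edge it is a single isthmus, and dually (using $\beta^*$ and ``loop'') for the cyclic-bipolar minors (Section \ref{subsec:prelim-beta}). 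So the whole difficulty is uniqueness.

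For uniqueness I would induct on $|E|$. Let $\Phi=(F'_\ep,\ldots,F'_0,F_c,F_0,\ldots,F_\io)$ be any connected filtration of $G$ such that each $\G(F_k)/F_{k-1}$ is bipolar and each $\G(F'_{k-1})/F'_k$ is cyclic-bipolar, w.r.t. the smallest edge of its edge set; I want $\Phi$ to be the active filtration of $\G$. If $E=\emptyset$ this is trivial; otherwise $\Min(E)$ is active or dual-active in $\G$, so $\io\ge 1$ or $\ep\ge 1$. Because the definitions and the statement are symmetric under cycle/cocycle duality, it suffices to handle $\io\ge 1$ (when $\io=0$ one peels off the first minor $\G(F'_{\ep-1})$ instead, by the same argument with cycles in place of cocycles, ``cyclic-bipolar'' in place of ``bipolar'' and $\beta^*$ in place of $\beta$). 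So put $F=F_{\io-1}$, a proper subset of $E$, and $a=\Min(E\s F)$; by hypothesis $\G/F$ is bipolar w.r.t. $a$. The key step --- see below --- is to deduce from this that $a$ is the greatest dual-active edge of $\G$ and $E\s F$ is the union of the directed cocycles $D$ of $\G$ with $\Min(D)=a$, i.e. that $F$ and $a$ are exactly the flat $F_{\io-1}$ and the edge $a_\io$ of the active filtration of $\G$. Once this is known, Lemma \ref{lem:induction-dec-seq} and the remarks after Definition \ref{def:act-part} identify the active filtration of $\G(F)$ with the truncation $(F'_\ep,\ldots,F'_0,F_c,F_0,\ldots,F_{\io-1})$ of that of $\G$; the same truncation of $\Phi$ is a connected filtration of $G(F)$ with the bipolar/cyclic-bipolar property for $\G(F)$ --- its induced minors are those of $\Phi$ minus the last one --- so, since $|F|<|E|$, the induction hypothesis gives that it equals the active filtration of $\G(F)$, hence the truncation of the active filtration of $\G$; appending the last term $E$ and recalling the identification of $F$ and $a$, we get $\Phi=$ active filtration of $\G$.

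So everything comes down to the key step, which I also expect to be the main obstacle. One half is easy: contracting $F$ does not change the orientation of the remaining edges and the cocycles of $G/F$ are the cocycles of $G$ contained in $E\s F$ (Section \ref{subsec:om}), so the directed cocycles of $\G/F$ are exactly the directed cocycles of $\G$ contained in $E\s F$; since $\G/F$ is bipolar w.r.t. $a$, every edge of $E\s F$ lies in one of them and every such has smallest element $a$ (as $O^*(\G/F)=\{a\}$), whence $a\in O^*(\G)$, $E\s F$ is a union of directed cocycles of $\G$ all with smallest element $a$, and $E\s F\subseteq\bigcup\{D:\Min(D)=a\}$. The other half --- that $\G$ has no dual-active edge greater than $a$, and that every directed cocycle of $\G$ with smallest element $a$ is contained in $E\s F$ --- is the delicate point. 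I would prove it by the same device as in the cocycle part of the proof of Lemma \ref{lem:induction-dec-seq}: let $P$ be the composition of all directed cocycles of $\G$ contained in $E\s F$ (a positive covector of $M(\G)$ with support $E\s F$ and smallest support element $a$), compose it with a given directed cocycle $D$ of $\G$ with $\Min(D)\ge a$ to obtain a positive covector with smallest support element $a$, and apply the conformal composition property of covectors of $M(\G)$ --- together with the fact that directed cycles and directed cocycles of $\G$ are disjoint, so that all directed cycles of $\G$ lie in $F$ --- to force $D\subseteq E\s F$ and $\Min(D)=a$. This is exactly where the absence of a dual graph bites: in oriented matroids the cocycle statement would follow from the (easier) cycle statement by duality, whereas here it must be argued directly, with the extra covector-composition technicality already flagged in the proof of Lemma \ref{lem:induction-dec-seq}.
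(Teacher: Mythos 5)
Your overall architecture matches the paper's: existence via Proposition \ref{prop:pty-active-minors}, then uniqueness by induction, peeling off the extreme layer and invoking Lemma \ref{lem:induction-dec-seq} to descend to $\G(F_{\io-1})$ (the paper first identifies $F_c$, then the two extreme parts, then inducts exactly as you do). The gap is in your key step, and it is fatal as proposed. You claim to deduce, \emph{from the bipolarity of the single top minor $\G/F$ w.r.t. $a=\min(E\s F)$}, that $a$ is the greatest dual-active edge of $\G$ and that every directed cocycle $D$ with $\min(D)\ge a$ lies in $E\s F$; and your covector-composition argument indeed uses no other hypothesis. But that implication is false. Take $K_3$ on vertices $u,v,w$ with edges $1\colon u\to v$, $2\colon u\to w$, $3\colon v\to w$ and ordering $1<2<3$; the directed cocycles are $\{1,2\}$ and $\{2,3\}$, so $O^*(\G)=\{1,2\}$ and the active filtration is $\emptyset\subset\{1\}\subset E$. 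Yet for $F=\{3\}$ the minor $\G/F$ is bipolar w.r.t. $a=1$ (and $\G(\{3\})$ is a single isthmus, bipolar w.r.t. $3$), while $a=1$ is not the greatest dual-active edge and the directed cocycle $\{2,3\}$ has $\min=2\ge a$ but meets $F$. Concretely, in this example your covector $P\circ D$ is the all-positive tope of the acyclic $\G$ and conformal decomposition yields no contradiction. The composition device in Lemma \ref{lem:induction-dec-seq} solves a different problem (lifting a cocycle of the \emph{restriction} $\G(F)$ to a cocycle of $\G$); it cannot force a cocycle of $\G$ down into $E\s F$.

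What saves the day — and what the paper actually uses — is the rest of the candidate filtration, not just its top term. The sequence $\emptyset\subset\{3\}\subset E$ above is ruled out only because it violates the increasing-min axiom of Definition \ref{def:abst-dec-seq}. The correct argument is: first show $F_c$ is the union of directed cycles (so every directed cocycle avoids $F_c$), using only that the minors above $F_c$ are acyclic and those below are strongly connected; then, for a directed cocycle $D$, let $k$ be largest with $D\subseteq E\s F_k$, note $D\cap F_{k+1}\ne\emptyset$ is a union of directed cocycles of $\G(F_{k+1})/F_k$, each of which contains $a_{k+1}$ by bipolarity, so $a_{k+1}\in D$; the increasing-min axiom gives $a_{k+1}=\min(E\s F_k)$, hence $\min(D)=a_{k+1}$. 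Applied with $\min(D)\ge a_\io$ this forces $k=\io-1$, i.e. $D\subseteq E\s F_{\io-1}$, which is exactly your key step. So you must route the identification of $F_{\io-1}$ and $a_\io$ through all the lower minors and the filtration axiom; once that is done, your induction closes correctly.
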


\eme{$a_k=\min(F_k)$ ? ou seulement $a_k=\min(F_k\setminus F_{k-1}$ ?}


\begin{proof}
First, we check that the active filtration $\emptyset= F'_\ep\subset...\subset F'_0=F_c=F_0\subset...\subset F_\io= E$ of $\G$ is a filtration of $G$. 
Assume $\G$ has  $\io$ dual-active edges $a_1<...<a_\io$, and $\ep$ active edges $a'_1<...<a'_\ep$.
By definition of $a_k$, for $1\leq k\leq \io$, there exists a directed cocycle of $\G$ whose smallest element is $a_k$, hence $a_k\in F_k\setminus F_{k-1}$ according to the definition of $F_k\setminus F_{k-1}$ given above. So we have $a_k=\min(F_k\setminus F_{k-1})$,  $1\leq k\leq\io$, which is increasing with $k$ by definition of $a_k$.
Similarly, for $1\leq k\leq \ep$, there exists a directed cycle of $\G$ whose smallest element is $a'_k$,
so we get $a'_k=\min(F'_{k-1}\setminus F'_k)$, 
which is increasing with $k$.
Hence the result.



Second, 
by Proposition \ref{prop:pty-active-minors}, the active minors exactly satisfy the property stated in the statement.
%
This also proves that those minors are loopless and 2-connected as soon as they have more than one edge, which 
shows that the active filtration of $\G$ is a connected filtration of $G$.
%


Now, it remains to prove the uniqueness property.
Assume $(F'_\ep, \ldots, F'_0, F_c , F_0, \ldots, F_\io)$ is a filtration of $E$ satisfying the properties given in the statement.
Then it is obviously  a connected filtration of $G$, by the definitions, since being bipolar, resp. cyclic-bipolar, implies being either connected  or reduced to an isthmus, resp. a loop.
First, we prove that $F_c$ is the union of all directed cycles of $\G$.
Assume $C$ is a directed cycle of $\G$, not contained in $F_c$.
Let $k$ be the smallest such that $C\subseteq F_k$, $1\leq k\leq \io$.
Then $C\s F_{k-1}\not=\emptyset$ (otherwise $k$ would not be minimal), so $C\s F_{k-1}$ contains a directed cycle of $\G/F_{k-1}$.
Moreover $C\s F_{k-1}\subseteq F_k\s F_{k-1}$ by definition of $k$, so  $C\s F_{k-1}$ contains a directed cycle of $\G_k=\G(F_k)/F_{k-1}$, a contradiction with $\G_k$ being acyclic.
Hence the union of directed cycles of $\G$ is contained in $F_c$.
With exactly the same reasoning from the dual viewpoint, we get that the union of directed cocycles of $\G$ is contained in $E\s F_c$.
Precisely: assume $D$ is a directed cocycle of $\G$, not contained in $E\s F_c$.
Let $k$ be the smallest such that $D\subseteq E\s F'_k$, $1\leq k\leq \ep$.
Then $D\cap F'_{k-1}\not=\emptyset$ (otherwise $k$ would not be minimal), so $D\cap F'_{k-1}$ contains a directed cocycle of $\G(F'_{k-1})$.
Moreover $D\cap F'_{k-1}\subseteq F'_{k-1}\s F'_{k}$ by definition of $k$, so  $D\cap F'_{k-1}$ contains a directed cocycle of $\G'_k=\G(F'_{k-1})/F'_{k}$, a contradiction with $\G'_k$ being strongly connected.
Finally, $F_c$ contains the union of directed cycles of $\G$ and has an empty intersection with the union of all directed cocycles of $\G$, so $F_c$ is exactly the union of all directed cycles of $\G$.

Second, we prove the following claim: for every directed cycle $C$ of $\G$, the smallest element of $C$ equals 
$a'_{k+1}$, where $k$ is the greatest possible such that $C\subseteq F'_k$, $0\leq k\leq \ep-1$.
%
%
Indeed, for such $C$ and $k$, we have
$C\s F'_{k+1}\not=\emptyset$ (otherwise $k$ would not be maximal), so $C\s F'_{k+1}$ is a union of directed cycles of $\G/F'_{k+1}$.
Moreover, $C\s F'_{k+1}\subseteq F'_k\s F'_{k+1}$ by definition of $k$, so $C\s F'_{k+1}$ is a union of directed cycles of $\G'_{k+1}=\G(F'_k)/F'_{k+1}$. 
By assumption that $\G'_{k+1}$ is cyclic-bipolar with respect to $a'_{k+1}$, we have that $a'_{k+1}$ belongs to every directed cycle of $\G'_{k+1}$,
so $a'_{k+1}$ 
is the smallest edge of $C\s F'_{k+1}$. 
By definition of a filtration, $a'_{k+1}$ is the smallest edge in $F'_k$ (it is the smallest in $F'_k\s F'_{k+1}$ and the sequence $\min(F'_i\s F'_{i+1})$ is increasing with $i$), hence we have $\min(C)=a'_{k+1}$. 
In particular, we have proved that the active edges of $\G$ are of type $a'_k$, $1\leq k\leq \ep$.


Dually, we prove - the same way - the following claim: for every directed cocycle $D$ of $\G$, the smallest element of $D$ equals 
$a_{k+1}$, where $k$ is the greatest possible such that $D\subseteq E\s F_k$, $0\leq k\leq \io-1$.
%
%
Indeed, for such $D$ and $k$, we have
$D\cap F_{k+1}\not=\emptyset$ (otherwise $k$ would not be maximal), so $D\cap F_{k+1}$ is a union of directed cocycles of $\G(F_{k+1})$.
Moreover, $D\cap F_{k+1}\subseteq F_{k+1}\s F_{k}$ by definition of $k$, so $D\cap F_{k+1}$ is a union of directed cocycles of $\G_{k+1}=\G(F_{k+1})/F_{k}$. 
By assumption that $\G_{k+1}$ is bipolar with respect to $a_{k+1}$, we have that $a_{k+1}$ belongs to every directed cocycle of  $\G_{k+1}$,
so $a_{k+1}$ 
is the smallest edge of $D\cap F_{k+1}$.
By definition of a filtration, $a_{k+1}$ is the smallest edge in $E\s F_k$ (it is the smallest in $F_k\s F_{k-1}$ and the sequence $\min(F_i\s F_{i-1})$ is increasing with $i$), hence we have $\min(C)=a_{k+1}$. 
In particular, we have proved that the dual-active edges of $\G$ are of type $a_k$, $1\leq k\leq \io$.

Third, we prove that the parts of the considered filtration are indeed the parts of the active filtration.
Let us denote $F=F_{\ep-1}$ and so $a'_\ep=\min (F)$.
We want to prove that $F=\cup\{C\mid C\hbox{ directed cycle of }\G, \ \min(C)=a'_\ep\}$.
By assumption, $G'_\ep=\G(F)$ is cyclic-bipolar.
So, every edge of $\G(F)$ belongs to a directed cycle of $\G(F)$ with smallest element $a'_\ep$. 
The cycles of $\G(F)$ are the cycles of $\G$ contained in $F$. Hence, every edge of $\G$ belonging to $F$ belongs to a directed cycle of $\G$ with smallest element $a'_\ep$,
which proves that
$F\subseteq \cup\{C\mid C\hbox{ directed cycle of }\G, \ \min(C)=a'_\ep\}$.
Conversely, let $C$ be a directed cycle of $\G$ with smallest element $a'_\ep$.
By the above claim, we have that $\ep-1$ is the greatest possible such that $D\subseteq F'_{\ep-1}$, that is $D\subseteq F$, hence the result.

Dually, let us denote $F=F_{\io-1}$ and so $a_\io=\min (E\s F)$.
We want to prove that $F=E\s \cup\{D\mid D\hbox{ directed cocycle of }\G, \ \min(D)=a_\io\}$.
By assumption, $G_\io=\G/F$ is bipolar.
So, every edge of $\G/F$ belongs to a directed cocycle of $\G/F$ with smallest element $a_\io$. 
The cocycles of $\G/F$ are the cocycles of $\G$ contained in $E\s F$. Hence, every edge of $\G$ belonging to $E\s F$ belongs to a directed cocycle of $\G$ with smallest element $a_\io$,
which proves that
$E\s F\subseteq \cup\{D\mid D\hbox{ directed cocycle of }\G, \ \min(D)=a_\io\}$.
Conversely, let $D$ be a directed cocycle of $\G$ with smallest element $a_\io$.
By the above claim, we have that $\io-1$ is the greatest possible such that $D\subseteq E\s F_{\io-1}$, that is $D\subseteq E\s F$, hence the result.


Now, we can conclude by induction, assuming the proposition is true for minors of $\G$.
Assume $\io>0$ and denote again $F=F_{\io-1}$, we have proved above that $F$ is indeed the largest part different from $E$ in the active filtration of $\G$.
It is easy to check that the sequence of subsets
$(F'_\ep, \ldots, F'_0, F_c , F_0, \ldots, F_\io-1)$  is a filtration of $G(F)$.
Moreover this filtration obviously satisfies the properties of the proposition for the directed graph $\G(F)$, as the involved minors are unchanged.
Hence, this filtration is the active filtration of $\G(F)$, by induction assumption.
Hence, by Lemma \ref{lem:induction-dec-seq}, we have that the subsets $F'_\ep, \ldots, F'_0, F_c , F_0, \ldots, F_\io-1$ are indeed the same subsets as in the active filtration of $\G$.
Finally, assume that $\ep>0$ and denote again $F'=F_{\ep-1}$, 
we have proved above that $F'$ is indeed the largest part different from $\emptyset$ in the active filtration of $\G$.
It is easy to check that the sequence of subsets
$(F'_{\ep-1}\s F', \ldots, F'_0\s F', F_c\s F' , F_0\s F', \ldots, F_\io\s F')$  is a filtration of $G/F'$.
Moreover this filtration obviously satisfies the properties of the proposition for the directed graph $\G/F'$, as the involved minors are unchanged.
Hence, this filtration is the active filtration of $\G/F'$, by induction assumption.
Hence, by Lemma \ref{lem:induction-dec-seq}, we have that the subsets $F'_{\ep-1}, \ldots, F'_0, F_c , F_0, \ldots, F_\io$ are indeed the same subsets as in the active filtration of $\G$.
\end{proof}

\begin{observation}\emevder{corollaire?}
\label{obs:induced-dec-seq-ori}
\rm
Let $\emptyset= F'_\ep\subset...\subset F'_0=F_c=F_0\subset...\subset F_\io= E$ be the active filtration of~$\G$.
Let $F'$ and $F$ be two subsets in this sequence such that $F'\subseteq F$ (with possibly $F\subseteq F_c$ or $F_c\subseteq F'$). Then, by Proposition \ref{prop:unique-dec-seq}, 
the active filtration  of $\G(F)/F'$ is obtained from the subsequence with extremities $F'$ and $F$ (i.e. $F'\subset \dots \subset F$) of the active filtration of $\G$ by   subtracting $F'$ from each subset of the subsequence (with $F_c\s F'$ as cyclic flat).
In particular, the subsequence ending with $F$ (i.e. $\emptyset\subset \dots \subset F$) yields the active filtration of $M(F)$, and the subsequence beginning with $F$  (i.e. $F\subset \dots \subset E$) yields the active filtration of  $M/F$ by subtracting  $F$ from each subset.
\emevder{faire une propositin? pa mal pour vision algebrique...}
\end{observation}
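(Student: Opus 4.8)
The plan is to deduce this from the uniqueness statement of Proposition~\ref{prop:unique-dec-seq}. It suffices to exhibit one sequence of subsets that is a filtration of $G(F)/F'$ and whose induced minors are bipolar, resp.\ cyclic-bipolar, with respect to the minima of the corresponding successive differences; by that uniqueness statement, such a sequence is forced to be the active filtration of $\G(F)/F'$.

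Write the active filtration of $\G$ as $\emptyset=G_0\subset G_1\subset\dots\subset G_n=E$, which is indeed a filtration of $G$ (first part of the proof of Proposition~\ref{prop:unique-dec-seq}), and let $F'=G_a\subset G_{a+1}\subset\dots\subset G_b=F$ be its subsequence with extremities $F'$ and $F$. The candidate is this subsequence with $F'$ removed from every term,
$$\emptyset\;=\;G_a\setminus F'\;\subset\;G_{a+1}\setminus F'\;\subset\;\dots\;\subset\;G_b\setminus F'\;=\;F\setminus F',$$
a sequence of subsets of the edge set $F\setminus F'$ of $G(F)/F'$. First I would check that it is a filtration in the sense of Definition~\ref{def:abst-dec-seq}: since $F'\subseteq G_i$ for every $i$ in range, its successive differences coincide with the successive differences $G_i\setminus G_{i-1}$ of the active filtration of $\G$, hence they are nonempty (so the inclusions stay strict) and their minima are increasing on the cyclic side and increasing on the acyclic side. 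The term of the candidate playing the role of the cyclic flat is $F_c\setminus F'$ when $F_c$ occurs in the subsequence; in the degenerate cases $F\subseteq F_c$, resp.\ $F_c\subseteq F'$, the candidate lies entirely in the cyclic, resp.\ acyclic, part, and its cyclic flat is its top term $F\setminus F'$, resp.\ its bottom term $\emptyset$, which is the correct reading of ``$F_c\setminus F'$ as cyclic flat'' there (then $\G(F)/F'$ is totally cyclic, resp.\ acyclic).

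Next I would identify the minor induced by the candidate between two consecutive terms. Since restriction and contraction on disjoint sets commute and $F'\subseteq G_{i-1}\subseteq G_i\subseteq F$, one has
$$\bigl(\G(F)/F'\bigr)\bigl(G_i\setminus F'\bigr)/\bigl(G_{i-1}\setminus F'\bigr)\;=\;\G(G_i)/G_{i-1},$$
which is precisely the active minor of $\G$ located between $G_{i-1}$ and $G_i$ in the active filtration of $\G$. By Proposition~\ref{prop:pty-active-minors} this minor is bipolar, resp.\ cyclic-bipolar, with respect to $\min(G_i\setminus G_{i-1})=\min\bigl((G_i\setminus F')\setminus(G_{i-1}\setminus F')\bigr)$ on the acyclic, resp.\ cyclic, side. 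Thus the candidate is a filtration of $G(F)/F'$ meeting exactly the hypotheses of Proposition~\ref{prop:unique-dec-seq} for $\G(F)/F'$, so by the uniqueness part of that proposition it is the active filtration of $\G(F)/F'$. The two ``in particular'' assertions are the special cases $F'=\emptyset$ (where $\G(F)/\emptyset=\G(F)$) and $F=E$ (where $\G(E)/F'=\G/F'$). The verifications above are routine once Proposition~\ref{prop:unique-dec-seq} is in hand; I expect the only delicate point to be the bookkeeping of where $F_c$ falls relative to $F'$ and $F$ (the three cases above), together with the elementary minor identity commuting the restriction to $F$, the contraction of $F'$, and the restrictions and contractions defining the induced minors.
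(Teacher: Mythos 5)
Your proposal is correct and follows exactly the route the paper intends: the observation is justified in the text solely by appeal to the uniqueness statement of Proposition \ref{prop:unique-dec-seq}, and you supply precisely the details of that argument (the truncated-and-shifted sequence is a filtration of $G(F)/F'$, its induced minors coincide with the corresponding active minors of $\G$ via the standard minor identity, and uniqueness then forces it to be the active filtration). Your careful handling of where $F_c$ falls relative to $F'$ and $F$ is a welcome clarification of a point the paper leaves implicit.
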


\begin{thm}
\label{th:dec-ori}
Let $G$ be an ordered graph. We have

\vspace{2.5mm}
\centerline{$\Bigl\{\ \hbox{orientations $\G$ of $G$}\ \Bigr\}\ $}
\vspace{-5.5mm}
\begin{align*}
 = \ \biguplus \ &\Biggl\{\ \G \ \ \mid\ \    
G(F_k)/F_{k-1},\ \ 1\leq k\leq\io, \hbox{ bipolar with respect to } \min (F_k\setminus F_{k-1}), \\
&\hphantom{\Biggl\{\ \G \ \ \mid\ \    }\hbox{and}\ \ 
\G(F'_{k-1})/F'_{k}, \ \ 1\leq k\leq \ep, \hbox{ cyclic-bipolar with respect to } \min (F'_{k-1}\setminus F'_{k})\ \ \Biggr\}.
\end{align*}
where the disjoint union is over all connected filtrations $(F'_\ep, \ldots, F'_0, F_c , F_0, \ldots, F_\io)$ of $G$. The connected filtration of $G$ associated to an orientation $\G$  in the right-hand side of the equality is the active filtration of $\G$.
\end{thm}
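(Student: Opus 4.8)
The plan is to deduce Theorem~\ref{th:dec-ori} essentially as a repackaging of Proposition~\ref{prop:unique-dec-seq}, by verifying two complementary facts: every orientation of $G$ lands in at least one block of the right-hand side, and the blocks are pairwise disjoint. For coverage, I would start from an arbitrary orientation $\G$ of $G$ and let $(F'_\ep, \ldots, F'_0, F_c, F_0, \ldots, F_\io)$ be its active filtration (Definition~\ref{def:act-seq-dec}). By Proposition~\ref{prop:pty-active-minors}, the active minors $\G(F_k)/F_{k-1}$, $1\le k\le\io$, are bipolar with respect to $\Min(F_k\s F_{k-1})$, and the active minors $\G(F'_{k-1})/F'_{k}$, $1\le k\le\ep$, are cyclic-bipolar with respect to $\Min(F'_{k-1}\s F'_{k})$. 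Moreover, as recorded inside the proof of Proposition~\ref{prop:unique-dec-seq}, the active filtration of $\G$ is a \emph{connected} filtration of $G$ in the sense of Definition~\ref{def:graph-dec-seq} (being bipolar, resp. cyclic-bipolar, forces each minor to be loopless and $2$-connected, or else a single isthmus, resp. a single loop). Hence $\G$ belongs to the block of the right-hand side indexed by this connected filtration, so the right-hand side is a union covering all orientations of $G$.

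For disjointness, I would show each $\G$ lies in only one block. Suppose $\G$ appears in the block indexed by a connected filtration $(F'_\ep, \ldots, F'_0, F_c, F_0, \ldots, F_\io)$ of $G$, that is, this filtration satisfies the bipolarity/cyclic-bipolarity conditions appearing in the right-hand side. Then it is, in particular, a filtration of $G$ fulfilling exactly the characterizing property of Proposition~\ref{prop:unique-dec-seq}; by the uniqueness assertion of that proposition it must coincide with the active filtration of $\G$. Since $\G$ has a single active filtration, it can belong to at most one block, so the union is disjoint. This simultaneously establishes the last sentence of the theorem: the connected filtration attached to $\G$ in the right-hand side is precisely the active filtration of $\G$.

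I do not expect a genuine obstacle here: the statement is the combination of the existence part (Proposition~\ref{prop:pty-active-minors}) and the uniqueness part (Proposition~\ref{prop:unique-dec-seq}). The only small points to be careful about in the write-up are: (i) indexing the disjoint union over \emph{connected} filtrations rather than over all filtrations loses nothing, since any filtration meeting the block condition is automatically connected (bipolar/cyclic-bipolar minors are connected-or-trivial); and (ii) some connected filtrations of $G$ may index an empty block, when no orientation of $G$ realizes them, which is harmless for a disjoint-union statement. Should one also wish to know the sizes of the blocks (as needed to pass to the enumerative Theorem~\ref{th:tutte}), one observes that the edge sets $F_k\s F_{k-1}$, $1\le k\le\io$, and $F'_{k-1}\s F'_{k}$, $1\le k\le\ep$, partition $E$, so that an orientation of $G$ is the same data as a choice of orientation on each active minor, and the block condition constrains these choices minor by minor; but this refinement is not needed for the set-theoretic decomposition stated above.
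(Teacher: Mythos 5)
Your proposal is correct and follows essentially the same route as the paper: coverage via Proposition~\ref{prop:pty-active-minors} applied to the active filtration (which is connected), and disjointness via the uniqueness assertion of Proposition~\ref{prop:unique-dec-seq}. The only cosmetic difference is that the paper also runs the converse direction explicitly (choosing a bipolar/cyclic-bipolar orientation of each minor of a connected filtration to realize it, so no block is actually empty by Lemma~\ref{lem:connected-filtration-beta}), whereas you simply observe that empty blocks would be harmless; both are fine.
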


\begin{proof}
This result consists in a bijection between all orientations $\G$ of $G$ and sequences of orientations of the minors involved in decomposition sequences of $G$. It is directly given  by  Proposition \ref{prop:unique-dec-seq}.
From the first set to the second set, 
the active filtration of $\G$ provides the required decomposition.
Conversely, from the second set to the first set,
first choose a connected filtration of $G$.
Then, for each minor of $G$ defined by this sequence, choose a bipolar/cyclic-bipolar orientation for this minor as written in the second set statement.
This defines an orientation $\G$ of $G$ (since every edge of $G$ appears in one and only one of these minors).
Now, for this orientation $\G$, the chosen filtration satisifies the property of Proposition \ref{prop:unique-dec-seq},
hence this filtration is the active filtration of this orientation $\G$ of $G$. 
%
Finally, the uniqueness in Proposition \ref{prop:unique-dec-seq} ensures that the union in the second set is disjoint.
%
\end{proof}

\eme{dessous en commentaire enonce de lemme pour equivalence des dec sews}

\begin{thm}
\label{th:tutte}
Let $G$ be a graph on a linearly ordered set of edges $E$. We have
$$t(G;x,y)= \ \ \sum \ \ \Bigl(\prod_{1\leq k\leq \io}
\beta \bigl( G(F_k)/F_{k-1}\bigr)\Bigr)
 \ \Bigl(\prod_{1\leq k\leq \ep}\beta^* \bigl( G(F'_{k-1})/F'_{k}\bigr)\Bigr)\ {x^\io\  y^\ep}$$
\noindent 
where
$\beta^*=\beta$ for a graph with at least two edges, 
$\beta^*$ of a loop equals $1$, 
$\beta^*$ of an isthmus equals $0$,
and where the sum can be equally:
\vspace{-1mm}
\begin{itemize}
\itemsep=0mm
\partopsep=0mm 
\topsep=0mm 
\parsep=0mm
\item either over all connected filtrations $(F'_\ep, \ldots, F'_0, F_c , F_0, \ldots, F_\io)$ of $G$;
\item or over all filtrations $(F'_\ep, \ldots, F'_0, F_c , F_0, \ldots, F_\io)$ of $E$.
\end{itemize}
\end{thm}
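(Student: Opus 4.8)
The plan is to deduce the formula from the bijective decomposition of orientations established in Theorem~\ref{th:dec-ori}, combined with the orientation activity expansion of the Tutte polynomial of \cite{LV84a} recalled in Section~\ref{subsec:orientation-activity}, namely $t(G;x,y)=\sum_{\io,\ep}o_{\io,\ep}\,(x/2)^\io(y/2)^\ep$. The point is that Theorem~\ref{th:dec-ori} sorts the orientations of $G$ according to their active filtration, and that within each part the orientations factor as independent (cyclic-)bipolar orientations of the active minors; counting these and keeping track of activities then gives the result, up to cancelling powers of $2$.

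First I would fix a connected filtration $(F'_\ep,\ldots,F'_0,F_c,F_0,\ldots,F_\io)$ of $G$. By Theorem~\ref{th:dec-ori}, the orientations $\G$ of $G$ whose active filtration is this one are exactly those for which each active minor $G(F_k)/F_{k-1}$ ($1\le k\le\io$) is bipolar with respect to its smallest edge $\min(F_k\s F_{k-1})$, and each $G(F'_{k-1})/F'_k$ ($1\le k\le\ep$) is cyclic-bipolar with respect to its smallest edge $\min(F'_{k-1}\s F'_k)$. Since the subsets $F_k\s F_{k-1}$ and $F'_{k-1}\s F'_k$ partition $E$, an orientation of $G$ is the same data as a tuple of orientations of these minors, and the bipolar/cyclic-bipolar conditions are imposed separately on each minor. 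Being bipolar with respect to the smallest edge means having dual-activity $1$ and activity $0$ (Section~\ref{subsec:prelim-beta}), so a minor $H$ admits exactly $o_{1,0}(H)=2\,t_{1,0}(H)=2\beta(H)$ such orientations, using $o_{\io,\ep}=2^{\io+\ep}t_{\io,\ep}$ and $\beta=t_{1,0}$; symmetrically it admits $o_{0,1}(H)=2\,t_{0,1}(H)=2\beta^*(H)$ cyclic-bipolar orientations. Hence the number of orientations of $G$ with this active filtration equals $2^{\io+\ep}\bigl(\prod_k\beta(G(F_k)/F_{k-1})\bigr)\bigl(\prod_k\beta^*(G(F'_{k-1})/F'_k)\bigr)$, and every such orientation has dual-activity $\io$ and activity $\ep$, read off from the number of sets of the filtration on each side of $F_c$. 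Since by Theorem~\ref{th:dec-ori} every orientation of $G$ has a well-defined active filtration and the decomposition is a disjoint union over connected filtrations, summing over all connected filtrations with $\io$ sets after $F_c$ and $\ep$ sets before $F_c$ gives
$$o_{\io,\ep}(G)=\sum 2^{\io+\ep}\Bigl(\prod_{1\le k\le\io}\beta\bigl(G(F_k)/F_{k-1}\bigr)\Bigr)\Bigl(\prod_{1\le k\le\ep}\beta^*\bigl(G(F'_{k-1})/F'_k\bigr)\Bigr).$$
Substituting into the Las Vergnas formula, the $2^{\io+\ep}$ cancels against $(1/2)^\io(1/2)^\ep$, yielding the claimed identity with the sum over all connected filtrations of $G$. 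Finally, to pass to the sum over all filtrations of $E$, I would invoke Lemma~\ref{lem:connected-filtration-beta}: a filtration is connected if and only if the product $\bigl(\prod_k\beta(G(F_k)/F_{k-1})\bigr)\bigl(\prod_k\beta^*(G(F'_{k-1})/F'_k)\bigr)$ is nonzero, so every non-connected filtration contributes a zero term and the two summation ranges agree.

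The bookkeeping identifications ($\beta=t_{1,0}$, $\beta^*=t_{0,1}$, $o_{\io,\ep}=2^{\io+\ep}t_{\io,\ep}$, and ``bipolar w.r.t.\ smallest edge $\Leftrightarrow(\io,\ep)=(1,0)$'') are all already recorded in Section~\ref{subsec:prelim-beta} and are routine. The one step deserving care is the counting within a part: that an orientation of $G$ lying in the $(F'_\ep,\ldots,F_\io)$-part corresponds bijectively to a free choice, for each active minor, of a (cyclic-)bipolar orientation with the prescribed pivot and nothing else — which is exactly the content of Theorem~\ref{th:dec-ori}, itself resting on the uniqueness in Proposition~\ref{prop:unique-dec-seq} — and that the factor $2$ per minor reflects the free choice of global sign on each active minor (equivalently, reorienting parts of the active partition preserves the active filtration). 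Everything else is substitution.
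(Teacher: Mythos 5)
Your proof is correct and follows essentially the same route as the paper's: both deduce the formula as the enumerative translation of Theorem~\ref{th:dec-ori}, counting $2\beta$ (resp.\ $2\beta^*$) orientations per active minor, cancelling the $2^{\io+\ep}$ against the Las Vergnas identity $o_{\io,\ep}=2^{\io+\ep}t_{\io,\ep}$, and invoking Lemma~\ref{lem:connected-filtration-beta} to pass between the two summation ranges.
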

%

\begin{proof}
By Lemma \ref{lem:connected-filtration-beta}, we directly have that the sum  over all filtrations and the sum over all connected filtrations yield the same result.
The result where the sum is over all connected filtrations  of $G$ is 
exactly the enumerative translation of Theorem \ref{th:dec-ori}.
More precisely, consider the set of orientations $\G$ with dual-activity $\io$ and activity $\ep$, whose cardinality is $o_{\io,\ep}$.
This set  bijectively corresponds to the set 
$\biguplus \bigl\{ \G  \mid  \G(F_k)/F_{k-1},\ \ 1\leq k\leq\io,$
$\hbox{bipolar with respect to } \min (F_k\setminus F_{k-1}),$
$\hbox{ and }\ 
\G(F'_{k-1})/F'_{k}, \ \ 1\leq k\leq \ep,$ $\hbox{ cyclic-bipolar with respect}$ 
$\hbox{to } \min (F'_{k-1}\setminus F'_{k})\  \bigr\}$
where the union is over all connected filtrations of $G$ with fixed $\io$ and $\ep$.
The cardinality of each part of this set is obviously 
$\Bigl(\prod_{1\leq k\leq \io}
2.\beta \bigl( G(F_k)/F_{k-1}\bigr)\Bigr)
 \ \Bigl(\prod_{1\leq k\leq \ep}2.\beta^* \bigl( G(F'_{k-1})/F'_{k}\bigr)\Bigr)$
 since $\beta$ counts half the number of bipolar or cyclic-bipolar orientations of a graph with more than two edges, $\beta=1$ for a graph with a single isthmus (which can happen for minors of type  $G(F_k)/F_{k-1}$), and $\beta^*=1$ for a graph with a single loop (which can happen for minors of type  $ G(F'_{k-1})/F'_{k}$).
To achieve the proof, we use that the coefficient $t_{\io,\ep}$ of the Tutte polynomial equals $o_{\io,\ep}/{2^{\io+\ep}}$, as shown in \cite{LV84a} (see Section~\ref{subsec:orientation-activity}).
%
\end{proof}

\begin{remark}
\rm
From Proposition \ref{prop:pty-active-minors}, we already have that any orientation $\G$ can be decomposed into  bipolar/cyclic-bipolar  minors induced by a (connected) filtration of $G$ (the active one of $\G$). Then we could directly deduce a weaker version of Theorem \ref{th:dec-ori} with a union instead of a disjoint union, and  a weaker version of Theorem \ref{th:tutte} with an inequality instead of an equality.
It is the uniqueness result of Proposition \ref{prop:unique-dec-seq} that allows us to state 
Theorems \ref{th:dec-ori} and \ref{th:tutte} as they~are.%
\end{remark}


\begin{cor}[\cite{EtLV98, KoReSt99}]
\label{cor:convolution}
Let $G$ be a graph. We have
$$t(G;x,y)=\sum t(G/F_c;x,0)\ t(G(F_c);0,y)$$ where the sum can be either over all subsets $F_c$ of $E$, or over all cyclic flats  $F_c$ of $G$.
\end{cor}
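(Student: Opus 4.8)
The plan is to derive this directly from Theorem~\ref{th:tutte} by grouping the filtrations of $G$ according to the value of their cyclic flat $F_c$. Recall that a filtration $(F'_\ep,\ldots,F'_0,F_c,F_0,\ldots,F_\io)$ of $E$ is the concatenation of a \emph{flat part} $F_c=F_0\subset\ldots\subset F_\io=E$ and a \emph{dual-flat part} $\emptyset=F'_\ep\subset\ldots\subset F'_0=F_c$, and that the two monotonicity conditions on $\min(F_k\setminus F_{k-1})$ and on $\min(F'_{k-1}\setminus F'_k)$ in Definition~\ref{def:abst-dec-seq} constrain these two parts independently of each other; likewise the connectedness requirements of Definition~\ref{def:graph-dec-seq} split into a condition on the flat part and one on the dual-flat part. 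So for a fixed subset $F_c\subseteq E$, the filtrations of $E$ with cyclic flat $F_c$ are exactly the pairs consisting of one flat part (above $F_c$) and one dual-flat part (below $F_c$), combined freely.

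Next I would recognize the two factors on the right-hand side as the generating functions of these two kinds of parts, by applying Theorem~\ref{th:tutte} to the minors $G/F_c$ and $G(F_c)$. For $H=G/F_c$, setting $y=0$ in Theorem~\ref{th:tutte} retains only the filtrations of $E\setminus F_c$ with $\ep=0$, that is those with empty cyclic flat, for which the $\beta^*$-product is empty; shifting by $F_c$ (cf. Observation~\ref{obs:induced-dec-seq-ori}) these correspond bijectively to the flags $F_c=F_0\subset\ldots\subset F_\io=E$ with $\min(F_k\setminus F_{k-1})$ increasing, and $\beta\bigl((G/F_c)(F_k\setminus F_c)/(F_{k-1}\setminus F_c)\bigr)=\beta\bigl(G(F_k)/F_{k-1}\bigr)$. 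Hence $t(G/F_c;x,0)=\sum\prod_{1\leq k\leq\io}\beta(G(F_k)/F_{k-1})\,x^\io$, the sum being over all flat parts above $F_c$. Symmetrically, applying Theorem~\ref{th:tutte} to $H=G(F_c)$ and setting $x=0$ keeps only the filtrations of $F_c$ with $\io=0$, i.e. those with full cyclic flat $F_c$, giving $t(G(F_c);0,y)=\sum\prod_{1\leq k\leq\ep}\beta^*(G(F'_{k-1})/F'_k)\,y^\ep$ over all dual-flat parts below $F_c$, using $(G(F_c))(F'_{k-1})/F'_k=G(F'_{k-1})/F'_k$ since $F'_{k-1}\subseteq F_c$.

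Finally I would multiply the two expressions: since flat parts and dual-flat parts combine freely, the product $t(G/F_c;x,0)\,t(G(F_c);0,y)$ equals $\sum\bigl(\prod_{1\leq k\leq\io}\beta(G(F_k)/F_{k-1})\bigr)\bigl(\prod_{1\leq k\leq\ep}\beta^*(G(F'_{k-1})/F'_k)\bigr)x^\io y^\ep$ over all filtrations of $E$ whose cyclic flat is exactly $F_c$. Summing over all $F_c\subseteq E$ then runs over all filtrations of $E$, so by Theorem~\ref{th:tutte} the total equals $t(G;x,y)$, which is the first form. For the second form, note that if $F_c$ is not a flat then $G/F_c$ has a loop and $t(G/F_c;x,0)=0$ (the loop contributes a factor $y$), and if $F_c$ is not a dual-flat then $G(F_c)$ has an isthmus and $t(G(F_c);0,y)=0$; hence the terms with $F_c$ not a cyclic flat vanish and the sum may equally be restricted to cyclic flats. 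The main obstacle I expect is the bookkeeping in the middle step: checking carefully that a filtration of $E$ with cyclic flat $F_c$ matches a pair of filtrations of the minors $G/F_c$ and $G(F_c)$, that this matching is a bijection, and that the $\beta$ and $\beta^*$ factors transfer unchanged; once that is in place, the algebra of multiplying out and re-summing is routine.
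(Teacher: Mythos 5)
Your proposal is correct and follows essentially the same route as the paper's proof: specialize Theorem~\ref{th:tutte} at $y=0$ and $x=0$ to identify $t(G/F_c;x,0)$ and $t(G(F_c);0,y)$ as generating functions of the flat and dual-flat parts, match each filtration of $G$ with cyclic flat $F_c$ to a pair of filtrations of the two minors, re-sum, and observe that terms with $F_c$ not a cyclic flat vanish because $G/F_c$ has a loop or $G(F_c)$ has an isthmus. Your treatment of the bookkeeping (the independence of the two parts and the transfer of the $\beta$, $\beta^*$ factors) is if anything slightly more explicit than the paper's.
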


\begin{proof}
By fixing $y=0$ in Theorem \ref{th:tutte}, we get

\centerline{$t(G;x,0)= \ \ \sum \ \ \Bigl(\prod_{1\leq k\leq \io}
\beta \bigl( G(F_k)/F_{k-1}\bigr)\Bigr)
 \  {x^\io}$}
 
\noindent where the sum is over all connected filtrations of type $\emptyset=F'_0= F_c = F_0\subset \ldots\subset F_\io=E$ of $G$.
By fixing $x=0$, we get

\centerline{$t(G;0,y)= \ \ \sum \ \ 
 \ \Bigl(\prod_{1\leq k\leq \ep}\beta^* \bigl( G(F'_{k-1})/F'_{k}\bigr)\Bigr)\ {y^\ep}$}
 
\noindent where the sum is over all connected filtrations of type $\emptyset=F'_\ep\subset \ldots\subset F'_0= F_c = F_0=E$ of $G$.
For a given cyclic flat $F_c$ of $G$, pairs of connected filtrations of $G/F_c$ and $G(F_c)$ of the  above type, respectively for $y=0$ and $x=0$, obviously correspond to the connected filtrations of $G$ of type $(F'_\ep, \ldots, F'_0,$ $F_c , F_0, \ldots, F_\io)$ involving this $F_c$ (note that the subset $F_c$ in a connected filtration has to be a cyclic flat of the graph, since it is the cyclic flat of some active partition, see also the similar observation below Definition \ref{def:act-part} in terms of the active filtration).
Then, by decomposing the sum in Theorem \ref{th:tutte} as $\sum_{F_c}\sum_{i,j}  \Pi_{1\leq k\leq \io}\dots\Pi_{1\leq k\leq \ep}\dots$, 
we get the formula
$t(G;x,y)=\sum t(G/F_c;x,0)\ t(G(F_c);0,y)$ where the sum is over all cyclic flats  $F_c$ of $G$.
\emevder{preuve ci-dessus pas super bien dit, vaut le coup d'ameliorer ???}%
If $F_c$ is not a cyclic flat, then either $G/F_c$ has a loop or $G(F_c)$ has an isthmus, implying that the corresponding term in the sum equals zero.
\end{proof}

%
%
%

The formula in Corollary \ref{cor:convolution} is called  {\it convolution formula for the Tutte polynomial} in \cite{KoReSt99}, and it is also the enumerative translation of the bijection given in \cite{EtLV98}.
For information, we mention that Corollary \ref{cor:convolution} can also be proved very shortly and directly for graphs, using the enumeration of orientation activities formula of the Tutte polynomial from \cite{LV84a} (see Section~\ref{subsec:orientation-activity}), along with the fact that a digraph $\G$ can be uniquely decomposed into an acyclic digraph $\G/F_c$ and a strongly connected digraph $\G(F_c)$ where $F_c$ is the union of directed cycles of $\G$ (however, such a proof does not generalize  to non-orientable matroids). 
The reader can complete details or find them in \cite{GiChapterOriented}. 
\emevder{verfiier si prevue donnee dans chpaitre comme prevu + faut il mettre cette remarque ? peut etre juste "see also chapter for a short direct proof using..."}%
%
Let us also mention that an algebraic proof of the formula in Theorem \ref{th:tutte} could be obtained using matroid set functions, a technique introduced in \cite{La97}, 
according to its author \cite{Lass-perso}.
\subsection{Activity classes in the set of all orientations of an ordered graph - Tutte polynomial expansion in terms of four refined orientation activities}
\label{subsec:act-classes}


Let us continue to build on active partitions. We define the notion of activity classes of orientations of an ordered graph. They are a central concept in this paper, and they will be put in bijection with spanning trees by the canonical active bijection in Section \ref{subsec:alpha-def-decomp} (as in \cite{GiLV05}).
Next, we develop this notion to derive further structural and enumerative results, which are interesting on their own and will be used later for the refined active bijection in Section \ref{subsec:refined}.
Let us mention that the whole content of this section is generalized to oriented matroid perspectives in \cite{Gi18}.

\begin{prop}
\label{prop:act-classes} 
Let $\G$ be a directed graph on a linearly ordered set of edges $E$, 
with $\io$ dual-active edges and $\ep$ active edges.
The $2^{\io+\ep}$ orientations of $G$ obtained by reorienting any union of parts of the active partition of $\G$ have the same active filtration/partition as $\G$ (and hence also the same active and dual-active  edges, and the same active minors up to taking the opposite).
\end{prop}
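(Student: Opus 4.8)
The plan is to deduce the statement from the uniqueness assertion of Proposition~\ref{prop:unique-dec-seq}, after observing that reorienting a union of parts of the active partition acts blockwise on the active minors of $\G$.

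First I would fix $A$ equal to a union of parts of the active partition of $\G$ and set $\G'=-_A\G$, and write $(F'_\ep,\ldots,F'_0,F_c,F_0,\ldots,F_\io)$ for the active filtration of $\G$. By Proposition~\ref{prop:pty-active-minors}, the active minors $\G(F_k)/F_{k-1}$, for $1\le k\le\io$, are bipolar with respect to $a_k=\min(F_k\s F_{k-1})$, and the active minors $\G(F'_{k-1})/F'_k$, for $1\le k\le\ep$, are cyclic-bipolar with respect to $a'_k=\min(F'_{k-1}\s F'_k)$. The key remark is that the edge set of the active minor $\G(F_k)/F_{k-1}$ is precisely the part $F_k\s F_{k-1}$ of the active partition, and likewise the edge set of $\G(F'_{k-1})/F'_k$ is the part $F'_{k-1}\s F'_k$ (Definitions~\ref{def:act-part} and~\ref{def:active-minors}). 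Since $A$ is a union of parts, for each active minor either all of its edges lie in $A$ or none of them do. Reorienting an edge that is deleted, or that is contracted, in forming a minor has no effect on that minor (the direction of a contracted edge is never used); hence $\G'(F_k)/F_{k-1}$ is either equal to $\G(F_k)/F_{k-1}$ or to its opposite digraph, and similarly $\G'(F'_{k-1})/F'_k$ is $\G(F'_{k-1})/F'_k$ or its opposite.

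Next I would use that being bipolar with respect to a fixed edge $p$ is preserved when passing to the opposite digraph --- the opposite merely exchanges the unique source and the unique sink, which in both digraphs are the two endpoints of $p$ --- and that being cyclic-bipolar with respect to $p$ is likewise preserved (see Section~\ref{subsec:prelim-beta}). Therefore the sequence $(F'_\ep,\ldots,F'_0,F_c,F_0,\ldots,F_\io)$, which is still a filtration of the unchanged graph $G$, has the property that $\G'(F_k)/F_{k-1}$ is bipolar with respect to $\min(F_k\s F_{k-1})$ for every $1\le k\le\io$ and $\G'(F'_{k-1})/F'_k$ is cyclic-bipolar with respect to $\min(F'_{k-1}\s F'_k)$ for every $1\le k\le\ep$. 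By the uniqueness in Proposition~\ref{prop:unique-dec-seq}, this sequence is the active filtration of $\G'$. Consequently $\G$ and $\G'$ have the same active filtration, hence the same active partition, the same active and dual-active edges (these being the minima of the parts, hence read off the filtration), and the same active minors up to taking opposites. Since the active partition has $\io+\ep$ nonempty pairwise disjoint parts, reorienting a union of parts yields $2^{\io+\ep}$ orientations, as claimed.

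I expect the only point needing a little care to be the assertion that reorienting an edge which gets contracted does not alter the resulting minor; the rest is bookkeeping on the active filtration. An alternative route, avoiding Proposition~\ref{prop:unique-dec-seq}, would be to check directly that every directed cycle, resp.\ directed cocycle, of $\G$ remains a directed cycle, resp.\ directed cocycle, of $\G'$ up to reversal, so that the unions defining $F_c$ and the $F_k$, $F'_k$ in Definition~\ref{def:act-seq-dec} are literally unchanged; this is the same observation phrased at the level of cycles and cocycles rather than minors.
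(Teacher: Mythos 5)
Your proposal is correct and follows exactly the second argument sketched in the paper's own proof, namely deducing the result from the uniqueness in Proposition~\ref{prop:unique-dec-seq} after observing that reorienting a union of parts reverses some active minors entirely while leaving the others unchanged (the paper also sketches, and leaves as an exercise, the direct cycle/cocycle argument you mention as an alternative at the end). You merely supply the details the paper omits, in particular that each active minor's edge set is exactly one part of the partition and that (cyclic-)bipolarity is preserved under taking the opposite digraph.
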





\begin{proof}
%
The result is not difficult to prove directly from Definition \ref{def:act-seq-dec}: consider $A$  the union of all directed cycles (or cocycles) of $\G$ whose smallest edge is greater than a given edge $e$, and prove that every union of all directed cycles (or cocycles) whose smallest edge is greater than any edge is the same in $\G$ and $-_A\G$. 
We leave this proof as an exercise (see \cite{AB2-b} for a similar short proof in oriented matroid terms, see \cite{Gi18} for a detailed more general proof in oriented matroid perspectives).
\eme{Let $C$ be a  directed cycle of $\G$, with $\min(C)=a$. If $a\geq e$ then $C\subseteq A$ then $C$ is a directed cycle of $-_A\G$.
If $a<e$ then consider $A$ as the composition of cycles...}%
Alternatively, the result can also be seen as a direct corollary of Theorem \ref{th:dec-ori} or Proposition \ref{prop:unique-dec-seq}. Indeed, reorienting a union of parts of the active partition of $\G$ implies reorienting completely some of the active minors 
of $\G$. 
Then, for the resulting orientation, the resulting minors still satisfie the property of Proposition \ref{prop:unique-dec-seq}, hence the active filtration is the same as that of $\G$.
\end{proof}


\begin{definition}
\label{def:act-class}
We call \emph{(orientation) activity class of }$\G$ the set of all orientations of $G$ obtained by reorienting any union of parts of the active partition of $\G$. 
\end{definition}

An illustration 
 is given in Figure \ref{EG:fig:K4-dec}. 
From Proposition \ref{prop:act-classes}, we directly get the following result.

\begin{prop}
\label{prop:partition-into-act-classes}
Activity classes of orientations of $G$ 
partition  
the set of orientations of $G$:
\ms

\noindent $\displaystyle\bigl\{\ \text{orientations of }G\ \bigr\}=\biguplus_{\substack{\text{activity classes}\\ \text{of orientations of }G\\ \text{\tiny (one $-_A\G$ chosen in each class)}}}
\Bigl\{\  2^{|O(-_A\G)|+|O^*(-_A\G)|} \ \
\substack{\text{\small orientations obtained by}\\
\text{\small  active partition reorienting}}
\ \Bigr\}.$

\vspace{-4mm}
\qed
\end{prop}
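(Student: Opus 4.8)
The statement to prove, Proposition~\ref{prop:partition-into-act-classes}, asserts that activity classes of orientations partition the set of all orientations of $G$, with each class of $-_A\G$ having size $2^{|O(-_A\G)|+|O^*(-_A\G)|}$. The plan is to derive this as an almost immediate consequence of Proposition~\ref{prop:act-classes} together with elementary properties of the active partition. First I would observe that, by definition (Definition~\ref{def:act-class}), the activity class of any orientation $\H$ is the set of $2^{|O(\H)|+|O^*(\H)|}$ orientations obtained by reorienting arbitrary unions of parts of the active partition of $\H$; this is well-defined and has the stated cardinality because the active partition has exactly $\io+\ep = |O^*(\H)| + |O(\H)|$ parts (one part for each active edge and each dual-active edge, by Definition~\ref{def:act-part}), and reorienting distinct unions of parts yields distinct orientations since the parts are nonempty and pairwise disjoint. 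So the size formula is built into the definition.

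The heart of the matter is that ``being in the same activity class'' is an equivalence relation, i.e. the classes genuinely partition the orientation set. Reflexivity is trivial (reorient the empty union). For symmetry and transitivity, the key input is Proposition~\ref{prop:act-classes}: if $-_A\H$ is obtained from $\H$ by reorienting a union $A$ of parts of the active partition of $\H$, then $-_A\H$ has the \emph{same} active partition as $\H$. Hence the set of parts available for reorienting is identical for $\H$ and for $-_A\H$. Symmetry then follows because reorienting the same set $A$ again recovers $\H$ from $-_A\H$, and $A$ is a union of parts of the active partition of $-_A\H$ as well. Transitivity follows because if $-_A\H$ lies in the class of $\H$ and $-_B(-_A\H) = -_{A\triangle B}\H$ lies in the class of $-_A\H$, then both $A$ and $B$ are unions of parts of the common active partition, so $A \triangle B$ is too, whence $-_{A\triangle B}\H$ lies in the class of $\H$. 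Therefore the activity classes are the blocks of an equivalence relation on the (finite) set of orientations of $G$, and thus they partition it; choosing one representative $-_A\G$ per class and invoking the cardinality count above yields exactly the displayed formula.

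I do not anticipate a real obstacle here: the only thing that must be handled with care is making explicit that the ``parts of the active partition'' are the same family of subsets of $E$ for any two orientations in the same class — and this is precisely the content of Proposition~\ref{prop:act-classes}, which the excerpt has already proved. A secondary point worth a sentence is confirming that the number of parts of the active partition of $\H$ equals $|O(\H)| + |O^*(\H)|$: this is immediate from Definition~\ref{def:act-part}, where the parts $F'_{k-1}\setminus F'_k$ (for $1 \le k \le \ep$) and $F_k \setminus F_{k-1}$ (for $1 \le k \le \io$) are indexed precisely by the $\ep$ active edges $a'_k$ and the $\io$ dual-active edges $a_k$. With these two observations in hand, the proof is a two-line formal argument, and the \texttt{\textbackslash qed} already present in the statement block closes it.
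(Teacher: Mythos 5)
Your proof is correct and follows the same route as the paper, which simply states that the proposition follows directly from Proposition~\ref{prop:act-classes} and omits the details; your explicit verification that class membership is an equivalence relation (via the invariance of the active partition) and that the class size is $2^{|O(-_A\G)|+|O^*(-_A\G)|}$ is precisely the routine argument the paper leaves implicit.
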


\begin{definition}
\label{def:active-fixed}
Let $\G$ be a digraph on a linearly set of edges $E$ (thought of as a \emph{reference orientation} of the graph $G$). 
Let $A\subseteq E$. 
The digraph $-_A\G$ is said to be \emph{active-fixed}, resp. \emph{dual-active fixed}, (with respect to $\G$) if the directions of all active, resp. dual-active, edges of $-_A\G$ agree with their directions in $\G$, that is if $O(-_A\G)\cap A=\emptyset$, resp. $O^*(-_A\G)\cap A=\emptyset.$
\end{definition}

From Propositions  \ref {prop:act-classes} and \ref{prop:partition-into-act-classes}, the Tutte polynomial formula in terms of orientations activities (Section \ref{subsec:orientation-activity}), and the usual cyclic/acyclic decomposition, we derive the following counting results.
\emevder{reresentatives dans corollarie ?}

\begin{cor}
\label{cor:enum-classes}
Let $G$ be an ordered graph.
The number of activity classes of orientations of $G$ with activity $i$ and dual activity $j$ equals $t_{i,j}$.

Let $\G$ be a reference orientation of $G$.
Each activity class of $G$ contains exactly one orientation of $G$ which is
active-fixed and dual-active-fixed (w.r.t. $\G$).
The number of such orientations of $G$ with activity $i$ and dual activity $j$ thus equals $t_{i,j}$.

In this way, we also obtain the enumerations given by Table~\ref{table:enum-classes}.
\qed
\end{cor}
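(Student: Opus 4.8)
The plan is to derive Corollary~\ref{cor:enum-classes} from the already-established structural results, essentially as a bookkeeping exercise. The three facts I would invoke are: (i) Proposition~\ref{prop:partition-into-act-classes}, which says the orientations of $G$ partition into activity classes, and each class of an orientation with activity $\ep$ and dual-activity $\io$ has size $2^{\io+\ep}$; (ii) the Tutte polynomial formula in terms of orientation activities from Section~\ref{subsec:orientation-activity}, giving $o_{\io,\ep}=2^{\io+\ep}\,t_{\io,\ep}$; and (iii) the fact that activity classes preserve activity and dual-activity (Proposition~\ref{prop:act-classes}), so it makes sense to speak of a class having a well-defined pair $(i,j)=(\ep,\io)$.

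First I would count classes. Group the activity classes by their common activity/dual-activity pair $(i,j)$. By Proposition~\ref{prop:partition-into-act-classes} every orientation lies in exactly one class, and the orientations in a class with parameters $(i,j)$ number $2^{i+j}$; hence if $N_{i,j}$ denotes the number of classes with parameters $(i,j)$ we get $o_{i,j}=2^{i+j}N_{i,j}$. Comparing with $o_{i,j}=2^{i+j}t_{i,j}$ gives $N_{i,j}=t_{i,j}$, which is the first assertion.

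Second I would handle the active-fixed / dual-active-fixed representative. Fix a reference orientation $\G$ and consider one activity class, represented by some $-_A\G$ with active partition having parts $P_1,\dots,P_{i}$ inside $F_c$ (the ``active'' parts, whose minimal elements are the active edges) and $P'_1,\dots,P'_{j}$ outside $F_c$ (the ``dual-active'' parts, whose minimal elements are the dual-active edges). By Definition~\ref{def:act-class} the class consists of all $2^{i+j}$ reorientations obtained by flipping an arbitrary union of these parts; by Proposition~\ref{prop:act-classes} all of them share the same active partition, so the identity of each part $P_k$, $P'_\ell$, and of its minimal element (= the corresponding (dual-)active edge) is the same throughout the class. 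Within the class, the direction of a given active edge $a'_k=\min(P_k)$ relative to $\G$ is toggled precisely when $P_k$ is among the flipped parts, and independently for each $k$; likewise for the dual-active edges $a_\ell=\min(P'_\ell)$ relative to $\G$ when flipping the parts $P'_\ell$. Hence there is exactly one choice of flipped parts — namely, flip $P_k$ iff $a'_k$ currently disagrees with $\G$, and flip $P'_\ell$ iff $a_\ell$ currently disagrees with $\G$ — that makes every active and every dual-active edge agree with $\G$, i.e.\ makes the orientation active-fixed and dual-active-fixed. This establishes existence and uniqueness of the representative in each class. Combined with the first assertion, the number of active-fixed and dual-active-fixed orientations with activity $i$ and dual-activity $j$ equals $N_{i,j}=t_{i,j}$.

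The table entries then follow by specializing: restricting to acyclic orientations means $F_c=\emptyset$, so $\ep=0$ (no active parts), and summing $t_{i,0}x^i=t(G;x,0)$ gives the acyclic counts, in particular $t(G;2,0)=\sum_i 2^i t_{i,0}$ for all acyclic orientations and $t(G;1,0)$ for the active-fixed (here automatically dual-active-fixed-free) ones; dually for strongly connected orientations one has $F_c=E$, $\io=0$, and the generating function $t(G;0,y)$; and the mixed specializations $t(G;2,1)$, $t(G;1,2)$, $t(G;2,2)$ arise from fixing only one of the two families of parts or none. The main (and only mild) obstacle is making precise the claim that flipping the parts of the active partition toggles the directions of the (dual-)active edges \emph{independently} — but this is immediate once one observes that distinct parts are disjoint and each contains exactly one (dual-)active edge, namely its minimum, by Definition~\ref{def:act-part}; everything else is direct counting from results proved above.
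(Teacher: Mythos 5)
Your proposal is correct and follows essentially the same route as the paper, which derives the corollary directly from Propositions \ref{prop:act-classes} and \ref{prop:partition-into-act-classes} together with the identity $o_{\io,\ep}=2^{\io+\ep}t_{\io,\ep}$ and the cyclic/acyclic decomposition; your explicit argument that flipping parts of the active partition toggles the (dual-)active edges independently (each such edge being the minimum of exactly one part) is precisely the observation the paper leaves implicit. The only blemish is a slip in your discussion of the table: for acyclic orientations it is the \emph{active-fixed} condition that is vacuous (since $O(\G)=\emptyset$) and the \emph{dual-active-fixed} condition that selects the representative counted by $t(G;1,0)$, not the other way around.
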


\begin{table}[H]
\def\interligne{&\\[-11pt]}%
\vspace{-5mm}
\begin{center}
\begin{tabular}{|l|c|}
\hline
\interligne
{\bf orientations of $G$} /
{\bf activity classes of $G$}   
& \multicolumn{1}{|c|}{\bf number}\\
\hline
\interligne
 active-fixed and   dual-active-fixed / all
&  $t(G;1,1)$\\
 \interligne
  acyclic and dual-active-fixed / acyclic
& $t(G;1,0)$\\
 \interligne
 active-fixed and strongly connected / strongly connected
& $t(G;0,1)$\\
\interligne
 active-fixed (/ non-applicable) & $t(G;2,1)$\\
\interligne
  dual-active-fixed (/ non-applicable) & $t(G;1,2)$\\
\hline
\end{tabular}  
\end{center}
\vspace{-5mm}
\caption{Enumeration of certain orientations based on  representatives of activity classes (Corollary  \ref{cor:enum-classes}).
 }
 \label{table:enum-classes}%
\end{table}

\vspace{-2mm}
Now, let us refine orientation activities into four parameters
w.r.t. a reference orientation~$\G$. 


\eme{dessous en commentaire DEF avec cardinaux en plus}

\begin{definition} 
\label{def:gene-act-ori}
Let $\G$ be an ordered directed graph (reference orientation of $G$). We define: 
%
\vspace{-1mm}
\begin{eqnarray*}
   \Theta_\G(A)&=&O(-_A\G)\s A,  \\
 \bar\Theta_\G(A)&=&O(-_A\G)\cap A, \\ 
   \Theta^*_\G(A)&=&O^*(-_A\G)\s A, \\
\bar\Theta^*_\G(A)&=&O^*(-_A\G)\cap A. 
\end{eqnarray*}
\eme{dessous avec complementaires en plus}
%
%
%
Hence we have $O(-_A\G)=\Theta_\G(A)\uplus\bar\Theta_\G(A)$ 
and $O^*(-_A\G)=\Theta^*_\G(A)\uplus\bar\Theta^*_\G(A)$.
\end{definition}


These parameters can be seen as situating a reorientation of $\G$ in its activity class. 
Indeed, the representative $-_A\G$ of its activity class which is active-fixed and dual-active fixed (Corollary \ref{cor:enum-classes}) satisfies
$\bar\Theta_\G(A)=O(-_A\G)\cap A=\emptyset$ and $\bar\Theta^*_\G(A)=O^*(-_A\G)\cap A=\emptyset,$
and the other orientations in the same activity class correspond to other possible values of $\bar\Theta_\G(A)\subseteq O(-_A\G)$ and $\bar\Theta^*_\G(A)\subseteq O^*(-_A\G)$.
A way of understanding the role of the reference orientation $\G$ is that it breaks the symmetry in each activity class, so that its boolean lattice structure can be expressed relatively to its aforemetioned representative.
This feature will be taken up in  Section \ref{subsec:refined} on the refined active bijection, in connection with the similar four parameters for spanning trees from Section \ref{prelim:subset-activities}.
Let us also mention that, for suitable orderings
(roughly when all branches of the smallest spanning tree are increasing),
 unique sink acyclic orientations are also representatives of their activity classes, see \cite[Section 6]{GiLV05}.

Finally, we derive the following Tutte polynomial expansion formula in terms of these four parameters.
A (technical) proof for Theorem \ref{EG:th:expansion-orientations} below is proposed in 
the preprint \cite{LV12}
by deletion/contraction in the more general setting of oriented matroid perspectives.
As announced in \cite{LV12},
this theorem can be proved by means of the above construction on activity classes (this theorem can also be seen as a direct corollary of the similar formula for subset activities from Theorem  \ref{EG:th:Tutte-4-variables} and the refined active bijection from Theorem \ref{EG:th:ext-act-bij}).
We give this short proof below for completeness of the paper, though it is a translation of the proof given in \cite{Gi18} for oriented matroid perspectives.%
%


 
\begin{thm}[\cite{LV12, Gi18}] 
\label{EG:th:expansion-orientations}
Let $G$ be a graph on a linearly ordered set of edges $E$, and $\G$ be an orientation of $G$. We have
%
%
\eme{dessous en commanriter enonce vec cardianux}
\begin{large}
$$T(G;x+u,y+v)=\sum_{A\subseteq E} \ x^{|\Theta^*_\G(A)|}\ u^{|\bar\Theta^*_\G(A)|}\ y^{|\Theta_\G(A)|}\ v^{|\bar\Theta_\G(A)|}.$$
\end{large}
\end{thm}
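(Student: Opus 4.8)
The plan is to derive this orientation-activity expansion from its spanning-tree/subset counterpart, Theorem \ref{EG:th:Tutte-4-variables}, via a bijective argument using activity classes. The key input is that the canonical active bijection $\alpha$ of Section \ref{subsec:alpha-def-decomp} matches activity classes of orientations to spanning trees while preserving both activities and active/dual-active edges, and that, once a reference orientation $\G$ is fixed, the refined active bijection $\alpha_\G$ of Theorem \ref{EG:th:ext-act-bij} upgrades this to a bijection $2^E \to 2^E$, $A \mapsto \alpha_\G(A)$, sending $-_A\G$ to a subset $\alpha_\G(A)$ lying in the interval of the spanning tree $\alpha(-_A\G)$. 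So the first step is simply to recall these two facts and set up the dictionary: $-_A\G$ corresponds to $B := \alpha_\G(A)$.

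The heart of the argument is then a term-by-term comparison. I would show that the four orientation parameters of $-_A\G$ equal the four subset parameters of $B$:
\begin{align*}
|\Theta^*_\G(A)| &= |\Int_G(B)|, & |\bar\Theta^*_\G(A)| &= |P_G(B)|,\\
|\Theta_\G(A)| &= |\Ext_G(B)|, & |\bar\Theta_\G(A)| &= |Q_G(B)|.
\end{align*}
The identities on the "fixed" parts follow because $\alpha$ preserves active and dual-active edges and preserves activities: the dual-active edges of $-_A\G$ that agree with $\G$ correspond to internally active edges of $\alpha(-_A\G)$ that lie in $B$, i.e. to $\Int_G(B)$, and similarly for $O(-_A\G)$, $\Ext_G(B)$. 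The identities on the "flipped" parts, $\bar\Theta^*_\G(A) \leftrightarrow P_G(B)$ and $\bar\Theta_\G(A) \leftrightarrow Q_G(B)$, are exactly the point of the refinement $\alpha_\G$: the definition $\alpha_\G(A) = \alpha(-_A\G) \setminus (A \cap O^*(-_A\G)) \cup (A \cap O(-_A\G))$ removes precisely the dual-active edges flipped relative to $\G$ (a set of size $|\bar\Theta^*_\G(A)|$) from the spanning tree and adds precisely the flipped active edges (a set of size $|\bar\Theta_\G(A)|$), which is what situates $B$ inside the interval $[T\setminus\Int(T),\,T\cup\Ext(T)]$ for $T=\alpha(-_A\G)$, yielding $|P_G(B)| = |\bar\Theta^*_\G(A)|$ and $|Q_G(B)| = |\bar\Theta_\G(A)|$. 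Summing over all $A\subseteq E$ and invoking the bijectivity of $\alpha_\G$ turns the claimed formula into the right-hand side of Theorem \ref{EG:th:Tutte-4-variables}, which is $T(G;x+u,y+v)$.

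I expect the main obstacle to be making the four cardinality identities genuinely rigorous rather than merely plausible: one must check carefully that $\alpha$ maps $O^*(-_A\G)$ (resp. $O(-_A\G)$) onto $\Int_G(\alpha(-_A\G))$ (resp. $\Ext_G(\alpha(-_A\G))$) as \emph{sets}, not just that the cardinalities agree, so that intersecting with $A$ on the orientation side corresponds correctly to intersecting with $B$ on the subset side after the edge swaps; this requires unwinding Definition \ref{EG:def:act-bij-ext} and the active-edge-preservation part of Theorem \ref{EG:th:alpha}. An alternative, more self-contained route — the one alluded to in the text as "by means of the above construction on activity classes" — would instead fix a connected filtration, use Proposition \ref{prop:act-classes} to describe an activity class as a boolean lattice on $O(-_A\G)\uplus O^*(-_A\G)$, expand $\sum_{A}$ as $\sum_{\text{classes}}\sum_{\text{reorientings within the class}}$, evaluate the inner sum as $(x+u)^{j}(y+v)^{i}$ for a class of dual-activity $j$ and activity $i$, and then recognize $\sum_{\text{classes}} (y+v)^i (x+u)^j = t(G;x+u,y+v)$ via Corollary \ref{cor:enum-classes} (which gives $t_{i,j}$ classes of type $(i,j)$) together with the spanning-tree expansion of Section \ref{subsec:prelim-sp-trees}; if the bijective route proves delicate I would fall back on this counting argument.
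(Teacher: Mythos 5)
Your proposal is correct, and it in fact contains the paper's own proof as its ``fallback'': the paper proves Theorem \ref{EG:th:expansion-orientations} exactly by the counting argument you sketch at the end --- partition $2^E$ into activity classes via Proposition \ref{prop:partition-into-act-classes}, pick the active-fixed and dual-active-fixed representative, observe that within a class $\bar\Theta_\G$ and $\bar\Theta^*_\G$ range over all subsets of $O(-_A\G)$ and $O^*(-_A\G)$, collapse the inner sum to $(x+u)^{|O^*(-_A\G)|}(y+v)^{|O(-_A\G)|}$ by the binomial formula, and finish with the orientation-activity formula of \cite{LV84a} (the paper re-expands over all orientations and divides by $2^{\io+\ep}$ rather than invoking Corollary \ref{cor:enum-classes}, but this is the same computation). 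Your primary route --- transporting the four-parameter subset expansion of Theorem \ref{EG:th:Tutte-4-variables} through the refined active bijection --- is also valid and is explicitly acknowledged in the text preceding the theorem as an alternative; the four set identities you worry about making rigorous are precisely the second bullet of Theorem \ref{EG:th:ext-act-bij} (and the set equality $\Int(\alpha(\G))=O^*(\G)$, $\Ext(\alpha(\G))=O(\G)$ is part of Theorem \ref{EG:th:alpha}, so no cardinality-versus-set gap arises), and there is no circularity since neither of those theorems uses the present one. The trade-off is clear: the bijective route gives a genuinely bijective explanation of the identity but leans on the full active-bijection machinery, whereas the counting route is self-contained given only the activity-class partition and the classical relation $o_{\io,\ep}=2^{\io+\ep}t_{\io,\ep}$, which is why the paper adopts it.
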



\begin{proof}
The proof is obtained by a simple combinatorial transformation.
Let us start with  the right-hand side of the equality, where we denote $\theta^*_\G(A)$ instead of $|\Theta^*_\G(A)|$, etc., by setting:
\vspace{-1mm}
$$\displaystyle [Exp] = \sum_{A\subseteq E} x^{\theta^*_\G(A)}u^{\bar\theta^*_\G(A)}y^{\theta_\G(A)}v^{\bar\theta_\G(A)}.$$

\vspace{-2mm}
Since $2^E$ is isomorphic to the set of orientations, which is partitioned into orientation activity classes of $\G$ (Proposition \ref{prop:partition-into-act-classes}), and by choosing a representative for each activity class which is active-fixed an dual-active-fixed (as discussed above), we get:
\vspace{-1mm}
$$[Exp] =  \sum_{\substack{\text{orientation activity classes of }G\\ \text{with one $-_A\G$ chosen in each class}\\ \text{such that $O(-_A\G)\cap A=\emptyset$ and $O^*(-_A\G)\cap A=\emptyset$}}}\ \ \sum_{\substack{-_{A'}\G\text{ in the }\\\text{class of }-_A\G}} x^{\theta^*_\G(A')}u^{\bar\theta^*_\G(A')}y^{\theta_\G(A')}v^{\bar\theta_\G(A')}$$

\vspace{-2mm}
As discussed above, when $-_{A'}\G$ ranges the activity class of $-_A\G$, 
$\bar\Theta_\G(A')$ and $\bar\Theta^*_\G(A')$  range subsets of 
$O(-_A\G)$ and  $O^*(-_A\G)$,  respectively. So, we get the following expression (where ``idem'' refers to the text below the first above sum), which we then transform using the binomial formula:%
%
%
\begin{eqnarray*}
[Exp] & = & \sum_{\text{idem}}\ \ \sum_{\substack{P\subseteq O^*(-_A\G)\\ Q\subseteq O(-_A\G)}} x^{\mid O^*(-_A\G)\s P\mid}u^{\mid P\mid}y^{\mid O(-_A\G)\s Q\mid}v^{\mid Q\mid}\\
&=&  \sum_{\text{idem}} \ \
\Biggl(\sum_{P\subseteq O^*(-_A\G)} 
x^{\mid O^*(-_A\G)\s P\mid}
u^{\mid P\mid}\Biggr)
\Biggl(\sum_{Q\subseteq O(-_A\G)} 
y^{\mid O(-_A\G)\s Q\mid}
v^{\mid Q\mid}\Biggr)
\\
&=&  \sum_{\text{idem}}\ \ (x+u)^{|O^*(-_A\G)|} (y+v)^{|O(-_A\G)|}
\end{eqnarray*}
%




\vspace{-2mm}
Since the activity class of $-_A\G$ has ${2^{|O(-_A\G)|+|O^*(-_A\G)|}}$ elements with the same orientation activities,
we have (denoting for short $o(A)=|O(-_A\G)|$ and $o^*(A)=|O(-_A\G)|$):
\begin{eqnarray*}
%
[Exp] & = & 
%
%
\sum_{\text{idem}}\ \ {1\over{2^{o(A)+o^*(A)}}}\ \sum_{\substack{-_{A'}\G\text{ in the  class of }-_A\G}}\ (x+u)^{o^*(A')}(y+v)^{o(A')}\\
&=& \sum_{\text{idem}}\ \ \sum_{\substack{-_{A'}\G\text{ in the  class of }-_A\G}}\ {\Bigl({x+u\over 2}\Bigr)}^{o^*(A')}{\Bigl({y+v\over 2}\Bigr)}^{o(A')}\\
&=&  \sum_{A\subseteq E}\ {\Bigl({x+u\over 2}\Bigr)}^{o^*(A)}{\Bigl({y+v\over 2}\Bigr)}^{o(A)}\\
&=&  t(G;x+u,y+v)
\end{eqnarray*}
\vspace{-2mm}
using at the end the orientation activity enumeration formula from \cite{LV84a} recalled 
in Section \ref{subsec:orientation-activity}.
\end{proof}

\eme{DESSOUS EN COMMANTIRES MA PREMIERE PREUVE}

\begin{remark}
\rm
Numerous Tutte polynomial formulas can be obtained from Theorem \ref{EG:th:expansion-orientations}, for instance by replacing variables ($x$, $u$, $y$, $v$) with $(x/2,x/2,y/2,y/2)$, 
or $(x+1,-1,y+1,-1)$, 
or $(2,0,0,0)$. One can also obtain expressions for Tutte polynomial derivatives.
Such formulas, and a detailed example for this set of formulas, are given in \cite{LV12} (see also \cite{Gi18, GiChapterOriented, AB2-b}).
\emevder{verifier refs du "see also", a mettre ou pa ?}%
\end{remark}


\eme{dessous en commentaire intro de corollaires enleves remis a LV12}%


\eme{dessous en commentaire corollaires enleves remis a LV12}%
\section{The three levels of the active bijection of an ordered graph}

\label{sec:edges}

In this section, we give the definitions and main properties of the three levels of the active bijection, as depicted in the diagram of Figure \ref{fig:diagram} and as globally described in Section \ref{sec:intro}. 
We focus on the construction from digraphs/orientations to spanning trees/subsets. The inverse direction is summarized in Section \ref{sec:spanning-trees}.

\subsection{The uniactive bijection -  The fully optimal spanning tree of an ordered bipolar digraph}
\label{subsec:fob}





Let us  recall or reformulate the precise definitions and main properties of the uniactive case of the active bijection, subject of \cite{GiLV05,AB1}. See Section \ref{sec:intro} for a global introduction with 
related results.

\eme{PEUT ETRE AJOUTER DEF 6 DANS GRAPHE AVEC CETTE RELAXATION?}%


\begin{definition}
\label{def:acyc-alpha}
Let $\G=(V,E)$ be a directed graph, on a linearly ordered set of edges, 
which is bipolar 
with respect to the minimal element $p$ of $E$. The \emph{fully optimal spanning tree} $\alpha(\G)$ of $\G$ is the unique spanning tree $T$ of $G$ such~that:
\smallskip

\bul for all $b\in T\setminus \{p\}$, the directions (or the signs) of $b$ and $\min(C^*(T;b))$ are opposite in $C^*(T;b)$;

\bul for all $e\in E\s T$, the directions (or the signs) of $e$ and $\min(C(T;e))$ are opposite in $C(T;e)$.
\end{definition}

Note that a directed graph and its opposite have the same fully optimal
spanning tree.
Let us mention that the above definition has equivalent formulations involving unions of successive fundamental cycles/cocycles \cite{GiLV05, AB1} (recalled in \cite[Section \ref{ABG2LP-subsec:prelim-fob}]{ABG2LP}).\emevder{verifier ref}
A detailed illustration of the above definition on a bipolar orientation of $K_4$ is given in \cite{AB2-b} (see also \cite{GiLV05} for another example).
\ss

The existence and uniqueness of the fully optimal spanning tree  
is the main result of \cite{GiLV05, AB1}. This is a deep and tricky result with several intepretations, mainly geometrical  (see \cite[Section 5]{AB2-b} for a recap\emeref{verifier}). It is equivalent to the key theorem below, before which we give  dual definitions extending the above one, and after which we give further precisions from the constructive viewpoint.%



\begin{definition}[Dual and very similar to Definition \ref{def:acyc-alpha}]
\label{def:cyc-alpha1}
Let $\G=(V,E)$ be a directed graph on a linearly ordered set of edges,
cyclic-bipolar with respect to the minimal element $p$ of $E$. 
%
We define  $\alpha(\G)$ as the unique spanning tree $T$ of $G$ such~that:
\smallskip

\bul for all $b\in T$,  the directions (or signs) of $b$ and $\min(C^*(T;b))$ are opposite in $C^*(T;b)$;
\smallskip

\bul for all $e\in (E\s T)\s \{p\}$, 
the directions (or signs) of $e$ and $\min(C(T;e))$ are opposite in $C(T;e)$.
%
\end{definition}

\begin{definition}[equivalent to Definition \ref{def:cyc-alpha1}]
\label{def:cyc-alpha2}
Let $\G=(V,E)$ be a directed graph on a linearly ordered set of edges,
cyclic-bipolar with respect to the minimal element $p$ of $E$. We assume $|E|>1$.
Then $-_p\G$ has the property to be bipolar with respect to $p$, and we define $\alpha(\G)$ as:
\begin{equation}
\tag{Active Duality}
\label{eq:act_mapping_strong_duality}
\alpha(\G)=\alpha(-_p\G)\setminus \{p\}\cup \{p'\}
\end{equation}
where $p'$ is the smallest edge of $E$ distinct from $p$.
\end{definition}

The equivalence of these two definitions is given by \cite[Theorem 5.3]{AB1}%
\footnote{Let us correct here an unfortunate typing error in \cite[Proposition 5.1 and Theorem 5.3]{AB1}. The statement has been given under the wrong hypothesis 
$T_{\min}=\{p<p'<\dots\}$ 
instead of the correct one $E=\{p<p'<\dots\}$. Proofs are unchanged
(independent typo: in line 10 of the proof of Proposition 5.1, instead of $B'-f$, read $(E\setminus  B')\setminus\{f\}$).
In \cite[Section 4]{GiLV05}, the statement of the \ref{eq:act_mapping_strong_duality} property is correct.\emevder{*** ref a ERRATUM ***}
}.
Only the second one was used in \cite{GiLV05}.
%
Let us observe that 
Definition \ref{def:cyc-alpha1} comes from Definition \ref{def:acyc-alpha} and cycle/cocycle duality.
Actually, for a planar ordered graph $\G$, assumed to be (cyclic-)bipolar w.r.t. the smallest edge, and a dual $\G^*$ of $\G$, we have: 
\vspace{-3mm}
\begin{equation}
\tag{Duality}
\label{eq:act_mapping_duality}
\alpha(\G)=E\s \alpha({\G}^*).
\end{equation}

\vspace{-2mm}
\noindent Then, the \ref{eq:act_mapping_strong_duality} property provided by Definition \ref{def:cyc-alpha2} 
means that Definitions \ref{def:acyc-alpha} and \ref{def:cyc-alpha1}
are compatible
with the canonical bijection between bipolar and cyclic-bipolar orientations (see Section \ref{subsec:orientation-activity}), and the canonical bijection between internal and external uniactive spanning trees (see Section \ref{subsec:prelim-sp-trees}), as 
 detailed in \cite[Section 4]{GiLV05}.
 Let us mention that the Active Duality property 
can also be seen as a strengthening of 
 linear programming duality, see \cite[Section 5]{AB1}.
We sum up these duality properties of $\alpha$ in the diagram of Figure \ref{fig:bounded-duality-diagram}.

\def\hadistance{3cm}
\def\vadistance{0cm}
\def\hbdistance{6cm}
\def\vbdistance{0cm}
\def\vdistance{2.1cm}

\begin{figure}[h]
\centering

\scalebox{0.75}{
\begin{tikzpicture}[->,>=triangle 90, thick,shorten >=1pt,auto, node distance=\hdistance,  thick,  
main node/.style={rectangle,font=\sffamily\large}
]

  \node[main node] (1) {\begin{tabular}{c}$\G$\\ \small (bipolar)\end{tabular}};
  \node[main node] (1a)[below left= \vadistance and \hadistance of 1] {\begin{tabular}{c}$\G^*$\\ \small (cyclic-bipolar)\end{tabular}};
  \node[main node] (1b)[below right= \vbdistance and \hbdistance of 1] {\begin{tabular}{c}$-_p\G$\\ \small (cyclic-bipolar)\end{tabular}};
   \node[main node] (1ab) [below left= \vadistance and \hadistance of 1b] {\begin{tabular}{c}$-_p\G^*$\\ \small (bipolar)\end{tabular}};

  \node[main node] (2) [below =\vdistance of 1] {\begin{tabular}{c}$T$\\ \small (uniactive internal)\end{tabular}};
  \node[main node] (2a) [below =\vdistance of 1a] {\begin{tabular}{c}$E\s T$\\  \small (uniactive external)\end{tabular}};
  \node[main node] (2b) [below =\vdistance of 1b] {\begin{tabular}{c} $T\s\{p\}\cup\{p'\}$\\ \small (uniactive external)\end{tabular}};
   \node[main node] (2ab)  [below =\vdistance of 1ab] {\begin{tabular}{c}  $(E\s T)\s\{p'\}\cup\{p\}$\\ \small (uniactive internal)\end{tabular}};

\path[every node/.style={font=\sffamily}]
	
	(1) edge [->, >=latex, line width=1mm] node [above left] 
	{\bf Duality}
	(1a)
	
	(1) edge [->, >=latex, line width=1mm] node [above right] 
	{\bf Active duality}
	(1b)
		
	(1a) edge [->, >=latex]
	(1ab)

	(1b) edge [->, >=latex] 
	(1ab)
		
	(2) edge [->, >=latex] 
	(2a)
	
	(2) edge [->, >=latex] 
	(2b)
		
	(2a) edge [->, >=latex]
	(2ab)

	(2b) edge [->, >=latex] 
	(2ab)
				
	(1) edge [->, >=latex, line width=1mm] node [left] 
	{$\alpha$}
	(2)
	
	(1a) edge [->, >=latex] node [left] 
	{$\alpha$}
	(2a)
	
	(1b) edge [->, >=latex] node [left] 
	{$\alpha$}
	(2b)
	
	(1ab) edge [->, >=latex] node [left] 
	{$\alpha$}
	(2ab)
;
\end{tikzpicture}
}
\caption{Commutative diagram of duality properties of the uniactive bijection in the planar case. 
It involves the usual duality ($\vec G^*$ is any planar dual of $\vec G$) and the active duality (Definition \ref{def:cyc-alpha2}).
%
%
It generalizes to general graphs using the oriented matroid dual $M^*(\vec G)$ instead of $\vec G^*$ (and, beyond, to general oriented matroids \cite{AB1, AB2-b}).
}
\label{fig:bounded-duality-diagram}
\end{figure}
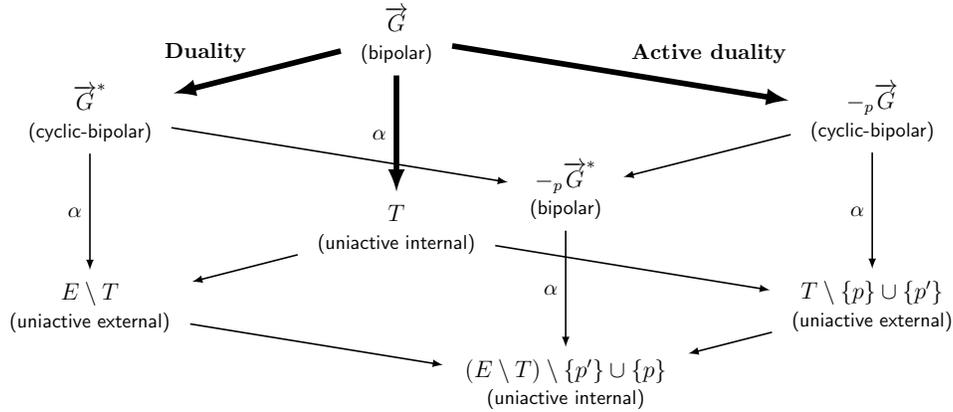

\begin{thm}[Key Theorem \cite{GiLV05, AB1}]
\label{thm:bij-10}
Let $G$ be a graph on a linearly ordered set of edges $E$ with 
smallest edge $p$. 

The mapping $\G\mapsto \alpha(\G)$ yields a bijection between all bipolar orientations of $G$ w.r.t. $p$ with fixed orientation for $p$ and all spanning trees of $G$ with internal activity $1$ and external activity $0$.

Also, it yields a bijection between all cyclic-bipolar orientations of $G$ w.r.t. $p$ with fixed orientation for $p$ and all spanning trees of $G$ with internal activity $0$ and external activity $1$.
\end{thm}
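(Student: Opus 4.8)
Write $p=\min(E)$. I would split the statement into four claims --- three routine, one hard --- and then concentrate on the hard one. Recall from Definition~\ref{def:acyc-alpha} that a fully optimal spanning tree of a digraph $\G$ bipolar w.r.t. $p$ is a spanning tree $T$ such that $b$ and $\min(C^*(T;b))$ are opposite in $C^*(T;b)$ for all $b\in T\s\{p\}$, and $e$ and $\min(C(T;e))$ are opposite in $C(T;e)$ for all $e\in E\s T$. The claims are: (i) whenever $\alpha(\G)$ exists it is a uniactive internal spanning tree; (ii) $\G\mapsto\alpha(\G)$ is injective on bipolar orientations with fixed direction for $p$; (iii) every bipolar orientation has exactly one fully optimal spanning tree, and the resulting map onto uniactive internal spanning trees is surjective; (iv) the cyclic-bipolar statement reduces to the acyclic one.

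Claim (i) is forced by the sign conditions: $p\in T$ (otherwise $p=\min(C(T;p))$ with $p\notin T$ would need a sign opposite to its own), $\Ext(T)=\emptyset$, and $\Int(T)=\{p\}$ (the same argument applied to $e=\min(C(T;e))\in E\s T$, resp. $b=\min(C^*(T;b))\in T\s\{p\}$), so $\Int(T)\cup\Ext(T)=\{p\}$, i.e. $T$ is uniactive internal. For Claim~(ii): since $T=\alpha(\G)$ is uniactive internal, $\min(C^*(T;b))<b$ for every $b\in T\s\{p\}$ and $\min(C(T;e))<e$ for every $e\in E\s T$; hence, scanning $E$ upwards from $p$, each sign condition determines the direction of an edge relative to the already-fixed direction of a strictly smaller edge, so $\G$ is entirely recovered from $T$ together with the direction of $p$. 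The very same scan also produces, from any uniactive internal $T$ and any chosen direction of $p$, an orientation $\varphi(T)$ of $G$ for which $T$ automatically satisfies the conditions of Definition~\ref{def:acyc-alpha}; this $\varphi$ is the candidate inverse of $\alpha$.

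Claim~(iii) is where the substance lies; I would split it into (a) $\varphi(T)$ is bipolar w.r.t. $p$, and (b) a bipolar digraph has at most one fully optimal spanning tree. Granting (a) and (b): $\varphi$ is injective (if $\varphi(T_1)=\varphi(T_2)$, this common digraph is bipolar by (a), hence $T_1=T_2$ by (b)), so by Section~\ref{subsec:prelim-beta}, both sides having cardinality $t_{1,0}=\beta(G)$, $\varphi$ is a bijection from uniactive internal spanning trees onto bipolar orientations with fixed direction for $p$; consequently every such $\G$ is $\varphi(T)$ for a unique $T$, which is then its unique fully optimal tree, so $\alpha=\varphi^{-1}$ is the asserted bijection (extending to all bipolar orientations since a digraph and its opposite have the same $\alpha$). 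I expect (a) to be the real obstacle --- it is the geometric core of the fully optimal basis theorem of \cite{GiLV05, AB1}. To prove (a) I would use the characterization of bipolarity recalled in Section~\ref{subsec:prelim-beta} (every edge in a directed cocycle, every directed cocycle containing $p$) and argue by contradiction: a minimal directed cycle, resp. a minimal directed cocycle avoiding $p$, of $\varphi(T)$ would, via composition of the appropriate signed cycles, resp. signed cocycles, force a sign incompatible with the definition of $\varphi(T)$; alternatively, prove (iii) directly by induction on $|E|$ through deletion and contraction of the largest edge $\omega$ (which in a bipolar digraph is neither a loop nor an isthmus), tracking how the active data transforms --- this is precisely the content of Theorem~\ref{thm:ind-10}. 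For (b) a basis-exchange argument should suffice: two distinct fully optimal trees, compared along a rotation through their symmetric difference anchored at its smallest element, yield contradictory sign prescriptions.

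Finally, Claim~(iv): if $|E|=1$ a cyclic-bipolar orientation is a single loop, $T=\emptyset$ is the unique spanning tree, it is uniactive external, and the statement is trivial; if $|E|>1$, apply the reformulation $\alpha(\G)=\alpha(-_p\G)\s\{p\}\cup\{p'\}$ of Definition~\ref{def:cyc-alpha2}, noting that $\G\mapsto -_p\G$ is a bijection between cyclic-bipolar and bipolar orientations compatible with fixing the direction of $p$, and that $T\mapsto T\s\{p\}\cup\{p'\}$ is the canonical bijection between uniactive internal and uniactive external spanning trees (Section~\ref{subsec:prelim-sp-trees}); composing these with the bijection established in (iii) gives the cyclic-bipolar bijection.
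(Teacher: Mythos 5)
The paper does not actually prove this theorem: it is imported wholesale from \cite{GiLV05, AB1} (the caption of Figure \ref{fig:diagram} and the discussion in Section \ref{subsec:fob} say so explicitly, calling it ``a deep and tricky result''), so there is no internal proof to compare yours against. That said, your reduction is the correct one and matches how the cited papers organize the argument: claims (i), (ii) and (iv) are sound as written (the sign conditions do force $p\in T$, $\Ext(T)=\emptyset$, $\Int(T)=\{p\}$; the upward scan does recover $\G$ from $T$ and the direction of $p$ because no element other than $p$ is active, which is exactly Proposition \ref{prop:alpha-10-inverse}; and the cyclic-bipolar case does reduce formally via Definition \ref{def:cyc-alpha2} and the exchange of the two smallest edges), and the counting step via $\beta(G)=t_{1,0}=o_{1,0}/2$ from Section \ref{subsec:prelim-beta} correctly converts ``injective into bipolar orientations'' into ``bijective''.

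The gap is that your claims (iii)(a) and (iii)(b) \emph{are} the theorem, and neither sketch is a proof. For (a), ``a minimal directed cycle \dots would, via composition of the appropriate signed cycles, force a sign incompatible with the definition of $\varphi(T)$'' does not work as stated: the sign condition on $\varphi(T)$ constrains each edge only relative to the minimum of \emph{its own} fundamental cycle or cocycle, and a putative directed cycle of $\varphi(T)$ decomposes into fundamental cycles whose minima are all different, so no single incompatibility falls out of a composition; the actual argument (Propositions 2 and 3 of \cite{GiLV05}) requires tracking the smallest element of the hypothetical directed cycle/cocycle through the decomposition and is a genuine piece of work. For (b), a naive basis-exchange on $f=\min(T_1\triangle T_2)$ compares $C^*(T_1;f)$ with $C(T_2;f)$, whose sign prescriptions are anchored at two \emph{different} minima, and orthogonality of signed cycles and cocycles alone does not produce a contradiction --- uniqueness of the fully optimal basis is the several-page core of \cite{AB1}, with a linear-programming interpretation, not a one-line exchange. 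Finally, your proposed fallback --- proving (iii) by deletion/contraction of $\omega=\max(E)$ as in Theorem \ref{thm:ind-10} --- is circular within this paper's logic: Section \ref{subsec:fob} states explicitly that the deletion/contraction and LP constructions ``do not give a proof of Theorem \ref{thm:bij-10} \dots on the contrary, this fundamental result is used to prove their correctness,'' and Remark \ref{rk:ind-10-equivalence} notes that the equivalence of the two dual formulations inside Theorem \ref{thm:ind-10} is itself equivalent to the bijectivity you are trying to establish. So either route you indicate for the hard core must be replaced by the actual arguments of \cite{GiLV05, AB1}, or the theorem must simply be cited, as the paper does.
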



The bijection provided by Theorem \ref{thm:bij-10} is called \emph{the uniactive bijection of the ordered graph $G$}.
This bijection was built in \cite{GiLV05,AB1} by its inverse, from uniactive internal spanning trees to bipolar orientations, provided by a single pass algorithm over the spanning tree, or equally (dually) over its complement.
Actually, it is easy to see that, given a uniactive spanning tree, one just has to choose orientations one by one in a single pass over $E$ (following the ordering) so as to build an orientation for which this spanning tree satisfies the criterion of Definition \ref{def:acyc-alpha} or \ref{def:cyc-alpha1}.
We recall this algorithm in Proposition \ref{prop:alpha-10-inverse} in Section \ref{subsec:basori}.
%
The problem of computing the direct image of a bipolar ordered digraph under $\alpha$ is not easy, and it is precisely addressed  in the companion paper \cite{ABG2LP}.
An efficient but technical solution \cite[Section \ref{ABG2LP-sec:fob}]{ABG2LP} uses a linear number of minors, and consists in an adaptation for graphs of a 
more general construction by means of elaborations on linear programming \cite{GiLV09, AB3}.
%
Alternatively, the uniactive bijection $\alpha$ can also be built by deletion/contraction, quite naturally but using an exponential number of minors, see Section \ref{subsec:alpha-10-ind} 
and \cite[Section \ref{ABG2LP-sec:induction}]{ABG2LP} for details.
Let us emphasize that those two constructions of $\alpha$ do not give a proof of Theorem \ref{thm:bij-10}, or of the existence and uniqueness 
of $\alpha(\G)$
 in Definition \ref{def:acyc-alpha}: on the contrary, 
this fundamental result is used to prove their correctness.
%

\emevder{XXX est-ce utile de remettre prop basori uniactive ici, ou juste dans ABG2-LP ? BIEN DE REMETTRE DANS LES DEUX JE PENSE, finalement remise en section spanning trees}

\subsection{The canonical active bijection - The active spanning tree of an ordered digraph}
\label{subsec:alpha-def-decomp}



First, we give three equivalent definitions for the active spanning tree of an ordered digraph, consistently with the definition given in \cite{GiLV05}. Then, we give the main theorem stating the consistency and properties of the construction, yielding the canonical active bijection of an ordered graph. Then, we give its complete proof, that mainly makes the link between spanning trees and constructions of Section \ref{sec:tutte} for orientations. See Section \ref{sec:intro} for a global introduction.

An important feature of the canonical active bijection is that it preserves active partitions, meaning that the active partition of a digraph is the same as the active partition of its active spanning tree.
We will mention this second notion of active partition though it has not yet been defined in the paper. For convenience, we postpone this definition to Section \ref{subsec:dec-bases} 
(it can be defined by several ways, the proof of the main theorem of this section will prove at the same time this spanning tree decomposition, which is also shortly defined in \cite{GiLV05} and detailed in \cite{AB2-a}).%
\emevder{fin de derniere parenthese ?}
%

An interesting 
underlying feature, that we will not develop in this paper, is how the active spanning tree is characterized by a sign criterion directly on its fundamental cycles/cocycles (obtained by applying the criterion for the uniactive case used in the previous section to suitable subsets of these cycles/cocycles obtained by the decomposition used in the present section, which also yields the algorithm of Section \ref{subsec:basori}).
We invite the reader to look at the several detailed examples in \cite{AB2-b} of active spanning trees of orientations of  the graph $K_4$
(which are consistent with the $K_4$ example in Section~\ref{sec:example}). 
\emevder{phrase lourde ?}

%

\emevder{donner refs precises des exemples de $K_4$ de AB2-b}

\emevder{eventuellement mettre ici exemple du graphe donne en decomp ?, oui serait bien, avec explication sommaire... sauf que je ne l'ai pas mis dans examle de AB2-b ?! }

\begin{definition}
\label{def:alpha-ind-decomp}
Let $\G$ be a directed graph on a linearly ordered set of edges.
The \emph{active spanning tree} $\alpha(\G)$ is defined by extending the definition of $\alpha$ from (cyclic-)bipolar (Definitions \ref{def:acyc-alpha} to \ref{def:cyc-alpha2}) to general ordered digraphs by the two following characteristic properties:
\smallskip

\noindent {\rm (1)} \bul  $\alpha(\G)=\alpha(\G/ F)\ \uplus\ \alpha(\G(F))$
where $F$ is the union of all directed cycles of $\G$ whose smallest element is the greatest active element~of~$\G$.  
\smallskip

\noindent {\rm (2)} \bul  $\alpha(\G)=\alpha(\G/ F)\ \uplus\ \alpha(\G(F))$
where $F$ is the complementary of the union of all directed cocycles of $\G$ whose smallest element is the greatest dual-active element~of~$\G$. 
%
\end{definition}
 
 Let us briefly  justify why  $\alpha(\G)$ is well defined, by Lemma \ref{lem:induction-dec-seq}:
  in property~(1), if  $F\not= \emptyset$, then $\G(F)$ is cyclic-bipolar, and $\G/F$ has one active element less than $\G$; 
in property~(2), if  $E\s F\not= \emptyset$, then $\G/F$ is bipolar, and $\G(F)$ has one dual-active element less than $\G$.
Then, the two properties are consistent and can be used recursively in any way, finally yielding the next definition.

\begin{definition}[equivalent to Definition \ref{def:alpha-ind-decomp}]
\label{def:alpha-seq-decomp}
Let $\G$ be a directed graph on a linearly ordered set of edges, with active filtration
 $\emptyset= F'_\ep\subset...\subset F'_0=F_c=F_0\subset...\subset F_\io= E$.
We define $$\alpha(\G)=\ \biguplus_{1\leq k\leq \io} \alpha\bigl( \G(F_k)/F_{k-1}\bigr)\ \uplus\ \biguplus_{1\leq k\leq \ep}
\alpha\bigl( \G(F'_{k-1})/F'_{k}\bigr).$$
\end{definition}

Observe that this definition is valid since each active minor is either bipolar or cyclic-bipolar, by Proposition \ref{prop:pty-active-minors}, and its image has been defined in Definitions \ref{def:acyc-alpha} to \ref{def:cyc-alpha2}.
Observe also that the $2^{\io+\ep}$ digraphs in the same activity class as $\G$ have the same image under $\alpha$ as $\G$ (as they have the same active minors up to taking the opposite, cf. Proposition \ref{prop:act-classes}).

\begin{observation}
\label{obs:induc-alpha}
\rm
Let us consider an ordered digraph $\G$ and continue Observation \ref{obs:induced-dec-seq-ori}.
Let $\emptyset= F'_\ep\subset...\subset F'_0=F_c=F_0\subset...\subset F_\io= E$ be the active filtration of~$\G$.
Let $F$ and $F'$ be two subsets in this sequence such that $F'\subseteq F$.
In particular, $F'$ can be a $F'_i$, $0\leq i\leq \ep$, that is any union of directed cycles whose smallest edge is greater than a given edge, and $F$ can be a $F_i$, $0\leq i\leq \io$, that is the complementary of any union of directed cocycles whose smallest edge is greater than a given edge.
 Then, by Definition \ref{def:alpha-seq-decomp}, we have
$$\alpha(\G)=\alpha\bigl(\G(F')\bigr)\uplus \alpha\bigl(\G(F)/F'\bigr)\uplus \alpha\bigl(\G/F\bigr).$$
\end{observation}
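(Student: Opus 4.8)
The plan is to linearize the active filtration of $\G$ once and for all, and then to read off the claimed identity by applying Observation~\ref{obs:induced-dec-seq-ori} and Definition~\ref{def:alpha-seq-decomp} three times, to $\G(F')$, to $\G(F)/F'$, and to $\G/F$ separately. First I would rewrite the active filtration $\emptyset = F'_\ep\subset\ldots\subset F'_0 = F_c = F_0\subset\ldots\subset F_\io = E$ as a single increasing chain $\emptyset = G_0\subset G_1\subset\ldots\subset G_m = E$, where $m=\ep+\io$, by setting $G_i = F'_{\ep-i}$ for $0\leq i\leq\ep$ and $G_{\ep+i}=F_i$ for $0\leq i\leq\io$. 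With this notation, Definition~\ref{def:alpha-seq-decomp} becomes
$$\alpha(\G)=\biguplus_{1\leq j\leq m}\alpha\bigl(\G(G_j)/G_{j-1}\bigr),$$
where each active minor $\G(G_j)/G_{j-1}$ is cyclic-bipolar for $1\leq j\leq\ep$ and bipolar for $\ep+1\leq j\leq m$ (Proposition~\ref{prop:pty-active-minors}); the disjointness of this union just records that the sets $G_j\setminus G_{j-1}$ form the active partition of $E$.

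Next, since $F'$ and $F$ both belong to the chain and $F'\subseteq F$, I would write $F'=G_a$ and $F=G_b$ with $0\leq a\leq b\leq m$. By Observation~\ref{obs:induced-dec-seq-ori} the active filtrations of the three digraphs in play are: $G_0\subset\ldots\subset G_a$ for $\G(F')=\G(G_a)$; $\emptyset=G_a\setminus G_a\subset G_{a+1}\setminus G_a\subset\ldots\subset G_b\setminus G_a$ for $\G(F)/F'=\G(G_b)/G_a$; and $\emptyset=G_b\setminus G_b\subset G_{b+1}\setminus G_b\subset\ldots\subset G_m\setminus G_b$ for $\G/F=\G/G_b$. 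The only elementary verification here, to be done once, consists of the standard minor identities $\bigl(\G(G_a)\bigr)(G_j)/G_{j-1}=\G(G_j)/G_{j-1}$ for $G_{j-1}\subseteq G_j\subseteq G_a$, $\bigl(\G(G_b)/G_a\bigr)\bigl(G_j\setminus G_a\bigr)\big/\bigl(G_{j-1}\setminus G_a\bigr)=\G(G_j)/G_{j-1}$ for $G_a\subseteq G_{j-1}\subseteq G_j\subseteq G_b$, and $\bigl(\G/G_b\bigr)\bigl(G_j\setminus G_b\bigr)\big/\bigl(G_{j-1}\setminus G_b\bigr)=\G(G_j)/G_{j-1}$ for $G_b\subseteq G_{j-1}\subseteq G_j$. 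Consequently the active minors of $\G(F')$, of $\G(F)/F'$ and of $\G/F$ are exactly the $\G(G_j)/G_{j-1}$ for $j$ ranging over $\{1,\ldots,a\}$, $\{a+1,\ldots,b\}$ and $\{b+1,\ldots,m\}$ respectively, and each of these minors has the same type (bipolar or cyclic-bipolar) whether it is viewed inside $\G$ or inside the relevant sub-digraph, so $\alpha\bigl(\G(G_j)/G_{j-1}\bigr)$ is unambiguous.

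Finally, I would apply Definition~\ref{def:alpha-seq-decomp} to each of $\G(F')$, $\G(F)/F'$, $\G/F$ --- legitimate, since their active filtrations have just been identified and their active minors are (cyclic-)bipolar --- to get $\alpha\bigl(\G(F')\bigr)=\biguplus_{1\leq j\leq a}\alpha\bigl(\G(G_j)/G_{j-1}\bigr)$, $\alpha\bigl(\G(F)/F'\bigr)=\biguplus_{a+1\leq j\leq b}\alpha\bigl(\G(G_j)/G_{j-1}\bigr)$ and $\alpha\bigl(\G/F\bigr)=\biguplus_{b+1\leq j\leq m}\alpha\bigl(\G(G_j)/G_{j-1}\bigr)$. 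The three index ranges partition $\{1,\ldots,m\}$, and the corresponding edge sets are pairwise disjoint, so taking the disjoint union of these three equalities yields $\alpha\bigl(\G(F')\bigr)\uplus\alpha\bigl(\G(F)/F'\bigr)\uplus\alpha\bigl(\G/F\bigr)=\biguplus_{1\leq j\leq m}\alpha\bigl(\G(G_j)/G_{j-1}\bigr)=\alpha(\G)$, as asserted. I do not expect any real obstacle: the statement is essentially a repackaging of Observation~\ref{obs:induced-dec-seq-ori}, and the only points needing care are the bookkeeping of the index ranges and the three minor identities above, together with checking that all edge sets remain pairwise disjoint so that every $\uplus$ is justified; the degenerate cases $a=0$ (i.e.\ $F'=\emptyset$) and $b=m$ (i.e.\ $F=E$) are handled automatically, the corresponding union being empty.
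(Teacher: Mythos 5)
Your argument is correct and is exactly the route the paper intends: the paper states this as an immediate consequence of Observation~\ref{obs:induced-dec-seq-ori} together with Definition~\ref{def:alpha-seq-decomp}, and your write-up simply makes explicit the regrouping of the active minors into the three index ranges. The bookkeeping (chain linearization, the minor identities, and the type-preservation of each active minor) is all sound.
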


In this way, we can also derive the following  equivalent relaxed definition.

\begin{definition}[equivalent to Definitions \ref{def:alpha-ind-decomp} and \ref{def:alpha-seq-decomp}]
\label{def:alpha-ind-decomp-relaxed}
Let $\G$ be an ordered directed graph.
We define $\alpha(\G)$ by Definitions \ref{def:acyc-alpha} to \ref{def:cyc-alpha2} if $\G$ is (cyclic-)bipolar w.r.t. its smallest edge, and by the following characteristic property:
$$\alpha(\G)=\alpha(\G/ F)\ \uplus\ \alpha(\G(F))$$
where $F$ is either any union of all directed cycles of $\G$ whose smallest edge is greater than a fixed edge of $\G$, or  the complementary of any union of all directed cocycles of $\G$ whose smallest edge is greater than a fixed edge of $\G$.
\end{definition}

Let us finally observe that the definition of $\alpha$ is consistent with cycle/cocycle duality.
For a dual pair of planar graphs $\G$ and $\G^*$, this is expressed  the following way: 
\begin{equation}
\tag{Duality}
\label{eq:act_mapping_duality}
\alpha(\G)=E\s \alpha({\G}^*)
\end{equation}
(which is also valid for general graphs by replacing $\G^*$ with the dual oriented matroid $M^*(\G)$).

\begin{thm} 
\label{EG:th:alpha}
Let $G$ be a graph on a linearly ordered set of edges $E$. 
\begin{enumerate}
\item For any orientation $\G$ of $G$, $\alpha(\G)$  is well defined, and Definitions \ref{def:alpha-ind-decomp}, \ref{def:alpha-seq-decomp} and \ref{def:alpha-ind-decomp-relaxed} are equivalent.
\item For any orientation $\G$ of $G$, $\alpha(\G)$ is
a spanning tree of $G$ with the same active filtration as~$\G$
(Definitions \ref{def:act-seq-dec} in Section \ref{subsec:act-part-decomp}, and \ref{def:sp-tree-dec-seq} in Section \ref{subsec:dec-bases}),
which implies in particular 
\begin{eqnarray*}
\Int\bigl(\alpha(\G)\bigr)&=&O^*(\G),\\
\Ext\bigl(\alpha(\G)\bigr)&=&O(\G).
\end{eqnarray*}
\item The $2^{i+j}$ orientations of $G$ in a given activity class with activity $j$ and dual-activity $i$ are mapped onto the same  spanning tree by $\alpha$.
\item The mapping $\G\mapsto \alpha(\G)$ from orientations of $G$ to spanning trees of $G$ is surjective. It provides a bijection between all activity classes of orientations of $G$ and all  spanning trees of $G$ (see Table \ref{table:intro} in Section \ref{sec:intro} for noticeable restrictions).
\end{enumerate}
\end{thm}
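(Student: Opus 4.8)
The plan is to prove the four items essentially in order, relying on the decomposition results of Section \ref{sec:tutte} (Lemma \ref{lem:induction-dec-seq}, Propositions \ref{prop:pty-active-minors} and \ref{prop:unique-dec-seq}, Theorem \ref{th:dec-ori}), on the behaviour of activity classes (Proposition \ref{prop:act-classes}), and on the uniactive Key Theorem \ref{thm:bij-10}. For item~1, I would first note that Lemma \ref{lem:induction-dec-seq} guarantees that one application of property~(1), resp.~(2), of Definition \ref{def:alpha-ind-decomp} splits off a cyclic-bipolar, resp. bipolar, active minor and strictly decreases $\ep$, resp.~$\io$; hence any recursion terminates on (cyclic-)bipolar pieces, for which $\alpha$ is already defined in Definitions \ref{def:acyc-alpha}--\ref{def:cyc-alpha2}. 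Well-definedness (confluence) comes from the fact that peeling the bottom cyclic part (property~(1), $F=F'_{\ep-1}$) and peeling the top acyclic part (property~(2), $F=F_{\io-1}$) commute: both orders produce $\alpha(\G/ F_{\io-1})\uplus\alpha\bigl(\G(F_{\io-1})/ F'_{\ep-1}\bigr)\uplus\alpha\bigl(\G(F'_{\ep-1})\bigr)$, because $\G$, $\G(F_{\io-1})$ and $\G/ F'_{\ep-1}$ have the same directed cycles and cocycles up to restriction. By induction on $\io+\ep$ every recursion thus yields the same value, namely the fully decomposed expression of Definition \ref{def:alpha-seq-decomp} (Proposition \ref{prop:pty-active-minors} identifying the minors reached as the active minors). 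Finally Observation \ref{obs:induc-alpha}, itself derived from Definition \ref{def:alpha-seq-decomp} and Observation \ref{obs:induced-dec-seq-ori}, yields the grouped splitting of Definition \ref{def:alpha-ind-decomp-relaxed}, and conversely that relaxed definition refines to the other two; so all three are equivalent.

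For item~2, by Proposition \ref{prop:pty-active-minors} each active minor $G(F_k)/ F_{k-1}$, resp. $G(F'_{k-1})/ F'_k$, is bipolar, resp. cyclic-bipolar, with respect to its smallest edge $a_k$, resp. $a'_k$, so Theorem \ref{thm:bij-10} gives that $\alpha(\G(F_k)/ F_{k-1})$ is a spanning tree of that minor with internal activity $1$ (internally active edge $a_k$) and external activity $0$, and dually $\alpha(\G(F'_{k-1})/ F'_k)$ has external activity $1$ (externally active edge $a'_k$) and internal activity $0$. A standard matroid fact --- the disjoint union along a filtration $\emptyset\subset\dots\subset E$ of bases of the successive minors $G(F_k)/ F_{k-1}$ is a basis of $G$ --- then shows $\alpha(\G)$ is a spanning tree of $G$. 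To identify its activities I would relate fundamental cocycles and cycles of $\alpha(\G)$ in $G$ to those in the active minors, using the minor formulas of Section \ref{subsec:om} and the fact (from the filtration structure, cf. Observation \ref{obs:induced-dec-seq-ori}) that $\alpha(\G)\cap F_{k-1}$ is a basis of $G(F_{k-1})$ and $\alpha(\G)\cap(E\s F_{k-1})$ a basis of $G/ F_{k-1}$: one gets that the fundamental cocycle of $a_k$ with respect to $\alpha(\G)$ meets $F_k\s F_{k-1}$ in the fundamental cocycle of $a_k$ in the minor (minimum $a_k$) and otherwise only elements $>a_{k+1}>a_k$, so $a_k\in\Int(\alpha(\G))$; and for $b\in(F_k\s F_{k-1})\cap\alpha(\G)$ with $b\neq a_k$ the same trace has minimum $<b$ in the minor, so $b\notin\Int(\alpha(\G))$; the cycle case is dual, using that $F'_{k-1}\s F'_k$ is a union of cycles of $G/ F'_k$. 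Hence $\Int(\alpha(\G))=\{a_1,\dots,a_\io\}=O^*(\G)$ and $\Ext(\alpha(\G))=\{a'_1,\dots,a'_\ep\}=O(\G)$; moreover this computation shows the filtration of $\G$ meets the characterization of the active filtration of the spanning tree $\alpha(\G)$ (Definition \ref{def:sp-tree-dec-seq}), so the two active filtrations coincide, which simultaneously proves the spanning-tree decomposition.

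Item~3 is then immediate: by Proposition \ref{prop:act-classes} the $2^{i+j}$ orientations in the activity class of $\G$ share the same active filtration and have the same active minors up to taking the opposite, and a digraph and its opposite have the same fully optimal spanning tree (noted after Definition \ref{def:acyc-alpha}), so Definition \ref{def:alpha-seq-decomp} returns the same value of $\alpha$ for all of them. For item~4, by item~3 the map $\alpha$ factors through a well-defined map from activity classes to spanning trees, sending, by item~2, classes with activity $j$ and dual-activity $i$ to spanning trees with internal activity $i$ and external activity $j$. For injectivity: if $\alpha(\G_1)=\alpha(\G_2)=T$, item~2 gives $\G_1$ and $\G_2$ the same active filtration (that of $T$), hence the same active minors; since for each minor $\alpha$ restricted to bipolar, resp. cyclic-bipolar, orientations with fixed orientation of the smallest edge is injective (Theorem \ref{thm:bij-10}), the restrictions of $\G_1$ and $\G_2$ to each active minor are equal up to opposite, so $\G_2$ is obtained from $\G_1$ by reorienting a union of parts of the active partition, i.e. they lie in the same activity class. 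Finally, by Corollary \ref{cor:enum-classes} the number of activity classes with activity $j$ and dual-activity $i$ equals $t_{i,j}$, which is also the number of spanning trees with internal activity $i$ and external activity $j$; an injection between finite sets of equal cardinality is a bijection, so $\alpha$ induces a bijection between activity classes and spanning trees, and in particular the map from orientations to spanning trees is surjective. (Alternatively one can prove surjectivity directly, building a preimage of a given $T$ by applying the inverse direction of Theorem \ref{thm:bij-10} on the minors of the spanning-tree active filtration of $T$, and checking via Proposition \ref{prop:unique-dec-seq} that the assembled orientation $\G$ has exactly that active filtration, so $\alpha(\G)=T$.)

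I expect the main obstacle to be item~2: correctly matching the digraph active filtration of Definition \ref{def:act-seq-dec} with the spanning-tree active filtration of Definition \ref{def:sp-tree-dec-seq}, which amounts to proving the spanning-tree decomposition theorem at the same time, together with the careful bookkeeping of how fundamental cycles and cocycles of $\alpha(\G)$ in $G$ restrict to, and are controlled by, the bipolar and cyclic-bipolar active minors.
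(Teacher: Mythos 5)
Your items 1--3 follow the paper's own route (they are essentially the content of Lemma \ref{lem:th-bij-gene-part1} together with Proposition \ref{prop:act-classes}), and your closing counting argument via $o_{\io,\ep}=2^{\io+\ep}t_{\io,\ep}$ is exactly the paper's. The gap is in the injectivity step of item 4. You write that if $\alpha(\G_1)=\alpha(\G_2)=T$ then ``item 2 gives $\G_1$ and $\G_2$ the same active filtration (that of $T$)''. But what your item 2 actually establishes is only that the active filtration of $\G$ is \emph{a} connected filtration whose induced minors cut $T$ into uniactive internal/external pieces; it does not show that such a filtration is \emph{unique} for a given $T$. A priori $\G_1$ and $\G_2$ could carry different active filtrations, each of which decomposes $T$ correctly, and then your claim that their restrictions to ``the'' active minors agree up to opposite does not get off the ground. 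You also cannot appeal to Definition \ref{def:sp-tree-dec-seq} or Theorem \ref{EG:th:dec_base} for this uniqueness, since in the paper the disjointness of that union (equivalently, the well-definedness of the active filtration of a spanning tree) is itself \emph{derived from} Theorem \ref{EG:th:alpha}; using it here would be circular. The same issue affects your parenthetical alternative route to surjectivity, which invokes ``the spanning-tree active filtration of $T$''.

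The missing ingredient is the paper's Lemma \ref{lem:th-bij-gene-part2}: the part of the active partition of $\G$ containing $a=\max(O(\G))$ (resp.\ $a=\max(O^*(\G))$) equals the active closure $\AA(\{a\})$ of Definition \ref{def:active-closure}, which is computed purely from the fundamental cycles and cocycles of $T=\alpha(\G)$ and is therefore the same for any two orientations with the same image. Peeling off this part and inducting (via Lemma \ref{lem:induction-dec-seq} and Observations \ref{obs:induced-dec-seq-ori} and \ref{obs:induc-alpha}) shows that the whole active filtration of $\G$ is determined by $T$; only then does your injectivity argument, and hence the counting step, go through. That lemma is a genuine double-inclusion argument comparing $\AA(\{a\})$ with $F'_{\ep-1}$, resp.\ $E\s F_{\io-1}$, not a formality, so you need to supply it or an equivalent determination of the active partition from the tree alone.
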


The bijection provided by Theorem \ref{EG:th:alpha} is called the \emph{canonical active bijection} of the ordered graph $G$.
%
%
Observe that it depends only on $G$ and its edge-set ordering, not on a particular orientation $\G$ of $G$.
Let us mention that the inverse mapping can be computed by a single pass over $E$, see Section \ref{subsec:basori}\emevder{a mettre ici ?}
(and let us mention a deletion/contraction construction, see Section \ref{subsec:alpha-ind}).







\bigskip
The rest of the section is devoted to proving Theorem \ref{EG:th:alpha}.
Using the decomposition for orientations from Theorem \ref{th:dec-ori} and the bijection in the bipolar case from Theorem \ref{thm:bij-10}, we can prove 
 Theorem \ref{EG:th:alpha} by means of two lemmas establishing the link with spanning trees, and we can derive at the same time a decomposition for spanning trees. This decomposition will be stated later as Theorem \ref{EG:th:dec_base} and will yield equivalent definitions of the active filtration of a spanning tree in Definition \ref{def:sp-tree-dec-seq}, see Section \ref{sec:spanning-trees}. 
Let us observe that this last decomposition generalizes to matroid bases \cite{AB2-a}, but such proofs based on orientations are not possible in non-orientable matroids. 

\def\F{T}
\def\S{{\mathcal S}}


%


\begin{property}
\label{pty:fund}
Let $T$ be a spanning tree of a graph $G$ with set of edges $E$.
Let $B\subset A\subseteq E$.
Assume $T'=T\cap A\s B$ is a spanning tree of $G'=G(A)/B$.
Then, for all $b\in T'$, we have $$C^*_{G'}(T';b)=C^*_G(T;b)\cap A=C^*_G(T;b)\cap A\s B,$$
and, for all $e\in (A\s B)\s T'$, we have $$C_{G'}(T';e)=C_G(T;e)\s B=C_G(T;e)\cap A\s B.$$
\end{property}

\begin{proof}
Direct by the definitions and the properties of cycles and cocycles in minors.
\end{proof}

\begin{lemma}
\label{lem:th-bij-gene-part1}
Let $\G$ be an ordered digraph. 
Then, $\alpha(\G)$ is well defined by Definitions \ref{def:alpha-ind-decomp}, \ref{def:alpha-seq-decomp} and \ref{def:alpha-ind-decomp-relaxed}, equivalently.
Moreover, denoting $T=\alpha(\G)$, we have that $T$ is a spanning tree of $G$, with $\Int(T)=O^*(\G)$ and $\Ext(T)=O(\G)$.
%
%
\end{lemma}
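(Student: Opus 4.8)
The statement is essentially bookkeeping: combine the decomposition results of Section~\ref{sec:tutte} with the uniactive bijection (Theorem~\ref{thm:bij-10}) and the elementary Property~\ref{pty:fund} on fundamental cycles/cocycles in minors. First I would establish that $\alpha(\G)$ is well defined and that Definitions~\ref{def:alpha-ind-decomp}, \ref{def:alpha-seq-decomp}, \ref{def:alpha-ind-decomp-relaxed} coincide. The key structural input is Lemma~\ref{lem:induction-dec-seq}: peeling off the union $F$ of directed cycles with greatest possible smallest element gives a cyclic-bipolar minor $\G(F)$ and a digraph $\G/F$ with one fewer active edge, and dually for cocycles. Since each such peeling step strictly decreases $\io+\ep$, the recursion in Definition~\ref{def:alpha-ind-decomp} terminates; Observation~\ref{obs:induc-alpha} (together with the uniqueness of the active filtration, Proposition~\ref{prop:unique-dec-seq}) shows that whatever order of peelings one performs, one always lands on the same collection of active minors, namely $\G(F_k)/F_{k-1}$ for $1\le k\le\io$ and $\G(F'_{k-1})/F'_k$ for $1\le k\le\ep$. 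Hence all three definitions produce $\biguplus_k \alpha(\G(F_k)/F_{k-1}) \uplus \biguplus_k \alpha(\G(F'_{k-1})/F'_k)$, where each summand is legitimate because the corresponding minor is bipolar or cyclic-bipolar by Proposition~\ref{prop:pty-active-minors}.

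Next I would show $T=\alpha(\G)$ is a spanning tree. Each active minor $\G_k=\G(F_k)/F_{k-1}$ is bipolar with respect to $a_k=\min(F_k\setminus F_{k-1})$, and by Theorem~\ref{thm:bij-10} its fully optimal spanning tree $T_k=\alpha(\G_k)$ is a spanning tree of the underlying graph $G(F_k)/F_{k-1}$; likewise each $T'_k=\alpha(\G(F'_{k-1})/F'_k)$ is a spanning tree of $G(F'_{k-1})/F'_k$. Because the filtration sets partition $E$ into the blocks $F_k\setminus F_{k-1}$ and $F'_{k-1}\setminus F'_k$, and a set is a spanning tree of $G$ iff its intersection with each block is a spanning tree of the corresponding minor in the filtration (a standard fact about minors along a chain of flats/dual-flats, which I would cite from Section~\ref{subsec:om}), the disjoint union $T=\biguplus_k T_k \uplus \biguplus_k T'_k$ is a spanning tree of $G$.

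For the activity statement $\Int(T)=O^*(\G)$ and $\Ext(T)=O(\G)$, I would compute fundamental cycles and cocycles of $T$ in $G$ from those in the active minors using Property~\ref{pty:fund}. Take $b\in T$; then $b$ lies in a unique block, and applying Property~\ref{pty:fund} with the appropriate $B\subset A$ taken from the filtration gives $C^*_G(T;b)\cap A\setminus B = C^*_{G'}(T';b)$ where $G'$ is the active minor containing $b$; one must also check that the part of $C^*_G(T;b)$ outside that minor consists only of edges larger than $\min$ of the minor's part — this follows from the filtration being a chain of flats (for cocycles) and the monotonicity $\min(F_k\setminus F_{k-1})=a_k$ increasing. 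Hence $b=\min C^*_G(T;b)$ iff $b=\min C^*_{G'}(T';b)$, i.e. $b$ is internally active for $T$ in $G$ iff it is so in the active minor. By Theorem~\ref{thm:bij-10}, in a bipolar minor $\G_k$ the fully optimal tree $T_k$ has $\Int(T_k)=\{a_k\}$ and $\Ext(T_k)=\emptyset$, while in a cyclic-bipolar minor $\Int=\emptyset$ and $\Ext=\{a'_k\}$. Collecting these over all blocks gives $\Int(T)=\{a_1,\dots,a_\io\}=O^*(\G)$ and $\Ext(T)=\{a'_1,\dots,a'_\ep\}=O(\G)$, using that the active/dual-active edges of $\G$ are exactly the $a'_k$/$a_k$ (Proposition~\ref{prop:unique-dec-seq}). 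The main obstacle is precisely this last verification: making rigorous that the ``external'' part of a fundamental cycle or cocycle of $T$ in $G$ — the edges lying outside the relevant active minor — never contributes a new minimum, so that activity in $G$ reduces cleanly to activity in the active minors. This uses the flat/dual-flat structure of the filtration and the increasing-minima condition of Definition~\ref{def:abst-dec-seq}, and is the technical heart of the lemma; everything else is assembly of already-established pieces.
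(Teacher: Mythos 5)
Your proposal is correct and follows essentially the same route as the paper: well-definedness via the uniqueness of the active filtration, the spanning-tree property by gluing the uniactive trees of the active minors along the filtration, and the activity identities by comparing fundamental cycles/cocycles in $G$ with those in the active minors via Property~\ref{pty:fund}, using the flat/dual-flat structure and the increasing-minima condition to control the elements outside the relevant block. You have also correctly located the technical heart (the converse direction, showing $a_k$ and $a'_k$ remain active in $G$), which the paper handles by observing that the relevant fundamental cycle, resp.\ cocycle, is entirely contained in $F'_{k-1}$, resp.\ $E\setminus F_{k-1}$, whose smallest element is $a'_k$, resp.\ $a_k$.
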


\begin{proof}
The fact that $\alpha(\G)$ is a well defined subset of $E$ and that Definitions \ref{def:alpha-ind-decomp}, \ref{def:alpha-seq-decomp} and \ref{def:alpha-ind-decomp-relaxed} of $\alpha(\G)$ are equivalent directly  comes from the above discussion and from Section \ref{subsec:act-part-decomp}.

Using notations of Definition \ref{def:alpha-seq-decomp}, Proposition \ref{prop:pty-active-minors} that ensures that the active minors are \hbox{(cyclic-)}bipolar, and Theorem \ref{thm:bij-10} stating the active bijection for (cyclic-)bipolar orientations, we have $T=\alpha(\G)=\ \biguplus_{1\leq k\leq \io} T_k\ \uplus\ \biguplus_{1\leq k\leq \ep}
 T'_k$, where, for $1\leq k\leq \io$, $T_i=\alpha\bigl( \G(F_k)/F_{k-1}\bigr)$ is a uniactive internal spanning tree of $G_k=G(F_k)/F_{k-1}$, and, for $1\leq k\leq \ep$, $T'_i=\alpha\bigl( \G(F'_{k-1})/F'_{k}\bigr)$ is a uniactive external spanning tree of $G'_k=G(F'_{k-1})/F'_{k}$. 
%
Recall that, for every subset $F\subseteq E$, the union of a spanning tree of $G/F$ and of a spanning tree of $G(F)$ is a spanning tree of $G$. Then a direct induction shows that $T$ is a spanning tree of $G$.
%
%
%
%
%

The directed graph $\G$ has $\io\geq 0$ dual-active elements, which we denote $a_1<...<a_\io$, and $\ep\geq 0$ active elements, which we denote $a'_1<...<a'_\ep$. Also, let us denote $\S=(F'_\ep, \ldots, F'_0, F_c , F_0, \ldots, F_\io)$ the active filtration of $\G$.
\ss

\tiret {\it External activity part.}
Let us prove that $\Ext(\F)=\{a'_1,\dots, a'_\ep\}$. 

Let $e\in \Ext(\F)$. By definition, $e\not\in T$ and $e=\min(C(T;e))$.
Since $\S$ induces a partition of $E$, $e$ is an edge of a minor $H$ of $G$ induced by this sequence $\S$: either $H=G_k$ for some $1\leq k\leq \io$ or $H=G'_k$ for some $1\leq k\leq \ep$.
In any case, $e$ is an element of the spanning tree $T_H$ induced by $T$ in $H$: either $T_H=T_k$ if $H=G_k$, or $T_H=T'_k$ if $H=G'_k$.
%
By Property \ref{pty:fund},
$C_H(T_H;e)$ is obtained from $C_G(T;e)$ by removing elements not in the edge set of $H$. 
So $e=\min(C_H(T_H;e))$, so $e$ is externally active in the spanning tree $T_H$ of the graph $H$. 
By properties of the spanning trees induced by $T$ in the minors induced by the  sequence $\S$, this implies that $e=a'_k$ for some $1\leq k\leq \ep$.
Hence $\Ext(\F)\subseteq\{a'_1,\dots, a'_\ep\}$.

Conversely, let $1\leq k\leq \ep$.
By properties of the sequence $\S$, we have $a'_k=\min(C_{G'_k}(T_k;a'_k))$.
As above, it is easy to see that we have $C_{G'_k}(T_k;a'_k)=C_G(T;a'_k)\cap (F'_{k-1}\s F'_{k})$. 
Let $e=\min(C_G(T;a'_k))$ and assume that $e<a'_k$.
Since $\S$ is a filtration, by Definition \ref{def:abst-dec-seq}, the sequence $a'_j=\min(F'_{j-1}\s F'_{j})$ is increasing with $j$. Hence 
$a'_k=\min(F'_{k-1})$. Hence $e\in E\s F'_{k-1}$.
On the other hand, since $T\cap F'_{k-1}$ is a spanning tree of $G(F'_{k-1})$, then we have $C_{G}(T;e)\s F'_{k-1}=\emptyset$, which is a contradiction with $e\in E\s F'_{k-1}$.
So we have $e=a'_k$. So $a'_k\in \Ext(\F)$ and we have proved $\Ext(\F)\supseteq\{a'_1,\dots, a'_\ep\}$.
Finally, we have proved $\Ext(\F)=\{a'_1,\dots, a'_\ep\}$. 
\smallskip

\tiret {\it Internal activity dual part.}
Exactly the same reasoning can be directly adapted (using cycle/cocycle duality) in order to prove 
$\Int(\F)=\{a_1,\dots, a_\io\}$.
We could leave the details to the reader (fundamental cocycles are used instead of fundamental cycles, deletion is used instead of contraction, etc.). However, for the sake of completeness, we give the proof below.


Let $b\in \Int(\F)$. By definition, $b\in T$ and $b=\min(C^*(T;b))$.
Since $\S$ induces a partition of $E$, $b$ is an edge of a minor $H$ of $G$ induced by this sequence $\S$: either $H=G_k$ for some $1\leq k\leq \io$ or $H=G'_k$ for some $1\leq k\leq \ep$.
In any case, $b$ is an element of the spanning tree $T_H$ induced by $T$ in $H$: either $T_H=T_k$ if $H=G_k$, or $T_H=T'_k$ if $H=G'_k$.
%
By Property \ref{pty:fund},
$C^*_H(T_H;b)$ is obtained from $C^*_G(T;b)$ by removing elements not in the edge set of $H$. 
So $b=\min(C^*_H(T_H;b))$, so $b$ is internally active in the spanning tree $T_H$ of the graph $H$. 
By properties of the spanning trees induced by $T$ in the minors induced by the  sequence $\S$, this implies that $b=a_k$ for some $1\leq k\leq \io$.
Hence $\Int(\F)\subseteq\{a_1,\dots, a_\io\}$.

Conversely, let $1\leq k\leq \io$.
By properties of the sequence $\S$, we have $a_k=\min(C^*_{G_k}(T_k;a_k))$.
As above, it is easy to see that we have $C^*_{G_k}(T_k;a_k)=C^*_G(T;a_k)\cap (F_k\s F_{k-1})$. 
Let $e=\min(C^*_G(T;a_k))$ and assume that $e<a_k$.
Since $\S$ is a filtration, by Definition \ref{def:abst-dec-seq}, the sequence $a_j=\min(F_j\s F_{j-1})$ is increasing with $j$. Hence 
$a_k=\min(E\s F_{k-1})$. Hence $e\in F_{k-1}$.
On the other hand, since $T\s F_{k-1}$ is a spanning tree of $G/F_{k-1}$, then we have $C^*_{G}(T;b)\cap F_{k-1}=\emptyset$, which is a contradiction with $e\in F_{k-1}$.
So we have $e=a_k$. So $a_k\in \Int(\F)$ and we have proved $\Int(\F)\supseteq\{a_1,\dots, a_\io\}$.
Finally, we have proved $\Int(\F)=\{a_1,\dots, a_\io\}$. 
%
%
%
%
\end{proof}


\begin{definition}
\label{def:active-closure}
Let $G$ be an ordered graph with set of edges $E$.
Let $T$ be a spanning tree of $G$.

For $X\subseteq Ext(T)$, the \emph{active closure} of $X$, denoted $\AA(X)$, is  the smallest (for inclusion) subset $A\subseteq E$  such that:
\vspace{-1mm}
\begin{enumerate}[(i)]
\partopsep=0mm \topsep=0mm \parsep=0mm \itemsep=0mm
\item \label{it1} $X\subseteq A$;
\item \label{it2} if $e\in (E\s T)\cap A$ then $C(T;e)\subseteq A$;
\item \label{it3} if $e\in (E\s T)\s \Ext(T)$ and every $b\in C(T;e)$ with $b<e$ belongs to $A$ then $e\in A$.
\end{enumerate}

For $X\subseteq Int(T)$, the \emph{active closure} of $X$, denoted $\AA(X)$, 
 is  the smallest (for inclusion) subset $A\subseteq E$  such that:
\vspace{-1mm}
\begin{enumerate}[(i)]
\partopsep=0mm \topsep=0mm \parsep=0mm \itemsep=0mm
\item \label{it1} $X\subseteq A$;
\item \label{it2} if $b\in T\cap A$ then $C^*(T;b)\subseteq A$;
\item \label{it3} if $b\in T\s \Int(T)$ and every $e\in C^*(T;b)$ with $e<b$ belongs to $A$ then $b\in A$.
\end{enumerate}
\end{definition}

\begin{lemma}
\label{lem:th-bij-gene-part2}
Let $\G$ be an ordered digraph and $T=\alpha(\G)$ be the active spanning tree of $\G$.
Assume
either $a=\max(\Ext(T))=\max(O(\G))$, or $a=\max(\Int(T))=\max(O^*(\G))$.
Then, 
the part of the active partition of $\G$ containing $a$ is $\AA(\{a\})$.
\end{lemma}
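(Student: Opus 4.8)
The idea is to reduce the general statement to the key facts established in Lemma \ref{lem:th-bij-gene-part1} together with the decomposition structure coming from Section \ref{subsec:act-part-decomp}. By cycle/cocycle duality it suffices to treat one of the two cases, say $a=\max(\Ext(T))=\max(O(\G))=a'_\ep$ (the dual case uses fundamental cocycles, deletion instead of contraction, etc.). Set $F'=F'_{\ep-1}$, which by Definition \ref{def:act-seq-dec} is the union of all directed cycles of $\G$ whose smallest edge is $\geq a'_\ep$, equivalently $= a'_\ep$ since $a'_\ep$ is the greatest active edge. By Lemma \ref{lem:induction-dec-seq}, $\G(F')$ is cyclic-bipolar with respect to $a'_\ep$, and $F'$ is exactly the part of the active partition of $\G$ containing $a'_\ep=\min(F')$. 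So the claim is precisely $\AA(\{a'_\ep\})=F'$.

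First I would prove $\AA(\{a'_\ep\})\subseteq F'$, by checking that $F'$ itself satisfies the three closure conditions (i)–(iii) in Definition \ref{def:active-closure} for $X=\{a'_\ep\}$; since $\AA$ is defined as the smallest such set, this gives the inclusion. Condition (i) is immediate as $a'_\ep\in F'$. For (ii): if $e\in(E\s T)\cap F'$, then by Lemma \ref{lem:th-bij-gene-part1} $e$ lies in one of the active minors, and since $F'$ is a union of (directed) cycles of $\G$ (hence a dual-flat of $G$), the fundamental cycle $C(T;e)$ is contained in $F'$ — here I would use Property \ref{pty:fund} and the fact that $T\cap F'$ is a spanning tree of $G(F')$, so $C_G(T;e)\subseteq F'$. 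For (iii): if $e\in(E\s T)\s\Ext(T)$ with all smaller elements of $C(T;e)$ in $F'$, then since $\Ext(T)=O(\G)=\{a'_1,\dots,a'_\ep\}$ and $a'_\ep$ is the greatest of these, $e\neq a'_\ep$; I would argue that $C_G(T;e)$ meeting $F'$ in a smaller element forces $C_G(T;e)\subseteq F'$ (again because $F'$ is a dual-flat / union of cycles and $T\cap F'$ spans $G(F')$), whence $e\in F'$.

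Next, the reverse inclusion $F'\subseteq\AA(\{a'_\ep\})$. Here I would work inside the cyclic-bipolar minor $\G(F')$ and its uniactive external spanning tree $T'_\ep=T\cap F'=\alpha(\G(F'))$. The key is that in a cyclic-bipolar digraph with smallest (= unique active) edge $a'_\ep$, the fully optimal spanning tree satisfies the sign criterion of Definition \ref{def:cyc-alpha1}; I would use this, via the "successive fundamental cycles" reformulation alluded to after Definition \ref{def:acyc-alpha}, to show that starting from $\{a'_\ep\}$ and repeatedly applying rules (ii) and (iii) one eventually reaches every edge of $F'$ — essentially because $\G(F')$ is connected as a matroid (being cyclic-bipolar with $\beta^*\neq0$) so no proper nonempty subset of $F'$ can be "closed" under those rules while containing $a'_\ep$. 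Concretely: a minimal $A\subseteq F'$ closed under (i)–(iii) and containing $a'_\ep$ would, if $A\subsetneq F'$, give a proper separation of the connected minor $\G(F')$ incompatible with (ii)–(iii), a contradiction.

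The main obstacle I anticipate is the reverse inclusion: rules (ii) and (iii) are a propagation/closure process whose termination at the full set $F'$ must be tied to the connectivity (non-triviality of $\beta^*$) of the cyclic-bipolar minor and to the precise sign behaviour of the fully optimal spanning tree on $F'$. Getting this right may require invoking the characterization of $\alpha$ on cyclic-bipolar minors in terms of unions of successive fundamental cycles from \cite{GiLV05, AB1} rather than just the bare sign criterion of Definition \ref{def:cyc-alpha1}. Once that propagation lemma is in place, both inclusions combine to give $\AA(\{a\})=F'$, which is the stated part of the active partition of $\G$ containing $a$; the dual case $a=\max(\Int(T))=\max(O^*(\G))$ follows by the same argument applied to cocycles and the bipolar minor $\G/F_{\io-1}$, using the supplementary covector-composition technicality already used in the proof of Lemma \ref{lem:induction-dec-seq}.
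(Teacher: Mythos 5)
Your overall architecture agrees with the paper's: identify the relevant part of the active partition as $F'_{\ep-1}$ (the union of directed cycles with smallest element $a'_\ep$), prove $\AA(\{a\})\subseteq F'_{\ep-1}$ by checking that $F'_{\ep-1}$ itself satisfies the three closure rules of Definition \ref{def:active-closure}, then prove the reverse inclusion; and treating only one of the two dual cases is acceptable. However, there are two genuine gaps. The smaller one is in your verification of rule (iii): you claim that a fundamental cycle meeting the dual-flat $F'_{\ep-1}$ in all its elements below $e$ must be entirely contained in $F'_{\ep-1}$ "because $F'$ is a dual-flat / union of cycles". That implication is false for a general union of cycles. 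The correct argument (the one in the paper) is by contradiction: if $e\notin F'_{\ep-1}$ and $\emptyset\subset C(T;e)^<\subseteq F'_{\ep-1}$, then by Property \ref{pty:fund} the set $C(T;e)\s F'_{\ep-1}$ is the fundamental cycle of $e$ w.r.t.\ $T\s T'_\ep$ in $G/F'_{\ep-1}$ and has minimum $e$, so $e$ is externally active there, hence $e=a'_k$ for some $k\leq\ep-1$ by Lemma \ref{lem:th-bij-gene-part1} applied to the contraction; since $a'_k<a'_\ep=\min(F'_{\ep-1})$, this contradicts $C(T;e)^<$ being a nonempty subset of $F'_{\ep-1}$. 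The ordering and the activities in the minor, not a flat-type closure property, are what make (iii) work.

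The more serious gap is the reverse inclusion $F'_{\ep-1}\subseteq\AA(\{a'_\ep\})$, which you correctly flag as the main difficulty but resolve with the wrong mechanism. A set $A$ closed under rules (ii) and (iii) and containing $a'_\ep$ need not be a separator of the matroid $G(F'_{\ep-1})$: rule (iii) only propagates along the linear order, so elements outside $A$ may well have fundamental cycles meeting both $A$ and its complement without violating either rule; hence connectivity ($\beta^*\neq0$) of the cyclic-bipolar minor does not by itself force $A=F'_{\ep-1}$, and no appeal to the full-optimality sign criterion or to successive fundamental cycles is needed either. The paper instead runs a minimal-counterexample descent using only Lemma \ref{lem:th-bij-gene-part1} and Property \ref{pty:fund}: take $e\in F'_{\ep-1}\s A$. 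If $e\notin T$, then $C(T;e)\subseteq F'_{\ep-1}$ (as $T'_\ep$ spans $G(F'_{\ep-1})$); either $e=\min C(T;e)$, forcing $e=a'_\ep\in A$ by uniactivity of $T'_\ep$ (contradiction), or some $f<e$ in $C(T;e)$ lies outside $A$ (else rule (iii) puts $e$ in $A$), and this $f$ is in $F'_{\ep-1}\s A$. If $e\in T$, uniactivity of $T'_\ep$ (no internally active elements) yields $f<e$ with $f\in C^*(T;e)\cap F'_{\ep-1}$, $f\notin T$; and $f\notin A$, since otherwise rule (ii) gives $C(T;f)\subseteq A$ and hence $e\in A$. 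Either way one produces a strictly smaller element of $F'_{\ep-1}\s A$, which is impossible. You would need to replace your connectivity/separation argument with a descent of this kind for the proof to go through.
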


Before proving this lemma, let us emphasize the following observation.

\begin{observation}
\rm
The part of the active partition constructed in Lemma \ref{lem:th-bij-gene-part2} is built from Definition \ref{def:active-closure}, hence only from $T$, and even more precisely only from the fundamental cycles and cocycles of $T$. 
By this way, the active closure of Definition \ref{def:active-closure} allows us to define the part of \emph{the active partition of the spanning tree $T$} containing $a$, see Definition \ref{def:sp-tree-dec-seq} in Section \ref{subsec:dec-bases} (which is consistent with \cite{GiLV05, AB2-a}).
Actually, the notion of active closure is the central tool of \cite{AB2-a}, to which the reader is referred for several equivalent constructions (independent of orientations).
\emevder{observation repetitive avec section sur spanning trees et avec commentaire avant preuve apres thm de cette section... a memttre ou pas ?}
\end{observation}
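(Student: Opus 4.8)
The goal is to show that the part of the active partition produced by Lemma \ref{lem:th-bij-gene-part2} is an intrinsic attribute of the spanning tree $T$ --- indeed of the mere list of fundamental cycles and cocycles of $T$ --- rather than of the orientation $\G$. The plan is to read off this fact directly from the two ingredients already in hand: Lemma \ref{lem:th-bij-gene-part2} identifies the part of the active partition of $\G$ containing $a$ with the active closure $\AA(\{a\})$, and Definition \ref{def:active-closure} expresses $\AA(\{a\})$ purely through data attached to $T$. So first I would invoke the Lemma to replace ``the part containing $a$'' by $\AA(\{a\})$, reducing the claim to the assertion that $\AA(\{a\})$ makes no reference to $\G$.

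Next I would inspect Definition \ref{def:active-closure} clause by clause. Whether we are in the external branch ($X\subseteq \Ext(T)$) or the internal branch ($X\subseteq \Int(T)$), the three closure conditions invoke only: membership of an edge in $T$ or in $E\s T$; the fundamental cycles $C(T;e)$ for $e\notin T$ and the fundamental cocycles $C^*(T;b)$ for $b\in T$; and the activity sets $\Ext(T)$ and $\Int(T)$. None of these mentions the orientation. The one point that needs an explicit word --- and this is the only place a spurious dependence on $\G$ could hide --- is that $\Ext(T)$ and $\Int(T)$ are themselves intrinsic to $T$: by the definitions of Section \ref{subsec:prelim-sp-trees}, $e\in \Ext(T)$ iff $e=\min(C(T;e))$ and $b\in \Int(T)$ iff $b=\min(C^*(T;b))$, so these sets too are determined by the family of fundamental cycles/cocycles of $T$ together with the fixed ordering of $E$. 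Here it is essential to distinguish the \emph{spanning-tree} activities $\Ext(T),\Int(T)$ (undirected, intrinsic to $T$) from the \emph{orientation} activities $O(\G),O^*(\G)$: Lemma \ref{lem:th-bij-gene-part1} shows the two agree when $T=\alpha(\G)$, but Definition \ref{def:active-closure} is phrased in terms of the former, so the construction never consults $\G$.

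Having checked this, I would conclude that $\AA(\{a\})$ is a function of $T$ and the ordering alone --- more precisely of the collection $\{C(T;e) : e\notin T\}\cup\{C^*(T;b) : b\in T\}$. This legitimizes defining \emph{the part of the active partition of the spanning tree $T$ containing $a$} to be $\AA(\{a\})$, with $a=\max(\Ext(T))$ or $a=\max(\Int(T))$, the selection of $a$ being likewise intrinsic since $\Ext(T),\Int(T)$ are. As a consistency check I would note that, for every orientation $\G$ with $\alpha(\G)=T$, this part coincides with the part of the active partition of $\G$ containing $a$ by Lemma \ref{lem:th-bij-gene-part2}; in particular all $2^{\io+\ep}$ orientations in a single activity class (Proposition \ref{prop:act-classes}) yield the same part, which is exactly what an orientation-free definition requires, and this matches Definition \ref{def:sp-tree-dec-seq} and \cite{GiLV05, AB2-a}. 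The main obstacle is genuinely modest: there is no computation, only the circularity check above ensuring that the activities and the selected edge $a$ carry no hidden orientation data; the substance of the statement has already been established in Lemma \ref{lem:th-bij-gene-part2}.
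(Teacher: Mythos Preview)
Your proposal is correct and follows exactly the paper's reasoning: the Observation is not accompanied by a separate proof in the paper, since it amounts precisely to noting that Definition~\ref{def:active-closure} references only $T$, its fundamental cycles/cocycles, and the sets $\Int(T),\Ext(T)$ (themselves determined by these), so that the part $\AA(\{a\})$ produced by Lemma~\ref{lem:th-bij-gene-part2} carries no dependence on~$\G$. Your careful unpacking of this point, including the explicit distinction between spanning-tree activities and orientation activities, is more detailed than what the paper states but is in the same spirit.
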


\begin{proof}[Proof of Lemma \ref{lem:th-bij-gene-part2}]
%
%
For the sake of completeness of the paper, we give below the two parts of the proof, for internal activities and for external activities. However, each part can be directly adapted from the other, following exactly the same reasoning, simply using dual objects with respect to cycle/cocycle duality, and one of the two parts could have been left as an exercise to the reader .
\ss

\tiret {\it External activity part.}
Assume that $\ep>0$ and denote $a=\max(\Ext(T))$. 
Let $A$ be the smallest subset of $E$
satisfying properties (\ref{it1})(\ref{it2})(\ref{it3}).
Let $F_{\ep-1}$ be the part of the active partition of $\G$ with smallest element $a'_\ep$.
Let us prove that $A=F'_{\ep-1}$.
%
%

First, we show that $F'_{\ep-1}$ satisfies the same properties (\ref{it1})(\ref{it2})(\ref{it3}) as $A$. 
By the previous lemma, Lemma \ref{lem:th-bij-gene-part1}, we have $\Ext(T)=\{a'_1,\dots, a'_\ep\}$, hence we have $a=a_\ep$, hence $a\in F'_{\ep-1}$. So property (\ref{it1}) is satisfied.
Let $e\in F'_{\ep-1}\s T'_\ep$, with $T'_\ep=T\cap F'_{\ep-1}$.
Since $T'_\ep$ is a spanning tree of $G(F'_{\ep-1})$, 
we have $C_G(T;e)\subseteq F'_{\ep-1}$.
So property (\ref{it2}) is satisfied.
Finally, for $e\in E\s T$, let us denote $C(T;e)^<=\{b<e\mid b\in C(T;e)\}$.
Assume that there exists $e\in E\s T$ such that $\emptyset \subset C(B;e)^<\subseteq F'_{\ep-1}$ and $e\not\in F'_{\ep-1}$. 
Then $e\not\in \Ext(\F)$ as $C(B;e)^<\not=\emptyset$.
And $e=\min( C(T;e)\s F'_{\ep-1})$ as $C(T;e)^<\subseteq F'_{\ep-1}$.
By Property \ref{pty:fund}, $C(T;e)\s F'_{\ep-1}$
is the fundamental cycle of $e$ w.r.t. the spanning tree $T\s T'_\ep$ of $G/F'_{\ep-1}$
(it is a spanning tree by Lemma \ref{lem:th-bij-gene-part1} applied to $G/F'_{\ep-1}$). 
Hence we have $e$ externally active in the spanning tree $T\s T'_\ep$ of $G/F'_{\ep-1}$, hence $e=a'_k$ for some $1\leq k\leq \ep-1$ by the above lemma, Lemma \ref{lem:th-bij-gene-part1}, applied to $\G/F'_{\ep-1}$ (precisely: $\Ext_{G/F'_{\ep-1}}(T\s T'_\ep)=\{a'_1,\dots,a'_{\ep-1}\}$). 
This is a contradiction with $a'_\ep=\min( F'_{\ep-1})$.
So property (\ref{it3}) is satisfied.
Since $F'_{\ep-1}$ satisfies the three properties (\ref{it1})(\ref{it2})(\ref{it3}) and $A$ is the smallest set satisfying those three properties, we have shown $A\subseteq F'_{\ep-1}$.

To conclude, let us assume that there exists $e\in F'_{\ep-1}\s A$. 
In a first case, let us assume that $e\not\in T$.
Then $C(T;e)\subseteq  F'_{\ep-1}$ since $T'_\ep$ is a spanning tree of $G(F'_{\ep-1})$. And moreover $C(T;e)$ is the fundamental cycle of $e$ w.r.t. $T'_\ep$ in $G(F'_{\ep-1})$.
If $e=\min (C(T;e))$ then we have $e$ externally active 
in the spanning tree $T'_\ep$ of $G(F'_{\ep-1})$. 
Then $e=a'_\ep=a$ by properties of $T'_\ep$, which is a contradiction with $e\not\in A$.
So there exists $f<e$ in $C(T;e)\s A$ (otherwise $\emptyset \subset C(T;e)^<\subseteq A$, which implies $e\in A$ by definition of $A$). So there exists $f<e$ with $f\in F'_{\ep-1}\s A$.
In a second case, let us assume that $e\in T$.
Then,  there exists $f\in E\s T$ with $f<e$ and $f\in C^*(T;e)\cap F'_{\ep-1}$ (otherwise, by Property \ref{pty:fund},
$e$ is externally active in $T'_\ep$, in contradiction with properties of $T'_\ep$). 
By assumption we have $e\in T\s A$. 
If $f\in A$ then, by definition of $A$, we have $C(T;f)\subseteq A$, hence $e\in A$ (since $f\in C^*(T;e)$ is equivalent to $e\in C(T;f)$), which is a contradiction with $e\not\in A$.
So we have $f\in F'_{\ep-1}$.
In any case, the existence of $e$ in $F'_{\ep-1}\s A$ implies the
existence of $f<e$ in $F'_{\ep-1}\s A$, which is impossible. 
So we have proved $F'_{\ep-1}=A$.
\smallskip

\tiret {\it Internal activity part.}
Assume that $\io>0$ and denote $a=\max(\Int(T))$. Let $A$ be the smallest subset of $E$
satisfying properties (\ref{it1})(\ref{it2})(\ref{it3}). 
Let $E\s F_{\io-1}$ be the part of the active partition of $\G$ with smallest element $a_\io$.
Let us prove that $A=E\s F_{\io-1}$ 
\def\AA{acl}

First, we show that $E\s F_{\io-1}$ satisfies the same properties (\ref{it1})(\ref{it2})(\ref{it3}) as $A$. 
By the previous lemma, Lemma \ref{lem:th-bij-gene-part1}, we have $\Int(T)=\{a_1,\dots, a_\io\}$, hence we have $a=a_\io$, hence $a\in E\s F_{\io-1}$. So property (\ref{it1}) is satisfied.
Let $b\in T_\io$ with $T_\io=T\cap (E\s F_{\io-1})$. Since $T_\io$ is a spanning tree of $G/F_{\io-1}$, 
we have $C^*_G(T;b)\subseteq (E\s F_{\io-1})$.
So property (\ref{it2}) is satisfied.
Finally, for $b\in T$, let us denote $C^*(T;b)^<=\{e<b\mid e\in C^*(T;b)\}$.
Assume that there exists $b\in T$ such that $\emptyset \subset C^*(B;b)^<\subseteq E\s F_{\io-1}$ and $b\not\in E\s F_{\io-1}$. 
Then $b\not\in \Int(\F)$ as $C^*(B;b)^<\not=\emptyset$.
And $b=\min( C^*(T;b)\s (E\s F_{\io-1}))$ as $C^*(T;b)^<\subseteq (E\s F_{\io-1})$.
By Property \ref{pty:fund}, $C^*(T;b)\s (E\s F_{\io-1})$
is the fundamental cocycle of $b$ w.r.t. the spanning tree $T\s T_\io$ of $G(F_{\io-1})$
(it is a spanning tree by Lemma \ref{lem:th-bij-gene-part1} applied to $G(F_{\io-1})$). 
Hence we have $b$ internally active in the spanning tree $T\s T_\io$ of $G(F_{\io-1})$, hence $b=a_k$ for some $1\leq k\leq \io-1$ by the above lemma, Lemma \ref{lem:th-bij-gene-part1}, applied to $\G(F_{\io-1})$ (precisely: $\Int_{G(F_{\io-1})}(T\s T_\io)=\{a_1,\dots,a_{\io-1}\}$). 
This is a contradiction with $a_\io=\min( E\s F_{\io-1})$.
So property (\ref{it3}) is satisfied.
Since $E\s F_{\io-1}$ satisfies the three properties (\ref{it1})(\ref{it2})(\ref{it3}) and $A$ is the smallest set satisfying those three properties, we have shown $A\subseteq E\s F_{\io-1}$.

To conclude, let us assume that there exists $e\in (E\s F_{\io-1})\s A$. 
In a first case, let us assume that $e\in T$.
Then $C^*(T;e)\subseteq E\s F_{\io-1}$ since $T_\io$ is a spanning tree of $G/F_{\io-1}$. And moreover $C^*(T;e)$ is the fundamental cocycle of $e$ w.r.t. $T_\io$ in $G/F_{\io-1}$.
If $e=\min (C^*(T;e))$ then we have $e$ internally active 
in the spanning tree $T_\io$ of $G/F_{\io-1}$. 
Then $e=a_\io=a$ by properties of $T_\io$, which is a contradiction with $e\not\in A$.
So there exists $f<e$ in $C^*(T;e)\s A$ (otherwise $\emptyset \subset C^*(T;e)^<\subseteq A$, which implies $e\in A$ by definition of $A$). So there exists $f<e$ with $f\in (E\s F_{\io-1})\s A$.
In a second case, let us assume that $e\not\in T$.
Then,  there exists $f\in T$ with $f<e$ and $f\in C(T;e)\cap (E\s F_{\io-1})$ (otherwise, by Property \ref{pty:fund},
$e$ is externally active in $T_\io$, in contradiction with properties of $T_\io$). 
By assumption we have $e\in E\s(A\cup T)$. 
If $f\in A$ then, by definition of $A$, we have $C^*(T;f)\subseteq A$, hence $e\in A$ (since $f\in C(T;e)$ is equivalent to $e\in C^*(T;f)$), which is a contradiction with $e\not\in A$.
So we have $f\in (E\s F_{\io-1})$.
In any case, the existence of $e$ in $(E\s F_{\io-1})\s A$ implies the
existence of $f<e$ in $(E\s F_{\io-1})\s A$, which is impossible. 
So we have proved $E\s F_{\io-1}=A$.
\end{proof}

\begin{proof}[Proof of Theorem \ref{EG:th:alpha}]
Let $T=\alpha(\G)$, which, by Lemma \ref{lem:th-bij-gene-part1}, is well-defined and is a spanning tree of $G$ with $\Int(T)=O^*(\G)=\{a_1,\dots,a_\io\}$ and $\Ext(T)=O(\G)=\{a'_1,\dots,a'_\ep\}$.

By Definition \ref{def:acyc-alpha}, two opposite bipolar orientations are mapped onto the same spanning tree. Hence, by Definition \ref{def:act-class} and Definition \ref{def:alpha-seq-decomp}, the $2^{\io+\ep}$ orientations of $G$ in the activity class of $\G$ are mapped onto the same  spanning tree $T$.

It remains to prove that $\alpha$ yields a bijection between activity classes of orientations and spanning trees.
Assume that an orientation $\G'$ of $G$ is mapped onto the same spanning tree $T$ as $\G$.
Then, by Lemma \ref{lem:th-bij-gene-part1}, we have  $O^*(\G)=\Int(T)=O^*(\G')$ and
$O(\G)=\Ext(T)=O(\G')$.

Assume $\io>0$.
By Lemma \ref{lem:th-bij-gene-part2}, the part $E\s F_{\io-1}$ of the active partition of $\G$ containing $a_\io$ depends only on $T$, hence it is the same for $\G'$.
By Definition \ref{def:alpha-ind-decomp}, we have
$\alpha(\G)=\alpha(\G(F_{\io-1}))\uplus \alpha(\G/F_{\io-1})$
and $\alpha(\G')=\alpha(\G'(F_{\io-1}))\uplus \alpha(\G'/F_{\io-1})$.
By hypothesis, we have $\alpha(\G)=\alpha(\G')=T$ and $\alpha(\G/F_{\io-1})=\alpha(\G'/F_{\io-1})=T\s F_{\io-1}$.
So we have $\alpha(\G(F_{\io-1}))=\alpha(\G'(F_{\io-1}))$.

Similarly, if $\ep>0$, by Lemma \ref{lem:th-bij-gene-part2}, we obtain that 
the part $F_{\ep-1}$ of the active partition of $\G$ containing $a'_\ep$ is the same for $\G'$ and that $\alpha(\G/F_{\ep-1})=\alpha(\G'/F_{\ep-1})$.

Now we can conclude by induction, by Lemma \ref{lem:induction-dec-seq}.
We finally have that $\G$ and $\G'$ have exactly the same active partitions.
By hypothesis, the image by $\alpha$ of each induced bipolar or cyclic-bipolar minor is the same for $\G$ and $\G'$. 
Hence, by Theorem \ref{thm:bij-10}, those bipolar minors are either equal or opposite for $\G$ and $\G'$, that is: $\G$ and $\G'$ are in the same activity class
(Definition \ref{def:act-class}).

So we have proved that $\alpha$ yields an injection from activity classes of orientations with dual-activity $\io$ and activity $\ep$ to spanning trees
with internal activity $\io$ and external activity $\ep$. It is a bijection because the sets have  the same cardinality, by the equality $o_{\io,\ep}=2^{\io+\ep}t_{\io,\ep}$ from  \cite{LV84a}, as recalled in Section \ref{sec:prelim}.
\end{proof}



\subsection{The refined active bijection (with respect to a reference orientation)}
\label{subsec:refined}

The present construction is a 
natural development of the canonical active bijection.
Let us consider an ordered directed graph $\G$ and its active spanning tree $T=\alpha(\G)$.  On one hand, the activity class of $\G$ (Definition \ref{def:act-class}) obviously has a boolean lattice structure isomorphic to the power set of $O(\G)\cup O^*(\G)$.
On the other hand, the interval $[T\setminus \Int(T), T\cup \Ext(T)]$ of $T$ (Section \ref{prelim:subset-activities})
also has a boolean lattice structure isomorphic to the power set of $\Int(T)\cup \Ext(T)$.
Since we have $\Int(T)\cup \Ext(T)=O(\G)\cup O^*(\G)$ by properties of $\alpha$ (Theorem \ref{EG:th:alpha}), those two boolean lattices are isomorphic.
See Figure \ref{EG:fig:K4-iso} for an illustration.
\emevder{la figue illustre aussi le choix par rapport a orientation de reference, a preciser plus loin? dans la caption?}%
Furthermore, activity classes of orientations of $G$ form a partition of the set of orientations of $G$ (Proposition \ref{prop:partition-into-act-classes}), intervals of spanning trees form a partition of the power set of $E$ (Section \ref{prelim:subset-activities}), and activity classes of orientations are in bijection with spanning trees 
under $\alpha$
(Theorem \ref{EG:th:alpha}).
Hence, selecting a boolean lattice isomorphism for each couple formed by an activity class and its active spanning tree directly yields a bijection between all orientations and all subsets of $E$, which refines the canonical active bijection of $G$, and transforms activity classes of orientations into intervals of spanning trees. 
The most natural way   to select such isomorphisms (see also Section \ref{subsec:act-map-class-decomp} for variants) is to use an orientation $\G$ as a reference orientation, whose role is to break the symmetry in activity classes, just as in Section \ref{subsec:act-classes}. By this way, we shall obtain below \emph{the refined active bijection $\alpha_\G$ of $G$ w.r.t. $\G$}, which relates\emevder{preserves ?} the refined activities 
for orientations  and for subsets from Definitions \ref{def:gene-act-base} and \ref{def:gene-act-ori}
\emevder{renvoyer a (see Section \ref{subsec:act-map-class-decomp}) pour varaintes ?}%
(as announced in \cite{LV12}%
\footnote{
Beware that the definition for such a bijection proposed at the very end of \cite{LV12} in terms of the active bijection is not correct: it is not complete, and given with a wrong parameter correspondence. It is different from the present one, which is consistent with the definition 
given in \cite{Gi02,GiLV07,AB2-b}.}%
),
giving a bijective transformation between the formulas of Theorems \ref{EG:th:Tutte-4-variables} and \ref{EG:th:expansion-orientations}:
\begin{eqnarray*}
T(G;x+u,y+v)&
=&\sum_{A\subseteq E}\ x^{\mid \Int_G(A)\mid}\ u^{\mid P_G(A)\mid}\ y^{\mid \Ext_G(A)\mid}\ v^{\mid Q_G(A)\mid}\\
&=&\sum_{A\subseteq E} \ x^{|\Theta^*_\G(A)|}\ u^{|\bar\Theta^*_\G(A)|}\ y^{|\Theta_\G(A)|}\ v^{|\bar\Theta_\G(A)|}.
\end{eqnarray*}
\bs

\begin{figure}[h]
\centering
\includegraphics[scale=1.1]{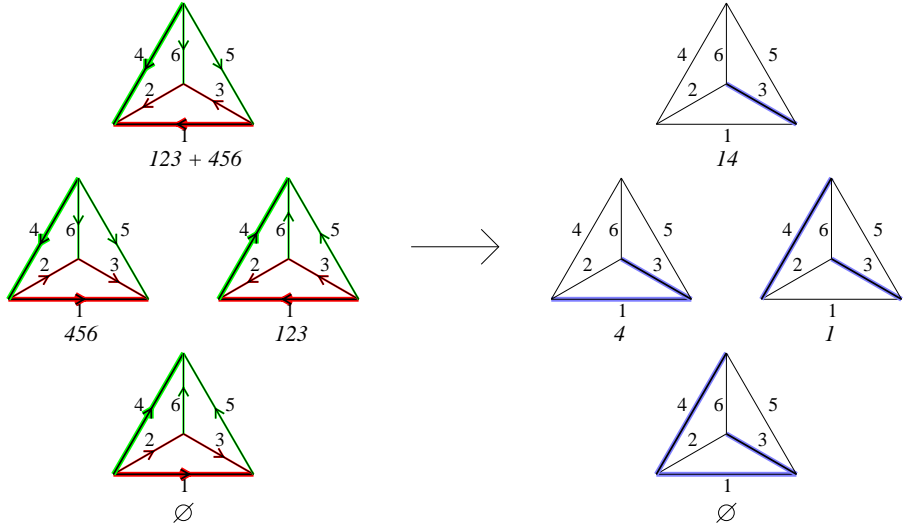}
\caption[]{Boolean lattice isomorphism between an activity class of (re)orientations and the interval of the corresponding spanning tree, figured for the class of the graph $\G$ from Figure \ref{EG:fig:K4-dec} with active partition $123+456$ and the spanning tree $T=\alpha(\G)$ with $[T\s \Int(T),T]=[3,134]$. Edges written below the graphs on the right are those removed from $T$, they correspond to reoriented parts in the digraphs on the left.}
\label{EG:fig:K4-iso}
\end{figure}

Technically, let $G$ be a graph on a linearly ordered set $E$.
Let $\G$ be an orientation of $G$, thought of as  \emph{the reference orientation}.
Let $A\subseteq E$.
%
The active partition of $-_A\G$ can be denoted as:
$$E\ =\ \biguplus_{a\ \in\ O(-_A\G)\ \cup\ O^*(-_A\G)}A_a$$
where the index of each part is the smallest element of the part.
%
Then, the activity class of $-_A\G$ can be denoted in the following way (where $\triangle$ denotes the symmetric difference): 
$$cl(-_A\G)=\Biggl\{\ -_{A'}\G \ \mid\ A'= A\ \triangle\ \Bigl(\ \biguplus_{a\in P\cup Q}A_a\Bigr)\ \text{ for } \ P\subseteq O^*(-_A\G),\ Q\subseteq O(-_A\G)\ \Biggr\}.$$
Let $T=\alpha(-_A\G)$ be the active spanning tree of $-_A\G$. 
%
The interval 
of $T$ 
can be also denoted:
$$[T\setminus \Int(T), T\cup \Ext(T)]\ =\ \Biggl\{\ T'\subseteq E\ \mid\  T'=T\triangle\bigl(\biguplus_{a\in P\cup Q}\{a\}\bigr) \text{ for  }P\subseteq \Int(T),\ Q\subseteq \Ext(T)\ \Biggr\}.$$
%
The above notations emphasize the two boolean lattice structures. 
Then, we define an isomorphism between the two by choosing that the representative orientation of the activity class which is active-fixed and dual-active fixed w.r.t. $\G$ (Definition \ref{def:active-fixed} and Corollary \ref{cor:enum-classes})
is associated with the spanning tree~$T$.
Assume $-_{A}\G$ is the representative of its class with these properties, then we formally have:
$\bar\Theta_\G(A)=O(-_A\G)\cap A=\emptyset$ and $\bar\Theta^*_\G(A)=O^*(-_A\G)\cap A=\emptyset,$ which corresponds to $A'=A$, $P=\emptyset$ and $Q=\emptyset$ in the above setting, and which corresponds to the subset $T'=T$ in the interval of the spanning tree $T$, that is to $P_G(T')=\Int(T)\cap T'=\emptyset$ and $Q_G(T')=\Ext(T)\cap T'=\emptyset$ (Definitions \ref
{def:gene-act-ori} and \ref{def:gene-act-base}).
Finally, all orientations in the same activity class and all subsets in the same interval correspond to all possible values of $P$ and $Q$ in the above notations, so that:
\begin{center}
   $
   \begin{array}{lclclclcl}
    P&=&\bar\Theta^*_\G(A')&=&P_G(T')&\subseteq& O^*(-_A\G)&=&\Int(T), \\
    Q&=&\bar\Theta_\G(A')&=&Q_G(T')&\subseteq& O(-_A\G)&=&\Ext(T).
   \end{array}
   $
\end{center}
%
By this way, we naturally obtain the following definition and theorem.

\begin{definition}
\label{EG:def:act-bij-ext}
Let $\G$ be a directed graph on a linearly ordered set of edges $E$.
For  $A\subseteq E$, 
set
%
%
 $$\alpha_\G(A)=\alpha(-_A\G)\ \setminus\ \Bigl(A\cap O^*(-_A\G)\Bigr)\ \cup\ \Bigl(A\cap O(-_A\G)\Bigr).$$
In other words, we set $\alpha_\G(A)=T \setminus P \cup Q$ with $T=\alpha(-_A\G),$
$P=A\cap \Int(T)=A\cap O^*(-_A\G),$ and $Q=A\cap \Ext(T)=A\cap O(-_A\G).$
The mapping $\alpha_\G$ is called \emph{the refined active bijection of $G$ w.r.t.~$\G$.}\emevder{ce mapping est la refined bijection ? voir enonce thm, ou bien labijetion n'est pas un mapping mais un couplage ?}%
%
%
\end{definition}

\emevder{on peut ajouter def directe par union avce active filtration !!!}



%

%

\eme{DESSOUS EN COMMENTAIRE DEF COMPLETE DE HANDOBOOK}

%

\eme{other possible definition: choose $A$ associated with $T$ containing all active elements, or no active element...}

\begin{thm} 
\label{EG:th:ext-act-bij}
Let $G$ be a graph on a linearly ordered set of edges $E$, and 
$\G$ be an orientation of $G$ (thought of as a reference orientation).
%
We have the following.
\begin{itemize}
\item The mapping $-_A\G\mapsto \alpha_\G(A)$ for $A\subseteq E$
effectively yields a bijection 
between all reorientations of $\G$ and all subsets of $E$. It maps activity classes of orientations of $G$ onto intervals of spanning trees of~$G$ (and these restrictions are boolean lattice isomophisms).\emevder{boolean lattice isomorphism?}
%
\item  For all $A\subseteq E$, 
$T=\alpha(-_A\G)$ and $\alpha_\G(A)=T\setminus P\cup Q$,
we have  (with notations of Definitions \ref
{def:gene-act-ori} and \ref{def:gene-act-base}):
\begin{align*}
\Int_G(\alpha_\G(A))&&=&&\scriptstyle \Int_G(T)\s P&&\scriptstyle =&&\scriptstyle O^*(-_A\G)\s P&&=&&\Theta^*_\G(A),\\
P_G(\alpha_\G(A))&&=&&&&\scriptstyle P&&&&=&&\bar\Theta^*_\G(A),\\
\Ext_G(\alpha_\G(A))&&=&&\scriptstyle \Ext_G(T)\s Q&&\scriptstyle =&&\scriptstyle O(-_A\G)\s Q&&=&&\Theta_\G(A),\\
Q_G(\alpha_\G(A))&&=&&&&\scriptstyle Q&&&&=&&\bar\Theta_\G(A).
\end{align*}

\item  In particular, $\alpha_\G(A)$ equals the active spanning tree $\alpha(-_A\G)$ if and only if $-_A\G$ is active fixed and dual-active fixed w.r.t. $\G$. Similarly, restrictions of the mapping $\alpha_\G$ yield the  bijections listed in Table \ref{table:thm-refined}.

%
%
%
\end{itemize}
\end{thm}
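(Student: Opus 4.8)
The plan is to deduce Theorem \ref{EG:th:ext-act-bij} by gluing the canonical active bijection of Theorem \ref{EG:th:alpha} with the two ``partition into boolean lattices'' facts available on each side: Proposition \ref{prop:partition-into-act-classes} (activity classes partition the reorientations of $\G$) and the interval partition $2^E=\biguplus_T[T\s\Int_G(T),T\cup\Ext_G(T)]$ recalled in Section \ref{prelim:subset-activities} (due to Crapo). I would proceed in five steps: (i) check that $\alpha_\G$ is well defined and sends the activity class of $-_A\G$ into the interval of $T:=\alpha(-_A\G)$; (ii) prove the four-parameter correspondence by a direct set computation; (iii) identify the restriction of $\alpha_\G$ to a single activity class with the obvious map of boolean-lattice coordinates, hence a bijection onto the corresponding interval; (iv) assemble the per-class bijections into a global bijection using the two partitions and Theorem \ref{EG:th:alpha}; (v) read off the ``in particular'' clause and Table \ref{table:thm-refined}.

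For step (i), Theorem \ref{EG:th:alpha} gives that $T=\alpha(-_A\G)$ is a spanning tree with $\Int_G(T)=O^*(-_A\G)$ and $\Ext_G(T)=O(-_A\G)$; hence in Definition \ref{EG:def:act-bij-ext} the sets $P:=A\cap O^*(-_A\G)=A\cap\Int_G(T)$ and $Q:=A\cap O(-_A\G)=A\cap\Ext_G(T)$ satisfy $P\subseteq\Int_G(T)$, $Q\subseteq\Ext_G(T)$, so $\alpha_\G(A)=(T\s P)\cup Q$ lies in $[T\s\Int_G(T),T\cup\Ext_G(T)]$. For step (ii), write $T'=\alpha_\G(A)=(T\s P)\cup Q$; using that $\Int_G(T)\subseteq T$ while $\Ext_G(T)\cap T=\emptyset$, intersect $T'$ with $\Int_G(T)$ and with $\Ext_G(T)$ and apply Definition \ref{def:gene-act-base} with associated tree $T$: this yields $\Int_G(T')=\Int_G(T)\s P$, $P_G(T')=P$, $\Ext_G(T')=\Ext_G(T)\s Q$, $Q_G(T')=Q$. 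Comparing with Definition \ref{def:gene-act-ori}, $\Theta^*_\G(A)=\Int_G(T)\s P$, $\bar\Theta^*_\G(A)=P$, $\Theta_\G(A)=\Ext_G(T)\s Q$, $\bar\Theta_\G(A)=Q$, which is precisely the displayed list.

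Step (iii) is the heart of the proof. Fix an activity class; by Theorem \ref{EG:th:alpha} the canonical $\alpha$ maps it to one spanning tree $T$, and by Proposition \ref{prop:act-classes} its active partition, hence also $O^*(\cdot)\cup O(\cdot)=\Int_G(T)\uplus\Ext_G(T)$, is constant on the class. By Corollary \ref{cor:enum-classes} the class has a unique active-fixed and dual-active-fixed representative $-_{A_0}\G$, i.e.\ $A_0\cap(\Int_G(T)\cup\Ext_G(T))=\emptyset$. By Definition \ref{def:act-class}, every member of the class is $-_{A_0\triangle B}\G$ with $B$ a union of parts $A_a$ of the common active partition, $a$ ranging over some $S\subseteq\Int_G(T)\cup\Ext_G(T)$; since the parts are disjoint and $\min A_a=a\notin A_0$, an index $a$ lies in $A_0\triangle B$ iff $a\in S$, so $S=(A_0\triangle B)\cap(\Int_G(T)\cup\Ext_G(T))=P\uplus Q$ for that member. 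Thus member $\mapsto(P,Q)$ is a bijection from the class onto $2^{\Int_G(T)}\times 2^{\Ext_G(T)}$; composing with the standard bijection $(P,Q)\mapsto(T\s P)\cup Q$ onto $[T\s\Int_G(T),T\cup\Ext_G(T)]$ shows that $\alpha_\G$ restricts to a bijection, indeed a boolean-lattice isomorphism, from the class onto the interval of $T$. For step (iv), activity classes partition the reorientations of $\G$, intervals partition $2^E$, and the canonical $\alpha$ is a bijection between the two families of blocks (Theorem \ref{EG:th:alpha}), so gluing the per-class bijections yields a bijection $2^E\to2^E$ mapping classes to intervals. Finally, in step (v), $\alpha_\G(A)=\alpha(-_A\G)=T$ forces, on intersecting with $\Int_G(T)$ and $\Ext_G(T)$, $P=Q=\emptyset$, i.e.\ $A\cap O^*(-_A\G)=A\cap O(-_A\G)=\emptyset$, i.e.\ $-_A\G$ active-fixed and dual-active-fixed (Definition \ref{def:active-fixed}); conversely this condition gives $P=Q=\emptyset$. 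Each line of Table \ref{table:thm-refined} then follows from the step-(ii) correspondence and the standard meaning of vanishing parameters ($Q_G(A')=0$ iff $A'$ is a forest, $P_G(A')=0$ iff $A'$ is a spanning connected subgraph, $\Ext_G(A')=Q_G(A')=0$ iff $A'$ is a no-broken-circuit set, and the dual statements) on one side, and ($O(\cdot)=\emptyset$ iff acyclic, $O^*(\cdot)=\emptyset$ iff strongly connected, $\bar\Theta_\G=\emptyset$ iff active-fixed, etc.) on the other.

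I expect the only genuine obstacle to be step (iii): one must verify that the boolean-lattice coordinate on the orientation side — which parts of the active partition are reversed relative to the distinguished representative — matches the coordinate on the subset side — which internally active edges of $T$ are deleted and which externally active edges are added. The choice of the active-fixed/dual-active-fixed representative of Corollary \ref{cor:enum-classes} and the elementary fact $\min A_a=a$ are exactly what makes these two coordinatizations agree; everything else is bookkeeping layered on top of Theorem \ref{EG:th:alpha} and the two partition statements.
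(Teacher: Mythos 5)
Your proof is correct and follows essentially the same route as the paper: the paper's own proof of the first two points simply points back to ``the discussion above the theorem'', and that discussion is precisely the gluing of per-class boolean-lattice isomorphisms that you carry out explicitly in steps (i)--(iv), anchored on Theorem \ref{EG:th:alpha}, Propositions \ref{prop:act-classes} and \ref{prop:partition-into-act-classes}, Corollary \ref{cor:enum-classes}, and Crapo's interval partition; your step (ii) parameter computation matches the paper's almost line by line. The one genuine divergence is in justifying the $t(G;2,1)$ and $t(G;1,2)$ rows of Table \ref{table:thm-refined}: the paper proves these via Lemma \ref{lem:decomp-intervals}, i.e.\ a decomposition-plus-counting argument through the convolution formula showing that the unions of half-intervals $[T\s \Int(T),T]$ and $[T,T\cup\Ext(T)]$ are exactly the forests and the connected spanning subgraphs, whereas you invoke the intrinsic characterizations $|Q_G(A')|=|A'|-r_G(A')$ and $|P_G(A')|=r(G)-r_G(A')$ already recalled in Section \ref{prelim:subset-activities}, which give the same identification immediately. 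Your shortcut is legitimate and arguably more economical, since it leans on a fact the paper states (with reference) in the preliminaries rather than reproving it; everything else is the same argument.
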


\begin{table}[h]
\centering
{
\ptirm
\def\interligne{&&\\[-11pt]}
\begin{tabular}{|l|l|c|}
\hline
\interligne
orientations & subsets & $t(G;2,2)$\\
\interligne
acyclic orientations & subsets of internal spanning trees & $t(G;2,0)$\\
&  (or no-broken-circuit subsets)& \\
\interligne
 strongly connected orientations & supersets of external spanning trees & $t(G;0,2)$\\
 \interligne
dual-active-fixed acyclic orientations & internal spanning trees & $t(G;1,0)$\\
 \interligne
dual-active-fixed acyclic orientations (w.r.t. $\scriptsize -_E\G$)& min. subsets of internal sp. tree intervals & $t(G;1,0)$\\
\interligne
active-fixed strongly connected orientations & external spanning trees & $t(G;0,1)$\\
\interligne
active-fixed strongly connected orientations  (w.r.t. $\scriptsize -_{\tiny E}\G$) & max. subsets of external sp. tree intervals & $t(G;0,1)$\\
\interligne
active-fixed orientations & subsets of spanning trees & $t(G;2,1)$\\
& (or forests)&\\
\interligne
dual-active-fixed orientations & supersets of spanning trees  & $t(G;1,2)$\\
& (or connected spanning subgraphs) \eme{??????????}&\\
\interligne
active-fixed and dual-active-fixed orientations & spanning trees & $t(G;1,1)$\\
\hline
%
%
%
%
\end{tabular}
}
\caption{Remarkable restrictions of the refined active bijection of $G$  w.r.t. $\vec G$, between particular types of orientations (first column, in terms of Definition \ref{def:active-fixed}) and  particular types of edge subsets (second column) enumerated by Tutte polynomial evaluations (third column). See Theorem \ref{EG:th:ext-act-bij}.}
\label{table:thm-refined}
\end{table}

\eme{POUR PLUS TARD : voir si \min subsetes of itnernal sp. tree intervals peuvent etre decrits combinatoriement autrement ! probable vu que P ne depend pas de arbre...}%

As written above, the \emph{refined active bijection of the ordered graph $G$ w.r.t. (the reference reorientation) $\G$} is the bijection 
provided by Theorem \ref{EG:th:ext-act-bij}.
\emevder{repetititf deja ecrit deux fois en emphasize !}
It is important to insist that, in contrast with the canonical one, this bijection is induced by the choice of a reference orientation.
\emevder{mieux : is induced by que depends on pour refined bijection ?}%
%
%
Let us mention that the inverse mapping can be computed by a single pass over $E$, see Section \ref{subsec:basori}.
Let us mention a deletion/contraction construction, see Section \ref{subsec:alpha-refined-ind}.
Lastly, let us mention that variants can be defined, for instance by exchanging the correspondences between the four parameter activities for spanning trees and orientations, see Section \ref{subsec:act-map-class-decomp} below.


\begin{proof}[Proof of Theorem \ref{EG:th:ext-act-bij}]
The first point directly  comes from the discussion above the theorem.
The second point also easily comes from this discussion.
Let us precisely check the equalities of parameters in the second point anyway.
In order to simplify notations, we omit subscripts ($G$ or $\G$) of activity parameters.
Let $A_T$ be the reorientation of $\G$ whose image under $\alpha_\G$ is the spanning tree $T$. 
Let $E=\uplus_{a\in O(-_{A_T}\G)\cup O^*(-_{A_T}\G)} A_a$, with $a=\min(A_a)$, be the active partition associated with $T$ or $-_{A_T}\G$.
Let $A$ be a subset in the associated activity class, we have
$A=A_T\triangle\bigl(\cup_{a\in P\cup Q}A_a\bigr)$ for some $P\subseteq \Int(T)=O^*(-_{A_T}\G)=O^*(-_A\G)$ and $Q\subseteq \Ext(T)=O(-_{A_T}\G)=O(-_A\G)$ with $P\cap A_T=\emptyset$ and $Q\cap A_T= \emptyset$.
By Definition \ref{EG:def:act-bij-ext}, we have $\alpha_\G(A)=T\setminus P\cup Q$ .
%

By Definition \ref{def:gene-act-base}, we have
$\Int(\alpha_\G(A))=\Int(T)\cap \alpha_\G(A)$.
We have $\Int(T)\cap \alpha_\G(A)=\Int(T)\cap(T\s P\cup Q)=\Int(T)\s P$.
By Theorem \ref{EG:th:alpha}, we have $\Int(T)\s P=O^*(-_A\G)\s P$.
By properties of $P$, we have
$O^*(-_A\G)\s P=O^*(-_A\G)\s \bigl(A_T\triangle (\cup_{a\in P\cup Q}A_a)\bigr)=O^*(-_A\G)\s A$.
By Definition \ref{def:gene-act-ori}, we have
$O^*(-_A\G)\s A=\Theta^*(A)$.
So finally $\Int_G(\alpha_\G(A))=\Theta^*(A)$.

On one hand, by Definition \ref{def:gene-act-base}, we have $\Int(\alpha_\G(A))\cup P(\alpha_\G(A))=\Int(T)$.
On the other hand, by Definition \ref{def:gene-act-ori}, we have $\Theta^*(A)\cup \bar\Theta^*(A)=O^*(-_A\G)$. By Theorem \ref{EG:th:alpha}, we have $\Int(T)=O^*(-_A\G)$, so, by the above result, we get $P(\alpha_\G(A))=\bar\Theta^*(A)$.

Similarly, by Definition \ref{def:gene-act-base}, we have
$\Ext(\alpha_\G(A))=\Ext(T)\s \alpha_\G(A)$.
We have $\Ext(T)\s \alpha_\G(A)=\Ext(T)\s(T\s P\cup Q)=\Ext(T)\s Q$.
By Theorem \ref{EG:th:alpha}, we have $\Ext(T)\s Q=O(-_A\G)\s Q$.
As above, 
by properties of $Q$, we have 
$O(-_A\G)\s Q=O(-_A\G)\s \bigl(A_T\triangle (\cup_{a\in P\cup Q}A_a)\bigr)=O(-_A\G)\s A$.
As above, by Definition \ref{def:gene-act-ori}, we have
$O(-_A\G)\s A=\Theta(A)$.
So finally $\Ext(\alpha_\G(A))=\Theta(A)$.
And, as above, we deduce that $Q(\alpha_\G(A))=\bar\Theta(A)$.


Now, let us consider the list of bijections of the third point.
They are all obtained as restrictions of $\alpha_\G$.
Observe that an orientation is active-fixed, resp. dual-active-fixed, if it is obtained by $Q=\emptyset$, resp. $P=\emptyset$.
Therefore, all these bijections are obvious by the definitions, except the two ones involving $t(G;1,2)$ and $t(G;2,1)$. 
For the first one, resp. second one, of these two, we can use that subsets, resp. supersets, of spanning trees are exactly the subsets of type $T\s P$, resp. $T\cup Q$, for some spanning tree $T$ and $P\subseteq \Int(T)$, resp. $Q\subseteq \Ext(T)$.
This result is stated separately in Lemma \ref{lem:decomp-intervals} below.
%
%
%
%
\end{proof}


\emevder{met on le resultat ci-dessous ou juste dans AB2b?}

\begin{lemma}
\label{lem:decomp-intervals}
Let $G$ be an ordered graph. 
The set of subsets of spanning trees of $G$ is the union of intervals $[T\s \Int_G(T),T]$ over all spanning trees $T$ of $G$.
The set of supersets of spanning trees of $G$ is the union of intervals $[T, T\cup \Ext_G(T)]$ over all spanning trees $T$ of $G$.
\end{lemma}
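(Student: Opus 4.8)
The plan is to prove Lemma~\ref{lem:decomp-intervals} by showing both inclusions, exploiting the fact (from Section~\ref{prelim:subset-activities}) that the intervals $[T\setminus\Int_G(T),\,T\cup\Ext_G(T)]$ partition $2^E$, together with the direct characterizations of $P_G(A)$ and $Q_G(A)$ recalled there: $Q_G(A)$ counts smallest edges of cycles contained in $A$, and $P_G(A)$ counts smallest edges of cocycles contained in $E\setminus A$; in particular $A$ is a subset of a spanning tree (i.e.\ a forest) if and only if $A$ contains no cycle, i.e.\ $Q_G(A)=\emptyset$, and dually $A$ is a superset of a spanning tree (i.e.\ a connected spanning subgraph, equivalently $E\setminus A$ contains no cocycle) if and only if $P_G(A)=\emptyset$. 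I will treat the forest statement in detail; the connected-spanning-subgraph statement is then obtained by the verbatim dual argument (cycles $\leftrightarrow$ cocycles, $\Int\leftrightarrow\Ext$, $P\leftrightarrow Q$, $T\leftrightarrow E\setminus T$).

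For the first equality, I would argue as follows. First the easy inclusion: if $T$ is a spanning tree and $P\subseteq\Int_G(T)$, then $T\setminus P\subseteq T$, so $T\setminus P$ is a forest, hence a subset of a spanning tree; this shows $\bigcup_T[T\setminus\Int_G(T),T]$ consists of forests. Conversely, let $A$ be a forest. By the partition result of \cite{Cr69} recalled in Section~\ref{prelim:subset-activities}, there is a (unique) spanning tree $T$ with $A\in[T\setminus\Int_G(T),\,T\cup\Ext_G(T)]$, so $T\setminus\Int_G(T)\subseteq A\subseteq T\cup\Ext_G(T)$. Write $A=(A\cap T)\cup(A\setminus T)$. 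I claim $A\setminus T=\emptyset$: indeed $A\setminus T\subseteq\Ext_G(T)$, so if some $e\in A\setminus T$ then $e\in\Ext_G(T)$ means $e=\min(C_G(T;e))$; but $C_G(T;e)\setminus\{e\}\subseteq T$ and, since $T\setminus\Int_G(T)\subseteq A$, every edge $b\in C_G(T;e)$ with $b\neq e$ that lies in $T$ but not in $\Int_G(T)$ lies in $A$ — I would instead argue directly that $C_G(T;e)\subseteq A\cup\{\text{edges of }T\}$ forms a cycle inside $A$ once we check all its $T$-edges are in $A$. The clean way: the edges of $C_G(T;e)$ other than $e$ are in $T$; a $T$-edge $b\in C_G(T;e)$ satisfies $e\in C^*_G(T;b)$ and $e<b$ (as $e=\min C_G(T;e)$), so $b\notin\Int_G(T)$ would give $b\in T\setminus\Int_G(T)\subseteq A$; and if $b\in\Int_G(T)$ then $b=\min C^*_G(T;b)\le e$, contradicting $e<b$. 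Hence all edges of $C_G(T;e)$ lie in $A$, so $A$ contains a cycle, contradicting that $A$ is a forest. Therefore $A\subseteq T$, i.e.\ $A=T\setminus P$ with $P=T\setminus A$. It remains to see $P\subseteq\Int_G(T)$, equivalently $T\setminus\Int_G(T)\subseteq A$, which holds by the interval containment; so $A\in[T\setminus\Int_G(T),T]$. This proves the first equality.

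The second equality follows by the dual argument: a set $A$ is a superset of a spanning tree iff $E\setminus A$ contains no cocycle, and for the spanning tree $T$ with $A\in[T\setminus\Int_G(T),T\cup\Ext_G(T)]$ one shows $T\subseteq A$ by the same reasoning with fundamental cocycles in place of fundamental cycles and $\Ext$ in place of $\Int$; then $A=T\cup Q$ with $Q=A\setminus T\subseteq\Ext_G(T)$ by the interval containment, and conversely every $T\cup Q$ with $Q\subseteq\Ext_G(T)$ is a connected spanning subgraph since it contains the spanning tree $T$.

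The only mildly delicate point — and the one I would present carefully rather than wave at — is the combinatorial claim that if $A$ lies in the interval of $T$ and $A\not\subseteq T$ (resp.\ $T\not\subseteq A$), then $A$ actually contains a cycle (resp.\ $E\setminus A$ contains a cocycle): this is exactly the activity-theoretic content behind $Q_G(A)=\emptyset\Leftrightarrow A$ forest, and it is where one must use the definition of active edges and the inequality $e<b$ coming from $e=\min C_G(T;e)$. Everything else is formal manipulation with the Crapo partition and the definitions of $\Int$, $\Ext$, $P_G$, $Q_G$. Alternatively, one could shortcut the whole proof by invoking directly that $|Q_G(A)|=|A|-r_G(A)$ and $|P_G(A)|=r(G)-r_G(A)$ (stated in Section~\ref{prelim:subset-activities}): then $A$ is a forest iff $|A|=r_G(A)$ iff $Q_G(A)=0$ iff $A=T\setminus P$ with $P=T\setminus A\subseteq\Int_G(T)$ for the unique $T$ with $A$ in its interval, and dually for supersets; I would likely include this rank-function version as the primary argument since it is shortest, and keep the cycle-based argument as the intuition.
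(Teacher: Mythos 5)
Your proof is correct, but it takes a genuinely different route from the paper's. The paper argues by counting: it first notes the easy inclusion (each $T\setminus P$ with $P\subseteq \Int_G(T)$ is a forest), then uses the decomposition of spanning trees into internal/external parts over cyclic flats from \cite{EtLV98} together with the convolution formula (Corollary \ref{cor:convolution}) to show that $\sum_T 2^{|\Int_G(T)|}$ equals $t(G;2,1)$, the number of forests, so the inclusion must be an equality. Your argument instead establishes the reverse inclusion directly: given a forest $A$ lying in the Crapo interval of $T$, you show that any $e\in A\setminus T$ would be externally active, and that the interval containment $T\setminus\Int_G(T)\subseteq A$ then forces the entire fundamental cycle $C_G(T;e)$ into $A$ (since a $T$-edge $b\in C_G(T;e)$ with $e<b$ cannot be internally active), contradicting acyclicity of $A$; the dual argument handles supersets. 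This is a sound and more elementary proof --- it needs only Crapo's partition and the definitions of activities, not the convolution formula or the internal/external decomposition --- and it has the added virtue of exhibiting the unique interval containing each forest explicitly rather than deducing equality from equicardinality. Your rank-function shortcut ($A$ is a forest iff $|A|=r_G(A)$ iff $Q_G(A)=\emptyset$ iff $A\cap\Ext_G(T)=\emptyset$, whence $A\subseteq T$) is also valid given the facts recalled in Section \ref{prelim:subset-activities}, and is essentially the same observation packaged through the parameters $P_G$ and $Q_G$.
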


\begin{proof}
By the main result of \cite{EtLV98} (implying Corollary \ref{cor:convolution}, and extended in Theorem \ref{EG:th:dec_base} below), we know that
spanning trees $T$ of $G$ are exactly subsets of the form $T_\io\uplus T_\ep$
where $T_\io$ is an internal spanning tree of $G/F$, 
$T_\ep$ is an external spanning tree of $G(F)$, 
and $F$ is a cyclic flat of $G$. Moreover $\Int(T)=\Int_{G/F}(T_\io)$ and 
$\Ext(T)=\Ext_{G(F)}(T_\ep)$ (for short, we omit these subscripts below).

We have $[T\s \Int(T),T\cup \Ext(T)]=[(T_\io\uplus T_\ep)\s \Int(T_\io),(T_\io\uplus T_\ep)\cup \Ext(T_\ep)]$.
Using the classical partition of $2^E$ into spanning tree intervals \cite{Cr69} recalled 
in the previous discussion,
we have:
\centerline{$\displaystyle 2^E=\biguplus_{T\text{ spanning tree}}[T\s \Int(T),T\cup \Ext(T)]
=\biguplus_{F,\ T_\io,\ T_\ep\text{ as above}}
[T_\io\s \Int(T_\io),T_\io]\times [T_\ep, T_\ep\cup \Ext(T_\ep)]$}
(where $\times$ yields all unions of a subset of the first set and a subset
 of the second set).
So we have 
\centerline{$\displaystyle \biguplus_{T\text{ spanning tree}}[T\s \Int(T),T]
=\biguplus_{F,\ T_\io,\ T_\ep\text{ as above}}
[T_\io\s \Int(T_\io),T_\io]\times [T_\ep]$}
%
The size of the second set of the equality equals 
$\sum_{F}t(G/F;2,0)t(G(F);0,1)$ by classical evaluations of the Tutte polynomial.
And this number equals $t(G;2,1)$ by the convolution formula (Corollary \ref{cor:convolution}), which equals the number of subsets of spanning trees (as well known).
The first set of the equality is included in the set of subsets of spanning trees, and it has the same size,
hence it equals the set of subsets of spanning trees.
Similarly (dually in fact), we get the result involving supersets of spanning trees, whose number equals $t(G;2,1)$.
\eme{a verifier ! pas si evident...}%
%
%
%
%
%
%
%
%
\end{proof}
%
%
%
%

%

\eme{rk suivante inutile ? (verifiable dans preuve)
The reader might be surprised by the symmetry of the correspondence between parameters in Theorem \ref{EG:th:ext-act-bij}, compared with the non-symmetry of the definitions:
$\Int_{G}(A) = \Int_{G}(B)\cap A$ and $\Ext_G(A) = \Ext_G(B)\s A$ on one hand, 
and $\Theta^*_\G(A)=O^*(-_A\G)\s A$ and $\Theta_\G(A)=O(-_A\G)\s A$ on the other hand.
The reason is that we have chosen to associate a spanning tree with the active-fixed and dual-active-fixed orientation in its associated activity class, so in this case: an internally active element belongs to the spanning tree and is not reoriented on one hand, and an externally active element does not belong to the spanning tree and is also not reoriented on the other hand.}%

%
%


\eme{th dessus provue sur feuilles volantes}%

\eme{peut etr alleger notations dans theoreme en definissant $T'$ et $A'$ comme dans intro de section}%

\eme{denote $\alpha_\G$ or denote $\bar\alpha_\G$}%

\eme{PEUT ETRE A DETAILLER AILLEURS dans AB2 :
For instance,
exchanging the correspondence between $\theta, \bar\theta$ and $nl, \ep$
is obtained by setting $X=\brown{???'}$.}%

\eme{ATTENTION bien verifier tout ca, surtout derniere phrase, a ete vite fait !}%


\eme{DESSOUS EN COMMENTAIRE premiere versio plusn litteraire de refined bijection}%
\subsection{A general decomposition framework  for classes of activity preserving bijections}
\label{subsec:act-map-class-decomp}

Let us briefly observe how the three level construction described in Sections \ref{subsec:fob}, \ref{subsec:alpha-def-decomp} and \ref{subsec:refined} can be relaxed so as to derive a whole 
\emph{class of active partition preserving mappings} (hence also activity preserving), satisfying similar bijective and decomposition properties.
Among the bijections of this class, the active bijection is uniquely determined by its canonical construction at the first level, and its natural specification at the third level.
What we call \emph{preserving} is again the transformation of active elements, etc., into their counterpart for orientations/subsets.\emevder{utile ? a dire avant ?}

\paragraph{First level} 
Assume that, for any ordered graph $G$,  a mapping $\psi_G$ provides a bijection between orientations $\G$ of $G$ which are bipolar, resp. cyclic-bipolar,  w.r.t. their smallest edge with fixed orientation, and the spanning trees $\psi_G(\G)$ of $G$ which are internal, resp. external, uniactive. Assume also that two opposite orientations have the same image (so that the mapping $\psi_G$ has the same properties as the uniactive bijection $\G\mapsto\alpha(\G)$ stated in Theorem~\ref{thm:bij-10}).

\paragraph{Second level} 
From the mappings $\psi_G$ available at the first level, one can extend their domains to all orientations of $\G$, using the same properties as for the canonical active bijection in Definitions \ref
{def:alpha-ind-decomp}, \ref{def:alpha-seq-decomp} and \ref{def:alpha-ind-decomp-relaxed}. Indeed, as discussed there, the  validity and equivalence 
of these definitions only relies upon properties of the active filtration/partition/minors addressed in Section \ref{sec:tutte}.
Precisely, for an ordered digraph $\G$ 
with active minors $\G_k$, ${1\leq k\leq \io}$, in the acyclic part, and 
$\G'_k$, ${1\leq k\leq \ep}$, in the cyclic part,
we define $$\psi_G(\G)=\ \biguplus_{1\leq k\leq \io} \psi_{G_k}\bigl( \G_k\bigr)\ \uplus\ \biguplus_{1\leq k\leq \ep}
\psi_{G'_k}\bigl( \G'_k\bigr).$$
\emevder{QUESTION RECHERCHE : n'est ce pas caracteristique de active partition preerving ???}%
%
%
At this step, similarly as for Theorem \ref{EG:th:alpha}, using the bijections at the first level and the decompositions of orientations and spanning trees provided by Theorems \ref{th:dec-ori} and \ref{EG:th:dec_base}, one can easily check that: \emph{$\psi_G$ yields an activity preserving, and active partition preserving, bijection between activity classes of orientations and spanning trees of $G$.}


\paragraph{Third level}
As discussed in Section \ref{subsec:refined}, from any bijection $\psi_G$ between activity classes of orientations and spanning trees that preserves active elements, one can build a whole class of bijections 
 between  orientations and subsets,  such that it maps each activity class of orientations onto a spanning tree interval. One can naturally demand that these restrictions are boolean lattice isomorphisms, which can be settled independently of each other.
For example, in each restriction, one can demand that the four activity parameters for orientations are transformed into the four activity parameter for subsets, but with possible exchanges in comparison with the refined active bijection
(i.e. make $\Int$ correspond to $\bar\Theta^*$ instead of $\Theta^*$,
and/or make $\Ext$ correspond to $\bar\Theta$ instead of $\Theta$).
Similarly, 
one can define active-fixed and dual-active-fixed orientations with respect to two different references orientations respectively, or with respect to variable reference orientations. 
\emevder{bien dit ???}
\ms

Lastly, let us roughly mention that one can also add a deletion/contraction property to the class of mappings considered in this section,
yielding the class mentioned in Section \ref{subsec:alpha-refined-ind}, option \ref{item:pres-act-parts} (which is thus at the intersection of the classes considered in this section and that one).
\emevder{bien dit ???}

\section{Counterparts from the spanning tree viewpoint}
\label{sec:spanning-trees}
%
%
%

This section has a special status in the paper. While the above is essentially written from orientations to spanning trees, here we take the inverse viewpoint. We gather results intrinsically involving spanning trees  and constructions starting from spanning trees
(except subset activities refining spanning tree activities, that have been addressed in Section \ref{prelim:subset-activities}).

\subsection{The active partition/filtration of a spanning tree -  Decomposition of the set of all spanning trees of an ordered graph}
\label{subsec:dec-bases}

First, we give a general decomposition theorem for spanning trees in terms of filtrations of an ordered graph. This theorem refines, at the uniactive level, the decomposition into internal/external spanning trees from \cite{EtLV98}, where only the cyclic flat $F_c$ was involved.
It is the counterpart for spanning trees of Theorem \ref{th:dec-ori}, and it can be derived from this latter theorem and the canonical active bijection (it is generalized to matroid bases in \cite{AB2-a}, in an intrinsic way, since a proof using orientations is not possible in non-orientable matroids: here we take benefit of graph orientability). 

Second, we define the active partition of a spanning tree. This fundamental notion can be defined in multiple ways 
(it was briefly introduced in  \cite{GiLV05}).
A noticeable feature of the active partition of a spanning tree is that it depends only on the fundamental cycles/cocyles of the spanning tree, but not on the whole graph (in fact, it can be generally seen 
as a decomposition of a bipartite graph on a linearly ordered set of vertices: edges of the spanning tree are considered as one part of a new set of vertices, the complementary set of edges form the other part, and two vertices 
are adjacent if they belong to the same fundamental cycle/cocycle).
\emevder{*** ce qui precede est-il a mettre ?***}%
Again, more details and constructions can be found in \cite{AB2-a}, as well as 
detailed examples on spanning trees of $K_4$ (consistently with Section~ \ref{sec:example}).

\begin{thm} 
\label{EG:th:dec_base}
Let $G$ be a graph on a linearly ordered set of edges $E$.
%
$$
\bigl\{\ \text{spanning trees of }G\ \bigr\}\ \ 
=\biguplus_
{\substack{
\emptyset=F'_\ep\subset...\subset F'_0=F_c\\[1mm]
F_c=F_0\subset...\subset F_\iota=E\\[1mm]
\text{\small connected filtration of $\scriptsize G$}
}}
\Biggl\{\ \ T'_1\plus...\plus T'_\ep\plus T_1\plus...\plus T_\iota
\ \ \mid$$
$$\text{for all }1\leq k\leq \ep, \
T'_k \hbox{ spanning tree of }G(F'_{k-1})/F'_{k}\text{ with $|\Int(T'_k)|=0$ and $|\Ext(T'_k)|=1$,}\hphantom{\ \ \Biggr\}}$$
$$\text{for all }1\leq k\leq \io, \
T_k \hbox{ spanning tree of }G(F_k)/F_{k-1}\text{ with $|\Int(T_k)|=1$ and $|\Ext(T_k)|=0$}\ \ \Biggr\}\hphantom{, }$$
With $T=T'_1\plus...\plus T'_\ep\plus T_1\plus...\plus T_\iota$
we then have:
$$\Int(T)\ =\ \uplus_{1\leq k\leq \iota} \min(F_k\s F_{k-1})\ =\ \uplus_{1\leq k\leq \iota} \Int(T_k),$$
$$\Ext(T)\ =\ \uplus_{1\leq k\leq \ep} \min(F'_{k-1}\s F'_{k})\ =\ \uplus_{1\leq k\leq \ep} \Ext(T'_k).$$
\end{thm}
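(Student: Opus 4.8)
The plan is to derive Theorem~\ref{EG:th:dec_base} as the spanning-tree counterpart of Theorem~\ref{th:dec-ori}, using the canonical active bijection $\alpha$ of Theorem~\ref{EG:th:alpha} as the bridge. First I would recall that, by Theorem~\ref{th:dec-ori}, the set of all orientations of $G$ is partitioned according to connected filtrations $(F'_\ep, \ldots, F'_0, F_c, F_0, \ldots, F_\io)$ of $G$, the block attached to a given filtration consisting of the orientations $\G$ whose active minors $\G(F_k)/F_{k-1}$ ($1\le k\le\io$) are bipolar and $\G(F'_{k-1})/F'_k$ ($1\le k\le\ep$) are cyclic-bipolar w.r.t. the relevant smallest edges, with the filtration in question being precisely the active filtration of $\G$. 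Then I would apply $\alpha$ and invoke Theorem~\ref{EG:th:alpha}: $\alpha$ maps an activity class onto a spanning tree, it is a bijection from activity classes onto spanning trees, it preserves the active filtration, and on each active minor it restricts (by Definition~\ref{def:alpha-seq-decomp}) to the uniactive bijection of Theorem~\ref{thm:bij-10}. In particular $\alpha(\G) = \biguplus_{1\le k\le\io}\alpha(\G(F_k)/F_{k-1}) \uplus \biguplus_{1\le k\le\ep}\alpha(\G(F'_{k-1})/F'_k)$, and by Theorem~\ref{thm:bij-10} each $\alpha(\G(F_k)/F_{k-1})$ is a uniactive internal spanning tree of $G(F_k)/F_{k-1}$ and each $\alpha(\G(F'_{k-1})/F'_k)$ is a uniactive external spanning tree of $G(F'_{k-1})/F'_k$.

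Next I would set up the bijection between the two sides of the claimed equality. For the forward inclusion: given a spanning tree $T$ of $G$, let $\G$ be any orientation with $\alpha(\G)=T$ (such $\G$ exists by surjectivity in Theorem~\ref{EG:th:alpha}); the active filtration of $\G$ is a connected filtration of $G$ (Proposition~\ref{prop:unique-dec-seq}), and the decomposition $T = \biguplus T'_k \uplus \biguplus T_k$ just described exhibits $T$ in the right-hand side, with $T_k = \alpha(\G(F_k)/F_{k-1})$ and $T'_k = \alpha(\G(F'_{k-1})/F'_k)$. For the reverse inclusion: given a connected filtration and spanning trees $T'_k$, $T_k$ of the indicated minors with the indicated activities, by Theorem~\ref{thm:bij-10} (applied in each minor, which is loopless $2$-connected or a single isthmus/loop by Definition~\ref{def:graph-dec-seq}, hence admits a (cyclic-)bipolar orientation with the prescribed smallest-edge direction whose $\alpha$-image is $T_k$ resp. $T'_k$) we can pick bipolar orientations $\G_k$ of $G(F_k)/F_{k-1}$ with $\alpha(\G_k)=T_k$ and cyclic-bipolar orientations $\G'_k$ of $G(F'_{k-1})/F'_k$ with $\alpha(\G'_k)=T'_k$; gluing these gives an orientation $\G$ of $G$ whose active filtration is the chosen one (Proposition~\ref{prop:unique-dec-seq}) and with $\alpha(\G) = \biguplus T'_k \uplus \biguplus T_k$, which is therefore a spanning tree of $G$. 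Disjointness of the union on the right is then inherited from the uniqueness statement in Proposition~\ref{prop:unique-dec-seq}: a spanning tree $T$ written in two ways would give rise, via $\alpha$ and its active-filtration-preservation, to two connected filtrations equal to the active filtration of a common preimage orientation, hence equal; and within a fixed filtration the pieces $T_k$, $T'_k$ are recovered as the restrictions $T\cap(F_k\setminus F_{k-1})$ etc., by the fact that each is a spanning tree of the corresponding minor (Property~\ref{pty:fund} and the induced-spanning-tree structure used in Lemma~\ref{lem:th-bij-gene-part1}).

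Finally, the activity formulas $\Int(T) = \biguplus_k \min(F_k\setminus F_{k-1}) = \biguplus_k \Int(T_k)$ and $\Ext(T) = \biguplus_k \min(F'_{k-1}\setminus F'_k) = \biguplus_k \Ext(T'_k)$ follow directly from Lemma~\ref{lem:th-bij-gene-part1}: there it is proved that for $T=\alpha(\G)$ one has $\Int(T) = O^*(\G) = \{a_1,\dots,a_\io\}$ with $a_k = \min(F_k\setminus F_{k-1})$, and $\Ext(T) = O(\G) = \{a'_1,\dots,a'_\ep\}$ with $a'_k = \min(F'_{k-1}\setminus F'_k)$; and applied inside each minor, it gives $\Int(T_k) = \{a_k\}$, $\Ext(T'_k) = \{a'_k\}$ (the $T_k$ being uniactive internal, the $T'_k$ uniactive external), so the two descriptions of $\Int(T)$ and $\Ext(T)$ coincide. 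I expect the main obstacle to be purely expository rather than mathematical: carefully justifying that the decomposition of $2^E$ (or of the set of spanning trees) is genuinely \emph{disjoint} — i.e. that distinct connected filtrations yield disjoint families of spanning trees — which requires citing the uniqueness in Proposition~\ref{prop:unique-dec-seq} together with the fact (Theorem~\ref{EG:th:alpha}) that $\alpha$ is well-defined on activity classes and preserves the active filtration, so that the filtration is an intrinsic attribute of $T$. An alternative, self-contained route (the one taken in \cite{AB2-a} for matroids) would bypass orientations entirely and use the active-closure construction of Definition~\ref{def:active-closure} directly on fundamental cycles/cocycles of $T$; I would mention this but carry out the orientation-based proof, since graphs are orientable.
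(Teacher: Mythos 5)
Your proposal is correct and follows essentially the same route as the paper, which proves Theorem \ref{EG:th:dec_base} as a direct consequence of Theorem \ref{EG:th:alpha} (the canonical active bijection, which preserves active filtrations and restricts to the uniactive bijection on active minors) and Theorem \ref{th:dec-ori} (the decomposition of orientations over connected filtrations). The paper's own proof is much terser — two sentences giving the two inclusions — whereas you additionally spell out the disjointness via the uniqueness in Proposition \ref{prop:unique-dec-seq} and the activity formulas via Lemma \ref{lem:th-bij-gene-part1}; these elaborations are accurate and consistent with the supporting results the paper relies on.
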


\begin{proof}[Proof of Theorem \ref{EG:th:dec_base}]
This is direct from  Theorem \ref{EG:th:alpha} and Theorem \ref{th:dec-ori}.
More precisely, by Theorem \ref{EG:th:alpha}, a spanning tree $T$ is the image of an orientation $\G$ by $\alpha$, hence it is a union of uniactive internal/external spanning trees in minors of $G$ induced by the active filtration of $\G$.
Conversely, for any connected filtration of $G$, the uniactive internal/external spanning trees of the minors induced by the sequence are images of some bipolar/cyclic-bipolar minors of an orientation $\G$, by Theorem \ref{th:dec-ori}.
\end{proof}



From the above result and the constructions of 
Section \ref{subsec:alpha-def-decomp}, we can derive the next definition, followed by  its multiple equivalent constructions  (completed with statements from \cite{GiLV05,AB2-a}).



\begin{definition}
\label{def:sp-tree-dec-seq}
Let $G$ be a 
graph on a linearly ordered set of edges $E$.
Let $T$ be a spanning tree of $G$.
The 
\emph{active filtration of $T$ in $G$}
is the unique connected filtration of $G$ associated to $T$ in the decomposition given by Theorem \ref{EG:th:dec_base}. 
It is thus the unique filtration  $\emptyset= F'_\ep\subset...\subset F'_0=F_c=F_0\subset...\subset F_\io= E$ of $G$ such that:
\vspace{-1mm}
\begin{itemize}
\partopsep=0mm \topsep=0mm \parsep=0mm \itemsep=0mm
\item for  $1\leq k\leq\io$, $T\cap (F_{k}\s F_{k-1})$ is an internal  uniactive spanning tree of $G(F_k)/F_{k-1}$,
\item for  $1\leq k\leq\ep$, $T\cap (F'_{k-1}\s F_{k})$ is an external  uniactive spanning tree of $G(F'_{k-1})/F'_{k}$
\end{itemize}
\noindent (such a filtration is necessarily connected, otherwise one of the induced minors has no spanning tree with the required property, see Lemma \ref{lem:connected-filtration-beta}).
The \emph{active partition of $T$ in $G$} is the partition of $E$ formed by successive differences of subsets in the active filtration 
(yielding parts whose smallest elements are the internally/externally active elements of $T$). 
%
%
\end{definition}

\begin{observation}
The active filtration/partition of $T$ in $G$ can also be defined as the active filtration/partition  of any orientation $\G$ of $G$ such that $\alpha(\G)=T$ (by Theorem \ref{EG:th:alpha}).
\end{observation}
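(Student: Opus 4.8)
The plan is to show that for \emph{any} orientation $\G$ of $G$ with $\alpha(\G)=T$, the active filtration of $\G$ in the sense of Definition~\ref{def:act-seq-dec} coincides with the (unique) active filtration of $T$ in $G$ of Definition~\ref{def:sp-tree-dec-seq}; the assertion about active partitions then follows immediately, since a partition is recovered from its filtration by taking successive differences.

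First I would fix such a $\G$ and let $\mathcal S=(F'_\ep,\ldots,F'_0,F_c,F_0,\ldots,F_\io)$ denote its active filtration. By Proposition~\ref{prop:pty-active-minors}, the active minors $\G(F_k)/F_{k-1}$ for $1\le k\le\io$ are bipolar, and the active minors $\G(F'_{k-1})/F'_k$ for $1\le k\le\ep$ are cyclic-bipolar, each with respect to its smallest edge; in particular $\mathcal S$ is a connected filtration of $G$. Next, using Definition~\ref{def:alpha-seq-decomp}, I would expand $T=\alpha(\G)$ as the disjoint union of the $\alpha$-images of these active minors. Since the $\alpha$-image of a minor is a subset of its edge set, and the edge sets of the active minors are precisely the parts $F_k\s F_{k-1}$ and $F'_{k-1}\s F'_k$ of the active partition of $\G$ (Definitions~\ref{def:act-part} and~\ref{def:active-minors}), this gives $T\cap(F_k\s F_{k-1})=\alpha\bigl(\G(F_k)/F_{k-1}\bigr)$ and $T\cap(F'_{k-1}\s F'_k)=\alpha\bigl(\G(F'_{k-1})/F'_k\bigr)$. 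By Theorem~\ref{thm:bij-10}, the former is an internal uniactive spanning tree of $G(F_k)/F_{k-1}$ and the latter an external uniactive spanning tree of $G(F'_{k-1})/F'_k$. Hence $\mathcal S$ satisfies the two conditions characterising the active filtration of $T$ in $G$ in Definition~\ref{def:sp-tree-dec-seq}, and by the uniqueness stated there (which stems from the disjointness of the union in Theorem~\ref{EG:th:dec_base}) $\mathcal S$ is the active filtration of $T$ in $G$. Since this reasoning applies to every orientation $\G$ with $\alpha(\G)=T$, all such orientations share this common active filtration, consistently with Theorem~\ref{EG:th:alpha}.

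The only point requiring a little care is the middle step: one must verify that the $\alpha$-image of an active minor, viewed as a subset of $E$, is exactly the trace of $T$ on the corresponding part of the active partition. This is immediate from Definition~\ref{def:alpha-seq-decomp} together with the fact that the active minors are supported on the parts of the active partition, so I do not expect a genuine obstacle here; the rest is a direct appeal to the results of Sections~\ref{sec:tutte} and~\ref{subsec:alpha-def-decomp}. An even shorter route would be to invoke Theorem~\ref{EG:th:alpha}(2) for one chosen $\G$ and then observe that any two orientations with the same image under $\alpha$ lie in the same activity class (Theorem~\ref{EG:th:alpha}(4)), hence have the same active filtration (Proposition~\ref{prop:act-classes}).
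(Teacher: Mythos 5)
Your proposal is correct. Note, though, that the paper offers no argument beyond the parenthetical appeal to Theorem~\ref{EG:th:alpha}: part~(2) of that theorem already asserts that $\alpha(\G)$ has the same active filtration as $\G$ (in the respective senses of Definitions~\ref{def:act-seq-dec} and~\ref{def:sp-tree-dec-seq}), and parts~(3)--(4) together with Proposition~\ref{prop:act-classes} guarantee that all preimages of $T$ lie in one activity class and hence share that filtration --- which is exactly the ``shorter route'' you sketch in your last sentence. The main body of your proof instead re-derives the statement from scratch: you expand $T$ via Definition~\ref{def:alpha-seq-decomp}, identify the traces $T\cap(F_k\s F_{k-1})$ and $T\cap(F'_{k-1}\s F'_k)$ with the $\alpha$-images of the active minors, invoke Theorem~\ref{thm:bij-10} to see these traces are uniactive internal/external, and conclude by the uniqueness in Definition~\ref{def:sp-tree-dec-seq} (equivalently, the disjointness in Theorem~\ref{EG:th:dec_base}). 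This is sound and self-contained, but it essentially replays the content of Lemma~\ref{lem:th-bij-gene-part1} and the proof of Theorem~\ref{EG:th:dec_base}, so it buys nothing over the one-line citation except making explicit where the uniqueness comes from; conversely, it has the small virtue of not needing part~(4) of Theorem~\ref{EG:th:alpha}, since the uniqueness of the connected filtration attached to $T$ already forces all preimages to agree.
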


\begin{prop}
\label{prop:act-part-sp-tree-construction}
Let $G$ be a 
graph on a linearly ordered set of edges $E$.
Let $T$ be a spanning tree of $G$.
The 
active filtration/partition of $T$ in $G$
 can 
 be   directly  built using only the fundamental cycles/cocycles of $T$ in $G$ and the active closure operator by the following equivalent manners.
\begin{itemize}
\partopsep=0mm \topsep=0mm \parsep=0mm \itemsep=1mm
\item Using the active closure in an inductive way (see Definition \ref{def:active-closure} in Section \ref{subsec:alpha-def-decomp}).

Assume
either $a=\max(\Ext(T))$, or $a=\max(\Int(T))$.
Then,
the part of the active partition of $T$ containing $a$ is $\AA(\{a\})$
(by Lemma \ref{lem:th-bij-gene-part2}).
Then,  
removing the part $\AA(\{a\})$ from the active partition of $T$ yields the active partition of $T\setminus \AA(\{a\})$ in $G/\AA(\{a\})$ if  $a=\max(\Ext(T))$, or in $G\setminus \AA(\{a\})$ if $a=\max(\Int(T))$
(this is obvious by Observations \ref{obs:induced-dec-seq-ori} and \ref{obs:induc-alpha} applied to an orientation $\G$ such that $\alpha(\G)=T$).

\item Using the active closure in a direct global way.

Assume $\Int(T)=\{a_1,...,a_\io\}_<$ and $\Ext(T)=\{a'_1,...,a'_\ep\}_<$.
It turns out that the active filtration 
 $\emptyset= F'_\ep\subset...\subset F'_0=F_c=F_0\subset...\subset F_\io= E$ of $T$ satisfies:
 \begin{itemize}[$\circ$]
 \item $F_c=\AA(\Ext(\F))=E\s \AA(\Int(\F));$
 \item $F_k=E\setminus \AA(\{a_{k+1},\dots, a_\io\})$, for every $0\leq k\leq\io-1$;

\item $F'_k=\AA(\{a'_{k+1},\dots, a'_\ep\})$, for every $0\leq k\leq\ep-1$.

 \end{itemize}
%
%

This is the definition that was given in \cite[Section 5]{GiLV05}. The equivalence with the above one is proved in \cite{AB2-a} (among various properties and alternative constructions 
of the active closure).

\item Using a linear single pass algorithm over $E$.\emevder{OU using the active closure by a signle pass algorithm}

This construction is contained in Theorem \ref{th:basori} below, and it is  proved in \cite{AB2-a} too (by means of a more general single pass construction of the active closure).
\end{itemize}
\end{prop}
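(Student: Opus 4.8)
The statement to prove is Proposition~\ref{prop:act-part-sp-tree-construction}, which asserts that the active filtration/partition of a spanning tree $T$ in $G$ can be built in three equivalent ways (inductive use of the active closure, global use of the active closure, single pass algorithm), and all using only the fundamental cycles/cocycles of $T$.

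\textbf{Overall approach.} The cleanest route is to reduce everything to the orientation picture already established. By Theorem~\ref{EG:th:alpha}, the map $\alpha$ is surjective from orientations onto spanning trees; fix an orientation $\G$ with $\alpha(\G)=T$. By Lemma~\ref{lem:th-bij-gene-part1} we have $\Int(T)=O^*(\G)$ and $\Ext(T)=O(\G)$, and by the (already proved) second part of Theorem~\ref{EG:th:alpha}, $\G$ and $T$ have the same active filtration. So the active filtration of $T$ in $G$ (Definition~\ref{def:sp-tree-dec-seq}) coincides with the active filtration of $\G$ (Definition~\ref{def:act-seq-dec}). From here each bullet is a matter of re-expressing the parts of that filtration in terms of fundamental cycles/cocycles, i.e. intrinsically in $T$, rather than in terms of directed cycles/cocycles of $\G$.

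\textbf{First bullet (inductive active closure).} The key input is Lemma~\ref{lem:th-bij-gene-part2}: if $a=\max(\Ext(T))=\max(O(\G))$ or $a=\max(\Int(T))=\max(O^*(\G))$, then the part of the active partition of $\G$ containing $a$ equals $\AA(\{a\})$, which by construction depends only on the fundamental cycles/cocycles of $T$. Since that part equals the part of the active partition of $T$ containing $a$ (by the identification above), the first claim holds. For the inductive step: if $a=\max(\Ext(T))$, then $\AA(\{a\})=F'_{\ep-1}$ in the notation of the filtration; by Observation~\ref{obs:induced-dec-seq-ori} applied to $\G$, the active filtration of $\G/F'_{\ep-1}$ is obtained by subtracting $F'_{\ep-1}$ from each remaining subset, and by Observation~\ref{obs:induc-alpha}, $\alpha(\G/F'_{\ep-1})=T\setminus F'_{\ep-1}=T\setminus\AA(\{a\})$; hence removing $\AA(\{a\})$ from the active partition of $T$ in $G$ gives the active partition of $T\setminus\AA(\{a\})$ in $G/\AA(\{a\})$. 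The case $a=\max(\Int(T))$ is dual, using deletion instead of contraction. An easy induction on $|E|$ then confirms the whole filtration is recovered, and at each stage only fundamental cycles/cocycles of $T$ (restricted to a minor, which are themselves fundamental cycles/cocycles of the induced spanning tree, by Property~\ref{pty:fund}) are used.

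\textbf{Second bullet (global active closure) and third bullet (single pass).} For the global formula, iterate the first bullet: peeling off $\AA(\{a_\io\})$ (the external case having been exhausted, the largest active element is internal, of type $a_k$) realizes $E\setminus F_{\io-1}=\AA(\{a_\io\})$, then in the minor the next peel gives $(E\setminus F_{\io-2})\setminus(E\setminus F_{\io-1})=\AA(\{a_{\io-1}\})$ computed in $G(F_{\io-1})$, which by the nested structure of the active closure equals $\AA(\{a_{\io-1},a_\io\})\setminus\AA(\{a_\io\})$ computed in $G$; an induction yields $F_k=E\setminus\AA(\{a_{k+1},\dots,a_\io\})$, and dually $F'_k=\AA(\{a'_{k+1},\dots,a'_\ep\})$, with $F_c=\AA(\Ext(T))=E\setminus\AA(\Int(T))$; the bullet itself notes this was the definition of \cite[Section 5]{GiLV05} and that the equivalence is proved in \cite{AB2-a}, so one may simply cite that. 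For the third bullet, the statement explicitly says the single pass construction is contained in Theorem~\ref{th:basori} and proved in \cite{AB2-a}; so here one only has to point out that the single pass algorithm builds, for the given spanning tree, simultaneously the active partition and the inverse image under $\alpha$, which by the identification above is exactly the active filtration/partition of $T$.

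\textbf{Main obstacle.} The genuinely nontrivial content is entirely packaged in Lemma~\ref{lem:th-bij-gene-part2} and in the nestedness/monotonicity properties of the active closure operator $\AA$ (that $\AA$ applied to a larger argument set contains $\AA$ applied to a smaller one, and that $\AA$ computed in a minor agrees with $\AA$ computed in $G$ up to the obvious set operations). The first is already established in the excerpt; the second is the sort of combinatorial bookkeeping that, here, is deferred to \cite{AB2-a}. So in the self-contained portion, the hard part is checking that the inductive peeling is consistent—that $\AA(\{a\})$ computed via fundamental cycles/cocycles in the successive minors matches the parts $F'_{\ep-1}$, $E\setminus F_{\io-1}$, etc.—which I would handle by carefully tracking, via Property~\ref{pty:fund} and Observations~\ref{obs:induced-dec-seq-ori} and~\ref{obs:induc-alpha}, how fundamental cycles/cocycles and the active closure behave under the deletions/contractions dictated by the active partition; everything else is citation or direct unwinding of definitions.
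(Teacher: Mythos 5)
Your proposal is correct and follows essentially the same route as the paper: the paper's justification is precisely the reduction to an orientation $\G$ with $\alpha(\G)=T$ (via Theorem \ref{EG:th:alpha} and the preceding Observation), after which the first bullet is Lemma \ref{lem:th-bij-gene-part2} together with Observations \ref{obs:induced-dec-seq-ori} and \ref{obs:induc-alpha}, and the second and third bullets are deferred to \cite{AB2-a} and Theorem \ref{th:basori}, exactly as you do. Your extra sketch of deriving the global formula by iterating the peeling is a reasonable bonus, though, as you note, the nestedness of the active closure it relies on is the part the paper outsources to \cite{AB2-a}.
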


\subsection{The three levels of the active bijection starting from spanning trees - An all-in-one single-pass construction from spanning trees}
\label{subsec:basori}

Concerning the uniactive bijection addressed in Section \ref{subsec:fob},
starting from a uniactive spanning tree, it is obvious to direct the edges one by one so that the criterion 
of Definitions \ref{def:acyc-alpha} or \ref{def:cyc-alpha1} is satisfied. 
We obtain the next algorithm which  works for uniactive internal or external spanning trees as well. 
See \cite[Proposition 3]{GiLV05} for details and for 
two alternative dual formulations, in terms of cycles only or cocycles only.
Note that this algorithm consists in a single pass over the edge-set, which is extended to all spanning trees thereafter, whereas the direct computation of $\alpha$ is not easy.
This ``one way function'' feature  of the active bijection is noteworthy (see also Section \ref{sec:intro} and \cite{ABG2LP}).

\begin{prop}[uniactive bijection from spanning trees, see also {\cite[Proposition 3]{GiLV05}}]
\label{prop:alpha-10-inverse}
Let $G$ be a graph on  a linearly ordered set of edges $E=\{e_1,\dots,e_n\}_<$. 
For a spanning tree $T$ with internal activity $1$ and external activity $0$, or internal activity $0$ and external activity $1$,
the two opposite orientations of $G$ whose image under $\alpha$ is $T$ are computed by the following algorithm.

\begin{algorithme}
Orient $e_1$ arbitrarily.\par
For $k$ from $2$ to $n$ do\par
\hskip 10 mm if $e_k\in T$ then \par
\hskip 20 mm let $a=\min (C^*(T;e_k))$\par
\hskip 20 mm orient $e_k$ in order to have $a$ and $e_k$ with opposite directions in $C^*(T;e_k)$\par
\hskip 10 mm if $e_k\not\in T$ then \par
\hskip 20 mm let $a=\min (C(T;e_k))$\par
\hskip 20 mm orient $e_k$ in order to have $a$ and $e_k$ with opposite directions in $C(T;e_k)$\par
\end{algorithme}
\end{prop}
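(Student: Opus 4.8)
The plan is to prove three facts in turn: (i) the algorithm is well-defined and returns exactly two orientations, which moreover are opposite to each other; (ii) each returned orientation $\G$ satisfies the sign criterion of Definition \ref{def:acyc-alpha} when $T$ has internal activity $1$, and that of Definition \ref{def:cyc-alpha1} when $T$ has external activity $1$; (iii) these two orientations are precisely the two orientations of $G$ whose image under $\alpha$ is $T$. The substance is carried by the Key Theorem \ref{thm:bij-10}; the only point needing care is the causality of the single pass, which is exactly where the uniactivity hypothesis enters.

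For (i), set $p=e_1=\min(E)$. Since $T$ is uniactive, its only active element is $p$, so for every $k\geq 2$ the edge $e_k$ is neither internally nor externally active; hence $a:=\min(C^*(T;e_k))<e_k$ if $e_k\in T$, and $a:=\min(C(T;e_k))<e_k$ if $e_k\notin T$. In both cases $a=e_j$ for some $j<k$, so $a$ has already been oriented when step $k$ runs. For the fixed undirected fundamental cocycle, resp.\ cycle, containing $e_k$, reversing $e_k$ reverses exactly its own sign in the associated signed cocycle, resp.\ signed cycle, leaving all other signs unchanged; therefore there is a unique orientation of $e_k$ giving $e_k$ and $a$ opposite signs. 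Thus each step is forced once $e_1$ is oriented, the algorithm returns two orientations, and reversing $e_1$ propagates by induction on $k$ (the already-determined $a=e_j$ with $j<k$ gets reversed, so its sign flips, so the forced orientation of $e_k$ flips) to a global reversal of every edge; the two outputs are opposite.

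For (ii), note that in both Definition \ref{def:acyc-alpha} and Definition \ref{def:cyc-alpha1} the set of edges carrying a constraint is exactly $E\s\{p\}$, and for $k\geq 2$ the constraint imposed on $e_k$ there is verbatim the one imposed at step $k$ of the algorithm. Since neither the orientation of $e_k$ nor that of the associated element $a$ is modified after step $k$, and the sign of an edge in a fundamental cocycle or cycle depends only on the orientations of the edges of that cocycle or cycle, each such constraint still holds in the final orientation $\G$. Hence $\G$ satisfies the relevant criterion.

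For (iii), by Theorem \ref{EG:th:alpha} any orientation $\G$ with $\alpha(\G)=T$ satisfies $|O^*(\G)|=|\Int(T)|$ and $|O(\G)|=|\Ext(T)|$, so by Section \ref{subsec:prelim-beta} it is bipolar, resp.\ cyclic-bipolar, with respect to $p$; combined with Theorem \ref{thm:bij-10}, the orientations sent to $T$ by $\alpha$ form a pair of opposite such orientations, one of which we call $\G_0$. By the uniqueness clause of Definition \ref{def:acyc-alpha}, resp.\ \ref{def:cyc-alpha1}, $T$ is the unique spanning tree satisfying the criterion with respect to $\G_0$, so $T$ does satisfy it. Running the algorithm with $e_1$ oriented as in $\G_0$, an induction on $k$ identical to the one in (i) shows the output equals $\G_0$: at step $k$ the forced orientation of $e_k$ is the only one making $e_k$ and $a=e_j$ ($j<k$) have opposite signs, and $\G_0$ realizes this since it agrees with the partial output on $e_1,\dots,e_{k-1}$. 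The other choice for $e_1$ then outputs $-_E\G_0$, which is again bipolar, resp.\ cyclic-bipolar, and has the same fully optimal spanning tree $T$, a digraph and its opposite having the same fully optimal spanning tree. So the two outputs are $\G_0$ and $-_E\G_0$, i.e.\ exactly the two $\alpha$-preimages of $T$, which completes the proof. I expect no combinatorial obstacle beyond phrasing the "reversing one edge flips exactly one sign" observation cleanly, since it is this uniqueness of the forced orientation at each step that both makes the algorithm deterministic and pins its output to $\G_0$.
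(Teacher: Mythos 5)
Your proof is correct and follows the same route the paper intends: the paper gives no written proof of Proposition~\ref{prop:alpha-10-inverse}, stating only that "it is easy to see" that one orients the edges one by one so that the criterion of Definition~\ref{def:acyc-alpha} or~\ref{def:cyc-alpha1} is satisfied (deferring details to \cite[Proposition 3]{GiLV05}), and your argument is exactly the expansion of that sketch --- uniactivity forces $\min$ of each fundamental cycle/cocycle to be an earlier edge, so each step is determined, and the Key Theorem~\ref{thm:bij-10} identifies the two opposite outputs with the two $\alpha$-preimages of $T$. Nothing further is needed.
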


The next definition is a direct rephrasing of Definition \ref{def:alpha-seq-decomp}, once Theorem \ref{EG:th:alpha} is acknowledged.

\begin{prop}[canonical active bijection from spanning trees]
\label{prop:preimage-sp-tree}
Let $G$ be an ordered graph. Let $T$ be a spanning tree of $G$, with active filtration  $\emptyset= F'_\ep\subset...\subset F'_0=F_c=F_0\subset...\subset F_\io= E$.
Let us denote $\alpha_G^{-1}(T)$ the set of orientations of $G$ whose image under $\alpha$ is $T$. Then we have:
$$\alpha_G^{-1}(T)\ =\ 
\bigtimes_{1\leq k\leq\io}
\alpha_{G(F_k)/F_{k-1}}^{-1}(T\cap (F_{k}\s F_{k-1}))\
\times \ 
\bigtimes_{1\leq k\leq\ep}
\alpha_{G(F'_{k-1})/F'_{k}}^{-1}(T\cap (F'_{k-1}\s F_{k}))\
   $$
where $\times$ means that the $2^{\io+\ep}$ resulting orientations of $G$ are inherited from the orientations of the involved minors the natural way (and where each induced spanning tree of a minor is uniactive).%
\qed
\end{prop}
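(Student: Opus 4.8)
The plan is to derive the statement almost directly from Definition \ref{def:alpha-seq-decomp} (the definition of $\alpha$ on arbitrary ordered digraphs) and from Theorem \ref{EG:th:alpha}, so that the only genuinely new work is to check that the ``gluing'' operation $\times$ on the right-hand side is well defined and that it captures every preimage of $T$.

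First I would establish that $\alpha_G^{-1}(T)$ is contained in the set on the right. Take an orientation $\G$ of $G$ with $\alpha(\G)=T$. By Theorem \ref{EG:th:alpha}(2), $\G$ and $T$ have the same active filtration, so the active filtration of $\G$ is exactly the prescribed sequence $\emptyset=F'_\ep\subset\dots\subset F_\io=E$; hence by Proposition \ref{prop:pty-active-minors} the minor $\G(F_k)/F_{k-1}$ is bipolar w.r.t.\ $\min(F_k\s F_{k-1})$ for $1\le k\le\io$ and the minor $\G(F'_{k-1})/F'_k$ is cyclic-bipolar w.r.t.\ $\min(F'_{k-1}\s F'_k)$ for $1\le k\le\ep$. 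Since the edge sets $F_k\s F_{k-1}$ and $F'_{k-1}\s F'_k$ partition $E$, Definition \ref{def:alpha-seq-decomp} gives $T=\biguplus_{1\le k\le\io}\alpha(\G(F_k)/F_{k-1})\uplus\biguplus_{1\le k\le\ep}\alpha(\G(F'_{k-1})/F'_k)$; intersecting this equality with $F_k\s F_{k-1}$, resp.\ with $F'_{k-1}\s F'_k$, shows that the restriction of $\G$ to $G(F_k)/F_{k-1}$, resp.\ to $G(F'_{k-1})/F'_k$, lies in $\alpha^{-1}$ of $T\cap(F_k\s F_{k-1})$, resp.\ of $T\cap(F'_{k-1}\s F'_k)$, each of which is a uniactive internal, resp.\ external, spanning tree of the minor by Theorem \ref{thm:bij-10}.

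For the reverse inclusion I would start from a tuple consisting of an orientation $\G_k$ of $G(F_k)/F_{k-1}$ with $\alpha(\G_k)=T\cap(F_k\s F_{k-1})$ for $1\le k\le\io$ and an orientation $\G'_k$ of $G(F'_{k-1})/F'_k$ with $\alpha(\G'_k)=T\cap(F'_{k-1}\s F'_k)$ for $1\le k\le\ep$, and glue them into a single orientation $\G$ of $G$; this is unambiguous precisely because the active filtration induces a partition of $E$, so every edge of $G$ belongs to exactly one of these minors. By Theorem \ref{thm:bij-10} each $\G_k$ is bipolar and each $\G'_k$ is cyclic-bipolar w.r.t.\ its respective smallest edge, and the prescribed sequence is a connected filtration of $G$; hence Proposition \ref{prop:unique-dec-seq} applies and identifies it as the active filtration of $\G$. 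Definition \ref{def:alpha-seq-decomp} then yields $\alpha(\G)=\biguplus_{1\le k\le\io}\alpha(\G_k)\uplus\biguplus_{1\le k\le\ep}\alpha(\G'_k)=T$, so $\G\in\alpha_G^{-1}(T)$. Together with the first step this proves the equality, the operation $\times$ being exactly this gluing; the count $2^{\io+\ep}$ then follows because, by Theorem \ref{thm:bij-10} and the fact that a digraph and its opposite have the same image under $\alpha$, each of the $\io+\ep$ minors contributes exactly two orientations.

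The step I expect to be the main obstacle is the reverse inclusion, and specifically the verification that the minors built from the chosen $\alpha$-preimages really are bipolar, resp.\ cyclic-bipolar, w.r.t.\ the correct smallest elements, so that Proposition \ref{prop:unique-dec-seq} can be invoked to recover the active filtration of the glued orientation. This is where Theorem \ref{thm:bij-10} is indispensable, since it characterizes the $\alpha$-preimages of uniactive internal, resp.\ external, spanning trees as being exactly the bipolar, resp.\ cyclic-bipolar, orientations (together with their opposites); once this is in hand, the rest is bookkeeping with the partition of $E$ determined by the active filtration and with the recursive formula of Definition \ref{def:alpha-seq-decomp}.
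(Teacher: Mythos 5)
Your proof is correct and follows exactly the route the paper intends: the paper states this proposition without a separate proof, remarking only that it is a direct rephrasing of Definition \ref{def:alpha-seq-decomp} once Theorem \ref{EG:th:alpha} is acknowledged, and your argument is precisely the unpacking of that remark (forward inclusion via activity-filtration preservation and the decomposition formula, reverse inclusion via Theorem \ref{thm:bij-10} and the uniqueness in Proposition \ref{prop:unique-dec-seq}). No gaps worth noting.
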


The inverse image under the refined active bijection can be directly defined by specifying the orientation within its activity class, 
as discussed in Section \ref{subsec:refined},
and as reformulated below.

\begin{prop}[refined active bijection from subsets]
\label{prop:preimage-subset}
Let $G$ be an ordered graph with reference orientation $\G$. 
Let 
$A$ be a subset 
in the interval of a spanning tree  of $G$ with active filtration  $\emptyset= F'_\ep\subset...\subset F'_0=F_c=F_0\subset...\subset F_\io= E$.
Then we have:
%
$$
\displaystyle\alpha_\G^{-1}(A)\ =\ 
\biguplus_{1\leq k\leq\io}
\alpha_{\G(F_k)/F_{k-1}}^{-1}(A\cap (F_{k}\s F_{k-1}))\
\uplus \ 
\biguplus_{1\leq k\leq\ep}
\alpha_{\G(F'_{k-1})/F'_{k}}^{-1}(A\cap (F'_{k-1}\s F_{k})).
   $$
\end{prop}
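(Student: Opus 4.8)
\textbf{Proof plan for Proposition \ref{prop:preimage-subset}.}

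The plan is to reduce this statement to the already-established description of the preimage under the canonical active bijection (Proposition \ref{prop:preimage-sp-tree}) together with the explicit formula defining $\alpha_\G$ in Definition \ref{EG:def:act-bij-ext}. First I would unwind the hypothesis: since $A$ lies in the interval $[T\s\Int_G(T),T\cup\Ext_G(T)]$ of a spanning tree $T$, this $T$ is uniquely determined by $A$ (the intervals partition $2^E$, Section \ref{prelim:subset-activities}), and by Theorem \ref{EG:th:ext-act-bij} the reorientations $-_{A'}\G$ with $\alpha_\G(A')=A$ are exactly those forming the activity class mapped by the canonical bijection onto $T$, i.e. $\alpha_G^{-1}(T)$, each reoriented on a prescribed union of parts of the active partition so as to realize the prescribed values $P=P_G(A)=\Int(T)\cap A$ and $Q=Q_G(A)=\Ext(T)\cap A$. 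So $\alpha_\G^{-1}(A)$ is a single well-defined reorientation $-_{A^\circ}\G$ of $G$, sitting in the activity class $\alpha_G^{-1}(T)$, and I must check that this reorientation decomposes edge-by-edge along the active filtration of $T$ exactly as the right-hand side claims.

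Next I would invoke the fact (Definition \ref{def:sp-tree-dec-seq}, Proposition \ref{prop:preimage-sp-tree}) that the active filtration $\emptyset=F'_\ep\subset\dots\subset F_\io=E$ of $T$ induces, on each active minor $G(F_k)/F_{k-1}$ (bipolar) and $G(F'_{k-1})/F'_k$ (cyclic-bipolar), a uniactive spanning tree $T\cap(F_k\s F_{k-1})$, resp. $T\cap(F'_{k-1}\s F'_k)$, and that an orientation $\G'$ lies in $\alpha_G^{-1}(T)$ iff its restriction to each such minor lies in the corresponding uniactive preimage. The key point is that the refined choice is \emph{local to the minors}: by Definition \ref{EG:def:act-bij-ext}, the active and dual-active edges of $-_{A^\circ}\G$ are precisely the internally/externally active edges of $T$, one in each active minor, and the rule "reorient edge $a$ relative to $\G$ iff $a\in A$" is applied to each such active edge within its own minor. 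Thus $A\cap(F_k\s F_{k-1})$, resp. $A\cap(F'_{k-1}\s F'_k)$, plays for the minor exactly the role that $A$ plays globally: it is a subset of the interval of the induced uniactive spanning tree of that minor, and its image under $\alpha_\G$ restricted to the minor (i.e. under $\alpha_{\G(F_k)/F_{k-1}}$, resp. $\alpha_{\G(F'_{k-1})/F'_k}$) is the corresponding piece of $A$. This uses Property \ref{pty:fund} to guarantee that fundamental cycles/cocycles, hence the sign criterion defining $\alpha$ on the minor, agree with the ambient ones restricted to the minor; and it uses Observation \ref{obs:induc-alpha} to guarantee the union of the minor-preimages is indeed the ambient preimage.

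Concretely, the write-up would be: let $T=\alpha(-_A\G)$ and $P,Q$ as in Definition \ref{EG:def:act-bij-ext}, so $A=\alpha_\G(A')$ for a unique $A'$ with $-_{A'}\G\in\alpha_G^{-1}(T)$; by Proposition \ref{prop:preimage-sp-tree}, $-_{A'}\G$ restricts on each active minor $H$ to an element of $\alpha_H^{-1}(T_H)$, where $T_H$ is the induced uniactive spanning tree; by Property \ref{pty:fund} the relevant fundamental cycles/cocycles of $T$ restrict to those of $T_H$ in $H$, so the active/dual-active edge of $-_{A'}\G$ inside $H$ is the unique internally/externally active edge $a$ of $T_H$, and $a\in A'$ iff $a\in A$ iff $a\in A\cap(\text{edge set of }H)$; hence the restriction of $-_{A'}\G$ to $H$ is exactly $\alpha_H^{-1}(A\cap(\text{edge set of }H))$; taking the disjoint union over the $\io+\ep$ active minors (which partition $E$) recovers $-_{A'}\G$, which is $\alpha_\G^{-1}(A)$. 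The main obstacle I anticipate is purely bookkeeping: making the "local = global" assertion airtight, i.e. verifying that the refined correction terms $A\cap O^*(-_{A}\G)$ and $A\cap O(-_{A}\G)$ distribute correctly over the minors and that no edge of $A$ outside the active edges of $T$ contributes — this follows from $O^*(-_A\G)=\Int(T)$, $O(-_A\G)=\Ext(T)$ (Theorem \ref{EG:th:alpha}) and from the fact that each active edge of $T$ belongs to exactly one active minor as its smallest edge, but it needs to be spelled out carefully.
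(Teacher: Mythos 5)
Your proposal is correct and follows essentially the same route as the paper's proof: localize to the active minors induced by the active filtration, where the two opposite preimages of each induced uniactive spanning tree are distinguished by the direction of that minor's smallest (active or dual-active) edge relative to the reference orientation, and observe that this local choice is exactly the restriction of the global refined choice. One small slip to fix in the write-up: for an internally active edge $a$ the convention of Definition \ref{EG:def:act-bij-ext} gives $a\in A'$ iff $a\notin A$ (not $a\in A$, since $P$ is \emph{removed} from $T$), but because the same asymmetric convention is applied within each minor the restriction formula is unaffected.
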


\emevder{A BIEN REVERIFIER !!!! (je l'ai ecrit tres vite fait, ainsi que la preuve)}

\emevder{on pourrait aussi donner une formula directe pour refined de ce type du coup je pense !!! en terms d'orietnations et d'ative filtation d'orietnation...}

\begin{proof}
This  is a straightforward reformulation of the construction of the refined active bijection discussed in Section \ref{subsec:refined}. Let us give details anyway.
Consider any of the  involved minors $H$, and the uniactive spanning tree $T_H$ induced in the minor $H$ by the involved spanning tree $T$. 
The inverse image of $T_H$ under $\alpha$ in $H$  consists of two opposite orientations of $H$.
Now consider the refined active bijection of $H$ w.r.t. the orientation of $H$ induced by $\G$, and denote $a$ the smallest edge of $H$.
One of the two above orientations is associated to $T_H$ (the one for which the orientation of $a$ agrees with $\G$),  and the other to $T_H\triangle \{a\}$.
Applying this to each minor $H$ and to any subset $A$ in the same interval, we always obtain a reorientation of $\G$ whose image under  $\alpha_\G$ is $A$.\emevder{reorientation mal dit, c'est un subset}
\end{proof}



For completeness of the overview given in this paper, we give below a direct construction from spanning trees/subsets to orientations, using graph terminology. It is stated in \cite{AB2-b}  for general oriented matroids and the proof essentially relies upon \cite{AB2-a} (or also  \cite{Gi02}).
It combines the inverse computation of fully optimal spanning trees in bipolar minors (Proposition \ref{prop:alpha-10-inverse})\emevder{attention si enlevee de ABG2} with the computation of the active partition from \cite{AB2-a}. Noticeably, it uses only the fundamental cycles and cocycles of the spanning tree, not the whole graph structure. 
The single pass linear algorithm below builds at the same time: the active partition of a spanning tree (Theorem \ref{EG:th:dec_base}, refining the partition into internal/external edges from \cite{EtLV98}),
the premiage of a spanning tree under the canonical active bijection (Theorem \ref{EG:th:alpha} and Proposition \ref{prop:preimage-sp-tree}), and the preimage of a subset under the refined active bijection (Theorem~\ref{EG:th:ext-act-bij}  and Proposition \ref{prop:preimage-subset}). \emevder{ajouter ma these en ref?}

\begin{thm}[all-in-one single-pass algorithm from spanning trees \cite{AB2-a,AB2-b}] 
\label{th:basori}
Let $G$ be a graph on a linearly ordered set of edges $E=e_1<\ldots<e_n$.
Let $T$ be a spanning tree of $G$.

In the algorithm below, the active partition of $T$ is computed as a mapping, denoted $\ass$, from $E$ to $\Int(T)\cup \Ext(T)$, that maps an edge onto the smallest element of its part in the active partition of $T$. An edge is called internal, resp. external, if its image is in $\Int(T)$, resp. $\Ext(T)$.

The set of $2^{|\Int(T)|+|\Ext(T)|}$ orientations formed by the preimages of $T$ under $\alpha$ in the graph $G$, denoted here $\alpha_G^{-1}(T)$,  is computed by doing all possible arbitrary choices to orient $e_k$
during the algorithm.
%
%
Equivalently, those preimages under $\alpha$ can also be retrieved from one another since we have
$$\alpha_G^{-1}(T)=\{\ A\ \triangle \ \ass^{-1}(P\cup Q)\ \mid\ 
P\subseteq \Int(T),\ Q\subseteq \Ext(T),\  A\in \alpha_G^{-1}(T) \ \}.$$
%

Let $\G$ be a reference orientation of $G$, 
and let $X$ be a subset of $E$. We assume that $X=T\setminus P\cup Q$ with $P\subseteq \Int(T)$ and $Q\subseteq \Ext(T)$, or equivalently that $T=X\setminus Q\cup P$ with $Q=Q_G(X)$ and $P=P_G(X)$ 
(see Definition \ref{def:gene-act-base}).
We also derive the preimage of $X$ under $\alpha_\G$.


\begin{algorithme} \par
\underbar{Input}: \vtop{\emph{either} a spanning tree $T$ of $G$ alone,\par
 \emph{or} a spanning tree $T$ of $G$ and a subset $X=T\setminus P\cup Q$  in the interval of $T$.}
 \par
\underbar{Output}: \vtop{\emph{either} all orientations of $G$ in $\alpha_G^{-1}(T)$, \par
\emph{or} the 
reorientation of $\G$ w.r.t. $\alpha_\G^{-1}(X)$.}
\par

For $k$ from $1$ to $n$ do\par

\hskip 5mmif $e_k\not\in T$ then\par
\hskip 5mm\hskip 5mm if $e_k$ is externally active w.r.t. $T$ then \par
\hskip 5mm\hskip 10mm $e_k$ is external, $\ass(e_k):=e_k$,
orient $e_k$ \emph{either} arbitrarily \emph{(to compute $\alpha^{-1}(T)$)}\par
\hskip 5mm\hskip 10mm \emph{or} with the same direction as in $\G$ if and only if $e_k\not\in Q$ \emph{(to compute $\alpha_\G^{-1}(X)$)}\par
\hskip 5mm\hskip 5mm otherwise\par

\hskip 5mm\hskip 10mm if there exists $c<e_k$ internal in $C(T;e_k)$ then\par

\hskip 5mm\hskip 15mm $e_k$ is internal\par
\hskip 5mm\hskip 15mm let $c$ $\in$ $C(T;e_k)$ with $c<e_k$, $c$ internal and $\ass(c)$ the greatest possible\par
\hskip 5mm\hskip 15mm let $\ass(e_k):=\ass(c)$\par
\hskip 5mm\hskip 10mm otherwise\par

\hskip 5mm\hskip 15mm $e_k$ is external\par
\hskip 5mm\hskip 15mm let $c$ $\in$ $C(T;e_k)$ with $c<e_k$ and $\ass(c)$ the smallest possible\par
\hskip 5mm\hskip 15mm let $\ass(e_k):=\ass(c)$ \finsi\par

\hskip 5mm\hskip 10mm let $a$ be the smallest possible in $C(T;e_k)$ with $\ass(a)=\ass(e_k)$\par

\hskip 5mm\hskip 10mm orient $e_k$ so that $e_k$ and $a$ have opposite directions in 
$C(T;e_k)$\par

\hskip 5mmif $e_k\in T$ then \emph{(note: the below rules are dual to the above ones)}\par
\hskip 5mm\hskip 5mm if $e_k$ is internally active  w.r.t. $T$ then\par
\hskip 5mm\hskip 10mm $e_k$ is internal, $\ass(e_k):=e_k$, 
orient $e_k$ \emph{either} arbitrarily \emph{(to compute $\alpha^{-1}(T)$)}\par
\hskip 5mm\hskip 10mm \emph{or} with the same direction as in $\G$ if and only if $e_k\not\in P$ \emph{(to compute $\alpha_\G^{-1}(X)$)}\par
\hskip 5mm\hskip 5mm otherwise\par

\hskip 5mm\hskip 10mm it there exists $c<e_k$ external in $C^*(T;e_k)$ then\par

\hskip 5mm\hskip 15mm $e_k$ is external\par
\hskip 5mm\hskip 15mm let $c$ $\in$ $C^*(T;e_k)$ with $c<e_k$, $c$ external and $\ass(c)$ the greatest possible \par
\hskip 5mm\hskip 15mm let $\ass(e_k):=\ass(c)$ \par

\hskip 5mm\hskip 10mm otherwise\par
\hskip 5mm\hskip 15mm $e_k$ is internal\par
\hskip 5mm\hskip 15mm let $c$ $\in$ $C^*(T;e_k)$ with $c<e_k$ and $\ass(c)$ the smallest possible\par
\hskip 5mm\hskip 15mm let $\ass(e_k):=\ass(c)$ \finsi\par

\hskip 5mm\hskip 10mm let $a$ be the smallest possible in $C^*(T;e_k)$ with $\ass(a)=\ass(e_k)$\par
\hskip 5mm\hskip 10mm orient $e_k$ so that $e_k$ and $a$ have opposite directions in 
$C^*(T;e_k)$
\end{algorithme}%
\end{thm}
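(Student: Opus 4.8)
\textbf{Proof plan for Theorem \ref{th:basori}.}

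The plan is to establish correctness of the single-pass algorithm by recognizing that it is a \emph{fusion} of two already-proved ingredients, and that the edges are processed in increasing order so that all the data needed to treat $e_k$ (namely the active-partition labels $\ass(c)$ for $c<e_k$ lying in the fundamental cycle $C(T;e_k)$ or cocycle $C^*(T;e_k)$, and whether such $c$ is internal or external) has already been computed. The first ingredient is the single-pass construction of the active partition of a basis from \cite{AB2-a}, which is exactly what the branches computing $\ass(e_k)$ reproduce: when $e_k\not\in T$ is not externally active, its fundamental cycle contains a smaller edge, and the rule ``$e_k$ is internal with $\ass(e_k)=\ass(c)$ for $c$ internal, $c<e_k$, $\ass(c)$ greatest, if such $c$ exists, else $e_k$ is external with $\ass(c)$ smallest'' is precisely the recursive characterization of the active closure (Definition \ref{def:active-closure}), by Proposition \ref{prop:act-part-sp-tree-construction}; dually for $e_k\in T$ not internally active. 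The second ingredient is the inverse uniactive bijection (Proposition \ref{prop:alpha-10-inverse}): once $\ass$ has sorted $E$ into the parts of the active partition of $T$ (Definition \ref{def:sp-tree-dec-seq}, Theorem \ref{EG:th:dec_base}), restricting attention to a single part $\ass^{-1}(a)$ and its induced minor, the orientation rule ``orient $e_k$ so that $e_k$ and the smallest $a$ with $\ass(a)=\ass(e_k)$ have opposite directions in $C(T;e_k)$'' (resp. $C^*(T;e_k)$) is exactly the rule of Proposition \ref{prop:alpha-10-inverse} applied inside that minor, using Property \ref{pty:fund} to identify fundamental cycles/cocycles in the minor with intersections of fundamental cycles/cocycles in $G$.

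First I would verify that the ordering is coherent: since $e_k$ is processed only after all smaller edges, and since every rule for $\ass(e_k)$ and for the orientation of $e_k$ refers only to edges $c<e_k$ (the smallest element $a$ with $\ass(a)=\ass(e_k)$ is necessarily $\le e_k$, and equals $e_k$ only in the active case where $e_k$ is oriented freely/by $\G$), the algorithm is well-defined. Next I would prove that the map $\ass$ output by the algorithm is the active partition of $T$ in $G$: this is a direct citation of \cite{AB2-a} via Proposition \ref{prop:act-part-sp-tree-construction}, third bullet, but I would spell out the translation, checking that the ``active'' cases ($e_k$ externally active, resp. internally active) correspond to opening a new part with smallest element $e_k$ — consistent with the fact that active elements are exactly the smallest elements of parts (Definition \ref{def:sp-tree-dec-seq}). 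Then, working part by part: for a part $\ass^{-1}(a)$ with $a\in\Int(T)$, the induced minor $G(F_k)/F_{k-1}$ is bipolar (Proposition \ref{prop:pty-active-minors}), $T\cap(F_k\setminus F_{k-1})$ is its uniactive internal spanning tree, and the orientation rule produces exactly the two opposite orientations whose image under $\alpha$ is that uniactive spanning tree, by Proposition \ref{prop:alpha-10-inverse} combined with Property \ref{pty:fund}; dually for $a\in\Ext(T)$. Assembling over all parts and invoking Proposition \ref{prop:preimage-sp-tree} (equivalently Definition \ref{def:alpha-seq-decomp} and Theorem \ref{EG:th:alpha}) gives that the set of outputs, over all free choices, is $\alpha_G^{-1}(T)$, and that these $2^{|\Int(T)|+|\Ext(T)|}$ orientations differ from one another exactly by reorienting unions of parts $\ass^{-1}(P\cup Q)$, which is the displayed formula.

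For the refined statement, I would use the discussion preceding Theorem \ref{EG:th:ext-act-bij} together with Proposition \ref{prop:preimage-subset}: fixing the reference orientation $\G$ and writing $X=T\setminus P\cup Q$, within each minor the choice ``orient the active edge with the direction of $\G$ iff it is not in $Q$ (resp. not in $P$)'' selects, among the two preimages of the uniactive spanning tree, precisely the one prescribed by the refined active bijection $\alpha_\G$ restricted to that minor (the active-fixed / dual-active-fixed representative when the edge lies outside $Q$, resp. $P$, and its opposite otherwise). Gluing over parts via Proposition \ref{prop:preimage-subset} yields $\alpha_\G^{-1}(X)$. The main obstacle I anticipate is purely bookkeeping: carefully matching the ``greatest possible $\ass(c)$'' / ``smallest possible $\ass(c)$'' tie-breaking rules and the ``smallest $a$ with $\ass(a)=\ass(e_k)$'' selection with the nested-minor structure of the active filtration, i.e. checking that processing $E$ linearly genuinely realizes the recursive peeling-off of parts described in Proposition \ref{prop:act-part-sp-tree-construction} (first bullet) and in Observations \ref{obs:induced-dec-seq-ori} and \ref{obs:induc-alpha} — everything else is a transcription of \cite{AB2-a,AB2-b}, which is why the statement is quoted rather than reproved here.
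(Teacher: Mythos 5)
Your plan is correct and coincides with the paper's own treatment: the paper does not reprove Theorem \ref{th:basori} but explicitly describes it as the combination of the single-pass active-partition construction from \cite{AB2-a} (recalled in Proposition \ref{prop:act-part-sp-tree-construction}) with the inverse uniactive bijection of Proposition \ref{prop:alpha-10-inverse} applied in each active minor, glued via the filtration (Propositions \ref{prop:preimage-sp-tree} and \ref{prop:preimage-subset}) — exactly your two ingredients. Your bookkeeping observations (ordering coherence, and the identification via Property \ref{pty:fund} of ``smallest $a$ in $C(T;e_k)$ with $\ass(a)=\ass(e_k)$'' with the minimum of the fundamental cycle in the induced minor) are the right points to check and are sound.
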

\eme{QUESTION A VOIR AVEC AB2:
dans algo basori, ne peut-on prendre le c (qui sert a definir partie) directement egal au e (ou a) qui sert a comparer 
orietnations dans cycle/cocyle fondamental ?}
%


\vspace{-2mm}
\section{Constructions by deletion/contraction}
\label{sec:induction}



We address deletion/contraction constructions for the three levels of the active bijection. 
As we will show, in contrast with the previous constructions, these constructions can be thought of as building the whole bijections at once, roughly said as $1-1$ correspondences between orientations and spanning trees/subsets rather than as pairs of inverse mappings from one side to the other.
We state these inductive constructions 
the simplest way, so that they are directly related to what precedes in this paper. 
This whole section is  generalized and developed further in \cite{AB4}, notably with more practical conditions equivalent to the ones used in the following algorithms. 
\emevder{pas pour uniactive, qui est tel que comme ca}%
At the end, we also present 
how these constructions fit in a general deletion/contraction framework
for building correspondences/bijections 
involving graduated activity preservation constraints, amongst which the active bijection is uniquely determined by its canonical or natural properties.

\subsection{The uniactive bijection}
\label{subsec:alpha-10-ind}

This section repeats  results and condenses remarks from the companion paper \cite[Section \ref{ABG2LP-sec:induction}]{ABG2LP}%
\emevder{verifier section ---- faire numerotation latex avec ref croisee comme pour ab2-a ab2-b ?}%
\footnote{Note: Theorem \ref{thm:ind-10} is also stated in the companion paper \cite{ABG2LP}, which is also submitted. At the moment, we give its proof in both papers, including Lemma \ref{lem:induc-fob-basis},  but we should eventually remove this repetition and give the proof in only one of the two papers.}%
.






%

\emevder{voir si ce lemme utilise, sinon renvoyer a ABG2LP --- OUI utilise dans preuve de Prop pour canonical act bij, mais juste pour propriete qu'un des deux est bipolaire, quise montre facilemetn sans ca... QUE FAIRE ??? remontrer ca dans la prevue, citer le lemme de auter ppeir, recopier lemme faire autre lemme ??? j'en ai marre je laisse comme ca pour le mment !}%

\begin{lemma}
\label{lem:induc-fob-basis}
Let $\G$ be a digraph,  on a linearly ordered set of edges $E$, which is bipolar w.r.t. $p=\min(E)$. Let $\n$ be the greatest element of $E$. Let $T=\alpha(\G)$. 
If $\n\in T$ then $\G/\n$ is bipolar w.r.t. $p$ and $T\setminus\{\n\}=\alpha(\G/\n)$.
If $\n\not\in T$ then $\G\backslash\n$ is bipolar w.r.t. $p$ and $T=\alpha(\G\backslash\n)$.
In particular, we get that $\G/\n$ is bipolar w.r.t. $p$ or $\G\backslash\n$ is bipolar w.r.t. $p$.
\end{lemma}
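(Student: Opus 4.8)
The statement is essentially a ``single deletion/contraction step'' lemma for the fully optimal spanning tree of a bipolar digraph. The plan is to exploit the characterization of $\alpha(\G)$ by the sign criterion of Definition \ref{def:acyc-alpha}, together with the structural equivalence recalled in Section \ref{subsec:prelim-beta}: $\G$ is bipolar w.r.t. $p$ if and only if every edge of $\G$ lies in a directed cocycle and every directed cocycle of $\G$ contains $p$; equivalently $\G$ is acyclic and $-_p\G$ is strongly connected. Since $\n=\max(E)$, the edge $\n$ never equals $\min(C^*(T;b))$ for $b\in T$ nor $\min(C(T;e))$ for $e\notin T$ (it can only be the ``large'' element in any fundamental cycle or cocycle), so removing or contracting $\n$ does not disturb the minimal elements of the surviving fundamental cycles/cocycles. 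This is the mechanism that will let the criterion be inherited by the minor.

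First I would split into the two cases according to whether $\n\in T$ or $\n\notin T$, where $T=\alpha(\G)$. Suppose $\n\notin T$. Then $T$ is still a spanning tree of $G\bk\n$, and for every $b\in T$ the fundamental cocycle $C^*_{G\bk\n}(T;b)$ equals $C^*_G(T;b)\setminus\{\n\}$ (removing an external edge only possibly deletes $\n$ from a fundamental cocycle, by Property \ref{pty:fund} with $A=E\setminus\{\n\}$, $B=\emptyset$); since $\n$ is the greatest element it is never the minimum of a fundamental cocycle, so $\min C^*_{G\bk\n}(T;b)=\min C^*_G(T;b)$ and the sign relation is preserved. Likewise for $e\in (E\setminus\{\n\})\setminus T$ the fundamental cycle is unchanged, $C_{G\bk\n}(T;e)=C_G(T;e)$ (it was contained in $T\cup\{e\}\subseteq E\setminus\{\n\}$ already). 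Next I must check $G\bk\n$ is bipolar w.r.t.\ $p$: deleting $\n$ preserves acyclicity trivially, and I must argue $-_p(\G\bk\n)$ is still strongly connected, i.e.\ $\n$ is not a (co)bridge-type obstruction; concretely, every directed cocycle of $\G\bk\n$ is a directed cocycle of $\G$ not containing $\n$ (contracting away nothing, deleting $\n$ only removes cocycles through $\n$), hence still contains $p$, and every edge of $G\bk\n$ still lies in one — this needs the fact that when $\n\notin T$, $\n$ is externally active-free in the relevant sense, which follows because $\n$ lies in the fundamental cycle $C_G(T;\n)$ whose minimum is some $e<\n$, guaranteeing that removing $\n$ cannot destroy any cocycle needed to cover another edge. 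Then by the uniqueness in Definition \ref{def:acyc-alpha}, $T=\alpha(\G\bk\n)$. The case $\n\in T$ is dual: contract $\n$, use the cycle/cocycle-duality half of Property \ref{pty:fund} ($C^*_{G/\n}(T\setminus\n;b)=C^*_G(T;b)$ for $b\in T\setminus\n$, and $C_{G/\n}(T\setminus\n;e)=C_G(T;e)\setminus\{\n\}$ for $e\notin T$), observe $\n$ is never the minimum of a surviving fundamental cycle, conclude the sign criterion passes to $T\setminus\{\n\}$ in $G/\n$, check $G/\n$ is bipolar w.r.t.\ $p$ (contraction preserves acyclicity; need that no new source/sink splitting appears, i.e.\ $\n$ was ``in series'' only in a way compatible with bipolarity — again guaranteed because $\n$ being in $T$ means $\n\in C^*_G(T;\n)$ with $\min$ an edge $<\n$, so contracting $\n$ does not create a directed cocycle avoiding $p$), and invoke uniqueness.

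The ``in particular'' clause is then immediate: exactly one of the two cases holds since $\alpha(\G)=T$ is a fixed spanning tree and $\n$ is either in it or not, so at least one (in fact exactly one) of $\G/\n$, $\G\bk\n$ is bipolar w.r.t.\ $p$.

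\textbf{Main obstacle.} The routine part is the behaviour of fundamental cycles/cocycles under deletion/contraction of the largest element and the preservation of the sign criterion — that is essentially bookkeeping via Property \ref{pty:fund}. The genuine content, and the step I expect to be delicate, is verifying that the resulting minor ($\G\bk\n$ when $\n\notin T$, resp.\ $\G/\n$ when $\n\in T$) is again bipolar w.r.t.\ $p$ — i.e.\ that deleting/contracting $\n$ neither creates a directed cocycle avoiding $p$ nor leaves some edge uncovered by directed cocycles nor introduces a directed cycle. This is exactly where one must use that $T$ is \emph{the} fully optimal tree and not an arbitrary one: the position of $\n$ relative to $T$ (governed by whether $\n$ is internally/externally active and by the sign criterion on $C_G(T;\n)$ or $C^*_G(T;\n)$) forces $\n$ to be removable/contractible without destroying bipolarity. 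I would handle this by translating bipolarity into the cocycle condition ``every edge in a directed cocycle, every directed cocycle contains $p$'' and checking both halves survive, using that the fundamental cocycle/cycle of $\n$ has its minimum strictly below $\n$.
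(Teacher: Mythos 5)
Your bookkeeping on fundamental cycles and cocycles is exactly the paper's: since $\n=\max(E)$ is never the minimum of a fundamental cycle or cocycle, the sign criterion of Definition \ref{def:acyc-alpha} for $T\s\{\n\}$ in $\G/\n$ (resp.\ $T$ in $\G\bk\n$) is inherited verbatim from that for $T$ in $\G$. The gap is in the step you yourself flag as delicate, the bipolarity of the minor. Your argument for the case $\n\not\in T$ rests on the claim that every directed cocycle of $\G\bk\n$ is a directed cocycle of $\G$ not containing $\n$, hence contains $p$. That claim is false: cocycles of $G\bk\n$ are minimal non-empty intersections of cocycles of $G$ with $E\s\{\n\}$, so a cocycle $D$ of $G$ with $\n\in D$ whose edges other than $\n$ are consistently directed but whose edge $\n$ is directed the other way yields a \emph{new} directed cocycle $D\s\{\n\}$ of $\G\bk\n$ that need not contain $p$. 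This is precisely the mechanism by which deletion of $\n$ can destroy bipolarity, and it is why the lemma only asserts that \emph{one} of $\G/\n$, $\G\bk\n$ is bipolar. The subsequent appeal to ``$\n$ lies in $C_G(T;\n)$ whose minimum is some $e<\n$'' does not repair this; you would need to show that no such sign pattern on a cocycle through $\n$ avoiding $p$ can occur when $\n\not\in\alpha(\G)$, and no such argument is given. The dual case $\n\in T$ has the symmetric problem with contraction creating new directed cycles or cocycles.

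The paper closes this gap by a different and much shorter route that you missed: it first records that if \emph{any} spanning tree of an ordered digraph satisfies the criterion of Definition \ref{def:acyc-alpha}, then that digraph is automatically bipolar w.r.t.\ its smallest edge (because the criterion forces the tree to be internal uniactive and determines the digraph up to global reversal, so by Proposition \ref{prop:alpha-10-inverse} and Theorem \ref{thm:bij-10} the digraph lies in the preimage of a uniactive internal tree under the uniactive bijection, hence is bipolar). With that observation, once you have verified that $T\s\{\n\}$ (resp.\ $T$) satisfies the criterion in the minor, the bipolarity of the minor and the identification of its fully optimal spanning tree come for free in one stroke; no separate analysis of cocycles of the minor is needed. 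I recommend you either import that observation or supply a genuine digraph-theoretic argument for the bipolarity claim; as written, the proof is incomplete.
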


\begin{proof}
First, let us recall that if a spanning tree of a directed graph satisfies the criterion of Definition \ref{def:acyc-alpha}, then this directed graph  is necessarily bipolar w.r.t. its smallest edge. 
This is implied by \cite[Propositions 2 and 3]{GiLV05}, or also stated explicitly in \cite[Proposition 3.2]{AB1},
and this is easy to see: if the criterion is satisfied, then 
the spanning tree is internal uniactive (by definitions of internal/external activities)
and the digraph is determined up to reversing all edges (see Proposition \ref{prop:alpha-10-inverse}\emevder{ATTENTION ptet pas dans ABG2 ???}), 
which implies that the digraph is in the inverse image  of $T$ by the uniactive bijection of Theorem \ref{thm:bij-10} and that it is bipolar w.r.t. its smallest edge.

Assume that $\n\in T$.
Obviously, the fundamental cocycle of $b\in T\s\{\n\}$ w.r.t. $T\s\{\n\}$ in $G/\n$ is the same as the fundamental cocycle of $b$ w.r.t. $T$ in $G$.
And the fundamental cycle of $e\not\in T$ w.r.t. $T\s\{\n\}$ in $G/\n$ is obtained by removing $\n$ from the fundamental cycle of $e$ w.r.t. $T$ in $G$.
Hence, those fundamental cycles and cocycles in $G/\n$ satisfy the criterion of Definition \ref{def:acyc-alpha}, hence $\G/\n$ is bipolar w.r.t. $p$ and $T\setminus\{\n\}=\alpha(\G/\n)$.

Similarly (dually in fact), assume that $\n\not\in T$.
%
%
The fundamental cocycle of $b\in T$ w.r.t. $T\s\{\n\}$ in $G\bk\n$ is obtained by removing $\n$ from the fundamental cocycle of $b$ w.r.t. $T$ in $G$.
And the fundamental cycle of $e\not\in T\s\{\n\}$ w.r.t. $T\s\{\n\}$ in $G\bk\n$ is the same as the fundamental cycle of $e$ w.r.t. $T$ in $G$.
Hence, those fundamental cycles and cocycles in $G\bk\n$ satisfy the criterion of Definition \ref{def:acyc-alpha}, hence $\G\bk\n$ is bipolar w.r.t. $p$ and $T\setminus\{\n\}=\alpha(\G\bk\n)$.

Note that the fact that either $\G/\n$ is bipolar w.r.t. $p$, or $\G\backslash\n$ is bipolar w.r.t. $p$ could also easily be directly proved in terms of digraph properties.
\end{proof}

\begin{thm}
\label{thm:ind-10}
The fully optimal (or active) 
spanning trees 
of ordered bipolar digraphs 
satisfy the
following inductive definition.
\emevder{OU the uniactive bijection satisfies...}
\emevder{NB debut d'enonce different de ABG2-LP, voir si on met cii comme la bas}

\begin{algorithme}

For any  ordered digraph $\G$ on $E$, bipolar w.r.t. $p=\min(E)$, and  with $\max(E)=\n$.%

If $|E|=1$ then $\alpha(\G)=\n$.\par
If $|E|>1$  then:\par


\hskip 10mm If $\G/\n$ is bipolar w.r.t. $p$ but not $\G\backslash \n$  then
$\alpha(\G)=\alpha(\G/\n)\cup\{\n\}$.

\hskip 10mm If $\G\backslash\n$ is bipolar w.r.t. $p$ but not $\G/ \n$ then
$\alpha(\G)=\alpha(\G\backslash\n)$.

\hskip 10mm If both $\G\backslash\n$ and $\G/\n$ are bipolar w.r.t. $p$  then:


\hskip 20mm
let $T'=\alpha(\G\backslash\n)$, $C=C_\G(T';\n)$ and $e=\min(C)$\par

\hskip 20mm
if $e$ and $\n$ have opposite directions in $C$ then $\alpha(\G)=\alpha(\G\backslash\n)$;\par
\hskip 20mm
if $e$ and $\n$ have the same directions in $C$ then $\alpha(\G)=\alpha(\G/\n)\cup\{\n\}$.\par
\smallskip

{\sl or equivalently:}\par
\hskip 20mm
let $T''=\alpha(\G/\n)$, $D=C^*_\G(T''\cup\n;\n)$ and $e=\min(D)$\par

\hskip 20mm
if $e$ and $\n$ have opposite directions in $D$ then $\alpha(\G)=\alpha(\G/\n)\cup\{\n\}$;\par
\hskip 20mm
if $e$ and $\n$ have the same directions in $D$ then $\alpha(\G)=\alpha(\G\backslash\n)$.
\smallskip

\end{algorithme}
\end{thm}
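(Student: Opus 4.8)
The plan is to check, by induction on $|E|$, that the fully optimal spanning tree $\alpha(\G)$ of Definition \ref{def:acyc-alpha} satisfies the displayed recursion; the structural input is Lemma \ref{lem:induc-fob-basis}, and the only genuine computation is a transfer of the sign criterion of Definition \ref{def:acyc-alpha} between $\G$ and its minors $\G\bk\n$, $\G/\n$, performed with Property \ref{pty:fund}. For the base case $|E|=1$: if $\G$ is bipolar w.r.t.\ $p$ then the single edge $p=\n$ cannot be a loop (a directed loop is a directed cycle, contradicting acyclicity), so it is an isthmus, the unique spanning tree is $\{\n\}$, and the criterion of Definition \ref{def:acyc-alpha} is vacuous, so $\alpha(\G)=\{\n\}$ as claimed.

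For the inductive step, assume $|E|>1$, so $p=\min(E)<\max(E)=\n$, and put $T=\alpha(\G)$. Lemma \ref{lem:induc-fob-basis} yields a dichotomy: either $\n\in T$, whence $\G/\n$ is bipolar w.r.t.\ $p$ and $T=\alpha(\G/\n)\cup\{\n\}$; or $\n\notin T$, whence $\G\bk\n$ is bipolar w.r.t.\ $p$ and $T=\alpha(\G\bk\n)$; and in either event at least one of $\G/\n$, $\G\bk\n$ is bipolar w.r.t.\ $p$. This settles the two one-sided cases of the algorithm at once: if $\G/\n$ is bipolar but $\G\bk\n$ is not, then $\n\notin T$ is impossible (it would force $\G\bk\n$ bipolar), so $T=\alpha(\G/\n)\cup\{\n\}$; symmetrically in the other one-sided case $T=\alpha(\G\bk\n)$. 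Hence only the case where \emph{both} $\G/\n$ and $\G\bk\n$ are bipolar remains, and there $T$ is one of the two candidates $T'=\alpha(\G\bk\n)$ or $T''\cup\{\n\}$, with $T''=\alpha(\G/\n)$; one must show the sign test picks the right one.

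I would first handle the cycle formulation. Being a spanning tree of $G\bk\n$ that avoids $\n$, the set $T'$ is also a spanning tree of $G$. Applying Property \ref{pty:fund} with $A=E\bk\n$ and $B=\emptyset$ shows that, for every $e\in(E\bk\n)\bk T'$, $C_G(T';e)$ coincides with $C_{G\bk\n}(T';e)$, and, for every $b\in T'$, $C^*_G(T';b)$ equals $C^*_{G\bk\n}(T';b)$ or $C^*_{G\bk\n}(T';b)\cup\{\n\}$; since $\n=\max(E)$, inserting $\n$ changes neither the minimum of such a set nor the direction of any other edge relative to it. As $T'=\alpha(\G\bk\n)$ satisfies the criterion of Definition \ref{def:acyc-alpha} in $\G\bk\n$, it thus satisfies every clause of that criterion in $\G$ except possibly the clause for $e=\n$. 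Therefore $T'=\alpha(\G)$ if and only if $\n$ and $\min(C)$ have opposite directions in $C=C_\G(T';\n)$; and when this fails, $\alpha(\G)\neq T'$, so Lemma \ref{lem:induc-fob-basis} gives $\n\in\alpha(\G)$ and $\alpha(\G)=\alpha(\G/\n)\cup\{\n\}$ — exactly the cycle formulation. The cocycle formulation is the dual argument: $T''\cup\{\n\}$ is a spanning tree of $G$ containing $\n$, Property \ref{pty:fund} with $A=E$, $B=\{\n\}$ together with $\n=\max(E)$ shows it satisfies all clauses of Definition \ref{def:acyc-alpha} in $\G$ except possibly the clause for $b=\n$ (a legitimate clause since $\n\neq p$), so $\alpha(\G)=T''\cup\{\n\}$ iff $\n$ and $\min(D)$ are opposite in $D=C^*_\G(T''\cup\{\n\};\n)$, and otherwise $\alpha(\G)=\alpha(\G\bk\n)$. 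Both formulations thus compute $\alpha(\G)$, so they are equivalent, which completes the proof of Theorem \ref{thm:ind-10}.

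The main obstacle I anticipate is the transfer step: making rigorous that adjoining or deleting the maximal edge $\n$ to a fundamental cycle or cocycle affects neither its minimum nor the signs of the remaining edges, so that \emph{all} clauses of Definition \ref{def:acyc-alpha} other than the single $\n$-clause hold automatically in $\G$ — this is what collapses the inductive step to one sign comparison. It must be coupled with Lemma \ref{lem:induc-fob-basis} to know there are exactly two candidate spanning trees, so that failure of the $\n$-clause immediately identifies the other one. One should also not overlook the degenerate endpoints: when $|E|>1$ and $\G$ is bipolar, $\n$ is not a loop and $p\neq\n$, so $C$ and $D$ are genuine fundamental objects and the $\n$-clause is applicable on the tree side.
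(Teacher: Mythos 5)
Your proof is correct and follows essentially the same route as the paper's: the dichotomy from Lemma \ref{lem:induc-fob-basis}, the observation that all clauses of Definition \ref{def:acyc-alpha} transfer between $\G$ and the minor except the single clause for $\n$ (harmless since $\n=\max(E)$ cannot be the minimum of any fundamental cycle or cocycle), and the indirect derivation of the equivalence of the two formulations from the fact that each computes the same $\alpha(\G)$. The extra care you take with the base case and with Property \ref{pty:fund} is consistent with, and slightly more explicit than, the paper's argument.
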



%

\begin{proof}
By Lemma \ref{lem:induc-fob-basis}, at least one minor among $\{\G/\n, \G\bk\n\}$ is bipolar w.r.t. $p$.
If exactly one minor among $\{\G/\n, \G\bk\n\}$ is bipolar w.r.t. $p$, then
by Lemma \ref{lem:induc-fob-basis} again, 
the above definition  is implied.  Assume now that both minors are bipolar w.r.t. $p$.

Consider $T'=\alpha(\G\bk\n)$. 
Fundamental cocycles of elements in $T'$ w.r.t. $T'$ in $\G$ are obtained by removing $\n$ from those in $\G\bk\n$. Hence they satisfy the criterion of Definition \ref{def:acyc-alpha}.
Fundamental cycles of elements in $E\s (T'\cup\{\n\})$ w.r.t. $T'$ in $\G$ are the same as in $\G\bk\n$.  Hence they satisfy the criterion of Definition \ref{def:acyc-alpha}.
Let $C$ be the fundamental cycle of $\n$ w.r.t. $T'$. 
If $e$ and $\n$ have opposite directions in $C$,
then $C$ satisfies the criterion of Definition \ref{def:acyc-alpha}, and
 $\alpha(\G)=T'$.
Otherwise, we have $\alpha(\G)\not=T'$, and, by Lemma \ref{lem:induc-fob-basis}, we must have $\alpha(\G)=\alpha(\G/\n)\cup\{\n\}$.

The second condition involving $T''=\alpha(\G/\n)$ is proved in the same manner. Since it yields the same mapping $\alpha$, then this second condition is actually equivalent to the first one, and so it can be used as an alternative. Note that the fact that these two conditions are equivalent is difficult and proved here in an indirect way (actually this fact is equivalent to the key result that $\alpha$ yields a bijection), see \cite[Remark \ref{ABG2LP-rk:ind-10-equivalence}]{ABG2LP}. 
\end{proof}

\begin{cor}
\label{cor:ind-10}
We use notations of Theorem \ref{thm:ind-10}.
If $-_\n\G$ is bipolar w.r.t. $p$ then the above algorithm of Theorem \ref{thm:ind-10} builds at the same time $\alpha(\G)$ and $\alpha(-_\n\G)$, 
we have:
$$\Bigl\{\ \alpha(\G),\ \alpha(-_\n\G)\ \Bigr\}\ =\ \Bigl\{\ \alpha(\G\backslash\n),\ \alpha(\G/\n)\cup\{\n\}\ \Bigr\}.$$
Also, we have that $-_\n\G$ is bipolar w.r.t. $p$ if and only if $\G\backslash\n$ and $\G/\n$ are bipolar w.r.t. $p$.
\end{cor}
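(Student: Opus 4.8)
The plan is to prove the equivalence in the last sentence first (``$-_\n\G$ bipolar w.r.t.\ $p$ $\iff$ $\G\bk\n$ and $\G/\n$ both bipolar w.r.t.\ $p$''), and then read off the set equality and the algorithmic statement as bookkeeping on top of it. Throughout I would assume $|E|>1$, the case $|E|=1$ being the base case of the algorithm of Theorem~\ref{thm:ind-10} (there $-_\n\G$ is the opposite of the single isthmus $\G$ and everything is trivial). Since $\G$ is bipolar w.r.t.\ $p$ and $|E|>1$, the graph $G$ is loopless and $2$-connected (Section~\ref{subsec:prelim-beta}), hence $\n=\max(E)$ is neither a loop nor a bridge of $G$, and $\n\neq p=\min(E)$.

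\emph{The implication ``$\G\bk\n$ and $\G/\n$ bipolar $\Rightarrow$ $-_\n\G$ bipolar''.} I would isolate the self-contained statement: if $\G'$ is a digraph on the ordered set $E$ with $\n=\max(E)$ not a loop, and both $\G'\bk\n$ and $\G'/\n$ are bipolar w.r.t.\ $p=\min(E)$, then $\G'$ is bipolar w.r.t.\ $p$; applying it to $\G'=-_\n\G$ then gives the claim, since $(-_\n\G)\bk\n=\G\bk\n$ and $(-_\n\G)/\n=\G/\n$. To prove the statement I would use the characterization (Section~\ref{subsec:prelim-beta}) that a digraph is bipolar w.r.t.\ $p$ iff every edge lies in a directed cocycle and every directed cocycle contains $p$. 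First, $\G'$ is acyclic: a directed cycle $C$ of $\G'$ either avoids $\n$, hence is a directed cycle of the acyclic $\G'\bk\n$, or contains $\n$, in which case $C\s\{\n\}$ is nonempty (as $\n$ is not a loop) and, after contracting $\n$, contains a directed cycle of the acyclic $\G'/\n$ --- a contradiction in both cases. Being acyclic, every edge of $\G'$ then lies in a directed cocycle (Section~\ref{subsec:orientation-activity}). Finally, let $D$ be a directed cocycle of $\G'$: if $\n\notin D$, then $D$ is a directed cocycle of $\G'/\n$ (cocycles of $G/\n$ are the cocycles of $G$ avoiding $\n$, Section~\ref{subsec:om}), hence $p\in D$; and if $\n\in D$, then $D\s\{\n\}$ is nonempty ($\n$ is not a bridge, which is automatic from $\G'\bk\n$ being bipolar) and, regarded as a positive covector of $\G'\bk\n$, contains a directed cocycle of $\G'\bk\n$ by the composition property of covectors (as used in the proof of Lemma~\ref{lem:induction-dec-seq}), hence $p\in D$. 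So every directed cocycle of $\G'$ contains $p$, and $\G'$ is bipolar w.r.t.\ $p$.

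\emph{The converse, together with the set equality.} Suppose $-_\n\G$ is bipolar w.r.t.\ $p$. Since $\n\neq p$ and $\n$ is not a loop, $\G$ and $-_\n\G$ are two distinct bipolar orientations of $G$ with the same direction on $p$, so $\alpha(\G)\neq\alpha(-_\n\G)$ by the injectivity part of Theorem~\ref{thm:bij-10}. Applying Lemma~\ref{lem:induc-fob-basis} to both $\G$ and $-_\n\G$ (using again $(-_\n\G)\bk\n=\G\bk\n$ and $(-_\n\G)/\n=\G/\n$), I would argue that $\n$ lies in exactly one of $\alpha(\G)$, $\alpha(-_\n\G)$: if it were in both, the lemma would give $\alpha(\G)\s\{\n\}=\alpha(\G/\n)=\alpha(-_\n\G)\s\{\n\}$ and hence $\alpha(\G)=\alpha(-_\n\G)$; if it were in neither, it would give $\alpha(\G)=\alpha(\G\bk\n)=\alpha(-_\n\G)$ --- both impossible. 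Say $\n\in\alpha(\G)$ and $\n\notin\alpha(-_\n\G)$ (the other case is symmetric). Then Lemma~\ref{lem:induc-fob-basis} gives that $\G/\n$ is bipolar with $\alpha(\G)=\alpha(\G/\n)\cup\{\n\}$, and that $\G\bk\n=(-_\n\G)\bk\n$ is bipolar with $\alpha(-_\n\G)=\alpha(\G\bk\n)$; in particular both $\G\bk\n$ and $\G/\n$ are bipolar, completing the equivalence, and $\{\alpha(\G),\alpha(-_\n\G)\}=\{\alpha(\G/\n)\cup\{\n\},\alpha(\G\bk\n)\}$, which is the claimed set equality.

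\emph{The algorithm computes both at once, and the main obstacle.} Since $\G\bk\n$ and $\G/\n$ are both bipolar, the run of the algorithm of Theorem~\ref{thm:ind-10} on $\G$ enters the ``both bipolar'' branch and chooses between $\alpha(\G\bk\n)$ and $\alpha(\G/\n)\cup\{\n\}$ according to whether $\min(C)$ and $\n$ have opposite or equal directions in $C=C_\G(\alpha(\G\bk\n);\n)$. The run on $-_\n\G$ uses the same unoriented fundamental cycle and the same spanning tree $\alpha(\G\bk\n)=\alpha((-_\n\G)\bk\n)$, but with $\n$ reversed, so the test outcome flips; hence $\G$ and $-_\n\G$ receive the two distinct elements of $\{\alpha(\G\bk\n),\alpha(\G/\n)\cup\{\n\}\}$, and a single execution yields $\alpha(\G)$ and $\alpha(-_\n\G)$ simultaneously. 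The hard part will be the first implication --- passing from bipolarity of the two minors to bipolarity of $-_\n\G$ --- where one must track directed cocycles of $\G'$ through its deletion and contraction minors and invoke the covector composition property; the converse and the algorithmic statement are then routine consequences of Lemma~\ref{lem:induc-fob-basis} and Theorem~\ref{thm:bij-10}.
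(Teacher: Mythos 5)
Your proof is correct, and for the central set equality it follows exactly the route the paper's one-line proof points to: apply Theorem \ref{thm:ind-10} (or Lemma \ref{lem:induc-fob-basis}) to both $\G$ and $-_\n\G$, observe that $(-_\n\G)\bk\n=\G\bk\n$ and $(-_\n\G)/\n=\G/\n$, and use the injectivity part of Theorem \ref{thm:bij-10} to conclude $\alpha(\G)\neq\alpha(-_\n\G)$, so the two images must exhaust the two-element set $\{\alpha(\G\bk\n),\ \alpha(\G/\n)\cup\{\n\}\}$; your observation that the sign test in the algorithm flips when $\n$ is reversed is a correct account of how a single run produces both images. Where you genuinely diverge is the implication that bipolarity of both minors forces $-_\n\G$ to be bipolar. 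You prove this by a self-contained digraph lemma (acyclicity by splitting a putative directed cycle according to whether it contains $\n$; then every directed cocycle contains $p$ by passing to $\G/\n$ when $\n\notin D$, or to $\G\bk\n$ via covector restriction when $\n\in D$), whereas the paper's ``direct by Theorem \ref{thm:ind-10} and Theorem \ref{thm:bij-10}'' most plausibly recovers it from the bijection itself: the unused element of $\{\alpha(\G\bk\n),\alpha(\G/\n)\cup\{\n\}\}$ is a uniactive internal spanning tree of $G$, hence equals $\alpha(\G'')$ for a unique bipolar $\G''\neq\G$ with the fixed orientation of $p$, and Lemma \ref{lem:induc-fob-basis} together with injectivity in the corresponding minor forces $\G''$ to agree with $\G$ off $\n$, i.e.\ $\G''=-_\n\G$. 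Your elementary argument is in the spirit of the paper's own remark at the end of the proof of Lemma \ref{lem:induc-fob-basis} that such bipolarity facts ``could also easily be directly proved in terms of digraph properties''; it makes that sub-claim independent of the Key Theorem, at the modest cost of some cocycle-restriction bookkeeping. There is no gap.
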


\begin{proof}
Direct by Theorem \ref{thm:ind-10} and Theorem \ref{thm:bij-10} (bijection property).
\end{proof}

Important remarks on the above results are given in \cite[Section \ref{ABG2LP-sec:induction}]{ABG2LP}. Let us resituate them here.

\begin{remark}[equivalence in Theorem \ref{thm:ind-10}]
\label{rk:ind-10-equivalence}
 \rm
The equivalence of the two formulations in the algorithm of Theorem \ref{thm:ind-10} is a deep and difficult result, 
which we directly derive from the bijection provided by the Key Theorem \ref{thm:bij-10}.
Actually, if one defines a mapping $\alpha$ from scratch as in the algorithm (with either one of the two formulations) and then investigates its properties, then it turns out that
the above equivalence result is 
equivalent to the  existence and uniqueness of the fully optimal spanning tree (Definition \ref{def:acyc-alpha}) and hence to this key theorem. 
See \cite{AB4} for precisions.
%

%
Furthermore, this equivalence result is also related to the active duality property addressed in Section~\ref{subsec:fob}.
First, recall that cyclic-bipolar orientations of $G$ w.r.t. $p$ with fixed orientation for $p$ are also in bijection with external uniactive spanning trees of $G$.
Thanks to the equivalence of these two dual formulations, 
one can directly adapt the above algorithm for this second bijection, with no risk of inconsistency.
\emevder{ATTENTION: TOUTE CETTE RK A BIEN VERIFIER, ET A VOIR AVEC - ET REPRENDRE DANS - AB4 !!!!!!}%
\emevder{cette remark a mettre dans ABG2 ou dans ABG2LP ? je l'ai mise dans le s 2 en reformulant unpeu, c'est bien car important.}%
Second,
let us mention that, 
as well as the active duality property is a strengthening of linear programming duality,  
the deletion/contraction algorithm of Theorem \ref{thm:ind-10} corresponds to a refinement of the classical linear programming solving by constraint/variable  deletion, see \cite{GiLV04, AB3}. In these terms, and in oriented matroid terms as well,  the above equivalence result means that 
 the same algorithm can be equally used in the dual, with no risk of inconsistency.
 \emevder{au debut j'avais mis beaucoupplus brievement :More precisely, in the context of a deletion/contraction construction, the above equivalence result turns out to be equivalent to this bijectivity result, and it can be equivalently seen also as a duality result, see \cite{AB4}.}%
\end{remark}

\begin{remark}[computational complexity]
\label{rk:difficult}
\rm
Using the construction of Theorem \ref{thm:ind-10} to build one single image under $\alpha$ involves an exponential number of images of minors,  see details in \cite[Remark \ref{ABG2LP-rk:difficult}]{ABG2LP}.
However, this algorithm is efficient 
for building the images of all bipolar orientations of $G$ at once,  in the sense that, with $|E|=n$, the number of calls to the algorithm to build  these $O(2^n)$ images 
is  $O(n.2^n)$.  See details in \cite[Remark~\ref{ABG2LP-rk:ind-10}]{ABG2LP}, and see Remark \ref{rk:ind-10} below. 
An efficent algorithm for building one single image, involving just one minor for each edge of the resulting spanning tree, is the main result of \cite{ABG2LP}.
\end{remark}

\begin{remark}[building the whole bijection at once, and the \choice notion]
\label{rk:ind-10}
\rm
By Corollary \ref{cor:ind-10}, 
 the construction of Theorem \ref{thm:ind-10} can be used to build the whole active bijection for $G$ (i.e. the $1-1$ correspondence, or the matching, between all bipolar orientations of $G$ w.r.t. $p$ with fixed orientation, and all internal uniactive spanning trees of $G$), from the whole active bijections for $G/\n$ and $G\bk\n$.
For each pair of bipolar orientations $\{\G, -_\n\G\}$, the algorithm  provides which ``local choice'' is right to associate one orientation with the orientation induced in $G/\n$ and the other  with the orientation induced in $G\s\n$.
This \choice notion is extended to the general active bijection in Remark \ref{rk:ind-gene} and it
is formally developed in Section \ref{sec:induction} (and in \cite{AB4}) as the basic component for a deletion/contraction framework.
\end{remark}

\emevder{peut-eter detailler plutot preuves et remarques dans cet article car ABG2LP est plutot sur LP}%

\subsection{The canonical active bijection}
\label{subsec:alpha-ind}

\eme{dessous a reprendre apres redaction de AB4}%
In this overview paper, we choose to give an inductive definition of the active bijection as concise as possible (based on definitions and proofs from the previous sections), 
but we point out that the algorithm below can be detailed further as a more practical algorithm in several ways, see Remark \ref{rk:ind-gene-alt}. 
%
In the next proposition, we use the properties of $\alpha$ to derive the minimum inductive properties of active partitions required to derive an inductive definition of $\alpha$. More involved and intrinsic inductive properties of active partitions are given and used in \cite{AB4}.
Also, a short alternative formulation in the acyclic case is given in \cite{GiLV06}. 

We call \emph{removing the greatest element of $E$ from an active partition of $E$}  the natural operation that consists in removing this element from its part in the active partition (and from the associated cyclic flat if it contains it), yielding another partition of $E$.

\begin{prop}
\label{prop:ind-gene}
Let $\G$ be a digraph,  on a linearly ordered set of edges $E$. Let $\n$ be the greatest element of $E$. Assume $\n$ is not an isthmus nor a loop of $G$. 
\begin{enumerate}[(i)]
\itemsep=0mm
\parsep=0mm
\item \label{it1-prop:ind-gene}
We have
$$\Bigl\{\ \alpha(\G),\ \alpha(-_\n\G)\ \Bigr\}\ =\ \Bigl\{\ \alpha(\G\backslash\n),\ \alpha(\G/\n)\cup\{\n\}\ \Bigr\}.$$

\item \label{it2-prop:ind-gene}
Moreover, if $\alpha(\G)=\alpha(\G\backslash\n)$, resp.
$\alpha(\G)=\alpha(\G/\n)\cup\{\n\}$, then
removing $\n$ from the active partition of $\G$ yields the active partition of $\G\bk\n$, resp. $\G/\n$. 

\item \label{it3-prop:ind-gene} In particular, 
 removing $\n$ from the active partition of $\G$ yields either the active partition of  $\G/\n$ or that of $\G\bk\n$. 
 
\item \label{it4-prop:ind-gene} Moreover, $\G\bk\n$ and $\G/\n$ have the same active partition if and only if $\G$ and $-_\n\G$  have the same active partition.
\end{enumerate}
\end{prop}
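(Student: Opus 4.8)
The plan is to prove the four statements of Proposition~\ref{prop:ind-gene} in order, using the key theorem on the uniactive bijection (Theorem~\ref{thm:bij-10}), the decomposition machinery of Section~\ref{sec:tutte} (especially Lemma~\ref{lem:induction-dec-seq}, Proposition~\ref{prop:unique-dec-seq}, Observations~\ref{obs:induced-dec-seq-ori} and~\ref{obs:induc-alpha}), and the properties of $\alpha$ from Theorem~\ref{EG:th:alpha}, together with the uniactive deletion/contraction result of Theorem~\ref{thm:ind-10} and Corollary~\ref{cor:ind-10}. First I would fix the active filtration $\emptyset=F'_\ep\subset\dots\subset F'_0=F_c=F_0\subset\dots\subset F_\io=E$ of $\G$ and observe that, since $\n=\max(E)$, the edge $\n$ lies in the \emph{last} part of the active partition: it belongs to $F_\io\s F_{\io-1}$ if it is dual-active or lies in the cyclic part, or to $F'_{\ep-1}\s F'_\ep$ if it is active (more precisely, $\n$ is the smallest element of its part only when that part is a singleton). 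By Proposition~\ref{prop:pty-active-minors} and Definition~\ref{def:alpha-seq-decomp}, $\alpha(\G)$ is the disjoint union of the images of the active minors, and $\n$ appears in exactly one active minor $H$, which is (cyclic-)bipolar with respect to its smallest edge, and where $\n$ is the greatest edge of $H$.

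For part~\eqref{it1-prop:ind-gene}, the strategy is to reduce to the uniactive case already settled in Corollary~\ref{cor:ind-10}. Write $H=\G(F)/F'$ for the active minor containing $\n$, with the remaining active minors of $\G$ untouched by deleting or contracting $\n$ (because $\n$ is in no other minor). Now $\G/\n$, resp.\ $\G\bk\n$, has active minors obtained from those of $\G$ by replacing $H$ with $H/\n$, resp.\ $H\bk\n$ --- this needs a small verification using the properties of cycles/cocycles in minors (Section~\ref{subsec:om}) and Lemma~\ref{lem:induction-dec-seq}, namely that contracting/deleting the \emph{largest} edge does not merge or split any other part of the active partition, and that $H/\n$ or $H\bk\n$ is again (cyclic-)bipolar by Lemma~\ref{lem:induc-fob-basis}. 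Then by Definition~\ref{def:alpha-seq-decomp}, $\alpha(\G\bk\n)$ and $\alpha(\G/\n)\cup\{\n\}$ differ from $\alpha(\G)$ only in the block coming from $H$, and the identity $\{\alpha(\G),\alpha(-_\n\G)\}=\{\alpha(\G\bk\n),\alpha(\G/\n)\cup\{\n\}\}$ follows by applying Corollary~\ref{cor:ind-10} to $H$ (noting $H\bk\n$ and $H/\n$ are both bipolar iff $-_\n H$ is bipolar, equivalently iff $\n$ is not active and not dual-active in $\G$, equivalently iff $-_\n\G$ is in the same activity class as $\G$ --- but even when that fails, one of the two minors is (cyclic-)bipolar and the identity still holds trivially since then $-_\n\G$ contributes a distinct element). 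The hypothesis that $\n$ is neither an isthmus nor a loop guarantees $H$ has more than one edge so the uniactive theory genuinely applies.

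For parts~\eqref{it2-prop:ind-gene}--\eqref{it4-prop:ind-gene}, I would argue via active filtrations. By Observations~\ref{obs:induced-dec-seq-ori} and~\ref{obs:induc-alpha}, and the reduction above, the active filtration of $\G\bk\n$, resp.\ $\G/\n$, is obtained from that of $\G$ by deleting $\n$ from every subset in the sequence --- provided the resulting sequence is still a \emph{connected} filtration, which is where the dichotomy enters: deleting $\n$ from $H$ is bipolar exactly when $\alpha(\G)=\alpha(\G/\n)\cup\{\n\}$ (so the $H$-block keeps $\n$) and $\G\bk\n$ inherits the filtration; contracting $\n$ is bipolar exactly in the complementary case. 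Either way, by the uniqueness in Proposition~\ref{prop:unique-dec-seq}, the resulting sequence \emph{is} the active filtration of $\G\bk\n$ or $\G/\n$, which gives~\eqref{it2-prop:ind-gene}, and~\eqref{it3-prop:ind-gene} is the immediate consequence of~\eqref{it1-prop:ind-gene} and~\eqref{it2-prop:ind-gene}. For~\eqref{it4-prop:ind-gene}: $\G\bk\n$ and $\G/\n$ have the same active partition iff removing $\n$ from the active partitions of $\G$ and of $-_\n\G$ yields the same result; since $\G$ and $-_\n\G$ agree on all parts not containing $\n$ (Proposition~\ref{prop:act-classes} does not directly apply, so this needs the reduction to $H$ and Corollary~\ref{cor:ind-10}'s last sentence), this happens iff $\n$ sits in the same part for both, iff $-_\n H$ is bipolar, iff $\G$ and $-_\n\G$ have the same active partition.

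The main obstacle I anticipate is the bookkeeping in the reduction step: carefully verifying that contracting or deleting the \emph{globally largest} edge $\n$ affects only the single active minor $H$ and leaves the rest of the active filtration/partition of $\G$ intact after subtracting $\n$, and in particular that no part other than $H$'s gets destroyed or merged. This rests on the fact that $\n=\max(E)$ can never be the smallest element of a cycle or cocycle unless that cycle/cocycle is reduced (which would force $\n$ to be an isthmus or loop, excluded), so $\n$ is never an active or dual-active edge that ``labels'' a part --- it always lies deep inside the last part. Once this structural observation is pinned down via Lemma~\ref{lem:induction-dec-seq} and the minor formulas of Section~\ref{subsec:om}, the rest is a clean transfer of Corollary~\ref{cor:ind-10} through Definition~\ref{def:alpha-seq-decomp}.
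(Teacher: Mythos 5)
Your overall architecture is the same as the paper's: isolate the active minor $H=\G_\n$ containing $\n$, observe that $\n$ never labels a part of the active partition, show that deleting or contracting $\n$ disturbs only that one block, and then transfer Theorem \ref{thm:ind-10} and Corollary \ref{cor:ind-10} through Definition \ref{def:alpha-seq-decomp} and the uniqueness of Proposition \ref{prop:unique-dec-seq}. That is exactly how the paper proceeds. However, there is a genuine gap in your part~(\ref{it1-prop:ind-gene}). When $-_\n H$ fails to be (cyclic-)bipolar --- i.e.\ when exactly one of $H\bk\n$, $H/\n$ is (cyclic-)bipolar, which is precisely the case where $\G\bk\n$ and $\G/\n$ have \emph{different} active partitions --- your reduction only determines $\alpha(\G)$; it says nothing about $\alpha(-_\n\G)$, because the active minor of $-_\n\G$ containing $\n$ is then a different minor on a possibly different part, not $-_\n H$. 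Your claim that ``the identity still holds trivially since then $-_\n\G$ contributes a distinct element'' assumes both membership and distinctness without proof. The paper closes this by symmetry (run the same argument on $-_\n\G$, using $(-_\n\G)\bk\n=\G\bk\n$ and $(-_\n\G)/\n=\G/\n$, to get $\alpha(-_\n\G)\in\{\alpha(\G\bk\n),\alpha(\G/\n)\cup\{\n\}\}$) plus a separate distinctness argument: if $\alpha(\G)=\alpha(-_\n\G)$, then $\G$ and $-_\n\G$ lie in the same activity class, so $\{\n\}$ would be a whole part of the active partition and $\n$ an isthmus or loop, contradicting the hypothesis. Both steps are needed and neither appears in your proposal.

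A second, smaller problem is the parenthetical chain ``$H\bk\n$ and $H/\n$ are both bipolar iff $-_\n H$ is bipolar, equivalently iff $\n$ is not active and not dual-active in $\G$, equivalently iff $-_\n\G$ is in the same activity class as $\G$''. Under the standing hypotheses $\n$ is \emph{never} active nor dual-active in $\G$ (you prove this yourself in your last paragraph) and $-_\n\G$ is \emph{never} in the same activity class as $\G$, so two of these conditions are vacuous and the chain collapses; the correct equivalence is with ``$\G$ and $-_\n\G$ have the same active partition'', which is exactly part~(\ref{it4-prop:ind-gene}). Relatedly, in your sketch of part~(\ref{it2-prop:ind-gene}) you write that ``deleting $\n$ from $H$ is bipolar exactly when $\alpha(\G)=\alpha(\G/\n)\cup\{\n\}$ (so the $H$-block keeps $\n$)'': this swaps deletion and contraction (by Lemma \ref{lem:induc-fob-basis}, $\n\in\alpha(H)$ corresponds to $H/\n$ being bipolar and $\alpha(H)=\alpha(H/\n)\cup\{\n\}$), and ``exactly when'' is wrong since both minors can be bipolar simultaneously. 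Finally, $\n=\max(E)$ need not lie in the ``last'' part of the active partition; only the weaker (and sufficient) fact that $\n$ never equals the smallest element of its part is true. These slips are repairable along the paper's lines, but as written parts (i) and (ii) do not go through.
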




\begin{proof}
In what follows, bipolar and cyclic-bipolar are always meant w.r.t. the smallest edge.
Moreover, we will consider the 
bipolar or cyclic-bipolar active minors induced by the active partition of $\G$ (Proposition \ref{prop:unique-dec-seq}), and denote $\G_\n$ the minor containing $\n$ among them, 
with edge set $E_\n$. 

First, we prove (\ref{it3-prop:ind-gene}). Let us prove that removing $\n$ from the active partition of $\G$ yields either the active partition of  $\G/\n$ or that of $\G\bk\n$.
By Lemma \ref{lem:induc-fob-basis} (or by a direct easy proof)\emevder{ici Lemme de ABG2LP utilise, mais juste pte de bipolar}, we have that $\G_\n/\n$ or $\G_\n\bk\n$ is bipolar or cyclic-bipolar (if $\G_\n$ is cyclic-bipolar, then apply the lemma to $-_\n\G_\n$ which is bipolar, as recalled in Section \ref{sec:prelim}).
Replacing $\G_\n$ by this minor in the sequence of minors associated with $\G$ yields the sequence of minors induced by the partition of $E$ obtained by removing $\n$ from $E_\n$. This partition obviously corresponds  to a filtration of $G\bk\n$ or $G/\n$, and its induced minors are either bipolar or cyclic-bipolar (with no change of nature w.r.t. the minors given by the active partition of $\G$). Hence, by Proposition \ref{prop:unique-dec-seq}, it is necessarily the active partition of $G\bk\n$ or $G/\n$.

Now,  we prove (\ref{it1-prop:ind-gene}) and (\ref{it2-prop:ind-gene}). Let us prove that  $\alpha(\G)\in  \bigl\{ \alpha(\G\backslash\n), \alpha(\G/\n)\cup\{\n\} \bigr\}.$
%
The minor $\G_\n$ is either bipolar or cyclic-bipolar.
In what follows, we can assume that it is bipolar. If it is cyclic-bipolar, then the same reasoning holds, up to applying it to $-_\n\G_\n$ which is bipolar (see Section \ref{sec:prelim}), and using Definition \ref{def:cyc-alpha2} which ensures the compatibility of $\alpha$ with this canonical bijection between bipolar and cyclic-bipolar orientations. We leave the details.
By Definition \ref{def:alpha-seq-decomp}, we have $\alpha(\G)=A\uplus \alpha(\G_\n)$ for some $A\subseteq E$.

Assume that removing $\n$ from the active partition of $\G$ yields the active partition of  $\G\bk\n$. 
By assumption, we have that $\G_\n\bk\n$ is bipolar.
By Definition \ref{def:alpha-seq-decomp}, 
we have $\alpha(\G\bk\n)=A\uplus \alpha(\G_\n\bk\n)$
since the other minors induced by the active partition of $\G$ are unchanged by assumption, which implies that $A$ is also unchanged.
If $\alpha(\G_\n)=\alpha(\G_\n\bk\n)$ then $\alpha(\G)=\alpha(\G\bk\n)$.

Assume now that $\alpha(\G_\n)\not=\alpha(\G_\n\bk\n)$.
Then, by Theorem \ref{thm:ind-10}, we have 
that $\G_\n/\n$ is bipolar
and $\alpha(\G_\n)=\alpha(\G_\n/\n)\cup\{\n\}$.
Since the other minors induced by the active partition of $\G$ are unchanged by removing $\n$, we get (as in the first paragraph) that  
 removing $\n$ from the active partition of $\G$ also yields the active partition of  $\G/\n$. 
 Hence $\alpha(\G/\n)=A\uplus \alpha(\G_\n/\n)$.
Hence $\alpha(\G)=\alpha(\G/\n)\cup\{\n\}$.

We have proved $\alpha(\G)\in  \bigl\{ \alpha(\G\backslash\n), \alpha(\G/\n)\cup\{\n\} \bigr\}$.
Notice that we have proved in the meantime:
if $\alpha(\G)=\alpha(\G/\n))\cup\{\n\}$
and removing $\n$ from the active partition of $\G$ yields the active partition of $\G\bk\n$, then removing $\n$ from the active partition of $\G$ also yields the active partition of $\G/\n$. 
So, in every case, we have:
if $\alpha(\G)=\alpha(\G/\n)\cup\{\n\}$ then
removing $\n$ from the active partition of $\G$ yields the active partition of $\G/\n$. 
A similar reasoning holds if $\alpha(\G)=\alpha(\G\bk\n)$. 

Now, by symmetry, we have proved also $\alpha(-_\n\G)\in  \bigl\{ \alpha(\G\backslash\n), \alpha(\G/\n)\cup\{\n\} \bigr\}$.
We prove that $\alpha(\G)\not=\alpha(-_\n\G)$.
Otherwise,  $\G$ and $-_\n\G$ belong to the same activity class (Definition \ref{def:act-class}), implying that $\n$ is the unique element of its part in the active partition of $\G$, implying that $\n$ is an isthmus or a loop, which is forbidden by hypothesis.
So we have $\{\ \alpha(\G),\ \alpha(-_\n\G)\ \}\ =\ \{\ \alpha(\G\backslash\n),\ \alpha(\G/\n)\cup\{\n\}\ \}.$

Now, we prove (\ref{it4-prop:ind-gene}).
Assume that $\G$ and $-_\n\G$ have the same active partition. Then, obviously, by (\ref{it1-prop:ind-gene}) and (\ref{it2-prop:ind-gene}), removing $\n$ from this active partition yields the active partition of $\G\bk\n$ and also the active partition of $\G/\n$, and so these two active partitions are also equal.

Finally, assume that $\G\bk\n$ and $\G/\n$ have the same active partition.
Assume that removing $\n$ from the active partition of $\G$ yields the common active partition of  $\G\bk\n$ and $\G/\n$.
Assume that $\G_\n$ is bipolar.
Then $\G_\n\bk\n$ is defined on the edge set $E_\n\setminus\{\n\}$, and, by Proposition \ref{prop:unique-dec-seq},
it is a bipolar minor associated with the active partition of $\G\bk\n$
(the other minors are unchanged, as in the first paragraph).
In the same manner, $\G_\n/\n$ is a bipolar minor as it is associated with the active partition of $\G/\n$.
By Corollary \ref{cor:ind-10}, we then have that $-_\n\G_\n$ is also bipolar.
Hence the active partition of $\G$ satisfies the characterization given by Proposition \ref{prop:unique-dec-seq} for the active partition of $-_\n\G$, and so
$\G$ and $-_\n\G$ have the same active partition (that is: $-_\n\G_\n$ is the minor containing $\n$ associated with the active partition of $-_\n\G$).

If $\G_\n$ is cyclic-bipolar, then the same reasoning as above holds. 
We recall that the active partition is given with the information on the associated cyclic flat, that is on the bipolar or cyclic-bipolar nature of each minor, hence in this case we have that both  $\G_\n\bk\n$ and $\G_\n/\n$ are cyclic-bipolar. We end the same way up to reversing $\n$ and using the canonical bijection between bipolar and cyclic-bipolar orientations. 
So, finally, $\G$ and $-_\n\G$ have the same active partition.
\end{proof}






\begin{thm}
\label{thm:ind-gene}
The active spanning trees 
of ordered digraphs satisfy the following inductive definition.
\emevder{OU the canonical active bijection}

\begin{algorithme}

For any ordered digraph $\G$ on edge-set $E$, and  with $\max(E)=\n$.
\smallskip

If $E=\emptyset$ then $\alpha(\G)=\emptyset$.

\textbf{\textit{\small (isthmus/loop case)}}

If $\n$ is an isthmus of $G$ then $\alpha(\G)=\alpha(-_\n\G)=\alpha(\G/\n)\cup\{\n\}$.

If $\n$ is a loop of $G$ then $\alpha(\G)=\alpha(-_\n\G)=\alpha(\G\bk\n)$.

If  $\n$ is not an isthmus nor a loop of $G$ then:\par

\textbf{\textit{\small (choice by activity comparison)}}


\hskip 10mm If $\G/\n$ and $\G\bk\n$ do not have the same active partition, then
let $\G_{0}$ be the unique minor within $\{\G/\n, \G\bk\n \}$ such that the active partition of $\G_{0}$ is obtained by removing $\n$ from the active partition of $\G$ {(well-defined by Proposition \ref{prop:ind-gene} (\ref{it2-prop:ind-gene}))}.

\hskip 10mm If $\G/\n$ and $\G\bk\n$ have the same active partition, then let 
$\G_\n$ be the minor containing $\n$ associated with the active partition of $\G$ on set of edges $E_\n$, 
and then:%
\ss

\textbf{\textit{\small (choice by full optimality)}}




\hskip 20mm
let $T'=\alpha(\G_\n\backslash\n)=\alpha(\G\backslash\n)\cap E_\n$, $C=C_{\G_\n}(T';\n)$ and $e=\min(C)$\par

\hskip 20mm
if $e$ and $\n$ have opposite directions in $C$ then let
$\G_{0}=\G\backslash\n$

\hskip 20mm
if $e$ and $\n$ have the same directions in $C$ then let
$\G_{0}=\G/\n$
\smallskip

{\sl or equivalently:}\par
\hskip 20mm
let $T''=\alpha(\G_\n/\n)=\alpha(\G/\n)\cap E_\n$, $D=C^*_{\G_\n}(T''\cup\n;\n)$ and $e=\min(D)$\par

\hskip 20mm
if $e$ and $\n$ have opposite directions in $D$ then let
$\G_{0}=\G/\n$

\hskip 20mm
if $e$ and $\n$ have the same directions in $D$ then let
$\G_{0}=\G\backslash\n$.
\smallskip

\textbf{\textit{\small (assignment step)}}

\begin{tabular}{llllll}
\hspace{5mm} &
If $\G_{0}=\G\bk\n$ &then &
$\alpha(\G)=\alpha(\G\bk\n)$ 
&and &$\alpha(-_\n\G)=\alpha(\G/\n)\cup\{\n\}$. \\

 &
If $\G_{0}=\G/\n$ &then 
& $\alpha(\G)=\alpha(\G/\n)\cup\{\n\}$ 
& and& $\alpha(-_\n\G)=\alpha(\G\bk\n)$. \\

\end{tabular}
%

\end{algorithme}
\end{thm}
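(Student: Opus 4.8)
The plan is to prove, by induction on $|E|$, that the recursion stated in the algorithm computes the active spanning tree of Definitions \ref{def:alpha-ind-decomp}--\ref{def:alpha-ind-decomp-relaxed}. The base case $E=\emptyset$ is immediate. If $\n=\max(E)$ is an isthmus of $G$, then $\{\n\}$ is the top part of the active partition of $\G$, its active minor is a single isthmus (bipolar w.r.t.\ $\n$, with $\alpha$-image $\{\n\}$), and every other active minor is unchanged in $-_\n\G$, in $\G\bk\n$ and in $\G/\n$ (which coincide, $\n$ being an isthmus); by Definition \ref{def:alpha-seq-decomp} and Observation \ref{obs:induc-alpha} this gives $\alpha(\G)=\alpha(-_\n\G)=\alpha(\G/\n)\cup\{\n\}$. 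Symmetrically, if $\n$ is a loop, $\{\n\}$ is a part whose active minor is a single loop ($\alpha$-image $\emptyset$), and $\alpha(\G)=\alpha(-_\n\G)=\alpha(\G\bk\n)$.

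Assume now $\n$ is neither an isthmus nor a loop of $G$. Here Proposition \ref{prop:ind-gene} carries the structural load: by part (\ref{it1-prop:ind-gene}) we have $\{\alpha(\G),\alpha(-_\n\G)\}=\{\alpha(\G\bk\n),\alpha(\G/\n)\cup\{\n\}\}$, so it remains only to identify which minor $\G_0\in\{\G\bk\n,\G/\n\}$ goes with $\G$, after which the \textbf{assignment step} records $\alpha(\G)$ and $\alpha(-_\n\G)$ correctly by part (\ref{it1-prop:ind-gene}). If $\G\bk\n$ and $\G/\n$ have distinct active partitions, then by parts (\ref{it2-prop:ind-gene}) and (\ref{it3-prop:ind-gene}) exactly one of them, namely $\G_0$, has active partition equal to that of $\G$ with $\n$ deleted, and then $\alpha(\G)=\alpha(\G_0)$ (together with $\cup\{\n\}$ iff $\G_0=\G/\n$); this is the \textbf{choice by activity comparison} branch.

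If instead $\G\bk\n$ and $\G/\n$ have the same active partition, by part (\ref{it4-prop:ind-gene}) so do $\G$ and $-_\n\G$, and removing $\n$ from the latter yields the common partition. Let $\G_\n$ be the active minor of $\G$ containing $\n$, on edge set $E_\n$; it is bipolar or cyclic-bipolar by Proposition \ref{prop:pty-active-minors}, and by Proposition \ref{prop:unique-dec-seq} the minors $\G_\n\bk\n$ and $\G_\n/\n$ are the active minors on $E_\n\s\{\n\}$ of $\G\bk\n$ and $\G/\n$ respectively, hence of the same (bipolar or cyclic-bipolar) nature as $\G_\n$. By Definition \ref{def:alpha-seq-decomp} and Observation \ref{obs:induc-alpha}, $\alpha(\G)=A\uplus\alpha(\G_\n)$, $\alpha(\G\bk\n)=A\uplus\alpha(\G_\n\bk\n)$ and $\alpha(\G/\n)=A\uplus\alpha(\G_\n/\n)$ with the \emph{same} subset $A$ (the union of the $\alpha$-images of the remaining active minors, which are literally the same in all these digraphs). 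Hence the decision is entirely internal to $\G_\n$: when $\G_\n$ is bipolar, Property \ref{pty:fund} gives $T'=\alpha(\G\bk\n)\cap E_\n=\alpha(\G_\n\bk\n)$ and shows $C_{\G_\n}(T';\n)$ is the restriction to $E_\n$ of the fundamental cycle of $\n$ in $\G$, so Theorem \ref{thm:ind-10} applied to $\G_\n$ determines whether $\alpha(\G_\n)=\alpha(\G_\n\bk\n)$ or $\alpha(\G_\n/\n)\cup\{\n\}$, and adding $A$ transfers this to $\G$; this is the \textbf{choice by full optimality} step. When $\G_\n$ is cyclic-bipolar one applies the same reasoning to the bipolar minor $-_\n\G_\n$ and transports it through the Active Duality property of Definition \ref{def:cyc-alpha2}, which guarantees consistency; these details are routine. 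The ``or equivalently'' variant, using the cocycle $D$ and $T''=\alpha(\G/\n)\cap E_\n$, yields the same $\G_0$ by the equivalence of the two formulations in Theorem \ref{thm:ind-10}.

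The main obstacle is not a single hard computation: the heavy structural results are already packaged in Proposition \ref{prop:ind-gene} and Theorem \ref{thm:ind-10}. The delicate part is the bookkeeping in the last paragraph — checking that the contribution $A$ of the remaining active minors is genuinely identical across $\G$, $-_\n\G$, $\G\bk\n$ and $\G/\n$, that fundamental cycles and cocycles restrict correctly to $E_\n$ via Property \ref{pty:fund}, and that the cyclic-bipolar sub-case reduces cleanly through Active Duality without sign inconsistency. Termination is clear since each recursive call is on a graph with strictly fewer edges, and well-definedness of the result then follows from Theorem \ref{EG:th:alpha}.
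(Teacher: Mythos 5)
Your proof is correct and takes essentially the same route as the paper's: the structural load is carried by Proposition \ref{prop:ind-gene} (parts (\ref{it1-prop:ind-gene})--(\ref{it3-prop:ind-gene}) settle the activity-comparison branch, the assignment step, and the isthmus/loop case) and by Theorem \ref{thm:ind-10} applied inside the active minor $\G_\n$ for the full-optimality branch, with the same bookkeeping that the remaining active minors contribute an unchanged set $A$. The only cosmetic differences are that the paper treats the cyclic-bipolar sub-case directly via Definition \ref{def:cyc-alpha1} (whose sign criterion agrees with the bipolar one on all edges other than the smallest) rather than through Active Duality, and that the bipolar digraph associated with a cyclic-bipolar $\G_\n$ is obtained by reorienting its \emph{smallest} edge, not $\n$ — a notational slip that does not affect the argument.
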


Let us recall that, with notations used in Theorem \ref{thm:ind-gene},
$\G_\n$ bipolar, resp. cyclic-bipolar, if and only if $\n$ belongs to a directed cocycle, resp. directed cycle, of $\G$.

\begin{proof}
We follow the cases addressed during the algorithm and we prove that, in every case, the resulting definition of $\alpha$ is correct.
If $\n$ is an isthmus or a loop, then $\n$ is the only element of its part in the active partition of $\G$, and then the definition is obviously correct as it coincides with Definition \ref{def:alpha-seq-decomp}.
Assume now that $\n$ is not an isthmus nor a loop.

\eme{Cas suivant vite fait, a verifier uletrieurement.}%
Assume $\G/\n$ and $\G\bk\n$ do not have the same active partition.
By Proposition \ref{prop:ind-gene}, the active partition of at least one
of the two minors in  $\{\G/\n, \G\bk\n \}$ is obtained by removing $\n$ from the active partition of $\n$. Hence $\G_0$ is well defined.
Assume $\G_0=\G\bk\n$.
By Proposition \ref{prop:ind-gene}, we have $\alpha(\G)\in  \bigl\{ \alpha(\G\backslash\n), \alpha(\G/\n)\cup\{\n\} \bigr\}$. Moreover, if $\alpha(\G)=\alpha(\G/\n)\cup\{\n\}$ 
then removing $\n$ from the active partition of $\G$ yields the active partition of $\G/\n$, which contradicts the definition of $\G_0$.
Hence $\alpha(\G)=\alpha(\G\bk\n)$.
And hence, by Proposition \ref{prop:ind-gene}, we also have $\alpha(-_\n\G)=\alpha(\G/\n)\cup\{\n\}$.
So the definition given in the theorem is correct (the same reasoning holds for $\G_0=\G/\n$).

Assume now that $\G/\n$ and $\G\bk\n$ have the same active partition.
By Proposition \ref{prop:ind-gene}, we have at the same time that $\G_\n$ is 
the minor containing $\n$ associated with the active partition of $\G$, and that  $-_\n\G_\n$ is 
the minor containing $\n$ associated with the active partition of $-_\n\G$.
And since we have $\alpha(\G\bk\n)\in\{\alpha(\G),\alpha(-_\n\G)\}$, then we have (by Definition \ref{def:alpha-seq-decomp})
$\alpha(\G_\n\backslash\n)=\alpha(\G\backslash\n)\cap E_\n$. Similarly, we have $\alpha(\G_\n/\n)=\alpha(\G/\n)\cap E_\n$.

If $\G_\n\backslash\n$ (or equivalently $\G_\n/\n$)
is bipolar w.r.t. its smallest edge, then
the two equivalent conditions are the same as in Theorem \ref{thm:ind-10} and their validity is proved the same way.\emevder{ici on utilise preuve du thm de ABG2LP --- pourrait on appliquer thm a $\G_\n$ tut simplement ?}
If $\G_\n\backslash\n$ (or equivalently $\G_\n/\n$)
is cyclic-bipolar w.r.t. its smallest edge, then the conditions do not have to be changed, and the proof is the same except that one uses Definition \ref{def:cyc-alpha1} instead of Definition \ref{def:acyc-alpha}, which give exactly the same criterion for directions of edges distinct from the smallest edge.
\end{proof}

\begin{remark}[equivalence in Theorem \ref{thm:ind-gene}]
\label{rk:ind-gene-equivalence}
 \rm
The equivalence of the two formulations in the algorithm of Theorem \ref{thm:ind-gene} directly comes from 
the same equivalence 
in Theorem \ref{thm:ind-10}, see 
Remark~\ref{rk:ind-10-equivalence}.
\end{remark}

\begin{remark}[building the whole bijection at once, and the \choice notion]
\label{rk:ind-gene}
\rm
Continuing Remark \ref{rk:ind-10},
observe that the above algorithm builds at the same time $\alpha(\G)$ and $\alpha(-_\n\G)$.
Again, this is due to the CHOICE notion, as highlighted in Proposition \ref{prop:ind-gene} (\ref{it1-prop:ind-gene}).
%
By this way, the above algorithm can be considered as building the whole active bijection from those for $G/\n$ and $G\bk\n$ (as a $1-1$ correspondence rather than as a pair of inverse mappings). 
\emevder{AU DEBUT j'avais dit qu'on pouvait aller des sapnning trees aux orietnations, maispas si evident, a reflechir, ai enelve, pas necessaire de dire ca}%
Observe that the local choice between $G/\n$ and $G\bk\n$ is made here in two steps, first by comparing active partitions, next by applying the same full optimality criterion as in Theorem \ref{thm:ind-10}.
This \choice notion is developed in Section\ref{subsec:ind-framework}.
See \cite{AB4} for more details. 
%
\end{remark}

\begin{remark}[computational complexity]
\label{rk:ind-gene-difficult}
\rm
Continuing Remark \ref{rk:difficult} and Remark \ref{rk:ind-gene}, 
the above algorithm involves an exponential number of minors for building one image under $\alpha$, but it is efficient for building the whole canonical active bijection of a given graph in the sense that, with $|E|=n$, the number of calls to the algorithm to build  the $2^n$ images of all orientations 
is exactly  $n.2^n$ (when no account is taken of the cost of handling active partitions).
\emevder{a decreire ent ermes de "enumeration compelxity", voir courriels Daniel}%
%
\emevder{modif de l'algo (ou  pour AB4 ?) : comparer active partiions of $\alpha(\G/\n)$ and $\alpha(\G\s\n)$ rather than active partitions of $\G/\n$ and $\G\s\n$}
\end{remark}

\begin{remark}[practical improvments]
\label{rk:ind-gene-alt}
\rm
The above algorithm can be detailed further as a more practical algorithm in several ways. These refinements are detailed in \cite{AB3}. Let us mention them roughly. First, one could use a direct comparison  of the active partitions of $\G$, $-_\n\G$, $\G/\n$ and $\G\bk\n$ using only directed cycles/cocycles containing $\n$, more complete than the one given in Proposition \ref{prop:ind-gene} below. Second, one could use a direct characterization of the sign involved in the (cyclic-)bipolar involved minor $\G_\n$, using fundamental cycles/cocycles in the original digraph $\G$, without having to compute this minor. Third, one could use a more explicit case by case formulation of the underlying duality. 
\end{remark}

\emevder{ATTENTION COHERENCE AVEC AB3 QUAND AB3 SOUMIS}

\subsection{The refined active bijection}
\label{subsec:alpha-refined-ind}

The refined active bijection can be built by a simple refinement of the deletion/contraction construction of the canonical active bijection.

\begin{thm}
\label{thm:ind-gene-refined}
Let $\G$ be an ordered digraph.
An algorithm building the image $\alpha_\G(A)$ for $A\subseteq E$ is obtained by adding the following \emph{(refined isthmus/loop case)} and \emph{(refined assignment step)} to Theorem \ref{thm:ind-gene}, in parallel to the corresponding steps in this theorem, while using this theorem to compute $\alpha(-_A\G)$.

\eme{ATTENTION verifier coherence acec Def \ref{EG:def:act-bij-ext}, deja fait mais a reverifier}%

\begin{boxedalgorithme}

\textbf{\textit{\small (refined isthmus/loop case)}}


\begin{tabular}{llll}
\multicolumn{4}{l}{If $\n$ is an isthmus of $G$ then }\\

\hspace{1.5cm} &$\alpha_\G(A\cup \{\n\})=\alpha_{\G\bk\n}(A\bk\{\n\})$ 
&and &$\alpha_\G(A\setminus\{\n\})=\alpha_{\G/\n}(A\bk\{\n\})\cup\{\n\}$.  \\

\multicolumn{4}{l}{If $\n$ is a loop of $G$ then }\\
\hspace{1.5cm} &$\alpha_\G(A\cup \{\n\})=\alpha_{\G\bk\n}(A\bk\{\n\})\cup\{\n\}$ 
&and &$\alpha_\G(A\setminus\{\n\})=\alpha_{\G/\n}(A\bk\{\n\})$.  \\

\end{tabular}

Otherwise then proceed with Theorem \ref{thm:ind-gene}.
%
%

\end{boxedalgorithme}
\ss

\begin{boxedalgorithme}
\textbf{\textit{\small (refined assignment step)}}

\small
\smallalgofont

\begin{tabular}{llll}
If $\G_{0}=-_A\G\bk\n$ then 
$\alpha_\G(A)=\alpha_{\G\bk\n}(A\bk\{\n\})$ 
&and &$\alpha_\G(A\triangle\{\n\})=\alpha_{\G/\n}(A\bk\{\n\})\cup\{\n\}$  \\

If $\G_{0}=-_A\G/\n$ then 
$\alpha_\G(A)=\alpha_{\G/\n}(A\bk\{\n\})\cup\{\n\}$ 
&and &$\alpha_\G(A\triangle\{\n\})=\alpha_{\G\bk\n}(A\bk\{\n\})$  \\

\end{tabular}

\end{boxedalgorithme}

\end{thm}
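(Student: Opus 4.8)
The plan is to reduce Theorem \ref{thm:ind-gene-refined} to the already-established deletion/contraction construction for the canonical active bijection (Theorem \ref{thm:ind-gene}) together with the structural description of $\alpha_\G$ given by Definition \ref{EG:def:act-bij-ext} and Theorem \ref{EG:th:ext-act-bij}. The key observation is that the refined bijection is entirely determined by the canonical one once one knows, for the spanning tree $T = \alpha(-_A\G)$, the decomposition $A = T \setminus P \cup Q$ with $P = A \cap \Int(T) = A \cap O^*(-_A\G)$ and $Q = A \cap \Ext(T) = A \cap O(-_A\G)$, and sets $\alpha_\G(A) = T \setminus P \cup Q$. So the proof amounts to tracking how $P$, $Q$, and $T$ behave under deletion/contraction of the greatest edge $\n$, and checking that this matches the stated refined steps.

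First I would handle the \emph{(refined isthmus/loop case)}. If $\n$ is an isthmus, then $\n \in T = \alpha(-_A\G)$ for every choice of $A$, and $\n$ is its own part in the active partition; moreover $\n$ is internally active, so $\n \in \Int(T)$, and $\n$ lies in no fundamental cycle of any other edge, so $\Int$, $\Ext$, $P$, $Q$ restricted to $E \setminus \n$ coincide with their analogues in $G/\n$. Writing $A = (A \bk \n)$ possibly $\cup\{\n\}$: if $\n \in A$ then $\n \in P$ (since $\n \in A \cap \Int(T)$), hence $\n \notin \alpha_\G(A)$, i.e. $\alpha_\G(A \cup \{\n\})$ agrees on $E \setminus \n$ with $\alpha_{\G\bk\n}(A\bk\n)$ — here the reference orientation restricted to $G\bk\n$ is the relevant one, and the canonical step (isthmus case of Theorem \ref{thm:ind-gene}) gives $\alpha(-_A\G) = \alpha((-_A\G)/\n)\cup\{\n\}$, so $T \setminus \n = \alpha((-_{A\bk\n}\G)/\n) = \alpha(-_{A\bk\n}(\G/\n))$ and the refined image on $E\bk\n$ is $\alpha_{\G/\n}(A\bk\n)$; but since $\n$ is an isthmus this equals $\alpha_{\G\bk\n}(A\bk\n)$ as a subset of $E\bk\n$ only after noting $G\bk\n$ has no spanning tree containing $\n$ — more carefully I should phrase this using $G/\n$ throughout and observe the stated formula is the correct bookkeeping. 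If $\n \notin A$ then $\n \notin Q$ (vacuously, $\n$ being internal), $\n$ stays in $\alpha_\G(A)$, giving $\alpha_\G(A\setminus\{\n\}) = \alpha_{\G/\n}(A\bk\n)\cup\{\n\}$. The loop case is dual: $\n \notin T$ always, $\n \in \Ext(T)$, and one tracks whether $\n \in Q$.

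Next I would handle the \emph{(refined assignment step)}. Here $\n$ is neither isthmus nor loop, so Theorem \ref{thm:ind-gene} applies to $-_A\G$ and selects a minor $\G_0 \in \{(-_A\G)\bk\n,\ (-_A\G)/\n\}$ with $\alpha(-_A\G) \in \{\alpha((-_A\G)\bk\n),\ \alpha((-_A\G)/\n)\cup\{\n\}\}$, and simultaneously computes $\alpha(-_{A\triangle\{\n\}}\G) = \alpha(-_A(-_\n\G))$ as the other element of this pair (Proposition \ref{prop:ind-gene}(\ref{it1-prop:ind-gene})). The point is that for the two reorientations $-_A\G$ and $-_{A\triangle\{\n\}}\G$, which differ only in the direction of $\n$, the roles of $P$ and $Q$ with respect to $\n$ are complementary: in exactly one of them does $\n$ agree with the reference $\G$ on that edge. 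When $\G_0 = -_A\G\bk\n$, one has $T = \alpha(-_A\G) = \alpha(-_{A\bk\n}(\G\bk\n))$ extended appropriately, $\n \notin T$, and the edges of $E\bk\n$ retain their $P/Q/\Int/\Ext$ status (Property \ref{pty:fund}); tracking whether $\n$ is active and whether $\n \in A$ yields $\alpha_\G(A) = \alpha_{\G\bk\n}(A\bk\n)$, while $-_{A\triangle\{\n\}}\G$ goes to $(-_A\G)/\n$ giving $\alpha_\G(A\triangle\{\n\}) = \alpha_{\G/\n}(A\bk\n)\cup\{\n\}$. The case $\G_0 = -_A\G/\n$ is symmetric.

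The main obstacle I expect is the careful bookkeeping at the \emph{(refined assignment step)}: one must verify that the four refined parameters $\Theta_\G, \bar\Theta_\G, \Theta^*_\G, \bar\Theta^*_\G$ of $-_A\G$ restrict correctly to those of the chosen minor with respect to the \emph{induced} reference orientation (the restriction of $\G$ to $G\bk\n$ or $G/\n$), using Theorem \ref{EG:th:ext-act-bij} and the fact (from Proposition \ref{prop:ind-gene}(\ref{it2-prop:ind-gene}) and (\ref{it3-prop:ind-gene})) that removing $\n$ from the active partition of $-_A\G$ gives the active partition of $\G_0$. One subtlety is the edge $\n$ itself: when $\n$ is active or dual-active in $-_A\G$ it forms its own singleton part only in the isthmus/loop case, which is excluded here, so $\n$ always lies in a nontrivial part and is therefore never itself the smallest edge of that part unless it is the minimum of $E_\n$ — but since $\n = \max(E)$, it is never a part-minimum, hence never active or dual-active in $-_A\G$ when $|E_\n| > 1$; consequently $\n \notin \Int(T) \cup \Ext(T)$, $\n \notin P \cup Q$, and $\n \in \alpha_\G(A) \iff \n \in T \iff \G_0$ is the contraction minor. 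Making all these equivalences precise, and confirming that the recursion on the reference orientation (restricting $\G$ at each step) is consistent, is the delicate part; the rest is a routine unwinding of Definition \ref{EG:def:act-bij-ext} along the recursion tree of Theorem \ref{thm:ind-gene}.
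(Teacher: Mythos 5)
Your proposal is correct and follows essentially the same route as the paper: unwind Definition \ref{EG:def:act-bij-ext} as $\alpha_\G(A)=T\setminus(A\cap\Int(T))\cup(A\cap\Ext(T))$, settle the isthmus/loop case by noting that $\n$ is then dual-active (resp.\ active) so its membership in $\alpha_\G(A)$ is governed by whether $\n\in A$, and for the assignment step observe that since $\n=\max(E)$ is never active nor dual-active unless it is an isthmus or loop, the sets $A\cap\Int(T)$ and $A\cap\Ext(T)$ are unaffected by $\n$ and pass unchanged to the chosen minor. The key point you isolate at the end (that $\n\notin\Int(T)\cup\Ext(T)$ in the assignment step) is exactly the observation the paper's proof rests on.
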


\eme{preuve desous vite faite, faut-il mieux justifier la fin ?}%

\begin{proof}
By Definition \ref{EG:def:act-bij-ext}, with  $T=\alpha(-_A\G)$, we must have $\alpha_\G(A)=T \setminus (A\cap \Int(T)) \cup (A\cap \Ext(T))$.
If $\n$ is an isthmus and $\n\in A$, resp. $\n\not\in A$, then $\n$ is dual-active in $\G$, $\n\in \Int(T)$ and so $\n\not\in \alpha_\G(A)$, resp. $\n\in \alpha_\G(A)$.
If $\n$ is a loop and $\n\in A$, resp. $\n\not\in A$, then $\n$ is active in $\G$, $\n\in \Ext(T)$ and so $\n\in \alpha_\G(A)$, resp. $\n\not\in \alpha_\G(A)$.
Hence the definition given in the isthmus/loop case is correct.

In parallel, the computation of $\alpha(-_A\G)$ is performed using Theorem \ref{thm:ind-gene}.
The two parts \emph{(choice by activity comparison)} and the \emph{(choice by full optimality)} of Theorem \ref{thm:ind-gene} are applied to the digraph $-_A\G$. They consist in choosing which minor $\G_0\in\{-_A\G\bk\n,-_A\G/\n\}$ allows us to compute $\alpha(-_A\G)$.

So, lastly, the final assignment step for computing $\alpha_\G$ has to be exactly a reformulation of the same step in Theorem  \ref{thm:ind-gene}, with $-_A\G$ instead of $\G$ and $-_{A\triangle\{\n\}}\G$ instead of $-_\n\G$.
Observe that handling the isthmus/loop case suffices to have that $\alpha_\G$ satisfies the above relation with $\alpha$ for all $A$.
Indeed, $A\cap \Int(\alpha(\G))$ and $A\cap \Ext(\alpha(\G))$ are the same as
$A\cap \Int(\alpha(\G_0))$ and $A\cap \Ext(\alpha(\G_0))$, as long as $\n$ is not an isthmus nor a loop.
\end{proof}

\begin{remark}[variants]
\label{rk:ind-refined-variants}
\rm
Let us mention that changing the assignment performed in the \emph{(refined isthmus/loop case)}, and making it possibly depend on $\alpha(-_A\G)$,
yields variants of the refined active bijections as mentioned in Section  \ref{subsec:act-map-class-decomp}.
\end{remark}



\eme{dessous en commentaire section supprimee, reportee a AB4, donnant enonces seuls de s variantes pratiques pour les tests dans algo}%
\subsection{A general deletion/contraction framework for classes of activity preserving bijections}
\label{subsec:ind-framework}

%
%
%
%
In this section, we present  
how one can obtain general classes of bijections between spanning trees (or subsets) and orientations satisfying  properties with respect to activities and satisfying a common deletion/contraction framework.
See \cite{AB3} for more details.
%
%
The idea is the following. 

We begin with a bijection, between the set $2^E$ of orientations and the set $2^E$ of subsets, which is completely arbitrary except that it
satisfies some minimal consistency in terms of deletion/contraction. This arbitrariness is formally given by a property which we call {\tt CHOICE}. 
Then, we introduce  constraints, which determine the \choice in some cases, so that this bijection becomes less  arbitrary as it satisfies more  involved properties with respect to activities.
At each level, the arbitrariness in the definition defines a class of bijections, with noticeable properties (amongst which one can always fix a bijection by some trivial artificial criterion using the reference~orientation).

By this manner, we define various 
\emph{activity preserving mapping classes by deletion/contraction}. 
Amongst all these bijections, the canonical active bijection satisfies the most involved properties, and is uniquely determined, without having to use an artifical criterion, while the refined active bijection uses a trivial choice depending on a reference orientation at the very last step, in order to break symmetries as explained in Section \ref{subsec:refined}. 
%
Note that the mapping classes considered in this section
are distinct from the 
active partition preserving mapping classes
 considered in Section \ref{subsec:act-map-class-decomp}.
The canonical active bijection belongs to both classes, and satisfies further properties (duality and full optimality for bipolar orientations) that determine it within these classes.

%
We do not give proofs here, 
proofs can be either easily deduced from \cite[Chapter 1]{Gi02} or
adapted from similar proofs of previous results in this section, 
and all proofs are detailed in \cite{AB4}.
%

\ms

Let us now be technical.
Here, we want to build either a mapping $\psi$ that associates an ordered directed graph to one of its spanning trees, or a mapping $\psi_{\G}$ that associates a subset of its edges (meant as a reorientation of $\G$) with another subset of its edges (meant as a subset/superset of a spanning tree of $G$).
The important difference between those two viewpoints is that the mapping $\psi$ applies directly to any directed graph with no common reference orientation, whereas the mapping $\psi_{\G}$ applies substantially to the graph $-_A\G$ but may use the graph $\G$ as a reference orientation.
For the sake of simplicity, in what follows we consider only a mapping $\psi_\G: 2^E\rightarrow 2^E$ that a priori depends on a reference digraph $\G$ whose edge-set is $E$.
If it satisfies the following property:
$$\psi_{-_A\G}(A')=\psi_{\G}(A\triangle A')$$
for all $A,A'\subseteq E$, then it induces a well-defined mapping $\psi$ applied to any ordered digraph by $$\psi(-_A\G)=\psi_\G(A)=\psi_{-_A\G}(\emptyset).$$
Equivalently, in this case, $\psi_\G(A)$ depends only on $-_A\G$, and we say that $\psi_\G$ \emph{does not depend on a reference orientation} $\G$.
In particular, by constuction, $\alpha_\G$ effectively satisfies the above property, yielding $\alpha$ as addressed before.
\emevder{paragraphe dessus a bien verifier !!! est-ce bien ca ???}%

In the framework below, 
we start with an ordered directed graph $\G$, denoting $\n$ the greatest element of its edge set $E$, and we build a mapping $\psi_\G$, by means of several successive constructions or options (that one can also combine).
The common feature is to build at the same time $\psi_\G(A)$ and $\psi_\G(A\triangle\{\n\})$, for any $A\subseteq E$, 
always preserving a fundamental inductive property. 

Note that we choose to present the construction from orientations to spanning trees. However, the way it is presented relies on building a bijection/correspondence from the two bijections/correspondences built in the minors $G\bk\n$ and $G/\n$ (just as in  Remarks \ref{rk:ind-10} and \ref{rk:ind-gene}). Therefore, it can be understood as doing both ways at the same time (for instance, from spanning trees to orientations, if $\n$ belongs to the spanning tree then the minor to be considered is $G/\n$, and if $\n$ does not belong to the spanning tree then it is $G\bk\n$).\emevder{ parenthese a verifeir, j'ai enelve ca des remarques, ca me paraissait louche, A VERFIERI REFLECHIR !!!}

Recall that we call \emph{correspondence} when several objects (e.g. some orientations) are associated with the same object (e.g. a spanning tree), hence a bijection is a one-to-one correspondence.
Lastly, in order to shorten notations, for $A\subseteq E$, we will denote:
\ss

\noindent\begin{tabular}{cccc}
$T_\n=\psi_\G(A\cup\{\n\})$,\hphantom{xx}&
$T_{-\n}=\psi_\G(A\setminus\{\n\})$,\hphantom{xx}&
$T_/=\psi_{\G/\n}(A\bk \{\n\})$,\hphantom{xx}&
$T_\bk=\psi_{\G\bk\n}(A\bk \{\n\})$.\\
\end{tabular}

\emevder{pas eu le temps de relire ce qui suit avant resoumission, a faire !}



%
%
%

\begin{enumerate}

\item \noindent{\bf Minimalist framework.}

\begin{enumerate}

\item \emph{Initialization.} Just set $\psi_\emptyset(\emptyset)=\emptyset$.
\emevder{bizarre decommncer avec vide alors que plus haut on dit qu'on commence avec $\G$.... pas trrible, a reprednre... apres AB4 peut-etre}

\item \emph{Orientations - subsets bijection.}
\label{item:sub-or-istloop-free}
For all $A\subseteq E$, make $\psi_\G$ satisfy the following property.

\begin{boxedalgorithme}

\noindent\begin{tabular}{llrcl}

\textbf{\textit{\choice}}&set&
$\Bigl\{\ \psi_\G(A),\ \psi_\G(A\triangle \{\n\})\ \Bigr\}$&
$=$&
$\Bigl\{\ \psi_{\G\bk\n}(A\bk\{\n\}),\ \psi_{\G/\n}(A\bk \{\n\})\cup\{\n\}\ \Bigr\}.$\\

That is:&set&
$\{\ \ T_\n,\ \ T_{-\n}\ \ \}$&
$=$&
$\{\ \ T_\bk,\ \ T_/\cup\{\n\}\ \ \}.$\\
\end{tabular}
%
%

\noindent\begin{tabular}{lrlcl}
That is: &set either\hphantom{xxx}&
$T_\n=T_\bk$ &and& $T_{-\n}=T_/\cup\{\n\}$,\\

& or\hphantom{xxx}&$T_\n=T_/\cup\{\n\}$ &and& $T_{-\n}=T_\bk$.\\
\end{tabular}

\end{boxedalgorithme}

Arbitrary choices satisfying this property yield orientations - subsets bijections.
\eme{??? Also, the $2^{i+j}$ subsets in the interval associated with a spanning tree, (where $i$, resp. $j$, is the internal, resp. external, activity of the spanning tree) will correspond to $2^{i+j}$ orientations. Associating those orientations with this spanning tree is equivalent to using the definition of the previous option. *****oulala....!!!!???}%
\ss


\item \emph{Orientations - spanning trees correspondence.}


\begin{boxedalgorithme}
If $\n$ is an isthmus of $G$ then $T_\n=T_{-\n}=T_\bk\cup\{\n\}=T_/\cup\{\n\}$.

If $\n$ is a loop of $G$ then $T_\n=T_{-\n}=T_\bk=T_/$.

Otherwise then \emph{\choice}.
\end{boxedalgorithme}

%

If $\n$ is a loop or an isthmus of $G$, then $\G/\n=\G\bk\n$, and $T_/=T_\bk$.
One can see, using classical properties in the Tutte polynomial area, that this construction yields  $2^{i+j} - 1$  orientations - spanning trees correspondences, where $i$, resp. $j$, is the internal, resp. external, activity of the spanning tree. 
\ss

\item \emph{Examples of trivial fixations of the \choice}
\label{item:trivial-specification}

\begin{boxedalgorithme}
\emph{Example 1.} Set $T_\n=T_/\cup\{\n\}$ and $T_{-\n}=T_\bk$.

\emph{Example 2.} Set $T_\n=T_\bk$ and $T_{-\n}=T_/\cup\{\n\}$.
\end{boxedalgorithme}

Such trivial fixations can be used in any of the present constructions, as soon as a choice is left arbitrary, in order to get a completely defined mapping within the considered class of mappings.
%
 %
Such a mapping will obviously depend on $\G$.
Notice that a fixation of this type is used in Theorem \ref{thm:ind-gene-refined} when $\n$ is an isthmus or a loop, with a different treatment of these two cases, yielding the required properties of $\alpha_\G$.
Variants can be used, as noted in Remark \ref{rk:ind-refined-variants}. 
%
 %
\eme{Obviously, various similar choices with specific properties are available and can be combined with other specifications below.}%
%

\end{enumerate}

\item \noindent{\bf Fixing the \choice by activity comparison.} 

Each of the following options can be applied assuming that $\n$ is not an isthmus nor a loop. 
We give constructions in order of increasing fixation constraint.

%
%
%
%
%
%
%
%
%
 \ss
 
 \begin{enumerate}
 
\item \emph{Separating acyclic/cyclic parts and internal/external parts}

\begin{boxedalgorithme}
If $\n$ belongs to a directed cycle of $-_{A\setminus \{\n\}}\G$ and a directed cocycle of $-_{A\cup\{\n\}}\G$ 

\hskip 1.5cm 
then $T_\n=T_\bk$ and $T_{-\n}=T_/\cup\{\n\}$.

If $\n$ belongs to a directed cocycle of $-_{A\setminus \{\n\}}\G$   and a directed cycle of $-_{A\cup\{\n\}}\G$ 

\hskip 1.5cm 
then $T_\n=T_/\cup\{\n\}$ and $T_{-\n}=T_\bk$.

Otherwise then \emph{\choice}.
\end{boxedalgorithme}
Note that this fixation does not depend on  a reference orientation $\G$, only on $-_A\G$ and $-_{A\triangle\n}\G$.
Applied to an orientations - spanning trees correspondence, 
 the construction will associate acyclic orientations with internal spanning trees, and strongly connected orientations with external spanning trees. 
\eme{QUESTION A VERIFIER !!! est-ce automatique que ca se reporte sur activite des arbres???? Si oui on devrait pouvoir le definir dans l'autre sens: si $\n$ est dans la partie externe alors... etc.}%
Applied to an orientations - subsets bijection, we get in particular a bijection between acyclic orientations and no broken circuit subsets and, and a bijection between strongly connected orientations and supersets of external spanning trees. See \cite{AB4}.
\eme{In this case, fixing the \choice further in the case where $\n$ is an isthmus or a loop with a trivial \choice of type (\ref{item:trivial-specification}), one can get a bijection between acyclic orientations with given orientation for active and dual-active elements and internal spanning trees (and a dual bijection for strongly connected orientations and external spanning trees).
See details in \cite{AB4}.
\red{vrai ou bien attendre preservation des activites ?}
\red{voir item ensuite sur trivial specification}
}%
\ss

\item \emph{Preserving active elements}
\label{item:pres-act-elts}

\begin{boxedalgorithme}
%

\eme{DESSOUS EN COMMENTAIRE FORMULATION sans reorienter A}
%
%

\small
\smallalgofont

If $O(-_{A\cup\{\n\}}\G)\subset O(-_{A\setminus\{\n\}}\G)$ or $O^*(-_{A\setminus\{\n\}}\G)\subset O^*(-_{A\cup\{\n\}}\G)$
 then $T_\n=T_\bk$ and $T_{-\n}=T_/\cup\{\n\}$.

If $O(-_{A\setminus\{\n\}}\G)\subset O(-_{A\cup\{\n\}}\G)$ or $O^*(-_{A\cup\{\n\}}\G)\subset O^*(-_{A\setminus\{\n\}}\G)$ then $T_\n=T_/\cup\{\n\}$ and $T_{-\n}=T_\bk$.

If $O(-_{A\cup\{\n\}}\G)=O(-_{A\setminus\{\n\}}\G)$ and $O^*(-_{A\setminus\{\n\}}\G)=O^*(-_{A\cup\{\n\}}\G)$
then \emph{\choice}.

\end{boxedalgorithme}

Note that this fixation does not depend on a reference orientation $\G$, only on $-_A\G$ and $-_{A\triangle\n}\G$.
Applied to an orientations - spanning trees correspondence, 
this fixation is necessary and sufficient to have that the construction will preserve active elements: active, resp. dual-active, elements of the orientation are equal to externally active, resp. internally active elements, of the associated spanning tree.
The proof that it is well-defined and yields this result is given in \cite{AB4} and \cite[Chapter 1]{Gi02}.
Also, this construction can be used as a set theoretical proof of the expression of the Tutte polynomial in terms of orientation activities from \cite{LV84a} recalled in Section \ref{sec:prelim} (see also Remark \ref{rk:fourientations}). 
\ss

\item \emph{Preserving active partitions}
\label{item:pres-act-parts}

\begin{boxedalgorithme}
\small
\smallalgofont

Assume 
$-_{A\cup\{\n\}}\G$ and $-_{A\setminus\{\n\}}\G$
 do not have the same active partition.
 
Let $T_\n \in\{T_\bk, T_/\cup\{\n\}\}$ corresponding (respectively) to the unique minor 
in $\{ -_A\G\bk\n , -_A\G/\n\}$ 

whose active partition is obtained by removing $\n$ from 
the active partition of $-_{A\cup\{\n\}}\G$. 

And let $T_{-\n}$ be the other element of $\{T_\bk, T_/\cup\{\n\}\}$

Otherwise then \emph{\choice}.
\end{boxedalgorithme}

Note that this fixation does not depend on a reference orientation $\G$, only on $-_A\G$ and $-_{A\triangle\n}\G$.
It reformulates the fixation used in Theorem \ref{thm:ind-gene}.
It allows the construction to preserve active partitions: the active partition of the orientation and of its associated spanning tree are equal.
Various practical conditions to compare active partitions of the two involved minors are provided 
in \cite{AB4}, along with proofs for this result.
This yields a whole class of active partition preserving bijections/correspondences, refining the class mentioned in Section \ref{subsec:act-map-class-decomp} with a deletion/contraction property.


\end{enumerate}

\newpage

\item \noindent{\bf Further \choice fixation.}

\begin{enumerate}

%
%
%
%
%
%
%

\item \emph{The canonical active bijection}

It is defined  using option (\ref{item:pres-act-parts}),
specified as in Theorem \ref{thm:ind-gene} in order to have duality and full optimality properties for bipolar and cyclic-bipolar minors. 
We do not repeat this specification here.
Finally the mapping is uniquely determined, 
and does not depend on a reference orientation $\G$.
More details and geometrical interpretations can be found in \cite{AB1, AB2-b, AB3, AB4}.

\ss

\item \emph{The weak active bijection}

This variant consists in using, first, option (\ref{item:pres-act-elts}) above, and, next, a further fixation similar to that of the canonical active bijection.
It is defined and studied in \cite{GiLV06} in the case of triangulated (or chordal) graphs and supersolvable arrangements. 
It is more simple and direct to define than the canonical active bijection in this case.
It preserves active elements, it coincides with the canonical active bijection for (cyclic-)bipolar orientations, but it does not preserve active partitions, and the set of orientations associated with a spanning tree is not structured as an activity class. Anecdotally, it is proved in \cite{GiLV06} that the weak active bijection and the canonical active bijection coincide for acyclic orientations of the complete graph (equivalent to permutations, or to regions of the braid arrangement), yielding a classical bijection between permutations and increasing trees.%
\ss

\item \emph{Adding trivial fixation}

Using a fixation of type (\ref{item:trivial-specification}) in the case where $\n$ is an isthmus or a  loop, in the case where a choice is left open by the previous ones, allows us for instance to  get bijections between orientations with given orientation for active and dual-active elements and spanning trees (see also below).
\ss

\item \emph{The refined active bijection}

As stated in Theorem \ref{thm:ind-gene-refined}, it is obtained by applying the same fixations as for the canonical active bijection, but for an orientation - subset bijection and with a trivial fixation of type (\ref{item:trivial-specification}) in the case where $\n$ is an isthmus or a  loop.
As seen in Theorem \ref{EG:th:ext-act-bij}, it coincides with the canonical active bijection for active-fixed and dual-active-fixed orientations.

\end{enumerate}

%
%
%
%
%
%

\end{enumerate}


\eme{DESSOUS EN COMMENTAIRE RAB A VIRER VENU DE SLIDE ET DE SUPERSOLV SUR CAS ACYCLIQUE}

\begin{remark}
\label{rk:fourientations}
\rm
In the seminal paper \cite{LV84a}, the proof of the expression of the Tutte polynomial in terms of orientation activities (Section \ref{subsec:orientation-activity}) is based on some sort of numerical \choice fixation at the level of  set cardinalities (orientation activities), see \cite[Lemma 3.2]{LV84a}.
This approach is generalized in \cite[Th\'eor\`eme 1.6]{Gi02} with a set theoretic approach, recalled here as option (\ref{item:pres-act-elts}), yiedling a proper correspondence and a preservation of active elements.
It is generalized further in the above framework, detailed in \cite{AB4}.
The fundamental property that enables these deletion/contraction constructions based on \choice fixation is roughly that
$\G$ and $-_\n\G$ on one side, and $\G/\n$ and $\G\bk\n$ on the other side, match to have the same properties (activities, active elements, or active partitions).
Let us mention the recent work \cite{BaHoTr18} which gathers both subset activity parameters (see Section \ref{prelim:subset-activities}) and orientation activity parameters (see Section \ref{subsec:act-classes}) in a large Tutte polynomial expansion formula in the context of graph fourientations. 
This work 
extends in some sense to graph fourientations the aforementioned fundamental property, at the level of active elements.
Precisely, \cite[Lemma 3.3]{BaHoTr18} is a 
remarkable 
non-trivial 
extension to graph fourientations of (the restriction to graphs of) 
\cite[Lemma 3.2]{LV84a} or \cite[Th\'eor\`eme 1.6]{Gi02}.
In this context, this fundamental property allows for defining activity preserving mappings in fourientations by deletion/contraction and  \choice fixations.
The fixation used to build 
the main mapping of \cite{BaHoTr18} is a ``tiebreaker'' induced by a reference orientation,
similar to the use of a ``trivial choice'' (\ref{item:trivial-specification}) in the above framework.
\end{remark}

\section{Detailed examples of $K_3$ and $K_4$}
\label{sec:example}

First, the constructions of the  active bijection are illustrated on the example%
\footnote{Anecdotally, the example of $K_3$ was highlighted by Tutte himself in conferences \cite{Tu01}. Comparing the symmetry of the graph with the non-symmetry of the polynomial he had defined, while the sum of its coefficients was equal to the number of spanning trees, made him think of introducing a linear ordering on the edges in order to break the symmetry. Thus began the long story of Tutte polynomial activities.}
 of $K_3$. The canonical and refined bijections are shown on Figure~\ref{fig:K3} and in the table of Figure \ref{fig:tabK3ori}.
The Tutte polynomial of $K_3$ is $$t(K_3;x,y)=x^2+x+y.$$

\begin{figure}[h]
\centering
{
\parindent=-1mm
\scalebox{1}[0.85]
{\includegraphics[width=9cm]{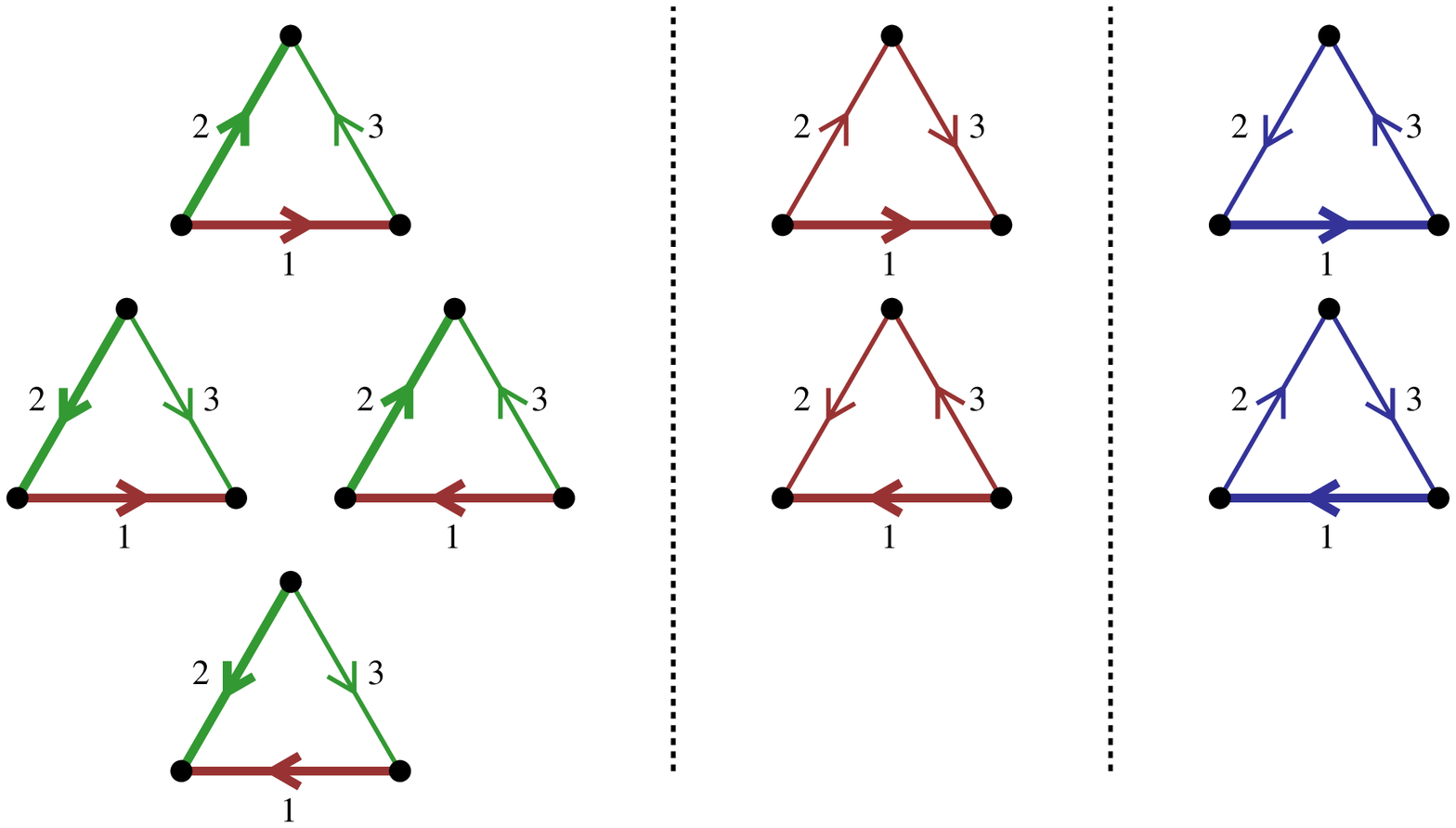}}
}
\hrule
\vspace{-1mm}
\flushleft
\begin{tabular}{l@{\hspace{15mm}}ccc;{1pt/1pt}c;{1pt/1pt}c}
 &  & $+ x^2$ & & $+x$ & $+y$\\
 $T(K_3;x+u,y+v)=$& $+xu$ & & $+ux$ & $ +u$ &$ +v$\\
  & & $+u^2$ & &   & \\
\end{tabular}
\vspace{1mm}
\hrule
\vspace{1mm}
%
\centering
{
\scalebox{1}[0.85]
{\includegraphics[width=9cm]{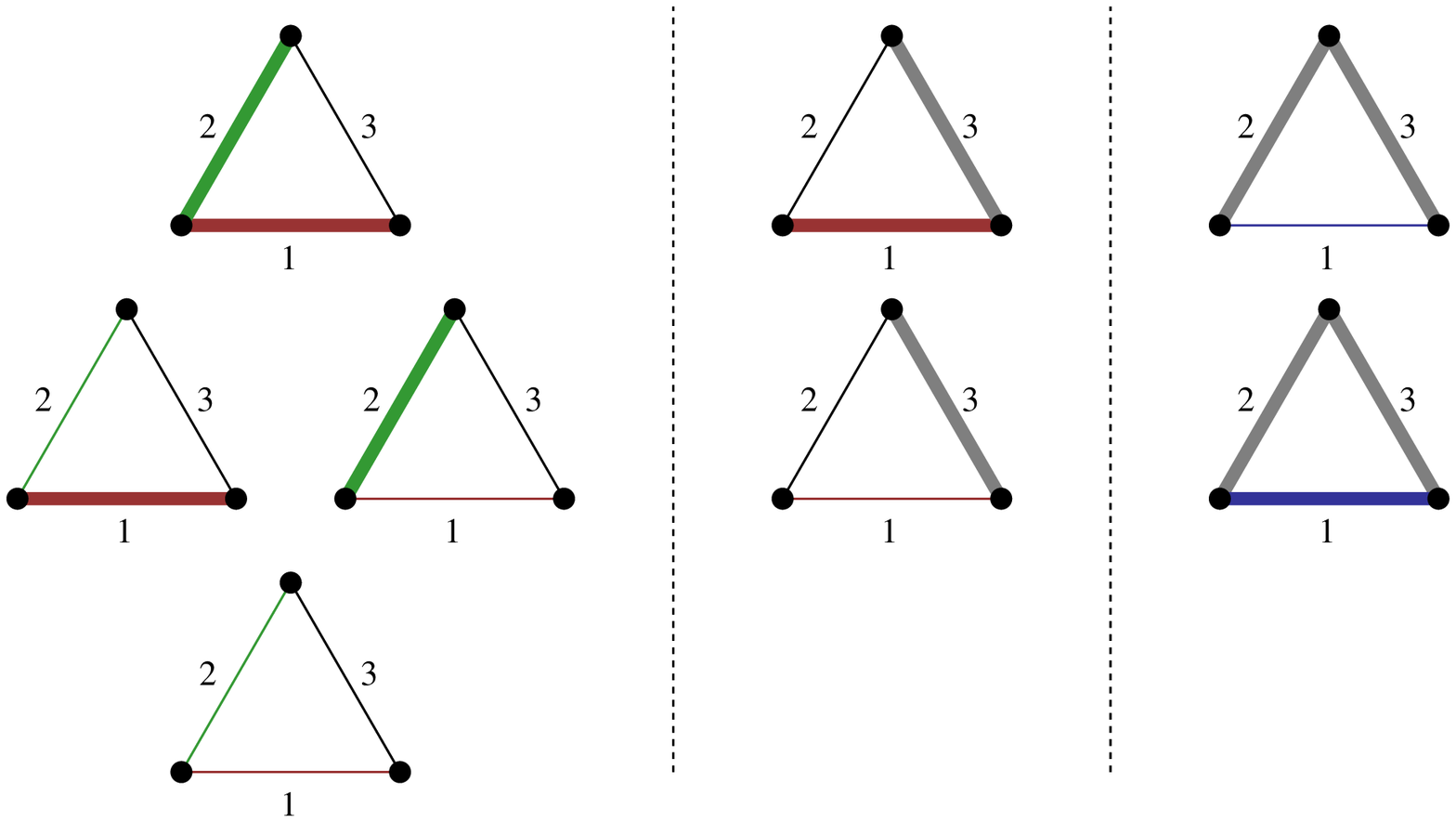}}
}
\caption{The active bijection illustrated on the graph $K_3$. We have $T(K_3;x,y)=x^2+x+y$.
The layout reflects the bijections.
Each monomial corresponds to an activity class of orientations in the top part and to a spanning tree in the bottom part,
associated by the canonical active bijection.
Each spanning tree yields a boolean lattice of subsets (shown by bold edges). Orientations in the top part and subsets in the bottom part are associated by the refined active bijection (with respect to the 
orientation 
displayed
first
in the top row), consistently with the four variable~formula, in the way shown by the layout.
%
}
\label{fig:K3}
\end{figure}

\def\interligne{&&&\\[-11pt]}
\def\fcyc #1{\fbox{\hbox{#1}}}

\begin{figure}[H]
\begin{center}
\begin{tabular}{|c|c|c|c|}
\hline
Active filtrations & Active partitions & Orientation activity classes & {Spanning trees}   \\
\hline
\interligne
$\fcyc{\O}\subset 1\subset E$ & $1+23$ & $123$, $1\ovl{23}$,  $\ovl{1}23$, $\ovl{123}$ & 12 \\
$\fcyc{\O}\subset E$ & $123$ & $12\ovl{3}$, $\ovl{12}3$ & 13 \\
$\emptyset\subset \fcyc{E}$ & $123$ & $1\ovl{2}3$, $\ovl{1}2\ovl{3}$ & 23 \\
\hline
\end{tabular}
\caption{Table of the active bijection of $K_3$, where orientations are written with a bar over reoriented edges w.r.t. the reference orientation given in the upper left of Figure \ref{fig:K3}. The cyclic flat of each active filtration  is boxed in the first column.}
\label{fig:tabK3ori}
\end{center}
\vspace{-0.5cm}
\end{figure}



Second, the constructions of the canonical active bijection (and the refined active bijection) described in the previous sections are completely illustrated on the example of $K_4$, with ordering (and reference orientation) given by Figure \ref{fig:K4}. 
The Tutte polynomial of $K_4$ is $$t(K_4;x,y)=x^3+3x^2+2x+4xy+2y+3y^2+y^3.$$


\begin{figure}[h]
	\centering	
\includegraphics[scale=0.4]{}
\vspace{-0.5cm}
		\caption[]{Ordering and reference orientation of $K_4$}
		\label{fig:K4}
\end{figure}

\def\interligne{&&&\\[-11pt]}

\begin{figure}[h]
\begin{center}
\begin{tabular}{|c|c|c|c|}
\hline
Active filtrations & Active partitions & Orientation activity classes & {Spanning trees}   \\
\hline
\interligne
$\fcyc{\O}\subset 1\subset 123\subset E$ & $1+23+456$ & $123456$, $1\ovl{23}456$, $123\ovl{456}$, $1\ovl{23456}$, ... & 124 \\
$\fcyc{\O}\subset 1\subset E$ & $1+23456$ & $12345\ovl{6}$, $1\ovl{2345}6$, ... & 126 \\
$\fcyc{\O}\subset 145\subset E$ & $145+236$ & $1234\ovl{56}$,$1\ovl{23}4\ovl 56$, ... & 125 \\
$\fcyc{\O}\subset 123\subset E$ & $123+456$ & $12\ovl{3456}$, $12\ovl 3456$, ... & 134 \\
$\fcyc{\O}\subset E$ & $123456$ & $12\ovl 34\ovl{56}$, ... & 135 \\
$\fcyc{\O}\subset E$ & $123456$ & $12\ovl 34\ovl{5}6$,...& 136 \\
$\emptyset\subset \fcyc{123}\subset E$ & $123+456$ & $1\ovl 23456$, $1\ovl 23\ovl{456}$, ...& 234 \\
$\emptyset\subset \fcyc{145}\subset E$ & $145+236$ & $1\ovl{234}56$, $123\ovl 45\ovl 6$, ...& 245 \\
$\emptyset\subset \fcyc{246}\subset E$ & $246+135$ & $12\ovl{34}56$, $1\ovl{23}45\ovl 6$, ...& 146 \\
$\emptyset\subset \fcyc{356}\subset E$ & $356+124$ & $1234\ovl 56$, $12\ovl 345\ovl 6$, ... & 156 \\
$\emptyset\subset \fcyc{E}$ & $123456$ & $1\ovl 23\ovl 456$, ... & 235 \\
$\emptyset\subset \fcyc{E}$ & $123456$ & $1\ovl 23\ovl 45\ovl 6$, ... & 236 \\
$\emptyset\subset 246 \subset \fcyc{E}$ & $135+246$ & $1\ovl 2345\ovl 6$, $123\ovl 456$, ... & 346 \\
$\emptyset\subset 356 \subset \fcyc{E}$ & $124+356$ & $1\ovl{234}5\ovl 6$, $1\ovl 23\ovl{45}6$, ... & 256 \\
$\emptyset\subset 23456\subset \fcyc{E}$ & $1+23456$ & $1\ovl234\ovl{56}$, $12\ovl{34}56$, ... & 345 \\
$\emptyset\subset 356\subset 23456\subset \fcyc{E}$ & $1+24+356$ & $12\ovl{34}5\ovl 6$, $1\ovl{23}45\ovl 6$,
$1\ovl 234\ovl 56$, $123\ovl{45}6$, ...& 456 \\
\hline
\end{tabular}
\caption{Table of the active bijection of $K_4$, where orientations are written with a bar over reoriented edges w.r.t. the reference orientation given in Figure \ref{fig:K4}, and where ``...'' means ``and opposites''. The cyclic flat of each active filtration  is boxed in the first column.}
\label{fig:tabK4ori}
\end{center}
\vspace{-0.5cm}
\end{figure}

\begin{figure}[]
	\centering	
\includegraphics[scale=1.1]{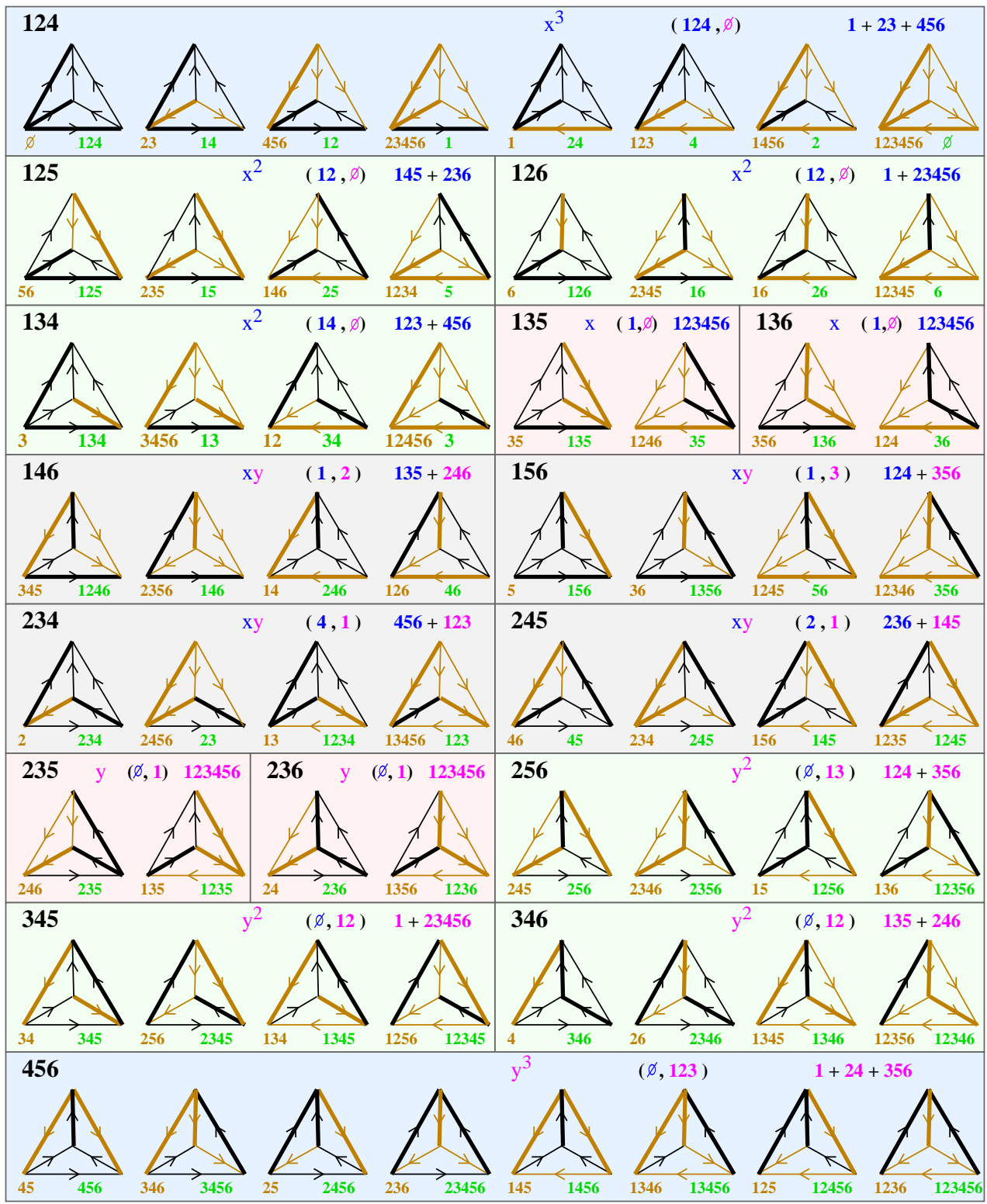}
\vspace{-0.3cm}
\caption[]{The canonical and refined active bijections of $K_4$ w.r.t. ordering and reference orientation from Figure \ref{fig:K4}}
		\label{fig:big-active-bijection}
\end{figure}

\newpage
Figure \ref{fig:tabK4ori} sums up in a table the canonical active bijection of the underlying ordered graph (Theorem \ref{EG:th:alpha}). 
%
%
Notice that figures in previous sections are based on the same ordered graph.
Figure \ref{EG:fig:K4-dec} show the minors involved in the decomposition of 
the orientations associated with spanning tree $134$, and Figure \ref{EG:fig:K4-iso} show the bijection between this orientation activity class and the interval of this spanning tree.

The reader is advised to  see also \cite{AB2-a} and \cite{AB2-b}, were 
more illustrations are given for constructions on the same example, such as
several detailed examples of decompositions of orientations and of their active spanning trees by means of active partitions and suitably signed fundamental cycles/cocycles (represented in tableaux and in bipartite graphs), as well as a complete geometrical representation using two dual pseudoline arrangements.

\eme{ajouter subsets dans tableau ??? (comme dans figure) eventuellement comme elemtns soulignes}



\eme{desous en commaentaire, figure de variante enlevee}

Figure \ref{fig:big-active-bijection} illustrates completely the canonical and refined active bijection between orientations and spanning trees (Theorem \ref{EG:th:ext-act-bij}). 
Each block corresponds to an activity class of orientations,
and to its associated spanning tree by the canonical active bijection.
The following information is given:%

\begin{itemize}
\itemsep=0mm
\item In the upper left: the spanning tree $T$
\item As drawn digraphs: 
	\vspace{-2mm}
	\begin{itemize}
	\itemsep=0mm
	\item the $2^{\io+\ep}$ orientations $\G$ such that $T=\alpha(\G)$
	\item the first orientation of the block is the active-fixed and dual-active-fixed one
	\end{itemize}
\item In the upper right: 
	\vspace{-2mm}
	\begin{itemize}
	\itemsep=0mm
	\item the associated coefficient $x^\io y^\ep$ in the Tutte polynomial
	\item the pair of subsets $(\Int(T),\Ext(T))=(O^*(\G),O(\G))$ whose cardinalities are $(\io,\ep)$
	\item the active partition of both $T$ and $\G$ (reorienting the parts of the active partition provides the other graphs of a block from any of them, and, conversely, active partitions can be deduced from the set of reorientations associated with the same spanning tree by considering symmetric differences of these reorientations)
	\end{itemize}
\item Under each graph is illustrated the refined active bijection w.r.t. the digraph of Figure \ref{fig:K4}:
	\vspace{-2mm}
	\begin{itemize}
	\itemsep=0mm
	\item on the left, the corresponding reorientation with respect to the digraph of Figure \ref{fig:K4}.
	\item on the right, the corresponding edge-subset obtained by adding/removing active elements with respect to the digraph of Figure \ref{fig:K4} 
	\end{itemize}
\item On each graph:
	\vspace{-2mm}
	\begin{itemize}
	\itemsep=0mm
	\item the bold edges form the spanning tree
	\item the grey edges are those that are reoriented
	\end{itemize}
\end{itemize} 

In particular, noticeable restrictions of the refined active bijection listed in Tables
\ref{table:intro} or \ref{table:thm-refined}
can be read on the figure:
the bijection between acyclic orientations and NBC subsets is given by the three first lines;
the bijection between strongly connected orientations and supersets of external trees is given by the three last lines;
the bijection between (dual-)active-fixed orientations and spanning trees is given by considering the first digraph of each block (in particular for acyclic orientations it is the only one of the block with the source of edge $1$ as unique source, yielding a bijection between such orientations and internal spanning trees); et caetera.
\eme{Anecdotically, let us mention that, since $K_4$ is a complete graph, the active bijection also consists in a bijection between permutations and increasing trees \cite{GiLV06}.}
\eme{Finally, Figure \ref{fig:variant} shows a variant (Remark \ref{rk:refined-bij}) of the refined active bijection for spanning tree $124$, obtained by symmetric difference with $\{1,2\}\subseteq \Int(124)$ of all edge-sets in this interval with respect to 
Figure \ref{fig:big-active-bijection}.
}

\eme{VARIANTE}
\eme{Concerning the big active bijection,
here the same reference orientation has been chosen for every interval.
The bijection between acyclic orientations and NBC subsets is given by the three first lines. The three last lines correspond dually to strongly connected orientations.
Figure \ref{fig:variant} shows an
example of another big active bijection for spanning tree $124$, obtained by symmetric difference with $\{1,2\}\subseteq \Int(124)$ of all edge 
-sets in this interval with respect to the big active bijection of Figure \ref{fig:big-active-bijection}.}


\bs

\noindent{\bf Acknowledgments.}

\noindent Emeric Gioan 
wishes to thank a lot the following people:
two anonymous reviewers for numerous useful comments and presentation advices,
Joanna Ellis-Monaghan and Iain Moffatt for their 
unifying
development work on the Tutte polynomial that encouraged the completion of this 
writing which had been pending for a long time,
 Bodo Lass for  stimulating discussions,
Cl\'ement Dupont for suggesting to use the 
nice term ``filtration'',
 Ilda da Silva for useful 
presentation advices,
and Lorenzo Traldi and Spencer Backman for 
some english corrections and numerous useful comments.%
%

%





\addcontentsline{toc}{section}{References}

\addtocontents{toc}{\small Computing the fully optimal spanning tree of an ordered bipolar directed~graph \hfill Companion paper \cite{ABG2LP}}

\emevder{attention coherence des refs, notamment titres de AB3 et AB4}



\vspace{-0.2cm}

\bibliographystyle{amsplain}



\emevder{METTRE A JOUR BIBLIO !!!!!! faire fichier .bib}

\providecommand{\bysame}{\leavevmode\hbox to3em{\hrulefill}\thinspace}
\providecommand{\MR}{\relax\ifhmode\unskip\space\fi MR }
\providecommand{\href}[2]{#2}


\end{document}